	\definecolor{UCIB}{HTML}{0064A4}
	\definecolor{UCSDB}{HTML}{00629B}
	\newcommand{\ubar}[1]{\underaccent{\bar}{#1}}
\newcommand{\Simplehref}[3][blue]{\href{#2}{\color{#1}{#3}}}%
\pgfplotsset{compat=1.17}
\theoremstyle{plain}
\newtheorem{fthm}{Theorem}[section]
\newtheorem*{fthm*}{Theorem}
\newtheorem{flemma}{Lemma}[section]
\newtheorem*{flemma*}{Lemma}
\newtheorem{fprop}{Proposition}[section]
\newtheorem*{fprop*}{Proposition}
\newtheorem{fcor}{Corollary}[section]
\newtheorem*{fcor*}{Corollary}
\theoremstyle{definition}
\newtheorem{fdefi}{Definition}[section]
\newtheorem*{fdefi*}{Definition}
\newtheorem*{fexmp*}{Example}
\theoremstyle{remark}
\newtheorem{frmk}{Remark}[section]
\newtheorem*{frmk*}{Remark}
\newtheorem{fconj}{Conjecture}[section]
\newtheorem*{fconj*}{Conjecture}
\newtheorem*{fclaim*}{Claim}
\newtheorem*{fquest*}{Question}
\newcommand{\Address}{{
  \bigskip
  \footnotesize

  Chao-Ming Lin, \textsc{Department of Mathematics, University of California-Irvine, CA}\par\nopagebreak
  \textit{E-mail address:} \href{chaominl@uci.edu}{chaominl@uci.edu}\par\nopagebreak
  \textit{Personal Website:} \href{https://chaominl.github.io}{https://chaominl.github.io}

}}
\newcommand{\Acknow}{{
  \bigskip
\textbf{Acknowledgements:} The author is grateful to Zhiqin~Lu and Xiangwen~Zhang for giving me enlightening helps. The author would like to thank Tristan~Collins for his interest in this work. The author is grateful to \Simplehref[UCSDB]{http://www.symbol.codes}{Hsin-Po~Wang} for several helpful conversations and some programming supports.

}}
\DeclareMathOperator\osc{osc}
\DeclareMathOperator\dist{dist}
\DeclareMathOperator\cs{$\csc^2\bigl(\hat{\theta}\bigr)$}
\def\cs{\csc^2 ( \hat{\theta}  )}
\def\s{\sec^2 ( \hat{\theta}  )}
\def\ct{\cot ( \hat{\theta}  )}
\def\ta{\tan ( \hat{\theta}  )}
\def\taa{\tan^2 ( \hat{\theta}  )}
\def\ctt{\cot^2 ( \hat{\theta}  )}
\def\cc{\bigl ( 3 \csc^2  ( \hat{\theta}  ) -4  \bigr) }
\def\sii{\sin^2 ( \hat{\theta}  )}
\def\css{\cos^2 ( \hat{\theta}  )}
\def\({\bigl (}
\def\){\bigr )}
\def\coo{ \cos ( \hat{\theta}  ) }
\def\cso{\csc ( \hat{\theta}  )}
\def\sco{\sec ( \hat{\theta}  )}
\begin{document}

\title{The Deformed Hermitian--Yang--Mills  Equation, the Positivstellensatz, and the Solvability}
\author{Chao-Ming Lin}

\begin{abstract}
Let $(M, \omega)$ be a compact connected Kähler manifold of complex dimension four and let $[\chi] \in H^{1,1}(M; \mathbb{R})$. We confirmed the conjecture by Collins--Jacob--Yau \cite{collins20151} of the solvability of the deformed Hermitian--Yang--Mills equation, which is given by the following nonlinear elliptic equation $\sum_{i} \arctan (\lambda_i) = \hat{\theta}$, where $\lambda_i$ are the eigenvalues of $\chi$ with respect to $\omega$ and $\hat{\theta}$ is a topological constant. 
This conjecture was stated in \cite{collins20151}, wherein they proved that the existence of a supercritical $C$-subsolution or the existence of a $C$-suboslution when $\hat{\theta} \in \bigl[   ( (n-2) + {2}/{n}     )  {\pi}/{2},  n\pi/2  \bigr)$ will give the solvability of the deformed Hermitian--Yang--Mills equation. Collins--Jacob--Yau conjectured that their existence theorem can be improved when $\hat{\theta} \in \bigl(  (n-2 ) {\pi}/{2},  ( (n-2) + {2}/{n}     )  {\pi}/{2}  \bigr)$, where $n$ is the complex dimension of the manifold. In this paper, we confirmed their conjecture that when the complex dimension equals four and $\hat{\theta}$ is close to the supercritical phase $\pi$ from the right, then the existence of a $C$-subsolution implies the solvability of the deformed Hermitian--Yang--Mills equation.  
\end{abstract}
\maketitle
\vspace{-0.6cm}



\section{Introduction}
\label{sec:1}

Motivated by mirror symmetry in string theory, the \emph{deformed Hermitian--Yang--Mills equation}, which will be abbreviated as dHYM equation later on, was discovered around the same time by Mariño--Minasian--Moore--Strominger \cite{marino2000nonlinear} and Leung--Yau--Zaslow \cite{leung2000special} using different points of view. Mariño--Minasian--Moore--Strominger \cite{marino2000nonlinear} found out that the dHYM equation is the requirement for a $D$-brane on the $B$-model of mirror symmetry to be supersymmetric. It was shown by Leung--Yau--Zaslow \cite{leung2000special} that, in the semi-flat model of mirror symmetry, solutions of the dHYM equation are related via the Fourier--Mukai transform to special Lagrangian submanifolds of the mirror. Thus it is crucial to understand under which condition does the dHYM equation has a solution.\bigskip 

Let $(M, \omega)$ be a compact connected Kähler manifold of complex dimension $n$ and $[\chi_0] \in H^{1,1}(M; \mathbb{R})$, where $H^{1,1}(M; \mathbb{R})$ is the $(1, 1)$-Dolbeault cohomology group. The study of the dHYM equation for a holomorphic line bundle over a compact Kähler manifold was initiated by Jacob--Yau \cite{jacob2017special}; they introduced the following problem: if $\omega$ is a Kähler form, does there exists a real smooth, closed $(1,1)$-form $\chi \in [\chi_0]$ such that, 
\begin{align}
\label{eq:1.1}
\Im \bigl ( \omega + \sqrt{-1} \chi \bigr )^n &= \tan \bigl (  \hat{\theta}   \bigr ) \cdot \Re \bigl ( \omega + \sqrt{-1} \chi \bigr )^n? \tag{1.1}
\end{align}Here $\Im$ and $\Re$ are the imaginary and real parts, respectively, and $\hat{\theta}$ is a topological constant determined by the cohomology classes $[\omega]$ and $[\chi_0]$. The above equation is called the deformed Hermitian--Yang--Mills equation. In the supercritical phase case, which means that the phase $\hat{\theta}$ satisfies $\hat{\theta} >  {(n-2)} \pi/2$, Collins--Jacob--Yau \cite{collins20151} showed that if there exists a supercritical $C$-subsolution (which will be introduced and defined later in Section~\ref{sec:2.3}), then the dHYM equation is solvable. Collins--Jacob--Yau conjectured that the existence of the solution to the dHYM equation (\ref{eq:1.1}) is equivalent to a certain stability condition for any analytic subvarieties and confirmed this numerical conjecture for complex surfaces. Recently, Chen \cite{chen2021j} proved a Nakai--Moishezon type criterion for the supercritical dHYM equation (and also for the $J$-equation) under a slightly stronger condition that these holomorphic intersection numbers have a uniform lower bound independent of analytic subvarieties. Datar--Pingali \cite{datar2021numerical} extended the techniques in Chen \cite{chen2021j} to the case of the generalized Monge--Ampère equation. As a result, Datar--Pingali confirmed this numerical conjecture of the dHYM equation when the complex dimension equals three and $\hat{\theta} \in [\pi, 3\pi/2)$ (also projective and positive line bundles for the $J$-equation). Jacob--Sheu \cite{jacob2020deformed} showed that the numerical conjecture holds on the blowup of complex projective space. Chu--Lee--Takahashi \cite{chu2021nakai} improved the result without assuming a uniform lower bound for these intersection numbers. The method by Chu--Lee--Takahashi was inspired by a remarkable work of Song \cite{song2020nakai} in the study of the $J$-equation. Song extended the method of Chen \cite{chen2021j} and showed a Nakai--Moishezon type criterion without assuming a uniform lower bound, which confirms the conjecture by Lejmi--Székelyhidi \cite{lejmi2015j}. We should emphasize that there are many significant works which have been done recently. The interested reader is referred to \cite{collins2020stability, collins2018moment, collins2018deformed, jacob2019weak, schlitzer2021deformed} and the references therein.\bigskip


If we write the dHYM equation (\ref{eq:1.1}) in terms of the eigenvalues of the Hermitian endomorphism $\Lambda = \omega^{-1}\chi$, then we can rewrite equation (\ref{eq:1.1}) as
\begin{align*}
\label{eq:1.2}
\Theta_{\omega}(\chi) \coloneqq \sum_{i = 1}^n \arctan \lambda_i = \hat{\theta}, \tag{1.2}
\end{align*}where $\lambda_i$ are the eigenvalues of $\Lambda$ and we specify the branch so that $\lambda_i \in (-\pi/2, \pi/2)$ for all $i \in \{1, \cdots, n\}$. In Collins--Jacob--Yau \cite{collins20151}, they proved the following. 
\hypertarget{T:1.1}{\begin{fthm}[Collins--Jacob--Yau \cite{collins20151}]}
Suppose that the topological constant $\hat{\theta}$ satisfies the supercritical phase condition
$\hat{\theta} > (n-2)  {\pi}/{2}$ and there exists a $C$-subsolution $\ubar{\chi} \coloneqq \chi_0 + \sqrt{-1} \partial \bar{\partial} \ubar{u}$ (in the sense of Definition~\hyperlink{D:2.2}{2.2} introduced by Székelyhidi \cite{szekelyhidi2018fully}). Assume either
\begin{itemize}
\item $\Theta_\omega (\ubar{\chi}) > (n-2)\pi/2$ (which will be abbreviated as supercritical $C$-subsolution), or
\item $\hat{\theta} \geq \bigl( (n-2) + 2/n \bigr) \pi/2$.
\end{itemize}
Then there exists a unique smooth $(1,1)$-form $\chi \in [\chi_0]$ solving the dHYM equation
\begin{align*}
\label{eq:1.3}
\Theta_\omega \left ( \chi \right ) = \hat{\theta}. \tag{1.3}
\end{align*}
\end{fthm}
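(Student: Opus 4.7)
The plan is to apply the standard method of continuity, combined with a priori estimates that exploit the concavity of $\Theta_\omega$ on the supercritical phase cone. Consider the one-parameter family
\begin{align*}
\Theta_\omega\bigl(\ubar{\chi} + \sqrt{-1}\partial\bar\partial v_t\bigr) = t\hat\theta + (1-t)\Theta_\omega(\ubar{\chi}), \quad t \in [0,1],
\end{align*}
where $v_0 \equiv 0$ obviously solves the equation at $t=0$ and any smooth solution $v_1$ at $t=1$ produces the desired $\chi = \ubar{\chi} + \sqrt{-1}\partial\bar\partial v_1$. Openness along this path follows from the implicit function theorem: in a frame diagonalizing $\chi_t$, the linearized operator takes the form $\sum_i (1+\lambda_i^2)^{-1}\partial_i\partial_{\bar i}$, which is strictly elliptic, and standard Schauder theory on the quotient by constants yields a local inverse.

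The substance of the proof is the closedness step, which reduces to a priori $C^{2,\alpha}$ estimates; for this I would follow Székelyhidi's framework \cite{szekelyhidi2018fully}. The $C^0$ bound comes from an Alexandrov--Bakelman--Pucci argument in a coordinate ball centered at a minimum of $v_t$, using the structural inequality provided by the $C$-subsolution definition. The heart of the proof is the second-order estimate, obtained by constructing a test function of the form
\begin{align*}
\log\lambda_{\max}\bigl(\sqrt{-1}\partial\bar\partial v_t\bigr) + A\varphi(v_t) + B|\nabla v_t|^2,
\end{align*}
maximizing on $M$, and exploiting two ingredients: (i) the concavity of $\Theta_\omega$ restricted to its level sets, valid precisely in the supercritical regime $\hat\theta_t > (n-2)\pi/2$; and (ii) the quantitative gap from $\ubar{\chi}$ to the boundary of the admissible cone that the $C$-subsolution hypothesis guarantees, which produces the term that absorbs the bad quadratic contributions. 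A gradient estimate then follows either by a Blocki-type trick from the $C^2$ estimate or by a Dinew--Kolodziej blow-up/Liouville argument, Evans--Krylov furnishes $C^{2,\alpha}$, and standard bootstrapping completes the regularity.

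The main obstacle, and the reason for the dichotomy in the hypotheses, is ensuring that every $\chi_t$ along the continuity path remains in the supercritical cone $\{\Theta_\omega > (n-2)\pi/2\}$, since outside this cone the operator fails to be concave and both the second-order estimate and the Evans--Krylov argument collapse. When $\ubar{\chi}$ is already a supercritical $C$-subsolution, a maximum principle applied at the minimum of $\Theta_\omega(\chi_t)$, using that the right-hand side $t\hat\theta + (1-t)\Theta_\omega(\ubar{\chi})$ is bounded below by $\min(\hat\theta,\inf_M\Theta_\omega(\ubar{\chi})) > (n-2)\pi/2$, propagates supercriticality along the path. When $\hat\theta \geq ((n-2)+2/n)\pi/2$, a purely algebraic lemma on eigenvalues of any candidate $\chi_t$ forces them into the supercritical cone automatically: the sum $\sum_i \arctan\lambda_i$ being sufficiently close to $n\pi/2$ prevents any individual $\arctan\lambda_i$ from approaching $-\pi/2$ strongly enough to spoil supercriticality. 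Uniqueness is immediate from the strict monotonicity of $\Theta_\omega$ in each $\lambda_i$ combined with the maximum principle applied to the difference of two solutions.
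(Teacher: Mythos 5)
The paper cites this theorem from Collins--Jacob--Yau without reproducing its proof; the only trace of the argument here is Remark~\hyperlink{R:1.1}{1.1}, which records that the second bullet reduces to the first because $\hat\theta \geq \bigl((n-2)+2/n\bigr)\pi/2$ forces \emph{any} $C$-subsolution to be supercritical. Your overall skeleton --- continuity path from $\Theta_\omega(\ubar{\chi})$ to $\hat\theta$, openness by the implicit function theorem, $C^0$ from an ABP argument, $C^2$ from a test-function maximum principle exploiting concavity on level sets inside the supercritical cone, $C^1$ by blow-up, Evans--Krylov, bootstrapping, and uniqueness from strict monotonicity plus the maximum principle --- is the right one and is consistent both with Collins--Jacob--Yau and with the machinery this paper deploys in its own Sections~\ref{sec:3}--\ref{sec:5}.

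The gap is in your handling of the second hypothesis. You assert that ``a purely algebraic lemma on eigenvalues of any candidate $\chi_t$ forces them into the supercritical cone,'' but along your path $\Theta_\omega(\chi_t) = t\hat\theta + (1-t)\Theta_\omega(\ubar{\chi})$, so for small $t$ the right-hand side is near $\Theta_\omega(\ubar{\chi})$, and nothing you wrote rules out that being below $(n-2)\pi/2$. Moreover, the explanation offered (``the sum being close to $n\pi/2$ prevents any individual arctangent from approaching $-\pi/2$'') is a description of what membership in the supercritical cone means, not a mechanism that places $\chi_t$ in it. The lemma you actually need concerns $\ubar{\chi}$, not $\chi_t$: the $C$-subsolution condition says that for each index $j$, $\sum_{i\neq j}\arctan\mu_i > \hat\theta - \pi/2$, where $\mu_i$ are the eigenvalues of $\omega^{-1}\ubar{\chi}$. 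Summing over $j$ gives $(n-1)\,\Theta_\omega(\ubar{\chi}) > n(\hat\theta - \pi/2)$, hence $\Theta_\omega(\ubar{\chi}) > \tfrac{n}{n-1}(\hat\theta - \pi/2)$, and for $\hat\theta \geq \bigl((n-2)+2/n\bigr)\pi/2$ the right-hand side is $\geq (n-2)\pi/2$ because $n^2 - 3n + 2 = (n-1)(n-2)$. That is precisely the content of Remark~\hyperlink{R:1.1}{1.1}; once established, it guarantees the right-hand side $h_t$ of your continuity path stays strictly above $(n-2)\pi/2$ for all $t \in [0,1]$, the first-bullet argument takes over, and your proposal becomes a faithful reconstruction of the Collins--Jacob--Yau proof.
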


\hypertarget{R:1.1}{\begin{frmk}[Collins--Jacob--Yau \cite{collins20151}]}
If $\hat{\theta} \geq \bigl( n-2 +  {2}/{n} \bigr) {\pi}/{2}$, then any $C$-subsolution becomes supercritical, that is, $\Theta_\omega ( \bullet ) > (n-2)\pi/2$. So if there exists a $C$-subsolution, then there exists a unique smooth $(1,1)$-form $\chi \in [\chi_0]$ solving the dHYM equation (\ref{eq:1.3}).
\end{frmk}From the above remark, Collins--Jacob--Yau conjectured that their existence result can be improved when the topological constant $\hat{\theta}$ is close to the supercritical phase.
\hypertarget{C:1.1}{\begin{fconj}[Collins--Jacob--Yau \cite{collins20151}]}
Theorem~\hyperlink{T:1.1}{1.1} can be improved when  
\begin{align*}
\label{eq:1.4}
\hat{\theta} \in \Bigl( \bigl(n-2\bigr) {\pi}/{2}, \bigl( (n-2) + {2}/{n}    \bigr)  {\pi}/{2}  \Bigr). \tag{1.4}
\end{align*}
\end{fconj}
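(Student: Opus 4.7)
The strategy is the standard method of continuity: deform the right-hand side $\hat{\theta}$ (together with $[\chi_0]$ if necessary) along a one-parameter family that starts from an obviously solvable reference equation and ends at the target dHYM equation, keeping a $C$-subsolution along the way. Openness is routine from the implicit function theorem, so the entire problem reduces to closedness, i.e.\ to uniform $C^0$, $C^1$, and $C^2$ estimates depending only on background data and on the existence of a (not necessarily supercritical) $C$-subsolution $\ubar{\chi}$.

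For the $C^0$ and $C^1$ bounds, the framework of Székelyhidi~\cite{szekelyhidi2018fully} used already in Theorem~\hyperlink{T:1.1}{1.1} applies essentially unchanged, since it does not require supercriticality. The real obstacle is the second-order estimate. In the proof of Theorem~\hyperlink{T:1.1}{1.1}, supercriticality enters at one precise point: when one differentiates the equation twice and applies the maximum principle to a test function such as $e^{\phi(u)}\lambda_1(\chi)$, an a priori uncontrolled third-order term appears, and it is absorbed using the concavity of $\arctan$ on the positive cone, which is available because every eigenvalue of a supercritical $C$-subsolution satisfies $\lambda_i > 0$. When $\hat\theta$ lies only slightly above $(n-2)\pi/2$, some eigenvalues of the solution can tend to $-\infty$, and the textbook concavity estimate fails; this is the genuine difficulty the conjecture confronts.

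The plan is to replace that concavity argument by a sharper algebraic inequality for $F(\lambda)=\sum_i \arctan \lambda_i$, tailored to the range in the conjecture. Concretely, for $n=4$ and $\hat\theta$ close to $\pi$ from above, I would try to prove an inequality of Positivstellensatz type: there exist functions $P_p(\lambda)$ and $Q_{pq}(\lambda)$, nonnegative on the semi-algebraic cone cut out by the $C$-subsolution condition at phase $\hat\theta$, such that
\[
-\sum_{p,q} \frac{\partial^2 F}{\partial \lambda_p \partial \lambda_q}\,\eta_p \eta_q \;+\; \sum_p P_p(\lambda)\,\eta_p^2 \;\geq\; \sum_{p,q} Q_{pq}(\lambda)\, \eta_p \eta_q
\]
for every $\eta\in\mathbb{R}^4$, with the right-hand side strong enough to absorb the bad third-order term in the maximum-principle computation. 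Clearing denominators via $\arctan \lambda_i = \arg(1+\sqrt{-1}\lambda_i)$, this reduces to a polynomial inequality in $\lambda_1,\ldots,\lambda_4$ on a semi-algebraic set, which is precisely the setting of the Positivstellensatz: one seeks an explicit representation of the difference as a weighted sum of squares plus products of the inequalities defining the cone. The hard part is twofold: first, identifying the correct cone (it must contain the eigenvalues of every admissible $C$-subsolution at phase $\hat\theta$, and degenerate continuously to the supercritical cone as $\hat\theta \uparrow (n-2+2/n)\pi/2$); second, producing a closed-form SOS-type certificate valid uniformly in $\hat\theta$. This is why restricting to $n=4$ and $\hat\theta$ near $\pi$ is natural, as both the number of variables and the number of inequalities defining the cone are then small enough for an explicit certificate to be feasible.

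With that inequality in hand, the $C^2$ estimate is pushed through by a now-standard maximum-principle argument on $e^{\phi(u)}\lambda_1(\chi)$ following \cite{szekelyhidi2018fully, collins20151}, with the Positivstellensatz correction playing the role that supercriticality played in Theorem~\hyperlink{T:1.1}{1.1}. Higher regularity then follows from Evans--Krylov applied to the equation restricted to the concavity cone of $F$, together with Schauder bootstrapping, and closedness in the continuity method is established. The Positivstellensatz step is the essential novelty and the main obstacle; everything else is an adaptation of Collins--Jacob--Yau's arguments, replacing the concavity reserve of the supercritical case by the algebraic reserve furnished by the polynomial certificate.
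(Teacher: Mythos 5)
You have correctly identified the two pillars --- a continuity method reducing everything to a priori estimates, and a Positivstellensatz-type algebraic input replacing the role of supercriticality in the $C^2$ estimate --- but the concrete plan you sketch would not run, and the missing pieces are precisely the paper's core contributions. Deforming $\hat{\theta}$ (or $[\chi_0]$) in place does not produce a solvable reference equation: $\hat{\theta}$ is a topological constant determined by $[\omega]$ and $[\chi_0]$, so there is no one-parameter family of dHYM problems on the fixed classes joining the given one to a supercritical one. The paper instead makes the shift $X := \chi + \cot(\hat{\theta})\,\omega$ and multiplies by $\sin^2(\hat{\theta})$ to rewrite the $n=4$ dHYM equation as the polynomial identity $\sin^2(\hat{\theta})\,X^4 - 6\,\omega^2\wedge X^2 + 8\cot(\hat{\theta})\,\omega^3\wedge X - (3\csc^2(\hat{\theta})-4)\,\omega^4 = 0$, and then deforms the \emph{coefficients} along a path $(c_2(t), c_1(t), c_0(t))$ subject to a topological constraint keeping the total integral zero; at $t=0$ the equation degenerates to a generalized inverse $\sigma_k$ equation with nonnegative coefficients, solvable by Collins--Székelyhidi. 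This reformulation is absent from your proposal and is what makes the continuity path land somewhere solvable.

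Second, even with a good reference equation you must ensure the (possibly non-supercritical) $C$-subsolution persists along the path, and your sketch does not address this. The paper introduces a hierarchy of $\Upsilon$-cones (iterated projections of the $C$-subsolution cone, Definitions 2.1 and 2.4) and imposes ``$\Upsilon$-cone constraints'' on the $c_i(t)$ forcing these cones to shrink monotonically as $t$ increases, so that a $C$-subsolution for the target equation remains one for every intermediate equation. Third, the Positivstellensatz actually proved (Theorem 2.1) is not an SOS certificate for $\Hess(\sum_i \arctan\lambda_i)$: after the shift the $\arctan$'s are gone, and the content is a collection of cone-containment and connected-component statements for semialgebraic sets cut out by inequalities in elementary symmetric polynomials of the eigenvalues of $\omega^{-1}X_u$. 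These containments give that each $F_t$ is elliptic on the correct branch and that the level set $\{f_t = h\}$ is convex there (Lemmas 4.3--4.4, 4.12--4.13), and it is this convexity that renders the Hessian term $\sum_{i,j} f_{ij}(X_u)_{i\bar{i},\bar{k}}(X_u)_{j\bar{j},k}$ nonnegative in the maximum-principle computation. Your ``absorb the bad third-order term via a weighted-SOS correction'' gestures at the right phenomenon, but without the cotangent shift, the $\Upsilon$-cone tracking, and the explicit cone-containment theorem --- together with the final matching step (Theorems 1.3--1.4) showing that a $C$-subsolution forces the numerical condition defining $\Omega^{4,\hat{\theta}}_\ell$ --- the estimates cannot close.
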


In this work, we will focus on this range (\ref{eq:1.4}). We always assume that $\hat{\theta} \in \bigl(  (n-2 ) {\pi}/{2},  ( (n-2) + {2}/{n}     )  {\pi}/{2}  \bigr)$ for the rest of this paper. We confirm this Conjecture~\hyperlink{C:1.1}{1.1} when the complex dimension equals three or four. The main purpose of this work is to prove that the existence of a $C$-subsolution (not necessarily a supercritical $C$-subsolution) will lead us to the solvability of the dHYM equation (\ref{eq:1.1}), when the topological constant $\hat{\theta}$ is close to the supercritical phase. When complex dimension equals three, Pingali \cite{pingali2019deformed} proved that if there exists a $C$-subsolution, then the dHYM equation is solvable. He had a nice observation finding a continuity path connecting the dHYM equation to the complex Hessian equation and he showed that the $C$-subsolution condition will be maintained. Thus if one has a priori estimates along the continuity path, then by the result of Fang--Lai--Ma \cite{fang2011class} on complex Hessian equation, the dHYM equation is solvable. We use some real algebraic geometry techniques to approach this Conjecture~\hyperlink{C:1.1}{1.1}. Moreover, we will show the solvability of the dHYM equation (\ref{eq:1.1}) when the complex dimension equals four. \bigskip

\begin{figure}
\centering
\begin{tikzpicture}[scale=0.7]

\path[draw,name path=border1] (0,0) to[out=-10,in=150] (6,-2);
\path[draw,name path=border2] (12,1) to[out=150,in=-10] (5.5,2.6);
\draw[draw,name path=line1] (6,-2) -- (12,1);
\path[draw,name path=line2] (5.5,2.6) -- (0,0);
\shade[left color=gray!10,right color=gray!70] 
  (0,0) to[out=-10,in=150] (6,-2) -- 
  (12,1) to[out=150,in=-10] (5.5,2.6) -- cycle;

\draw[->,black] (1.7,3.4) -- (2.9,3.8);
\draw[->,black] (2,3.2) -- (2,4.9);
\draw[color=red,variable=\t,domain=-pi/2+1.04:pi/2-0.24,samples=100]    plot ({2 + 9*tan(\t r)/40  },{ 3.5 + 7*tan( (pi/4 - \t) r)/20 + 3*tan(\t r)/40 });
\filldraw [red!50,opacity=0.3] (2.25,3.23) -- (71/40 +1.2,123/40 +0.4) --  (71/40 +1.2,123/40+0.4*2+1.4) -- (71/40 ,123/40+0.4+1.4) -- (71/40,3.74) --cycle ;
\draw[dashed,thin] (2.25,0.8) -- (2.25,3.2);
\filldraw (2.25,0.8) circle (1pt) node[anchor=north,font = \tiny] {$p_1$};

\pgfsetplottension{0.8}
\pgfsetlinewidth{0.8pt}
\pgfplothandlercurveto
\pgfplotstreamstart
\pgfplotstreampoint{\pgfpoint{2cm}{3.3cm}} 
\pgfplotstreampoint{\pgfpoint{6.5cm}{4.7cm}} 
\pgfplotstreampoint{\pgfpoint{9.3cm}{2.8cm}} 
\pgfplotstreamend
\color{green!60}
\pgfusepath{stroke}

\pgfsetplottension{0.75}
\pgfsetlinewidth{0.8pt}
\pgfplothandlercurveto
\pgfplotstreamstart
\pgfplotstreampoint{\pgfpoint{1.8cm}{4.2cm}} 
\pgfplotstreampoint{\pgfpoint{6.1cm}{4.9cm}} 
\pgfplotstreampoint{\pgfpoint{9cm}{2.5cm}} 
\pgfplotstreamend
\color{red!60}
\pgfusepath{stroke}

\color{black}
\pgfsetlinewidth{0.4pt}

\draw[black!60] (2.25,0.8) .. 	controls (4.4,1.4) .. (6.25,1) 
				..	controls (7.8,0.4) ..  (8.75,0.6);

\draw[->,black] (5.7,3.9) -- (6.9,4.3);
\draw[->,black] (6,3.7) -- (6,5.4);
\draw[color=red,variable=\t,domain=0.3:pi/2-0.24,samples=100]    plot ({6 + 9*tan(\t r)/40  },{ 4 + 7*tan( (pi/2 - \t) r)/20 + 3*tan(\t r)/40 });
\filldraw [red!50,opacity=0.3] (6,4) -- (6 +1.2*0.8,4 +0.4*0.8) --  (6 +1.2*0.8,4+0.4*2*0.8+1.4*0.7) -- (6 ,4+0.4*0.8+1.4*0.7) -- cycle ;
\draw[dashed,thin] (6.25,1) -- (6.25,4);
\filldraw (6.25,1) circle (1pt) node[anchor=north,font = \tiny] {$p_2$};

\draw[->,black] (8.2,2.4) -- (9.4,2.8);
\draw[->,black] (8.5,2.2) -- (8.5,3.9);
\draw[color=red,variable=\t,domain=-pi/2+1.2:pi/2-0.24,samples=100]    plot ({8.5 + 9*tan(\t r)/40  },{ 2.5 + 7*tan( (3*pi/10 - \t) r)/20 + 3*tan(\t r)/40 });
\filldraw [red!50,opacity=0.3] {(8.65,2.31) -- (8.5-9*0.73/40 + 1.2,2.5-3*0.73/40-7*0.73/20 + 0.4)-- (8.5-9*0.73/40 +1.2,2.5-3*0.73/40-7*0.73/20 + 0.4*2 +1.4*0.85) -- (8.5-9*0.73/40,2.5-3*0.73/40-7*0.73/20+0.4+1.4*0.85) -- (8.5-9*0.73/40,2.7) } --cycle ;
\draw[dashed,thin] (8.75,0.6) -- (8.75,2.2);
\filldraw (8.75,0.6) circle (1pt) node[anchor=north,font = \tiny] {$p_3$};

\filldraw[red!60!black] (1.92,4.25) circle (1pt) ; 
\filldraw[red!60!black] (6.1,4.9) circle (1pt) ; 
\filldraw[red!60!black] (8.94,2.58) circle (1pt) ; 

\filldraw[green!60!black] (2.3,3.44) circle (1pt) ; 
\filldraw[green!60!black] (6.4,4.71) circle (1pt) ; 
\filldraw[green!60!black] (9.16,2.98) circle (1pt) ; 

\end{tikzpicture}
\caption{The solution and a supercritical $C$-subsolution}
\label{fig:1.1}
\end{figure}
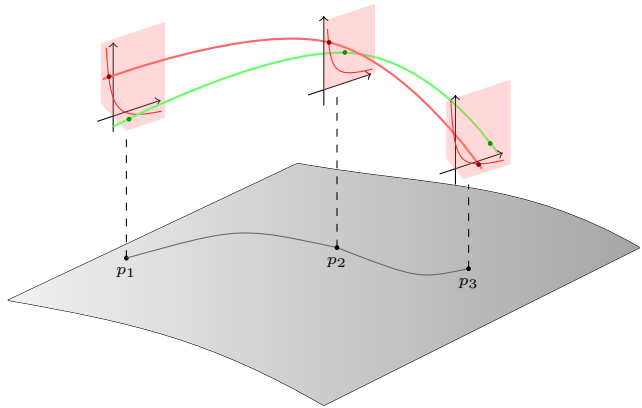


We will sketch the ideas and proofs here. First, we start with the classical result. In 1978, by studying the complex Monge--Ampère equation, Shing-Tung~Yau \cite{yau1978ricci} resolved the Calabi conjecture \cite{calabi1954kahler, calabi1957kahler}, which had been posed by Eugenio~Calabi in 1954. This celebrated method by Yau is well-known nowadays, which is called the continuity method. The idea is to find a path connecting the unsolved equation to a well-understood solvable equation. And on this path, if we have a priori estimates, then we can apply the Arzelà--Ascoli theorem to get a smooth solution to the equation. The continuity method is widely used to solve many famous equations now, for example the complex Hessian equation, the $J$-equation, the general inverse $\sigma_k$ equation, etc. Since we mainly focus on the dHYM equation in this work and because the space limitations, the interested reader is referred to \cite{chen2021j, collins2017convergence, fang2013convergence, fang2011class, lejmi2015j} and the references therein. To get a priori estimates for these equations, Székelyhidi \cite{szekelyhidi2018fully} introduced a notion of $C$-subsolution (see Definition~\hyperlink{D:2.2}{2.2}), the existence of a $C$-subsolution will lead us to the solvability of many equations, for example, the complex Monge--Ampère equation, the complex Hessian equation, the $J$-equation, the general inverse $\sigma_k$ equation, etc. But for the dHYM equation, till now, if a $C$-subsolution is not strong enough, then we cannot derive nice a priori estimates from this weak $C$-subsolution (see Collins--Jacob--Yau \cite{collins20151} for the Kähler case and Lin \cite{lin2020} for the Hermitian case). This is the reason why Collins--Jacob--Yau assumed the $C$-subsolution to be supercritical in Theorem~\hyperlink{T:1.1}{1.1}; see above Figure~\ref{fig:1.1} for a graphical illustration. In Figure~\ref{fig:1.1}, first, the red curve on each plane attached to a point is the solution set of the dHYM equation at this point, the section passing through the red curves is the solution to the dHYM equation. The pink shaded region attached to each point, is the strong enough $C$-subsolutions region for the complex dimension two dHYM equation, that we previously knew can provide us a priori estimates. Last, the green section is a supercritical $C$-subsolution on the manifold. \bigskip

In this work, for complex dimension three and four, even when a $C$-subsolution is not supercritical, we can still find a special continuity path and a priori estimates such that the dHYM equation (\ref{eq:1.1}) is solvable. Let us state some of our settings and results now.\bigskip


If the complex dimension $n = 4$, then we are interested in the case $\hat{\theta} \in   (   \pi, 5\pi/4     )$. By specifying the branch, the dHYM equation (\ref{eq:1.1}) becomes 
\begin{align*}
\label{eq:1.5}
\ct \cdot \Im \bigl (  \omega + \sqrt{-1} \chi \bigr )^4 &=   \Re \bigl ( \omega + \sqrt{-1} \chi \bigr )^4, \tag{1.5}
\end{align*}that is, 
\begin{align*}
\label{eq:1.6}
\chi^4 + 4 \ct \omega \wedge \chi^3 - 6 \omega^2 \wedge \chi^2 -4 \ct \omega^3 \wedge \chi + \omega^4 = 0. \tag{1.6}
\end{align*}

By doing the following substitution: $X \coloneqq \chi + \ct \omega$, we get
\begin{align*}
\label{eq:1.7}
\sii X^4 - 6   \omega^2 \wedge X^2 + 8 \ct   \omega^3 \wedge X - \cc   \omega^4 = 0. \tag{1.7}
\end{align*} By our assumption that $\hat{\theta} \in   (   \pi, 5\pi/4     )$, thus $\cot   (   \hat{\theta}    ) < 0$. Otherwise if $\cot   (   \hat{\theta}    ) \geq 0$ and $3 \cs -4 \geq 0$, then equation (\ref{eq:1.7}) becomes a general inverse $\sigma_k$ equation with non-negative coefficients. This is solvable by the result of Collins--Székelyhidi \cite{collins2017convergence}. Now, we consider the following continuity path:
\begin{align*}
\label{eq:1.8}
\sii X^4 - 6 c_2(t)   \omega^2 \wedge X^2 + 8 c_1(t) \ct   \omega^3 \wedge X - c_0(t) \cc   \omega^4 = 0. \tag{1.8}
\end{align*}

Suppose the triple $\bigl(c_2(t), c_1(t), c_0(t)\bigr)$ satisfies the following \hypertarget{cons}{constraints} for all $t \in [0, 1]$:

\begin{enumerate}[leftmargin=4.5cm]
	\setlength\itemsep{+0.4em}
\item[Topological constraint:] $\sii \Omega_0 - 6c_2(t) \Omega_2 + 8c_1(t) \ct \Omega_3 - c_0(t) \cc   \Omega_4 =0$.
\item[Boundary constraints:] $c_2(1) = c_1(1) = c_0(1) = 1;\quad c_2(0)>0; \quad c_1(0) = 0$.
\item[Positivstellensatz constraint:] $\cc c_0(t) >  -24   {c_2^2(t) \cs \cos^2(\theta_{c_1, c_2})\cos(2\theta_{c_1, c_2})} $.
\item[$\Upsilon$-cone constraints:] $\frac{d}{dt} \bigl( c_2^{3/2}(t) \bigr) \geq -\cos(\hat{\theta}) c_1'(t); \quad c_2'(t) > 0; \quad c_1'(t) > 0$.
\end{enumerate}Here we denote $\Omega_i \coloneqq \int_M \omega^i \wedge X^{4-i}$ and we define $\theta_{c_1, c_2} \coloneqq  \arccos \bigl(   { c_1(t) \coo}/{c_2^{3/2}(t)}  \bigr)\big/ 3 -  {2\pi}/{3}$, where we specify the branch so that $\arccos \bigl(   {c_1(t) \cos(\hat{\theta})}/{c_2^{3/2}(t)}  \bigr) \in   \bigl( \pi, 3\pi/2 \bigr]$. Then we have the following a priori estimates.

\hypertarget{T:1.2}{\begin{fthm}[A Priori Estimates]}
Suppose $X$ is a $C$-subsolution to equation (\ref{eq:1.7}) and $u \colon M  \rightarrow \mathbb{R}$ is a solution to equation (\ref{eq:1.7}), then for every $\alpha \in (0, 1)$, we have
\begin{align*}
\| \partial \bar{\partial} u \|_{C^{2, \alpha(M)}} \leq C \bigl(M, X, \omega, \alpha, \hat{\theta}, c_0, c_1, c_2  \bigr ).
\end{align*}
\end{fthm}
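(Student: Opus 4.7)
The plan is to follow the now-standard continuity-method / maximum-principle pipeline for fully nonlinear elliptic PDEs on compact Kähler manifolds, developed by Yau \cite{yau1978ricci} and formalized by Székelyhidi \cite{szekelyhidi2018fully} and Collins--Jacob--Yau \cite{collins20151}. After the substitution $X = \chi + \ct \omega$ and division by $\omega^4$, equation (\ref{eq:1.8}) reads $F_t(\lambda_1,\ldots,\lambda_4) = h_t$, where the $\lambda_i$ are the eigenvalues of the Hermitian endomorphism $\omega^{-1}X$ and $F_t$ is a symmetric quartic polynomial whose coefficients are determined by $\bigl(c_0(t), c_1(t), c_2(t), \hat\theta\bigr)$. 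The goal is to produce uniform $C^0$, $C^1$, and $C^2$ estimates on $u$, and then appeal to the Evans--Krylov and Schauder theories for the claimed $C^{2,\alpha}$ bound on $\partial\bar\partial u$.

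The first and crucial step is to identify an open symmetric convex cone $\Gamma_t \subset \mathbb{R}^4$ on which $F_t$ is elliptic and concave, and which contains the eigenvalues of the $C$-subsolution. The $\Upsilon$-cone constraints are designed precisely to force $\Gamma_t$ to vary monotonically in $t$ and to guarantee that the subsolution property is preserved along the continuity path. The Positivstellensatz constraint
\[
\cc c_0(t) \;>\; -24\, c_2^2(t)\, \cs\, \cos^2(\theta_{c_1,c_2})\cos(2\theta_{c_1,c_2})
\]
functions as a real-algebraic certificate: it provides an explicit signed factorization of the quartic $F_t - h_t$ that witnesses concavity of the appropriate branch on $\Gamma_t$ and rules out the degenerate behavior that would otherwise occur in the subcritical phase without substitution. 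Once $\Gamma_t$ is in hand, the $C^0$ estimate $\osc_M u \leq C$ follows from the Alexandrov--Bakelman--Pucci argument of Székelyhidi \cite{szekelyhidi2018fully}, adapted to the cone $\Gamma_t$, and uses only the existence of a $C$-subsolution.

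The heart of the matter is the second-order estimate, which is the step that forced the supercritical hypothesis in Theorem~\hyperlink{T:1.1}{1.1}. Although $\Theta_\omega$ itself is not concave in the subcritical phase, the reparametrized operator $F_t$ is concave on $\Gamma_t$ by the Positivstellensatz constraint. I would then run a maximum-principle argument on a test function of the form
\[
H \;=\; \log \lambda_1(\omega^{-1}X) + \phi\bigl(|\nabla u|^2\bigr) + A\bigl(\ubar{u} - u\bigr),
\]
where $\phi$ is an appropriate increasing concave function and $A$ is a large constant. The $C$-subsolution hypothesis yields the crucial positive lower bound $\sum_i F_t^{ii}(\mu_i - \lambda_i) \geq \delta > 0$ away from a fixed neighborhood of $\partial\Gamma_t$ (with $\mu_i$ the eigenvalues of the subsolution), which absorbs the commutator terms produced by covariant differentiation and yields a bound of the form $|\partial\bar\partial u| \leq C\bigl(1 + \sup_M |\nabla u|^2\bigr)$. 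A standard blow-up argument on a rescaled sequence then improves this to $|\nabla u| \leq C$, and hence to a full $C^2$ bound.

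With a uniform $C^2$ estimate and the concavity of $F_t$ on $\Gamma_t$, the Evans--Krylov theorem provides an interior $C^{2,\alpha}$ bound on $u$, and the stated estimate on $\|\partial\bar\partial u\|_{C^{2,\alpha}(M)}$ then follows from Schauder bootstrapping. I expect the main obstacle to be verifying that the Positivstellensatz and the $\Upsilon$-cone constraints together genuinely place $(\lambda_1,\ldots,\lambda_4)$ inside a cone of ellipticity and concavity for $F_t$, uniformly in $t \in [0,1]$; the derivation of the Positivstellensatz certificate itself is the real-algebraic input that goes beyond the supercritical-phase theory and is the technical novelty flagged in the paper's title.
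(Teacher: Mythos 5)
Your high-level pipeline is essentially correct --- substitution, $C^0$ via ABP, $C^2$ via a $\log\lambda_1$-type maximum-principle test function, blow-up for $C^1$, Evans--Krylov plus Schauder for $C^{2,\alpha}$ --- and you correctly identify the Positivstellensatz and $\Upsilon$-cone constraints as the structural inputs that replace the supercritical hypothesis. But there is a genuine conceptual gap in the middle: you assert that ``the reparametrized operator $F_t$ is concave on $\Gamma_t$ by the Positivstellensatz constraint.'' This is not what the paper proves, and it is not true in general. The operator $f_t = (c_2\sigma_2 - 2c_1\cot(\hat\theta)\sigma_1 + c_0(3\csc^2\hat\theta - 4))/\sigma_4$ is a ratio of symmetric polynomials and is \emph{not} concave on the relevant cone; indeed the paper's Remark after Definition~\hyperlink{D:2.2}{2.2} explicitly drops the usual structural assumption that $f$ be concave (or convex). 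What Lemma~\hyperlink{L:4.13}{4.13} actually proves is that the \emph{level set} $\{f_t = h\}$ is a convex hypersurface, i.e.\ the second fundamental form $\sum_{i,j} f_{ij} V_i \bar V_j$ is non-negative on tangent vectors $V$ with $\sum_i f_i V_i = 0$. Convexity of the level set is strictly weaker than concavity of the operator, and establishing it requires the delicate real-algebraic computation (the $r_1(\lambda)$, $r_2(\lambda)$ discriminant analysis feeding into inequality (\ref{eq:4.33})) that is the paper's actual technical contribution. If $F_t$ were simply concave, the whole Positivstellensatz / $\Upsilon$-cone machinery would be redundant and the standard Collins--Sz\'ekelyhidi theory would suffice.

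This distinction propagates into the $C^{2,\alpha}$ step as well. You invoke Evans--Krylov as if operator concavity were available; the paper instead uses the level-set convexity to get a supporting-hyperplane inequality for the restriction of $\tilde F_{t,\tilde x}$ to the solution surface (the tangent plane at $\partial\bar\partial u(y)$ supports the convex set $\{\tilde F_{t,\tilde x} < h\}$), which is exactly what feeds the Krylov--Safonov weak Harnack step in Lemma~\hyperlink{L:4.17}{4.17} and the Liouville-type Proposition~\hyperlink{P:4.3}{4.3}. Two further, more minor, deviations: (i) your test function $H = \log\lambda_1 + \phi(|\nabla u|^2) + A(\ubar u - u)$ carries a gradient term in the Hou--Ma--Wu/Sz\'ekelyhidi style, whereas the paper's test function $U = -Au + \log(1+\lambda_1)$ has no gradient term; the gradient term turns out to be unnecessary here because of the strong structure encoded in Lemmas~\hyperlink{L:4.15}{4.15} and~\hyperlink{L:4.16}{4.16}. (ii) The coercivity input you cite as $\sum_i F_t^{ii}(\mu_i - \lambda_i) \geq \delta$ is realized in the paper as the pair of inequalities $\sum_i f_i u_{i\bar i} \geq -\kappa\sum_i f_i$ (Lemma~\hyperlink{L:4.15}{4.15}) and $-f_2-f_3-f_4 \geq \tilde{\mathcal F}_0 > 0$ (Lemma~\hyperlink{L:4.16}{4.16}), whose proofs use the $C$-subsolution cone description of Lemma~\hyperlink{L:3.5}{3.5} in a manner specific to the quartic structure. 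In short: replace ``$F_t$ is concave'' by ``the level set $\{f_t = h\}$ is convex'' throughout, and recognize that verifying this convexity --- rather than concavity of the operator --- is where the Positivstellensatz actually bites.
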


\hypertarget{R:1.2}{\begin{frmk}}
Here, let us emphasize that this $C$-subsolution need not to be supercritical. 
\end{frmk}

With these a priori estimates, we can indeed find a triple $\bigl(c_2(t), c_1(t), c_0(t)\bigr)$ satisfies these \hyperlink{cons}{4-dimensional four constraints}, that is the following. 

\hypertarget{T:1.3}{\begin{fthm}} If $ ( \Omega_2, \Omega_3, \Omega_4   ) \in \Omega^{4, \hat{\theta}}_{\ell}$, then the following triple will satisfy all \hyperlink{cons}{4-dimensional four constraints}
\begin{align*}
c_{2, \ell} (t) \coloneqq \bigl[ 1 + (1-t) \ell   \cos (\hat{\theta} )   \bigr ]^{2/3};\  c_1(t) \coloneqq t; \  c_{0, \ell}(t) \coloneqq \frac{ \sii  \Omega_0 -  6c_{2, \ell}(t)      \Omega_2 + 8c_1(t) \ct     \Omega_3}{\cc    \Omega_4}.
\end{align*}Here $\ell \in \bigl [ 1, -\sco \bigr)$ is a constant, $\Omega_i \coloneqq \int_M \omega^i \wedge X^{4-i}$, and  
\begin{align*}
\label{eq:1.9}
\Omega^{4, \hat{\theta}}_{\ell} \coloneqq \Bigl \{     \Omega_3 <   \inf_{t \in [0, 1) } \frac{3(1- c_{2, \ell} (t))\ta}{4(1-t)} \Omega_2 + \frac{  \tilde{c}_{0, \ell} (t) \ta}{8(1-t)} \Omega_4  \Bigr \}, \tag{1.9} 
\end{align*}where we define $\tilde{c}_{0, \ell} (t) \coloneqq 3 \cs -4 +  24   {c_{2, \ell}^2 \cs \cos^2(\theta_{c_1, c_{2, \ell}})\cos(2\theta_{c_1, c_{2, \ell}})}$. Moreover, $\tilde{c}_{0, \ell}(t)$ is a positive function.
\end{fthm}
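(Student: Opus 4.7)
The plan is to verify each of the four families of constraints together with the positivity claim on $\tilde{c}_{0,\ell}(t)$ by direct substitution, with the hypothesis $(\Omega_2,\Omega_3,\Omega_4) \in \Omega^{4,\hat{\theta}}_\ell$ entering only at the Positivstellensatz step. The topological constraint holds tautologically, since $c_{0,\ell}(t)$ is defined precisely so that it holds. The boundary conditions $c_1(0)=0$, $c_1(1)=c_{2,\ell}(1)=1$, $c_{2,\ell}(0)=[1+\ell\coo]^{2/3}>0$ (using $\ell<-\sco$), and $c_{0,\ell}(1)=1$ (using the cohomological identity $\sii\,\Omega_0 - 6\Omega_2 + 8\ct\,\Omega_3 = \cc\,\Omega_4$ obtained by integrating equation (1.7) over $M$) are immediate from the formulas. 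For the $\Upsilon$-cone constraints, since $c_{2,\ell}^{3/2}(t) = 1+(1-t)\ell\coo$, one has $\tfrac{d}{dt}(c_{2,\ell}^{3/2}) = -\ell\coo$, and the comparison against $-\coo\cdot c_1'(t)=-\coo$ reduces to $(\ell-1)(-\coo)\geq 0$, which holds because $\ell\geq 1$ and $-\coo>0$; positivity of $c_1'$ and $c_{2,\ell}'$ are then direct.

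The central step is the Positivstellensatz. I would substitute the definition of $c_{0,\ell}(t)$, multiply through by $\Omega_4>0$, and use the cohomological identity to eliminate $\sii\,\Omega_0$, reducing the constraint to
\[
6\bigl(1-c_{2,\ell}(t)\bigr)\Omega_2 - 8(1-t)\ct\,\Omega_3 + \tilde{c}_{0,\ell}(t)\,\Omega_4 > 0.
\]
Since $\ct>0$ on $(\pi,5\pi/4)$, dividing by $8(1-t)\ct$ preserves the inequality and gives exactly the pointwise bound cut out by the definition of $\Omega^{4,\hat{\theta}}_\ell$; taking the infimum over $t\in[0,1)$ recovers the hypothesis. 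The boundary $t=1$ is handled separately: $\tfrac{1-c_{2,\ell}(t)}{1-t}\to -\tfrac{2}{3}\ell\coo$ shows the coefficients in the definition of $\Omega^{4,\hat{\theta}}_\ell$ stay finite there, and the Positivstellensatz at $t=1$ collapses to $\tilde{c}_{0,\ell}(1)>0$, i.e.\ the positivity claim below.

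For positivity of $\tilde{c}_{0,\ell}(t) = \cc + 24c_{2,\ell}^2(t)\cs\cos^2(\theta_{c_1,c_{2,\ell}})\cos(2\theta_{c_1,c_{2,\ell}})$, I would first pin down the range of $\theta_{c_1,c_{2,\ell}}(t)$ using monotonicity in $t$ of the argument $r(t) = c_1(t)\coo/c_{2,\ell}^{3/2}(t)$ together with its endpoint values. The chosen branch of arccos is designed so that $\cos(3\theta_{c_1,c_{2,\ell}}) = r(t)$; this triple-angle identity lets one rewrite $\cos^2\theta\cos(2\theta)$ as a rational expression in $r(t)$, so positivity of $\tilde{c}_{0,\ell}$ reduces to an algebraic inequality in $\coo$, $c_{2,\ell}$, and $\ell$. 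This inequality can be verified directly on $\hat{\theta}\in(\pi,5\pi/4)$, where in particular $\cc>0$ because $\cs>2$.

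The step I anticipate as the main obstacle is the last one. The naive trigonometric bound $\cos^2\theta\cos(2\theta)\geq -1/8$ is too weak, since it degenerates at $t=1$ (where $c_{2,\ell}=1$) and does not exploit the coupling between $\theta_{c_1,c_{2,\ell}}$ and $\hat{\theta}$ enforced by the arccos branch. Converting positivity into a bona fide algebraic inequality via the triple-angle identity $\cos(3\theta)=r(t)$ is where the Positivstellensatz philosophy alluded to in the paper's title enters, and where the specific algebraic form $c_{2,\ell}^{3/2}(t) = 1+(1-t)\ell\coo$ of the continuity path (rather than, say, an affine interpolation) becomes essential.
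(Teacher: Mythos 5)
Your handling of the first four constraints coincides with the paper's and is correct: the topological constraint is tautological, the boundary and $\Upsilon$-cone computations are exactly as you say, and the reduction of the Positivstellensatz constraint via the cohomological identity $\sii\,\Omega_0 = 6\Omega_2 - 8\ct\,\Omega_3 + \cc\,\Omega_4$ to
\[
6\bigl(1-c_{2,\ell}(t)\bigr)\Omega_2 - 8(1-t)\ct\,\Omega_3 + \tilde{c}_{0,\ell}(t)\,\Omega_4 > 0,
\]
followed by division by $8(1-t)\ct>0$ for $t\in[0,1)$, reproduces the defining inequality of $\Omega^{4,\hat\theta}_\ell$; the $t=1$ case indeed collapses to $\tilde{c}_{0,\ell}(1)>0$.

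However, the last step --- positivity of $\tilde{c}_{0,\ell}(t)$ on $[0,1]$ --- is where the proposal has a genuine gap, and you correctly identify it as the obstacle. Two issues. First, the proposed algebraic route does not work as stated: $\cos^2\theta\cos(2\theta)$ is \emph{not} a rational function of $r(t)=\cos(3\theta_{c_1,c_{2,\ell}})$. The relation $\cos(3\theta)=4\cos^3\theta-3\cos\theta$ is a cubic in $\cos\theta$, so $\cos\theta$ is merely algebraic over $\mathbb{R}(r)$; there is no rational formula to substitute. Second, even granting the intention (work with $\cos\theta$ as a cubic root of $r(t)$), no explicit inequality is produced or bounded, and the closing remark that ``$\cc>0$ because $\cs > 2$'' does not help since the term $24c_{2,\ell}^2\cs\cos^2\theta\cos(2\theta)$ can be as negative as $-3\cs$ near $t=1$, overwhelming $\cc = 3\cs-4$.

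The paper's actual argument is calculus in $t$ rather than algebra in $r$. One computes
\[
\tilde{c}_{0,\ell}'(t) = 16\,c_{2,\ell}^{1/2}(t)\,\coo\,\cs\,\cos(\theta_{c_1,c_{2,\ell}})\,\bigl(1-\ell\cos(\theta_{c_1,c_{2,\ell}})\bigr),
\]
so the sign of $\tilde{c}_{0,\ell}'(t)$ is opposite to that of $1-\ell\cos(\theta_{c_1,c_{2,\ell}})$. Since $\theta_{c_1,c_{2,\ell}}$ decreases from $-\pi/6$ at $t=0$ to $\theta_{1,1}=\hat\theta/3-2\pi/3\in(-\pi/3,-\pi/4)$ at $t=1$, the function $\cos(\theta_{c_1,c_{2,\ell}})$ is strictly decreasing, hence $1-\ell\cos(\theta_{c_1,c_{2,\ell}})$ is strictly increasing and changes sign at most once from negative to positive. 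Consequently $\tilde{c}_{0,\ell}$ is unimodal (increasing then decreasing, or simply decreasing), and its minimum on $[0,1]$ is attained at an endpoint. At $t=0$, $\theta=-\pi/6$ gives $\cos^2\theta\cos(2\theta)=3/8$, so $\tilde{c}_{0,\ell}(0)=\cc+9c_{2,\ell}^2(0)\cs>0$. At $t=1$, one needs $3-4\sin^2(3\theta_{1,1})+24\cos^2(\theta_{1,1})\cos(2\theta_{1,1})>0$ on $\theta_{1,1}\in(-\pi/3,-\pi/4)$, which the paper verifies by showing this quantity is strictly increasing with infimum $0$ at $\theta_{1,1}=-\pi/3$. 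Your intuition that the branch of $\arccos$ couples $\theta_{c_1,c_{2,\ell}}$ to $\hat\theta$ and that the specific form $c_{2,\ell}^{3/2}(t)=1+(1-t)\ell\coo$ is essential is right, but the mechanism in the paper is the monotonicity of $t\mapsto\cos(\theta_{c_1,c_{2,\ell}}(t))$, not an elimination of the trigonometric expression via $r(t)$.
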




\begin{figure}
	\centering
	\includegraphics[width=0.8\textwidth]{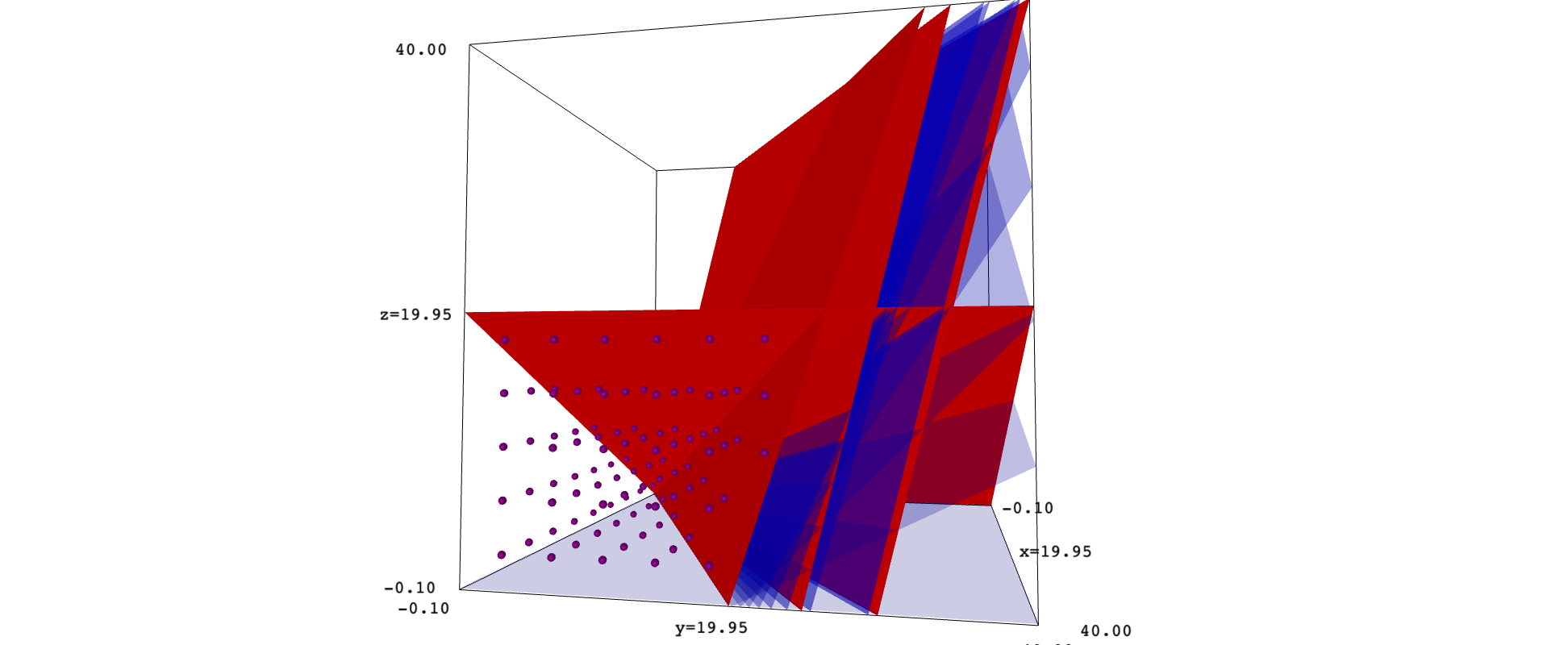}
	\caption{$C$-Subsolution constraints and the numerical constraint}
	\label{fig:1.2}
\end{figure}

In conclusion, as long as $ ( \Omega_2, \Omega_3, \Omega_4   ) \in \Omega^{4, \hat{\theta}}_{\ell}$, then we can apply Theorem~\hyperlink{T:1.2}{1.2}. So the complex dimension four dHYM equation (\ref{eq:1.7}) is solvable if the following equation is solvable.
\begin{align*}
\label{eq:1.10}
\sii X^4 - 6 c_{2, \ell}(0)   \omega^2 \wedge X^2  - c_{0, \ell}(0) \cc   \omega^4 = 0. \tag{1.10}
\end{align*}Equation (\ref{eq:1.10}) is solvable if $c_{0, \ell}(0) \geq 0$ due to Collins--Székelyhidi \cite{collins2017convergence}. A natural way to make $c_{0, \ell}(0) \geq 0$ is to let $c_{2, \ell}(0)$ positive but sufficiently small, so $\sii \Omega_0 - 6 c_{2, \ell}(0) \Omega_2$ becomes non-negative. This can be done by letting $\ell$ sufficiently close to $-\sco$, but the issue here is that the set $\Omega^{4, \hat{\theta}}_{\ell}$ will also be deformed. Fortunately, we have the following.

\hypertarget{T:1.4}{\begin{fthm}}
There exists an $\ell$ sufficiently close to $-\sco$, such that if there exists a $C$-subsolution $X$ to equation (\ref{eq:1.7}), then
\begin{align*}
(\Omega_2, \Omega_3, \Omega_4 ) \in \Omega^{4, \hat{\theta}}_{{\ell}}
\end{align*}
and the four dimensional dHYM equation (\ref{eq:1.7}) is solvable. Here $\Omega_i \coloneqq \int_M \omega^i \wedge X^{4-i}$.
\end{fthm}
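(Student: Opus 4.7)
The overall strategy is the continuity method along the path (1.8), with $c_1(t)=t$, $c_{2,\ell}(t)=\bigl[1+(1-t)\ell\cos(\hat\theta)\bigr]^{2/3}$, and $c_{0,\ell}(t)$ determined by the topological constraint, as prescribed by Theorem~\hyperlink{T:1.3}{1.3}. The plan reduces to two tasks: (i) show that if $X$ is a $C$-subsolution to (\ref{eq:1.7}), then for $\ell$ sufficiently close to $-\sco$ the numerical inclusion $(\Omega_2,\Omega_3,\Omega_4)\in\Omega^{4,\hat\theta}_\ell$ holds, so that Theorem~\hyperlink{T:1.3}{1.3} certifies the four 4-dimensional constraints along the entire path and Theorem~\hyperlink{T:1.2}{1.2} supplies the uniform $C^{2,\alpha}$ estimates; and (ii) verify that the endpoint equation (\ref{eq:1.10}) at $t=0$ is solvable by the Collins--Székelyhidi theory on general inverse $\sigma_k$ equations. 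Once (i) and (ii) are in hand, openness/closedness along $t\in[0,1]$ yields a solution to (\ref{eq:1.7}), hence to the original dHYM equation (\ref{eq:1.1}).

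For (i), the first step is to convert the pointwise $C$-subsolution hypothesis into a numerical upper bound on $\Omega_3$. At every $p\in M$, Székelyhidi's $C$-subsolution condition applied to the four-dimensional operator $\sum_i\arctan(\lambda_i)$ in the phase range $\hat\theta\in(\pi,5\pi/4)$ yields an algebraic inequality on the eigenvalues of $\omega^{-1}X$; multiplying by $\omega^4$ and integrating over $M$ promotes it to an inequality of the schematic shape
\[
\Omega_3 \le A(\hat\theta)\,\Omega_2 + B(\hat\theta)\,\Omega_4,
\]
with constants $A(\hat\theta)$, $B(\hat\theta)$ depending only on the phase. The goal is then to dominate this pointwise-derived bound by the infimum appearing in the definition (\ref{eq:1.9}) of $\Omega^{4,\hat\theta}_\ell$, for suitable $\ell$.

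The main obstacle lies in the second step of (i): the asymptotic analysis of that infimum as $\ell\to-\sco$. Since $c_{2,\ell}(0)=[1+\ell\cos(\hat\theta)]^{2/3}\to 0$ while $c_{2,\ell}(1)=1$, both ratios $(1-c_{2,\ell}(t))/(1-t)$ and $\tilde c_{0,\ell}(t)/(1-t)$ deform nontrivially with $\ell$; a L'Hôpital computation at $t=1$ fixes their boundary values, and the monotonicity of $c_{2,\ell}$ together with the positivity of $\tilde c_{0,\ell}$ (already established in Theorem~\hyperlink{T:1.3}{1.3}) should give enough control to track the infimum. The delicate part is matching the resulting one-parameter family of real algebraic functions of $\hat\theta$, with the explicit trigonometric expression for $\theta_{c_1,c_{2,\ell}}$, to the bound $A(\hat\theta)\Omega_2+B(\hat\theta)\Omega_4$; this is a sharp Positivstellensatz-style inequality, and I anticipate that rigorously verifying it in the limit $\ell\to-\sco$ will require a careful case-split depending on the signs of $\Omega_2$ and $\Omega_3$.

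For (ii), at $t=0$ the topological constraint together with $c_1(0)=0$ gives
\[
c_{0,\ell}(0)\cc\,\Omega_4 \;=\; \sii\,\Omega_0 \;-\; 6\,c_{2,\ell}(0)\,\Omega_2.
\]
Since $\Omega_4>0$ and $\cc>0$ throughout $\hat\theta\in(\pi,5\pi/4)$, and since $c_{2,\ell}(0)\to 0$ as $\ell\to-\sco$, one can ensure $c_{0,\ell}(0)\ge 0$ by taking $\ell$ close enough to $-\sco$, provided $\Omega_0$ has the correct sign (which I would extract either from the $C$-subsolution, using positivity at points where the $C$-cone is acute, or directly from the topological identity obtained by integrating (\ref{eq:1.7})). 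At that point (\ref{eq:1.10}) is a four-dimensional general inverse $\sigma_k$ equation with non-negative coefficients and is solvable by Collins--Székelyhidi \cite{collins2017convergence}. Combining (i) and (ii), Theorem~\hyperlink{T:1.3}{1.3} provides a valid continuity path, Theorem~\hyperlink{T:1.2}{1.2} the a priori estimates, and a standard deformation argument closes the proof.
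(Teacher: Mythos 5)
Your overall framework is the right one and matches the paper: run the continuity path of Theorem~\hyperlink{T:1.3}{1.3} with the explicit triple $(c_{2,\ell},c_1,c_{0,\ell})$, invoke Theorem~\hyperlink{T:1.2}{1.2} for the a priori estimates, and solve the endpoint equation $(\ref{eq:1.10})$ via Collins--Székelyhidi. But the heart of the theorem is precisely the verification that a $C$-subsolution forces $(\Omega_2,\Omega_3,\Omega_4)\in\Omega^{4,\hat\theta}_\ell$ for $\ell$ close to $-\sco$, and here you have identified the task without carrying it out. You write that you ``anticipate that rigorously verifying it in the limit $\ell\to-\sco$ will require a careful case-split depending on the signs of $\Omega_2$ and $\Omega_3$'' --- this is the gap, and your expectation of the shape of the argument is also off. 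There is no sign ambiguity: any $C$-subsolution lies pointwise in $\Upsilon^4_{1;1,0}$ (Lemma~\hyperlink{L:3.4}{3.4}), so $X>0$ and hence $\Omega_2,\Omega_3,\Omega_4>0$. The paper instead derives the single pointwise inequality $\sii X^3 - 3\omega^2\wedge X + 2\ct\omega^3 > 0$ from Lemma~\hyperlink{L:3.4}{3.4}, integrates it, combines it with the topological identity to get $4\Omega_2 + \tfrac{4}{3}\cc\Omega_4 > 8\ct\Omega_3$, and then shows that the family of hyperplanes defining $\Omega^{4,\hat\theta}_\ell$ is dominated by this bound. That last step is delicate and is \emph{not} a single L'H\^opital computation at $t=1$: the paper splits $[0,1]$ into $[0,1/100]$ (direct estimates exploiting $c_{2,\ell}(0)\to 0$) and $[1/100,1]$ (introduce $\ell_n=-\sco-1/n$, show the resulting functions $d_n$ converge \emph{uniformly} to an explicit limit $d_\infty$, and verify positivity of $d_\infty$ by a trigonometric monotonicity argument in $\theta_{1,1}\in(-\pi/3,-\pi/4)$). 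Your proposal does not identify this uniform-convergence/compactness mechanism nor the role of the $t$-split.

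A smaller but real inaccuracy occurs in your part (ii). You hedge on whether $\Omega_0$ has ``the correct sign'' and suggest you might need to extract it ``from the $C$-subsolution, using positivity at points where the $C$-cone is acute.'' This is both unnecessary and not how the paper argues: the $\Upsilon$-cone condition already gives $\sii X^2 > \omega^2$, hence $\sii\Omega_0 > \Omega_2 > 0$, so once $\ell$ is close enough to $-\sco$ that $6c_{2,\ell}(0) < 1$, the numerator $\sii\Omega_0 - 6c_{2,\ell}(0)\Omega_2$ is automatically positive. The sign of $\Omega_0$ is never in doubt. In sum, your proposal recognizes the architecture of the proof but leaves the main analytic estimate --- the one that makes the theorem nontrivial --- as a plan rather than an argument, and where it does speculate about the details, those speculations do not match what actually works.
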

This result confirms the \hyperlink{C:1.1}{Conjecture} by Collins--Jacob--Yau when the complex dimension equals four.

\begin{frmk}
Above Figure~\ref{fig:1.2} is a numerical graph of $\Omega^{4, \hat{\theta}}_{\ell}$ for $\ell$ sufficiently close to $-\sco$, the blue hyperplanes are the hyperplanes 
\begin{align*}
{6(1- c_{2, \ell}(t))} \Omega_2 - 8(1-c_1(t)) \ct \Omega_3 +  {  \tilde{c}_{0, \ell}(t)  }  \Omega_4 = 0
\end{align*}for $t \in [0, 1)$. So if the triple $( \Omega_2, \Omega_3, \Omega_4   )$ lies above these hyperplanes, then the complex four dimensional dHYM equation (\ref{eq:1.7}) is solvable if it is solvable when $t = 0$. On the other hand, the existence of $C$-subsolution will also give us some constraints. The red hyperplanes are the constraints generated by the $C$-subsolution condition we know so far; there might be more constraints. From Figure~\ref{fig:1.2}, if a $C$-subsolution exists, then it must lie in the dotted region. So it must also satisfy the numerical inequality (\ref{eq:1.9}), which implies that the complex four dimensional dHYM equation is solvable (we will show that it is solvable when $t = 0$). The author wants to point out that this work shows that when the complex dimension equals three or four, the existence of $C$-subsolution is equivalent to the solvability of the dHYM equation. In addition, it is also equivalent to the existence of supercritical $C$-subsolution by Collins--Jacob--Yau \cite{collins20151} and the numerical criterion by Chen \cite{chen2021j} (see also Chu--Lee--Takahashi \cite{chu2021nakai}).
\end{frmk}

The layout of this paper is as follows: in Section~\ref{sec:2}, we discuss some background materials. In Section~\ref{sec:2.2}, we consider some special semialgebraic sets in real algebraic geometry, which are defined by systems of inequalities of polynomials with real coefficients. We calculate some optimal bounds such that we can obtain some Positivstellensatz type results over these semialgebraic sets. In Section~\ref{sec:2.3}, we introduce the notion of $\Upsilon$-cones, which is an extension of the $C$-subsolution cone introduced by Székelyhidi \cite{szekelyhidi2018fully}. Roughly speaking, we consider the $C$-subsolution cone of the $C$-subsolution cone, etc. Under our structural assumption, we get Noetherian ascending type cones. In Section~\ref{sec:3}, we explain the reasons of the constraints of our continuity path connecting the dHYM equation to the general inverse $\sigma_k$ type equation. For example, why are these constraints related to the Positivstellensatz type theorem and the $\Upsilon$-cones. In Section~\ref{sec:4}, we prove a priori estimates if all the four constraints are satisfied. In Section~\ref{sec:5}, we discuss the existence results when the phase $\hat{\theta}$ is close to the supercritical phase. To be more precise, for complex dimension three or four, we prove the conjecture by Collins--Jacob--Yau \cite{collins20151} that their existence theorem can be improved when $\hat{\theta} \in \bigl(  (n-2 ) {\pi}/{2},  ( (n-2) + {2}/{n}     )  {\pi}/{2}  \bigr)$, where $n$ is the complex dimension of the manifold. In particular, in Section~\ref{sec:5.1}, for complex dimension three, we find explicit continuity path satisfying the constraints in Section~\ref{sec:3} and show that the existence of $C$-subsolution will be sufficient to obtain the solvability of the dHYM equation. In Section~\ref{sec:5.2}, for complex dimension four, we prove that the existence of $C$-subsolution will also be sufficient to provide the solvability of the dHYM equation.



\Acknow

\section{Preliminaries}
\label{sec:2}
\subsection{Basic Formulas of Eigenvalues and Symmetric Functions}
\label{sec:2.1}

In this subsection, we state some lemmas for symmetric functions first. One can also check the following references \cite{szekelyhidi2018fully, szekelyhidi2017gauduchon} for more details.


\hypertarget{L:2.1}{\begin{flemma}}
If $F(\Lambda) = f(\lambda_1, \dots, \lambda_n)$ is a smooth function in the eigenvalues $\lambda = \{ \lambda_1, \cdots, \lambda_n\}$ of a Hermitian matrix $\Lambda$, then at a diagonal matrix $\Lambda$ with distinct eigenvalues $\lambda_i$, we get
\begin{align*}
\frac{\partial F}{\partial \Lambda_{i }^j}(\Lambda) = \delta_{ij} f_i(\lambda);\quad  \frac{\partial^2 F}{\partial \Lambda_{i }^j \Lambda_{r  }^s} (\Lambda) = f_{ir}(\lambda) \delta_{ij} \delta_{rs} + \frac{f_i - f_j}{ \lambda_i - \lambda_j}(\lambda) (1 - \delta_{ij})\delta_{is} \delta_{jr}, 
\end{align*}where $f_i(\lambda) =  \frac{\partial f}{\partial \lambda_i}(\lambda)$ and $f_{ir} = \frac{\partial^2 f}{\partial \lambda_i \partial \lambda_r} (\lambda)$.
\end{flemma}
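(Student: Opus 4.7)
The plan is to establish both identities by first-order and second-order eigenvalue perturbation theory around the diagonal matrix $\Lambda = \diag(\lambda_1,\dots,\lambda_n)$, exploiting the hypothesis that the eigenvalues are pairwise distinct so that no small divisors appear. Introduce the one-parameter family $\Lambda(t) = \Lambda + tH$ for a Hermitian direction $H$ and expand
\[
\lambda_k(t) = \lambda_k + t\, a_k(H) + t^2\, b_k(H) + O(t^3).
\]
By the chain rule, every partial derivative of $F = f \circ \lambda$ at $\Lambda$ is obtained by reading off the appropriate coefficient of $H$ in the first- and second-order Taylor expansion of $f(\lambda(t))$.

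For the first derivative, standard perturbation theory with unperturbed eigenvector $e_k$ gives $a_k(H) = H_{kk}$. Hence only diagonal entries of $H$ shift eigenvalues to first order, which immediately yields $\partial F/\partial \Lambda^j_i(\Lambda) = \delta_{ij}\, f_i(\lambda)$.

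For the second derivative, I would split into cases. If $i = j$ and $r = s$, a diagonal-diagonal perturbation keeps $\Lambda(t)$ diagonal, so $b_k \equiv 0$ and only the quadratic term $\sum_{k,\ell} f_{k\ell}(\lambda)\, a_k a_\ell$ survives, producing $f_{ir}(\lambda)\,\delta_{ij}\delta_{rs}$. If $i \neq j$, the first-order shifts $a_k$ vanish, and the standard second-order formula $b_k(H) = \sum_{m \neq k} |H_{km}|^2/(\lambda_k - \lambda_m)$ shows that $\lambda_i$ and $\lambda_j$ pick up nonzero corrections precisely when $H$ couples $e_i$ and $e_j$. Extracting the coefficient of $\partial^2/\partial \Lambda^j_i \partial \Lambda^s_r$ then forces $\{r,s\} = \{j,i\}$ and produces the prefactor $(f_i - f_j)/(\lambda_i - \lambda_j)$, with the Kronecker structure $(1 - \delta_{ij})\delta_{is}\delta_{jr}$ encoding exactly this coupling.

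The delicate point will be the bookkeeping between the Hermitian constraint $\Lambda^j_i = \overline{\Lambda^i_j}$ and the formal partial derivatives $\partial/\partial \Lambda^j_i$: one treats the matrix entries as independent complex variables on the ambient space of $n \times n$ matrices, with Hermiticity imposed only at the evaluation point, consistent with the convention used elsewhere in the paper. No deeper analytic obstacle is anticipated, as both identities are classical; the remaining work is entirely in matching the index structure of the perturbative expansion to the stated Kronecker factors.
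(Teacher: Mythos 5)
Your proposal is correct and uses the standard eigenvalue-perturbation argument: expand $\lambda_k(\Lambda + tH) = \lambda_k + tH_{kk} + t^2\sum_{m\neq k}\frac{H_{km}H_{mk}}{\lambda_k - \lambda_m} + O(t^3)$ (valid since the eigenvalues are simple), apply the chain rule to $f\circ\lambda$, and read off coefficients. The paper itself gives no proof of this lemma; it cites Székelyhidi and Székelyhidi--Tosatti--Weinkove, where exactly this perturbative computation is carried out, so your route coincides with the referenced one. One small bookkeeping remark: when you pass from the second-order correction $\sum_{m\neq k} f_k\, |H_{km}|^2/(\lambda_k-\lambda_m)$ to the symmetric coefficient $\frac{f_i - f_j}{\lambda_i - \lambda_j}$, you implicitly pair the ordered indices $(k,m)=(i,j)$ and $(k,m)=(j,i)$, using Hermiticity $|H_{ij}|^2 = H_{ij}H_{ji}$; that pairing is exactly what produces the Kronecker factor $(1-\delta_{ij})\delta_{is}\delta_{jr}$, and it also shows that the mixed cases (one diagonal and one off-diagonal direction) vanish, a case your write-up skips but which follows at once since the first-order shifts see only diagonal entries and the second-order shifts see only off-diagonal products.
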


Let $\lambda = \bigl \{  \lambda_1,  \dots, \lambda_n \bigr \}$ be the eigenvalues of the Hermitian endomorphism $\omega^{i \bar{k}} \bigl (  X + \sqrt{-1} \partial \bar{\partial}  {u}   \bigr )_{j \bar{k}}$. Since we are on a Kähler manifold, we can pick the following coordinates to simplify our computation.

\hypertarget{L:2.2}{\begin{flemma}}At any point $p \in M$, there exists a holomorphic coordinates near $p$ such that 
\begin{align*}
\omega_{i \bar{j}} (p) = \delta_{ij}; \quad     ( X_u     )_{i \bar{j}} (p) = \lambda_i \delta_{ij};  \quad \omega_{i \bar{j}, k} (p) = 0,   
\end{align*}for all $i, j, k \in \{ 1, \dots, n\}$.
\end{flemma}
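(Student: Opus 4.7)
The plan is to construct the coordinates in three successive stages, each step preserving the normalizations achieved in the previous ones. First, I would start with an arbitrary holomorphic chart $(z^1,\ldots,z^n)$ centered at $p$. Because $\omega(p)$ is a positive-definite Hermitian form on $T_p^{1,0}M$, some matrix $A \in GL(n,\mathbb{C})$ realizes $A^*\omega(p) A = I$; performing the linear holomorphic change $z \mapsto A z$ achieves $\omega_{i\bar{j}}(p) = \delta_{ij}$.

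Next, in the resulting coordinates, $(X_u)_{i\bar{j}}(p)$ is a Hermitian matrix, so the spectral theorem gives a unitary $U$ with $U^* (X_u)(p) U = \operatorname{diag}(\lambda_1,\ldots,\lambda_n)$. The unitary holomorphic change $z \mapsto U z$ preserves $\omega_{i\bar{j}}(p) = \delta_{ij}$ (since $U^* I U = I$) while simultaneously diagonalizing $X_u$ at $p$, so the first two required identities now hold.

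The third step is the only one that uses the K\"ahler hypothesis. Setting $a^i_{jk} \coloneqq \omega^{i\bar{l}}(p)\,\omega_{j\bar{l},k}(p)$, the closedness $d\omega = 0$ is equivalent to the pointwise identity $\omega_{j\bar{l},k} = \omega_{k\bar{l},j}$, which yields the crucial symmetry $a^i_{jk} = a^i_{kj}$ in the lower indices. I would then perform the quadratic holomorphic change $w^i = z^i + \tfrac{1}{2}\, a^i_{jk} \, z^j z^k$, whose differential at $p$ is the identity; hence $\omega_{i\bar{j}}(p) = \delta_{ij}$ and $(X_u)_{i\bar{j}}(p) = \lambda_i \delta_{ij}$ persist in the $w$-coordinates. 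A direct expansion of the tensorial transformation law for $\omega_{i\bar{j}}$ shows that the linear term in $w^k$ is precisely $\omega_{i\bar{j},k}(p) - \omega^{l\bar{j}}(p)\, a^{l}_{ik} \cdot (\ldots)$, which by construction cancels, giving $\omega_{i\bar{j},k}(p) = 0$.

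The main (in fact only) nontrivial point is the symmetry $a^i_{jk} = a^i_{kj}$: without the K\"ahler condition the three normalizations cannot be imposed simultaneously, since one would be forced to solve an overdetermined system in the coefficients of the quadratic change of variables. Everything else in the proof is routine linear algebra and a Taylor expansion at $p$, so I expect no real obstacle beyond this symmetry verification.
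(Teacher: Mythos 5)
Your proof is correct and is the standard construction: a linear change to normalize $\omega(p)$, a unitary rotation (which preserves that normalization) to diagonalize $(X_u)(p)$, and finally a quadratic holomorphic change of coordinates — the Kähler normal coordinates construction — to kill the first derivatives of $\omega$ at $p$. The paper states this lemma without proof, as it is a well-known fact, so there is no conflicting argument to compare against. Two small remarks on presentation: the symmetry $a^i_{jk}=a^i_{kj}$ that you correctly single out is the content of $\partial\omega=0$, not the full $d\omega=0$ (the $\bar\partial$-part gives the conjugate identity, which you don't need here); and the "$\cdots$" placeholder in your last displayed identity should just read $\omega_{i\bar{j},k}(p) - \omega_{a\bar{j}}(p)\,a^a_{ik}$, which vanishes once you use $\omega_{a\bar{j}}(p)=\delta_{aj}$ and your definition of $a^a_{ik}$. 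With those cosmetic fixes the argument is complete.
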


From now on, without further notice, we always use the above coordinates. We denote $\Lambda$ as the Hermitian endomorphism $\omega^{i \bar{k}} \bigl (  X + \sqrt{-1} \partial \bar{\partial}  {u}   \bigr )_{j \bar{k}}$. Then the first and second derivatives of $\Lambda$ will be the following. 


\hypertarget{L:2.3}{\begin{flemma}}The first and second derivatives of $\Lambda$ are
\begin{align*}
\frac{\partial \Lambda_i^j }{\partial \bar{z}_k} &=  \tensor[]{\omega}{^{j \bar{p}}_{, \bar{k}}}     (  X_u      )_{i \bar{p}} +  \tensor[]{\omega}{^{j \bar{p}}}     (  X_u      )_{i \bar{p},\bar{k}} =  - \omega^{j \bar{b}} \omega_{a \bar{b}, \bar{k}} \omega^{a \bar{p}}     (  X_u      )_{i \bar{p}} +  \tensor[]{\omega}{^{j \bar{p}}}    (  X_u      )_{i \bar{p},\bar{k}},  \\ 
\frac{\partial^2 \Lambda_i^j }{\partial z_l \partial \bar{z}_k} &= \tensor[]{\omega}{^{j \bar{p}}_{, \bar{k}l}}    (  X_u      )_{i \bar{p}} + \tensor[]{\omega}{^{j \bar{p}}_{, \bar{k}}}     (  X_u      )_{i \bar{p},l} +  \tensor[]{\omega}{^{j \bar{p}}_{, l}}     (  X_u      )_{i \bar{p},\bar{k}} + \tensor[]{\omega}{^{j \bar{p}}}    (  X_u      )_{i \bar{p},\bar{k}l}   \\
&=  \omega^{j \bar{d}} \omega_{c \bar{d}, l} \omega^{c \bar{b}} \omega_{a \bar{b}, \bar{k}} \omega^{a \bar{p}}    (  X_u      )_{i \bar{p}} - \omega^{j \bar{b}} \omega_{a \bar{b}, \bar{k} l} \omega^{a \bar{p}}     (  X_u      )_{i \bar{p}} + \omega^{j \bar{b}} \omega_{a \bar{b}, \bar{k}} \omega^{a \bar{d}} \omega_{c \bar{d}, l} \omega^{c \bar{p}}    (  X_u      )_{i \bar{p}} \\
&\kern2em - \omega^{j \bar{b}} \omega_{a \bar{b}, \bar{k}} \omega^{a \bar{p}}    (  X_u      )_{i \bar{p},l} - \omega^{j \bar{b}} \omega_{a \bar{b}, l} \omega^{a \bar{p}}     (  X_u      )_{i \bar{p},\bar{k}} + \omega^{j \bar{p}}    (  X_u      )_{i \bar{p},\bar{k}l} , 
\end{align*}where we denote $   ( X_u      )_{i \bar{j}} = X_{i \bar{j}} + u_{i \bar{j}}$ and $\Lambda$ is the Hermitian endomorphism $\omega^{-1}   ( X_u     )$. 
\end{flemma}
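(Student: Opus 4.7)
The plan is to derive both formulas by applying the product rule to the definition $\Lambda_i^j = \omega^{j\bar p}(X_u)_{i\bar p}$, together with the standard matrix calculus identity for the derivative of an inverse. The argument is computational rather than structural; no analytic input is required.

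The workhorse identity is obtained by differentiating $\omega^{j\bar p}\omega_{c\bar p} = \delta^j_c$ and contracting: in $\bar z_k$ this gives
\begin{align*}
\partial_{\bar k}\omega^{j\bar p} = -\omega^{j\bar b}\omega_{a\bar b,\bar k}\omega^{a\bar p},
\end{align*}
and analogously $\partial_l\omega^{j\bar p} = -\omega^{j\bar d}\omega_{c\bar d,l}\omega^{c\bar p}$. These two identities will be invoked repeatedly. For the first derivative of $\Lambda_i^j$, I would simply apply the product rule, writing $\partial_{\bar k}\Lambda_i^j$ as the sum of a piece where the derivative hits $\omega^{j\bar p}$—rewritten via the inverse identity above to produce the term $-\omega^{j\bar b}\omega_{a\bar b,\bar k}\omega^{a\bar p}(X_u)_{i\bar p}$—and a piece where the derivative hits $(X_u)_{i\bar p}$, producing $\omega^{j\bar p}(X_u)_{i\bar p,\bar k}$. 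This gives the first stated formula.

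For the second derivative, I would apply $\partial_l$ to each of these two summands. Differentiating the first summand yields four contributions according to which factor $\partial_l$ lands on: the factor $\omega^{j\bar b}$ (rewritten via the inverse identity, producing the term with $\omega_{c\bar d,l}$ between two inverse metrics), the factor $\omega_{a\bar b,\bar k}$ (producing the mixed Hessian $\omega_{a\bar b,\bar k l}$), the factor $\omega^{a\bar p}$ (again rewritten via the inverse identity), and finally the factor $(X_u)_{i\bar p}$. Differentiating the second summand yields the remaining two contributions, one in which $\partial_l$ hits $\omega^{j\bar p}$ (rewritten via the inverse identity to produce the $-\omega^{j\bar b}\omega_{a\bar b,l}\omega^{a\bar p}(X_u)_{i\bar p,\bar k}$ piece) and one in which it hits $(X_u)_{i\bar p,\bar k}$ to produce $\omega^{j\bar p}(X_u)_{i\bar p,\bar k l}$. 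Collecting and renaming dummy indices yields the second stated formula.

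The only real obstacle is bookkeeping: keeping dummy indices distinct between the two nested applications of the inverse identity, and assembling the six terms in exactly the stated order. Since the whole calculation is pointwise in coordinates and does not use the Kähler normal coordinates of Lemma~\hyperlink{L:2.2}{2.2}, the resulting formulas are valid in any holomorphic chart; one may subsequently specialize to the coordinates of Lemma~\hyperlink{L:2.2}{2.2} (where $\omega_{i\bar j,k}$ vanishes at the base point) to simplify downstream estimates.
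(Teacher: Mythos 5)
Your proposal is correct and is exactly the computation the lemma encodes: apply the product rule to $\Lambda_i^j=\omega^{j\bar p}(X_u)_{i\bar p}$ and repeatedly use $\partial\omega^{j\bar p}=-\omega^{j\bar b}(\partial\omega_{a\bar b})\omega^{a\bar p}$ from differentiating $\omega^{j\bar p}\omega_{c\bar p}=\delta^j_c$. The paper states Lemma~\hyperlink{L:2.3}{2.3} without proof precisely because it is this routine product-rule-plus-inverse-derivative calculation, and your bookkeeping of the six resulting terms, the dummy indices, and the remark that the identity is chart-independent (with the Kähler normal coordinates of Lemma~\hyperlink{L:2.2}{2.2} only used downstream, in Lemma~\hyperlink{L:2.4}{2.4}) all match the intended derivation.
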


If we evaluate at any fixed point $p \in M$ and we use the coordinates in Lemma~\hyperlink{L:2.2}{2.2}, we can simplify the first and second derivatives of $\Lambda$.

\hypertarget{L:2.4}{\begin{flemma}}At any fixed point $p$, by picking the coordinates in Lemma~\hyperlink{L:2.2}{2.2}, we get
\begin{align*}
\frac{\partial \Lambda_i^j}{\partial \bar{z}_k} (p) =      (  X_u      )_{i \bar{j},\bar{k}}; \quad \frac{\partial^2 \Lambda_i^j}{\partial z_l \partial \bar{z}_k} (p) =   - \lambda_i \omega_{i \bar{j}, \bar{k} l}    +     (  X_u       )_{i \bar{j},\bar{k}l}.
\end{align*}
\end{flemma}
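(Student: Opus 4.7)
The plan is purely a bookkeeping computation: specialize the coordinate-invariant formulas of Lemma~2.3 to the fixed point $p$ and then invoke the three normalizations of Lemma~2.2, namely $\omega_{i\bar{j}}(p) = \delta_{ij}$, $(X_u)_{i\bar{j}}(p) = \lambda_i\delta_{ij}$, and $\omega_{i\bar{j},k}(p)=0$. From the first one I get $\omega^{i\bar{j}}(p)=\delta_{ij}$, and by taking the complex conjugate of the relation $\omega_{i\bar{j}} = \overline{\omega_{j\bar{i}}}$ and differentiating in $z_k$, the third one also yields the conjugate vanishing $\omega_{i\bar{j},\bar{k}}(p)=0$, which I will use repeatedly.

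For the first derivative, the expression in Lemma~2.3 has two summands: the first contains the factor $\omega_{a\bar{b},\bar{k}}$ and therefore vanishes at $p$, while the second reduces by $\omega^{j\bar{p}}(p) = \delta_{jp}$ to $(X_u)_{i\bar{j},\bar{k}}$, as claimed. For the second derivative, I would walk through the six summands in Lemma~2.3 one by one. Four of them, namely the first, third, fourth, and fifth, each contain at least one first-derivative factor of the form $\omega_{\cdot,l}$ or $\omega_{\cdot,\bar{k}}$, so they all vanish at $p$. The remaining two evaluate as follows: the summand $-\omega^{j\bar{b}}\omega_{a\bar{b},\bar{k}l}\omega^{a\bar{p}}(X_u)_{i\bar{p}}$ contracts via $\omega^{j\bar{b}}(p)=\delta_{jb}$, $\omega^{a\bar{p}}(p)=\delta_{ap}$, and $(X_u)_{i\bar{p}}(p)=\lambda_i\delta_{ip}$ down to $-\lambda_i\,\omega_{i\bar{j},\bar{k}l}$, and the last summand $\omega^{j\bar{p}}(X_u)_{i\bar{p},\bar{k}l}$ collapses by $\omega^{j\bar{p}}(p)=\delta_{jp}$ to $(X_u)_{i\bar{j},\bar{k}l}$. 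Adding the two contributions gives exactly the asserted formula.

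Since this is pointwise evaluation in adapted coordinates, there is no substantive obstacle; the only point that deserves a moment of care is checking that the conjugate first derivative $\omega_{i\bar{j},\bar{k}}(p)$ also vanishes, without which the four summands flagged above would not drop out. I record this as the Hermitian-symmetry observation at the start of the argument, after which the rest is a mechanical collapse of Kronecker deltas.
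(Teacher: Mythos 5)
Your proof is correct and is essentially the only reasonable route: the paper itself states Lemma~2.4 without proof, as the expected direct specialization of Lemma~2.3 to the coordinates of Lemma~2.2. You correctly identify the one non-trivial bookkeeping point, namely that $\omega_{i\bar{j},\bar{k}}(p)=0$ follows from $\omega_{j\bar{i},k}(p)=0$ via the reality of $\omega$, and the subsequent collapse of the six summands (with only the second and sixth surviving) is accurate.
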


\subsection{\texorpdfstring{$\Upsilon$}{}-cones and the Positivstellensatz}
\label{sec:2.2}
In this subsection, we introduce the following $\Upsilon$-cones and these $\Upsilon$-cones will be used throughout this paper. $\Upsilon$-cones indeed keep track of a lot of informations when we are deforming the $C$-subsolution cone introduced by Székelyhidi \cite{szekelyhidi2018fully} (see also Guan \cite{guan2014second}). We will see this in the later sections. 

\hypertarget{D:2.1}{\begin{fdefi}[$\Upsilon$-cone]}
Let $\Upsilon^n_{i; c_i, c_{i-1}, \dots, c_{1}, c_0 } \subset \mathbb{R}^n$ be defined by
\begin{align*}
\label{eq:2.1}
\Upsilon^n_{i; c_i, c_{i-1}, \dots, c_{1}, c_0 } \coloneqq \Bigl \{ \bigl ( \lambda_1, \lambda_2, \cdots, \lambda_n \bigr)  \colon    \sum_{k=0}^{i} c_{k} \sigma_{k} (\lambda_s^{(i)})   > 0,\ \forall s \in S_n  \Bigr\}. \tag{2.1}
\end{align*}Here $S_n$ is the symmetric group on $n$ objects, $c_k$ are constants, $\lambda_s^{(i)} \coloneqq \bigl \{ \lambda_{s(1)}, \lambda_{s(2)}, \cdots, \lambda_{s(i)} \bigr\}$, and $\sigma_k(\lambda_s^{(i)})$ are the $k$-th symmetric polynomials of $\lambda_s^{(i)}$, that is,
\begin{align*}
\sigma_k(\lambda_s^{(i)}) \coloneqq \sum_{\substack{\lambda_{j_l} \in \lambda_s^{(i)} \\  \lambda_{j_1} <  \cdots <  \lambda_{j_k}}} \lambda_{j_1} \lambda_{j_2} \cdot \cdots \cdots \lambda_{j_k}.
\end{align*}For convenience, we define $\sigma_0(\lambda_s^{(i)}) \coloneqq 1$.
\end{fdefi}


\hypertarget{R:2.1}{\begin{frmk}}
These $\Upsilon$-cones are all semialgebraic sets in real algebraic geometry. Here, since they are defined by finitely many inequalities, they are open subsets in $\mathbb{R}^n$. So any open connected component of $\Upsilon$-cone will be path-connected. 
\end{frmk}

\hypertarget{P:2.1}{\begin{fprop}}We have the following
\begin{enumerate}[itemsep=1mm]
\item[(a)] $\Gamma_k = \Upsilon^n_{n; 1, 0, \cdots, 0, 0} \cap \Upsilon^n_{n; 0, 1, 0, \cdots, 0, 0} \cap  \cdots \cap \Upsilon^n_{n; 0,   \cdots, 0, \underset{k\text{-th}}{1}, 0, \cdots, 0, 0}$.
\item[(b)] $\Upsilon^n_{1; 1, 0}$ is the positive orthant $\Gamma_n$.  
\end{enumerate}Here $\Gamma_k =   \{  \lambda \in \mathbb{R}^n \colon \  \sigma_1(\lambda) > 0, \cdots, \sigma_k(\lambda) > 0     \}$  is the $k$-positive cone.
\end{fprop}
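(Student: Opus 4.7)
My plan is to unpack the definition of the $\Upsilon$-cone in the two special regimes relevant to the proposition, namely $i = n$ for part (a) and $i = 1$ for part (b). In both cases the universal quantifier over the symmetric group $S_n$ collapses, either trivially or by an orbit argument, and the claim reduces to reading off coefficients.

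For part (a), I would first observe that when $i = n$, the multiset $\lambda_s^{(n)} = \{\lambda_{s(1)}, \ldots, \lambda_{s(n)}\}$ equals $\{\lambda_1, \ldots, \lambda_n\}$ as a set for every $s \in S_n$. Because $\sigma_k$ is a symmetric function, this yields $\sigma_k(\lambda_s^{(n)}) = \sigma_k(\lambda)$ independent of $s$. Hence the quantifier over $S_n$ is vacuous, and $\Upsilon^n_{n; c_n, \ldots, c_0}$ is the single open semialgebraic set cut out by $\sum_{k=0}^{n} c_k \sigma_k(\lambda) > 0$. Specializing to the ``coordinate'' case in which exactly one $c_j = 1$ and all other coefficients vanish, this reduces to the half-space-like cone $\{\lambda : \sigma_j(\lambda) > 0\}$. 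Intersecting such cones for $j = 1, 2, \ldots, k$ then gives $\bigcap_{j=1}^{k} \{\sigma_j > 0\} = \Gamma_k$, which is exactly the claim of (a).

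For part (b), I would specialize to $i = 1$, where $\lambda_s^{(1)} = \{\lambda_{s(1)}\}$ is a single eigenvalue, so $\sigma_1(\lambda_s^{(1)}) = \lambda_{s(1)}$ and $\sigma_0(\lambda_s^{(1)}) = 1$. The defining condition of $\Upsilon^n_{1; 1, 0}$ is then $1 \cdot \lambda_{s(1)} + 0 \cdot 1 > 0$ for every $s \in S_n$. Since the $S_n$-orbit of $1$ exhausts the index set $\{1, \ldots, n\}$, the conjunction of these inequalities is equivalent to $\lambda_j > 0$ for every $j \in \{1, \ldots, n\}$, i.e., the positive orthant $\Gamma_n$.

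No genuine obstacle arises; the proposition is essentially a definition-chase. The only detail that requires care is tracking the subscript convention $(c_i, c_{i-1}, \ldots, c_0)$ consistently, so that the ``$k$-th'' coordinate cone appearing in the intersection of part (a) is correctly identified with the single inequality $\sigma_k(\lambda) > 0$; once that bookkeeping is fixed, both assertions follow immediately from the collapse of the $S_n$-quantifier.
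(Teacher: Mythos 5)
Your proof is correct and matches the paper's approach: the paper's proof of (a) is literally ``by definition,'' and for (b) it simply unpacks the condition $\sigma_1(\lambda_s^{(1)}) = \lambda_{s(1)} > 0$ for all $s \in S_n$ to $\bigcap_i \{\lambda_i > 0\}$, exactly as you do. Your final remark about tracking the subscript convention is warranted and in fact points at a genuine bookkeeping issue in the statement as typeset: under the stated ordering $(c_i, c_{i-1}, \ldots, c_1, c_0)$, the leftmost coefficient of $\Upsilon^n_{n; 1, 0, \ldots, 0, 0}$ is $c_n$, so that cone encodes $\sigma_n(\lambda) > 0$ rather than $\sigma_1(\lambda) > 0$, and the displayed intersection in part (a), read literally, gives $\{\sigma_n > 0, \ldots, \sigma_{n-k+1} > 0\}$ rather than $\Gamma_k$. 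Your reading, in which the $j$-th factor singles out $\sigma_j(\lambda) > 0$, is the one under which the equality actually holds, so your argument supplies the verification that the paper's own one-line proof of (a) omits.
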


\begin{proof}
By definition, we have
\begin{align*}
\Gamma_k = \Upsilon^n_{n; 1, 0, \cdots, 0, 0} \cap \Upsilon^n_{n; 0, 1, 0, \cdots, 0, 0} \cap  \cdots \cap \Upsilon^n_{n; 0,   \cdots, 0, \underset{k \text{th}}{1}, 0, \cdots, 0, 0}.
\end{align*}Also, we get
\begin{align*}
\Upsilon^n_{1; 1, 0} &= \bigl \{   ( \lambda_1, \lambda_2, \cdots, \lambda_n  )  \colon   \sigma_1 (\lambda_s^{(1)})  = \lambda_{s(1)}   > 0,\ \forall s \in S_n  \bigr\} =  \cap_i   \{ \lambda_i > 0   \}.
\end{align*}
\end{proof}

Here, since we only focus on complex dimension three and four manifolds, we assume $n \geq 3$.

\hypertarget{T:2.1}{\begin{fthm}[Positivstellensatz]}  \ 
\begin{enumerate}[itemsep=1mm]
\item[(a)] For $c > 0$, $\Upsilon^n_{2; 1, 0, -c} \cap \Upsilon^n_{1; 1, 0}$ is one of the connected component of $\Upsilon^n_{2; 1, 0, -c}$.
\item[(b)] For $c > 0$, $d \geq  0$ and $2 c^{3/2} > d$, $\Upsilon^n_{3; 1, 0, -c, d} \cap \Upsilon^n_{2; 1, 0, -c} \cap \Upsilon^n_{1; 1, 0}$ is one of the connected component of $\Upsilon^n_{3; 1, 0, -c, d}$.
\item[(c)] For $c > 0$ and $d \geq 0$, $\Upsilon^n_{3; 0, c, -d, e} \cap \Upsilon^n_{2; 1, 0, -c} \cap \Upsilon^n_{1; 1, 0}$ is a connected set. 
\item[(d)] For $c > 0$, $d \geq 0$ and $2 c^{3/2} > d$, then $\Upsilon^n_{2; 1, 0, -c} \cap \Upsilon^n_{1; 1, 0}$ is contained in $\Upsilon^n_{2; 0, c, -d} \cap \Upsilon^n_{1; 1, 0}$. 
\item[(e)] For $c > 0$, $d  \geq 0$, $2 c^{3/2} > d$, and $e > -24 c^2 \cos^2(\theta_{c, d})\cos(2\theta_{c, d})$, then $\Upsilon^n_{3; 1, 0, -c, d} \cap \Upsilon^n_{2; 1, 0, -c} \cap \Upsilon^n_{1; 1, 0}$ is contained in $\Upsilon^n_{3; 0, c, -d, e} \cap \Upsilon^n_{2; 1, 0, -c} \cap \Upsilon^n_{1; 1, 0}$, where
\begin{align*}
\theta_{c,d} \coloneqq   \arccos \bigl(   {-d}/{2c^{3/2}}  \bigr)/3  - {2\pi}/{3}
\end{align*}and we specify the branch so that $\arccos \bigl(   {-d}/{2c^{3/2}}  \bigr) \in \bigl( \pi, 3\pi/2 \bigr]$.
\item[(f)] For $c > 0$, $d \geq  0$, $2 c^{3/2} > d$, and $e > -24 c^2 \cos^2(\theta_{c, d})\cos(2\theta_{c, d})$, then $\Upsilon^n_{4; 1, 0, -c, d, -e} \cap \Upsilon^n_{3; 1, 0, -c, d} \cap \Upsilon^n_{2; 1, 0, -c} \cap \Upsilon^n_{1; 1, 0}$ is one of the connected component of $\Upsilon^n_{4; 1, 0, -c, d, -e}$.
\end{enumerate}
\end{fthm}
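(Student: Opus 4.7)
The overall strategy exploits the $S_n$-symmetry built into the definition: a point $\lambda \in \mathbb{R}^n$ lies in $\Upsilon^n_{i;c_i,\ldots,c_0}$ iff the polynomial $p_i := \sum_{k=0}^{i} c_k \sigma_k$ is strictly positive on every size-$i$ subset of the coordinates. Thus each of (a)--(f) reduces to an assertion on three or four variables that can be lifted back to $\mathbb{R}^n$ via the symmetric-group action. Part (a) is immediate: the condition $\lambda_i\lambda_j > c > 0$ for every pair forces every coordinate to share a sign, so $\Upsilon^n_{2;1,0,-c}$ splits into two open pieces, and the ``all positive'' one is path-connected via straight-line homotopy to $(R,\ldots,R)$ for $R$ large. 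Part (d) is AM--GM: under $\lambda_i,\lambda_j > 0$ and $\lambda_i\lambda_j > c$, one has $\lambda_i+\lambda_j \geq 2\sqrt{\lambda_i\lambda_j} > 2\sqrt{c} \geq d/c$ using $2c^{3/2} > d$.

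For parts (b) and (f), which identify $\Upsilon^n_{2;1,0,-c} \cap \Upsilon^n_{1;1,0}$ (resp.\ the 4-variable analogue) as a connected component of the ambient cone, I would argue openness by definition and closedness inside the ambient by excluding boundary crossings: on the face $\lambda_i\lambda_j = c$ with positive coordinates, the cubic collapses to
\[
\lambda_i\lambda_j\lambda_k - c(\lambda_i+\lambda_j+\lambda_k) + d = -c(\lambda_i+\lambda_j)+d \leq -2c^{3/2}+d < 0,
\]
contradicting membership in $\Upsilon^n_{3;1,0,-c,d}$. Hence the set cannot accumulate to such boundary points; combined with the sign-locking from (a) this gives the component claim. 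Part (f) will follow analogously by reducing the quartic condition to its 3-subset restrictions (governed by (b)) and an extra term controlled by (e). Part (c) is a direct connectedness argument: any two points in $\Upsilon^n_{3;0,c,-d,e} \cap \Upsilon^n_{2;1,0,-c} \cap \Upsilon^n_{1;1,0}$ can be joined through a common ``large diagonal'' point $(R,\ldots,R)$ for $R$ sufficiently large, where every defining inequality holds trivially in the highest-degree term.

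The technical heart is (e), which I would cast as the constrained optimization
\[
m := \inf \Bigl\{\, c\,\sigma_2(\mu_1,\mu_2,\mu_3) - d\,\sigma_1(\mu_1,\mu_2,\mu_3) : (\mu_1,\mu_2,\mu_3) \in \overline{K}\,\Bigr\},
\]
where $K$ is the relevant symmetric 3-dimensional subset. The interior critical point $\mu_1 = \mu_2 = \mu_3 = d/(2c)$ lies outside $\overline{K}$ because $\mu_1\mu_2 = d^2/(4c^2) < c$ follows from $4c^3 > d^2$; so the infimum is attained on $\partial K$. Lagrange multipliers together with the $S_3$-symmetry localize the minimizer to a symmetric triple on the cubic face $\mu^{\ast 3} - 3c\mu^\ast + d = 0$ (on the face $\mu_1\mu_2 = c$ the $\partial_{\mu_3}$-component of the Lagrangian cannot vanish for positive coordinates). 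The trigonometric form of Cardano yields $\mu^\ast = 2\sqrt{c}\cos\phi$ with $\cos(3\phi) = -d/(2c^{3/2})$, and the branch convention $\arccos(\cdot) \in (\pi, 3\pi/2]$ selects precisely the root satisfying $\mu^{\ast 2} > c$, namely $\mu^\ast = 2\sqrt{c}\cos(\theta_{c,d})$. Substituting, using $d = -2c^{3/2}\cos(3\theta_{c,d})$ together with the sum-to-product identity $\cos\theta + \cos(3\theta) = 2\cos(2\theta)\cos\theta$, a direct computation evaluates the critical value to $m = 24\,c^2 \cos^2(\theta_{c,d})\cos(2\theta_{c,d})$; the hypothesis $e > -m$ then gives $c\sigma_2 - d\sigma_1 + e > 0$ everywhere on $K$.

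The main obstacle I expect is verifying that the symmetric critical point on the cubic face is a genuine global minimum (not a saddle), and ruling out competing non-symmetric Lagrange configurations on the mixed faces $\mu_i\mu_j = c$; this requires a second-derivative test along the cubic face together with careful bookkeeping of the branch conditions governing $\theta_{c,d}$. Once (e) is in hand, (f) follows by applying (e) 3-subset-wise and combining with the connectedness mechanism of (b), extended to the quartic inequality.
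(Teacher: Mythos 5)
For parts (a) and (d) your approach matches the paper's, and for the component-exclusion step of (b) and (f) you reproduce the paper's boundary contradiction exactly. But the connectedness half of (b) and (f) is incomplete in your sketch: openness together with closedness inside the ambient cone only shows the set is a \emph{union} of connected components, not a single one. The paper closes this by moving one coordinate at a time --- if two points differ in a single coordinate, the segment between them stays in the set (checked by casing on whether the varied index appears in a given $\lambda^{(i)}_s$), and iterating over coordinates connects any two points. Your ``large diagonal'' suggestion for (c) needs similar care: the cubic and quadratic inequalities are not preserved along an arbitrary straight segment in $\mathbb{R}^n$, but each relevant partial derivative has a definite sign on the set, so the one-coordinate-at-a-time path does work.

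For (e) you take a genuinely different route. The paper argues by contradiction: supposing $c\sigma_2 = d\sigma_1 - e$ holds for some feasible triple, it solves for $\lambda_{s(1)}$, substitutes into $\sigma_3 - c\sigma_1 + d > 0$, and infimizes the resulting two-variable rational expression $c^2 A - cdB + c^3 B^2/A$ over the AM--GM-constrained region, landing at $\tilde{B}_1 = 4\sqrt{c}\cos(\theta_{c,d})$ and deriving $e < -24c^2\cos^2(\theta_{c,d})\cos(2\theta_{c,d})$. You instead directly minimize $c\sigma_2 - d\sigma_1$ over $\overline{K}$, place the minimizer on the cubic face, and evaluate at the symmetric root. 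The two routes converge on the same extremal configuration --- one can check that at the paper's critical $(A,B)$, with $e$ at threshold, the eliminated $\lambda_{s(1)}$ also works out to $2\sqrt{c}\cos(\theta_{c,d})$, so the triple is fully symmetric --- and hence the same threshold value. Yours is conceptually cleaner, but the gap you flagged must be filled, and it can be: subtracting any two of the stationarity equations $c(\mu_j + \mu_k) - d = \kappa(\mu_j\mu_k - c)$ gives $c = \kappa\mu_\ell$ for the excluded index whenever $\mu_j \neq \mu_k$, and combining all pairs forces full symmetry, while the semi-symmetric case $\mu_1 \neq \mu_2 = \mu_3$ reduces to $c\mu_2^2 - d\mu_2 + c^2 = 0$, whose discriminant $d^2 - 4c^3$ is negative under $2c^{3/2} > d$. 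Also, the ``mixed faces'' $\mu_i\mu_j = c$ you were worried about are actually disjoint from $\overline{K}$: exactly as in (b), $\mu_i\mu_j = c$ forces $\sigma_3 - c\sigma_1 + d = -c(\mu_i + \mu_j) + d \leq -(2c^{3/2} - d) < 0$. Coercivity of the objective on the cubic face, plus uniqueness of the interior critical point, then gives the global minimum without any separate second-derivative test.
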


\begin{proof}
To prove (a), we first prove that $\Upsilon^n_{2; 1, 0, -c} \cap \Upsilon^n_{1; 1, 0}$ only has one connected component. Let $\lambda, \tilde{\lambda} \in \Upsilon^n_{2; 1, 0, -c} \cap \Upsilon^n_{1; 1, 0}$, we show that $t \lambda + (1-t) \tilde{\lambda} \in \Upsilon^n_{2; 1, 0, -c} \cap \Upsilon^n_{1; 1, 0}$ for $t \in [0, 1]$. We have
\begin{align*}
\sigma_1 \bigl (   (t \lambda + (1-t) \tilde{\lambda}  )^{(1)}_s   \bigr) = t \lambda_{s(1)} + (1-t) \tilde{\lambda}_{s(1)} > 0
\end{align*}for all $s \in S_n$, hence $t \lambda + (1-t) \tilde{\lambda} \in \Upsilon^n_{1; 1, 0}$. Moreover, we get
\begin{align*}
\sigma_2 \bigl (   (t \lambda + (1-t) \tilde{\lambda}  )^{(2)}_s   \bigr) &=   ( t \lambda_{s(1)} + (1-t) \tilde{\lambda}_{s(1)}    )   ( t \lambda_{s(2)} + (1-t) \tilde{\lambda}_{s(2)}    ) \\
&= t^2 \sigma_2    (   \lambda^{(2)}_s    ) + (1-t)^2 \sigma_2    (   \tilde{\lambda}^{(2)}_s    ) + t (1-t)   (  \lambda_{s(1)}   \tilde{\lambda}_{s(2)} +  \tilde{\lambda}_{s(1)}  \lambda_{s(2)}    ) \\
&\geq t^2 \sigma_2    (   \lambda^{(2)}_s    ) + (1-t)^2 \sigma_2    (   \tilde{\lambda}^{(2)}_s    ) + 2 t (1-t) \sqrt{\lambda_{s(1)} \lambda_{s(2)} \tilde{\lambda}_{s(1)} \tilde{\lambda}_{s(2)} } \\
&\geq t^2 \sigma_2    (   \lambda^{(2)}_s    ) + (1-t)^2 \sigma_2    (   \tilde{\lambda}^{(2)}_s    ) + 2 t (1-t)  \min \bigl\{ \sigma_2    (   \lambda^{(2)}_s    ), \sigma_2    (   \tilde{\lambda}^{(2)}_s    )   \bigr\} > c
\end{align*}for all $s \in S_n$. Thus $t \lambda + (1-t) \tilde{\lambda} \in \Upsilon^n_{2; 1, 0, -c}$, which implies that $\Upsilon^n_{2; 1, 0, -c} \cap \Upsilon^n_{1; 1, 0}$ is path-connected. Moreover, it is convex. \bigskip

Then, we try to show that $\Upsilon^n_{2; 1, 0, -c} \cap \Upsilon^n_{1; 1, 0}$ is indeed one of the connected component of $\Upsilon^n_{2; 1, 0, -c}$. If not, since any open connected subset of $\mathbb{R}^n$ is path-connected, there exists a $\lambda$ in this connected component such that $\sigma_1   (   \lambda^{(1)}_s    ) = 0$ for some $s \in S_n$. By fixing this $\lambda$ and $s \in S_n$, we have $\sigma_2   (   \lambda^{(2)}_s    ) = \lambda_{s(1)} \lambda_{s(2)} = 0$, which is a contradiction.\bigskip

To prove (b), we first prove that $\Upsilon^n_{3; 1, 0, -c, d} \cap \Upsilon^n_{2; 1, 0, -c} \cap \Upsilon^n_{1; 1, 0}$ only has one connected component. Let $\lambda = (\lambda_1, \lambda_2, \cdots, \lambda_n) \in \Upsilon^n_{3; 1, 0, -c, d} \cap \Upsilon^n_{2; 1, 0, -c} \cap \Upsilon^n_{1; 1, 0}$ and $\tilde{\lambda} = (\tilde{\lambda}_1, \lambda_2, \cdots, \lambda_n) \in \Upsilon^n_{3; 1, 0, -c, d} \cap \Upsilon^n_{2; 1, 0, -c} \cap \Upsilon^n_{1; 1, 0}$ with $\tilde{\lambda}_1 \geq \lambda_1$. Then we may show that $t \lambda + (1-t) \tilde{\lambda} \in \Upsilon^n_{3; 1, 0, -c, d} \cap \Upsilon^n_{2; 1, 0, -c} \cap \Upsilon^n_{1; 1, 0}$ for $t \in [0, 1]$. According to (a), we only need to show that $t \lambda + (1-t) \tilde{\lambda} \in \Upsilon^n_{3; 1, 0, -c, d}$ for $t \in [0, 1]$. We see

\begin{align*}
\kern1em
&\kern-1em \sigma_3 \bigl (   (t \lambda + (1-t) \tilde{\lambda}  )^{(3)}_s   \bigr)\\ 
&=   ( t \lambda_{s(1)} + (1-t) \tilde{\lambda}_{s(1)}    )   ( t \lambda_{s(2)} + (1-t) \tilde{\lambda}_{s(2)}    )   ( t \lambda_{s(3)} + (1-t) \tilde{\lambda}_{s(3)}    ) \\
&= t^3 \lambda_{s(1)}  \lambda_{s(2)} \lambda_{s(3)} + t^2(1-t)   (  \tilde{\lambda}_{s(1)}  {\lambda}_{s(2)}  \lambda_{s(3)} +  \lambda_{s(1)} \tilde{\lambda}_{s(2)}  \lambda_{s(3)}  +  \lambda_{s(1)}  {\lambda}_{s(2)}  \tilde{\lambda}_{s(3)}     ) \\
&\kern2em + t(1-t)^2   (   {\lambda}_{s(1)}  \tilde{\lambda}_{s(2)}  \tilde{\lambda}_{s(3)} +  \tilde{\lambda}_{s(1)}  {\lambda}_{s(2)}  \tilde{\lambda}_{s(3)}  +  \tilde{\lambda}_{s(1)}  \tilde{\lambda}_{s(2)}   {\lambda}_{s(3)}     )  + (1-t)^3 \tilde{\lambda}_{s(1)}  \tilde{\lambda}_{s(2)}   \tilde{\lambda}_{s(3)}.
\end{align*}There are two cases, if $s(i) = 1$ for some $i \in \{1, 2, 3\}$, say $s(1) = 1$ for convenience, then we get
\begin{align*}
\sigma_3 \bigl (   (t \lambda + (1-t) \tilde{\lambda}  )^{(3)}_s   \bigr) &= t^3 \lambda_{s(1)}  \lambda_{s(2)} \lambda_{s(3)} + t^2(1-t)   (  \tilde{\lambda}_{s(1)}  {\lambda}_{s(2)}  \lambda_{s(3)} + 2 \lambda_{s(1)}  {\lambda}_{s(2)}  \lambda_{s(3)}      ) \\
&\kern2em + t(1-t)^2   (   {\lambda}_{s(1)}   {\lambda}_{s(2)}   {\lambda}_{s(3)} + 2 \tilde{\lambda}_{s(1)}  {\lambda}_{s(2)}   {\lambda}_{s(3)}       )  + (1-t)^3 \tilde{\lambda}_{s(1)}   {\lambda}_{s(2)}    {\lambda}_{s(3)} \\
&= t \lambda_{s(1)}  \lambda_{s(2)} \lambda_{s(3)} + (1-t) \tilde{\lambda}_{s(1)}  \lambda_{s(2)} \lambda_{s(3)} \\
&>  ct \sigma_1  \bigl ( \lambda^{(3)}_s \bigr ) -dt  + c(1-t) \sigma_1 \bigl ( \tilde{\lambda}^{(3)}_s \bigr ) -d(1-t) \\
&= c \sigma_1 \bigl(    (t \lambda + (1-t) \tilde{\lambda}  )^{(3)}_s  \bigr) -d.
\end{align*}If $s(i) \neq 1$ for all $i \in \{1, 2, 3\}$, then we have
\begin{align*}
\sigma_3  \bigl (   (t \lambda + (1-t) \tilde{\lambda}  )^{(3)}_s  \bigr  ) &= \sigma_3 \bigl  (   \lambda^{(3)}_s  \bigr  ) >   c \sigma_1  (    \lambda^{(3)}_s   ) -d = c \sigma_1  (    (t \lambda + (1-t) \tilde{\lambda}  )^{(3)}_s   ) -d.
\end{align*}

We can do this process on each entry to connect any two points in $\Upsilon^n_{3; 1, 0, -c, d} \cap \Upsilon^n_{2; 1, 0, -c} \cap \Upsilon^n_{1; 1, 0}$, so this set has only one connected component. \bigskip

Last, we try to show that $\Upsilon^n_{3; 1, 0, -c, d} \cap \Upsilon^n_{2; 1, 0, -c} \cap \Upsilon^n_{1; 1, 0}$ is indeed one of the connected component of $\Upsilon^n_{3; 1, 0, -c, d}$. If not, consider the connected component of $\Upsilon^n_{3; 1, 0, -c, d}$ containing $\Upsilon^n_{3; 1, 0, -c, d} \cap \Upsilon^n_{2; 1, 0, -c} \cap \Upsilon^n_{1; 1, 0}$, then there exists a $\lambda$ in this connected component such that $\sigma_2 \bigl (   \lambda^{(2)}_s   \bigr) - c = 0$ for some $s \in S_n$. By fixing this $\lambda$ and $s \in S_n$, we have
\begin{align*}
0 < \sigma_3 \bigl (   \lambda^{(3)}_s   \bigr) -c \sigma_1 \bigl (   \lambda^{(3)}_s   \bigr) + d \sigma_0 \bigl (   \lambda^{(3)}_s   \bigr) &= \lambda_{s(1)} \lambda_{s(2)} \lambda_{s(3)} - c \bigl( \lambda_{s(1)} + \lambda_{s(2)} + \lambda_{s(3)} \bigr) +d \\
&= -c \bigl( \lambda_{s(1)} + \lambda_{s(2)}  \bigr) + d. 
\end{align*}So by the Cauchy--Schwarz inequality, we get $d > c \bigl( \lambda_{s(1)} + \lambda_{s(2)} \bigr) \geq 2 c\sqrt{\lambda_{s(1)}  \lambda_{s(2)}} = 2 c^{3/2}$, which leads to a contradiction.\bigskip

To show (c), we first prove that $\Upsilon^n_{3; 0, c, -d, e} \cap \Upsilon^n_{2; 1, 0, -c} \cap \Upsilon^n_{1; 1, 0}$ only has one connected component. Let $\lambda = (\lambda_1, \lambda_2, \cdots, \lambda_n) \in \Upsilon^n_{3; 0, c, -d, e} \cap \Upsilon^n_{2; 1, 0, -c} \cap \Upsilon^n_{1; 1, 0}$. We can show that if we have $\tilde{\lambda} = (\tilde{\lambda}_1, \lambda_2, \cdots, \lambda_n) \in \Upsilon^n_{3; 0, c, -d, e} \cap \Upsilon^n_{2; 1, 0, -c} \cap \Upsilon^n_{1; 1, 0}$ with $\tilde{\lambda}_1 \geq \lambda_1$, then $t \lambda + (1-t) \tilde{\lambda} \in \Upsilon^n_{3; 0, c, -d, e} \cap \Upsilon^n_{2; 1, 0, -c} \cap \Upsilon^n_{1; 1, 0}$ for $t \in [0, 1]$. According to (a), we only need to show that $t \lambda + (1-t) \tilde{\lambda} \in \Upsilon^n_{3; 0, c, -d, e} \cap \Upsilon^n_{2; 1, 0, -c}$ for $t \in [0, 1]$. First, for $\Upsilon^n_{2; 1, 0, -c}$, we get
\begin{align*}
\sigma_2 \bigl (   (t \lambda + (1-t) \tilde{\lambda}  )^{(2)}_s   \bigr) &=  ( t \lambda_{s(1)} + (1-t) \tilde{\lambda}_{s(1)}    )    ( t \lambda_{s(2)} + (1-t) \tilde{\lambda}_{s(2)}    )   \\
&= t^2 \sigma_2 \bigl ( \lambda^{(2)}_s   \bigr) + t(1-t)  ( \lambda_{s(1)} \tilde{\lambda}_{s(2)} +  {\lambda}_{s(2)} \tilde{\lambda}_{s(1)}  )  + (1-t)^2 \sigma_2 \bigl ( \tilde{\lambda}^{(2)}_s   \bigr).
\end{align*}There are two cases, if $s(i) = 1$ for some $i \in \{1, 2\}$, say $s(1) = 1$ for convenience, then we get
\begin{align*}
\sigma_2 \bigl (   (t \lambda + (1-t) \tilde{\lambda}  )^{(2)}_s   \bigr) &=  t \sigma_2 \bigl ( \lambda^{(2)}_s   \bigr)  + (1-t) \sigma_2 \bigl ( \tilde{\lambda}^{(2)}_s   \bigr) >  ct + c(1-t) = c.
\end{align*}If $s(i) \neq 1$ for all $i \in \{1, 2, 3\}$, then we have
\begin{align*}
\sigma_2 \bigl (   (t \lambda + (1-t) \tilde{\lambda}  )^{(2)}_s   \bigr) &= \sigma_2 \bigl (   \lambda^{(3)}_s   \bigr) >   c.
\end{align*}So $t \lambda + (1-t) \tilde{\lambda} \in   \Upsilon^n_{2; 1, 0, -c}$ for $t \in [0, 1]$. For $\Upsilon^n_{3; 0, c, -d, e}$, we see
\begin{align*}
\kern1em
&\kern-1em c\sigma_2 \bigl (   (t \lambda + (1-t) \tilde{\lambda}  )^{(3)}_s   \bigr) \\
&=   c   ( t \lambda_{s(1)} + (1-t) \tilde{\lambda}_{s(1)}    )    ( t \lambda_{s(2)} + (1-t) \tilde{\lambda}_{s(2)}    ) + c   ( t \lambda_{s(1)} + (1-t) \tilde{\lambda}_{s(1)}    )    ( t \lambda_{s(3)} + (1-t) \tilde{\lambda}_{s(3)}    ) \\
&\kern2em + c \bigl ( t \lambda_{s(2)} + (1-t) \tilde{\lambda}_{s(2)}  \bigr )  \bigl ( t \lambda_{s(3)} + (1-t) \tilde{\lambda}_{s(3)}  \bigr ). 
\end{align*}Similarly, we consider these two cases, if $s(i) = 1$ for some $i \in \{1, 2, 3\}$, say $s(1) = 1$ for convenience, then we get
\begin{align*}
c \sigma_2 \bigl (   (t \lambda + (1-t) \tilde{\lambda}  )^{(3)}_s   \bigr) 
&> d \sigma_1 \bigl(    (t \lambda + (1-t) \tilde{\lambda}  )^{(3)}_s  \bigr) - e.
\end{align*}If $s(i) \neq 1$ for all $i \in \{1, 2, 3\}$, then we have
\begin{align*}
c \sigma_2 \bigl (   (t \lambda + (1-t) \tilde{\lambda}  )^{(3)}_s   \bigr) &= c \sigma_2 \bigl (   \lambda^{(3)}_s   \bigr) >   d \sigma_1 \bigl(    \lambda^{(3)}_s  \bigr) -e = d \sigma_1 \bigl(    (t \lambda + (1-t) \tilde{\lambda}  )^{(3)}_s  \bigr) -e.
\end{align*}
Similar as before, we can do this process to connect any two points in $\Upsilon^n_{3; 0, c, -d, e} \cap \Upsilon^n_{2; 1, 0, -c} \cap \Upsilon^n_{1; 1, 0}$, so this set has only one connected component. \bigskip



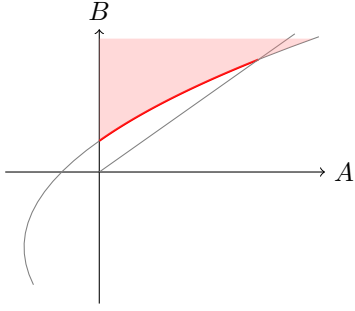
\begin{figure}
\centering
\begin{tikzpicture}[scale=0.5]
  \draw[->] (-2.5,0) -- (6,0) node[right] {$A$};
  \draw[->] (0,-3.5) -- (0,3.8) node[above] {$B$};
  \draw[color=gray,domain=-3:3.6]    plot ({(\x+2)*(\x+2)/4 -2},\x)             ;
  \draw[color=red,thick,domain={2*sqrt(2)-2}:{2*sqrt(2)-2+sqrt(16-8*sqrt(2))},samples=101]    plot ({(\x+2)*(\x+2)/4 -2},\x)             ;
  \draw[color=gray,domain=0:5.2]    plot (\x,{\x/sqrt(2)})             ;
\begin{scope}
    \fill[color=red!50,opacity=0.3,thick,domain={2*sqrt(2)-2}:3.55,samples=101]
    (0, 3.55) -- plot ({(\x+2)*(\x+2)/4 -2},\x) ;
  \end{scope}
\end{tikzpicture}
\caption{The defining region for $A$ and $B$}
\label{fig:2.1}
\end{figure}

For (d), the proof is straightforward. To prove (e), we prove by contradiction. Say there exists a $\lambda \in \Upsilon^n_{3; 1, 0, -c, d} \cap \Upsilon^n_{2; 1, 0, -c} \cap \Upsilon^n_{1; 1, 0}$ satisfying $c\sigma_2 \bigl( \lambda^{(3)}_s \bigr) = d \sigma_1 \bigl( \lambda^{(3)}_s \bigr) -e$ for some $s \in S_n$. That is,
\begin{align*}
c \bigl(  \lambda_{s(1)} \lambda_{s(2)} +   \lambda_{s(1)} \lambda_{s(3)} + \lambda_{s(2)} \lambda_{s(3)}  \bigr) = d \bigl(    \lambda_{s(1)}   +   \lambda_{s(2)} + \lambda_{s(3)} \bigr) - e,
\end{align*}which implies that
\begin{align*}
\label{eq:2.2}
\lambda_{s(1)} = \frac{-c \lambda_{s(2)} \lambda_{s(3)} + d  ( \lambda_{s(2)} + \lambda_{s(3)}  ) -e }{c   ( \lambda_{s(2)} + \lambda_{s(3)}  ) -d }. \tag{2.2}
\end{align*}By equation (\ref{eq:2.2}) and the hypothesis that $\lambda \in \Upsilon^n_{3; 1, 0, -c, d}$, we get
\begin{align*}
\label{eq:2.3}
0 < \sigma_3 \bigl( \lambda^{(3)}_s \bigr) - c \sigma_1 \bigl( \lambda^{(3)}_s \bigr)  +d &= \lambda_{s(1)} \lambda_{s(2)} \lambda_{s(3)} - c \bigl( \lambda_{s(1)} + \lambda_{s(2)} + \lambda_{s(3)}  \bigr) + d \tag{2.3} \\
&= \frac{1}{c^2 B} \bigl( cdAB+d^2 A-c^2 A^2-c^3A - ceA  -c^3 B^2  \bigr) \\
&= \frac{-1}{c^2 B} \bigl( c^2 A^2 -  cdAB + c^3 B^2 - CA     \bigr),
\end{align*}where we denote $A \coloneqq \lambda_{s(2)} \lambda_{s(3)} - c$, $B \coloneqq \bigl( \lambda_{s(2)} + \lambda_{s(3)} \bigr) - d/c \geq 2\sqrt{\lambda_{s(2)}  \lambda_{s(3)}}- d/c = 2 \sqrt{A+c} -d/c$, and $C \coloneqq -c^3  -ce + d^2$. Thus, $C > c^2 A -cd B + c^3  {B^2}/{A}$. In particular, if $2 c^{3/2} -d >0$, then the pink shaded region in Figure~\ref{fig:2.1} is the defining domain for $A$ and $B$.\bigskip

By checking the partial derivatives of $c^2 A -cd B + c^3  {B^2}/{A}$ with respect to $A$ and $B$, the infimum of $c^2 A -cd B + c^3  {B}/{A}$ will be obtained on the curve $B = 2 \sqrt{A+c} -  {d}/{c}$, where $B \in \bigl(2\sqrt{c} -d/c, 2\sqrt{c} -d/c + 2 \sqrt{2c -  d / c^{1/2}} \bigr]$. To find the minimum, we substitute $\tilde{B} = B + d/c$ and $A = \tilde{B}^2/4 - c$, so $\tilde{B} \in \bigl(2\sqrt{c}, 2\sqrt{c} + 2 \sqrt{2c -  d / c^{1/2}} \bigr]$. We define
\begin{align*}
h(\tilde{B}) &\coloneqq c^2 A -cd B + c^3  \frac{B}{A} =  c^2 \bigl(  {\tilde{B}^2}/{4} - c   \bigr) -cd \bigl(\tilde{B} -   {d}/{c} \bigr) + c^3 \frac{\tilde{B} -   {d}/{c}}{ {\tilde{B}^2}/{4} -c} \\
&=   \frac{c^2 \tilde{B}^4 - 4cd \tilde{B}^3 + 4(2c^3 +d^2)\tilde{B}^2 -16c^2d \tilde{B} + 16c^4 }{4 \tilde{B}^2 - 16c}.
\end{align*}The derivative withe respect to $\tilde{B}$ will give us
\begin{align*}
h'(\tilde{B}) 
&= \frac{ c \cdot  (c \tilde{B}^2  -2d \tilde{B} + 4c^2  ) \cdot  (  \tilde{B}^3 -12c \tilde{B} + 8d    )  }{2 \tilde{B}^4 - 16c \tilde{B}^2 +32c^2}.
\end{align*}By checking that
$2 \tilde{B}^4 - 16c \tilde{B}^2 +32c^2 > 0$ and $c \tilde{B}^2 + 4c^2 -2d \tilde{B} \geq c (2 \sqrt{c})^2 + 4c^2 -2d \cdot 2 \sqrt{c} = 4 \sqrt{c}(2 c^{3/2} -d ) > 0$. So the extremum happens when $\tilde{B}^3 -12c \tilde{B} + 8d = 0$. For a cubic equation of the form $t^3 + pt +q = 0$ (depressed cubic), the discriminant will be $-\bigl( 4p^3 + 27q^2  \bigr)$, that is,
\begin{align*}
-4 (-12c)^3 -27 (8d)^2 = 2^6 3^3 (4c^3 -d^2) > 0
\end{align*}in our case. This is the case {\it casus irreducilis}, where there will be three real roots. If there is no rational roots, then by Galois theory, this cubic equation cannot be solved in radicals in terms of real quantities. But we can still solve it trigonometrically in terms of real quantities as follows: 
\begin{align*}
\tilde{B}_k &= 4 \sqrt{c} \cos \biggl [ \frac{1}{3} \arccos \Bigl(  \frac{-d}{2c^{3/2}}  \Bigr)  - k \frac{2\pi}{3}    \biggr ]  
\end{align*}are the roots of this cubic equation, where $k \in \{0, 1, 2\}$.\bigskip

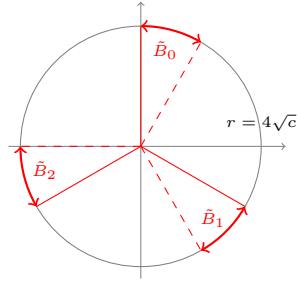
\begin{figure}
\centering
\begin{tikzpicture}[scale=1.6]
\clip (-1.3,-1.3) rectangle (1.3,1.3);
\draw[gray,ultra thin,->] (-1.1,0) -- (1.2,0) coordinate (x axis);
\draw[gray,ultra thin,->] (0,-1.1) -- (0,1.2) coordinate (y axis);
\draw[thin,gray] (0,0) circle (1cm);
    \draw[dashed,red] (0,0) -- (1/2,{sqrt(3)/2});
    \draw[red] (0,0) -- (0,1);
    \draw[red] (0,0) -- ({sqrt(3)/2},-1/2);
    \draw[dashed,red] (0,0) -- (1/2,{-sqrt(3)/2});
    \draw[dashed,red] (0,0) -- ({-1},0);
    \draw[red] (0,0) -- ({-sqrt(3)/2},-1/2);
\node[font=\tiny] at (1,0.2) {$r = 4 \sqrt{c}$};
\draw[<->, thick, red] (0, 0) +(180:1cm) arc (180:210:1cm) node[font=\tiny] at (-0.8,-0.2) {$\tilde{B}_2$};
\draw[<->, thick, red] (0, 0) +(60:1cm) arc (60:90:1cm) node[font=\tiny] at (0.2,0.8) {$\tilde{B}_0$};
\draw[<->, thick, red] (0, 0) +(300:1cm) arc (300:330:1cm) node[font=\tiny] at (0.6,-0.6) {$\tilde{B}_1$};
\end{tikzpicture}
\caption{The angle range of $\tilde{B}_i$}
\label{fig:2.2}
\end{figure}

By hypothesis, we get $-1 <  {-d}/{2c^{3/2}} \leq 0$, so for convenience in later use, we specify the branch so that $\arccos \bigl(   {-d}/{2c^{3/2}}  \bigr) \in \bigl( \pi, 3\pi/2 \bigr]$. Hence, $ \arccos  (   \bullet    )/3 \in  (  {\pi}/{3},   {\pi}/{2}  ]$, which implies that $2\sqrt{3c} \geq \tilde{B}_1 > 2\sqrt{c} > \tilde{B}_0 \geq 0 > - 2\sqrt{3c} \geq \tilde{B}_2 > -4\sqrt{c}$. See Figure~\ref{fig:2.2} for $\tilde{B}_i$'s angle range.\bigskip

We observe that $\tilde{B}_1$ will be the only critical point in $\bigl(2\sqrt{c}, 2\sqrt{c} + 2 \sqrt{2c -  d / c^{1/2}} \bigr]$ and will be the local minimum. Hence,
\begin{align*}
h(\tilde{B}_1) &=  \frac{c^2 \tilde{B}_1^4 - 4cd \tilde{B}_1^3 + 4(2c^3 +d^2)\tilde{B}_1^2 -16c^2d \tilde{B}_1 + 16c^4 }{4 \tilde{B}_1^2 - 16c} \\
&=  \frac{ \bigl( c^2  \tilde{B}_1 - 4cd \bigr) \bigl(  \tilde{B}_1^3 -12c \tilde{B}_1 + 8d   \bigr) + 4(5c^3 +d^2) \tilde{B}_1^2 -72c^2d \tilde{B}_1  +16c(c^3+2d^2) }{4 \tilde{B}_1^2 - 16c} \\
&=  \frac{   4(5c^3 +d^2) \tilde{B}_1^2 -72c^2d \tilde{B}_1  +16c(c^3+2d^2) }{4 \tilde{B}_1^2 - 16c}  = 5c^3 +d^2 + \frac{-18c^2d \tilde{B}_1 +12c (2c^3 + d^2) }{ \tilde{B}_1^2 - 4c} \\
&= 5c^3 +d^2 + \frac{-18 c^{3/2} d \cos(\theta_{c,d}) +3 (2c^3 + d^2) }{4 \cos^2(\theta_{c, d}) - 1},
\end{align*} where $\theta_{c, d} \coloneqq    \arccos  (   {-d}/{2c^{3/2}}   ) \big/3  - {2\pi}/{3}$. So by inequality (\ref{eq:2.3}), we get
\begin{align*}
C \coloneqq -c^3  -ce + d^2 > c^2 A -cd B + c^3  {B^2}/{A} \geq 5c^3 +d^2 +\frac{-18 c^{3/2} d \cos(\theta_{c,d}) +3 (2c^3 + d^2) }{4 \cos^2(\theta_{c, d}) - 1},
\end{align*}which implies that
\begin{align*}
\label{eq:2.4}
e &< -6c^2 + \frac{18 c^{3/2} d \cos(\theta_{c,d}) -3 (2c^3 + d^2) }{c \bigl(4 \cos^2(\theta_{c, d}) - 1\bigr)} = -3 \frac{ 8c^3 \cos^2(\theta_{c, d}) -6 c^{3/2} d \cos(\theta_{c,d})  +d^2 }{c\bigl(4 \cos^2(\theta_{c, d}) - 1\bigr)} \tag{2.4} \\
&= -3 \frac{ \bigl ( 4c^{3/2} \cos(\theta_{c, d}) -d \bigr) \bigl ( 2c^{3/2} \cos(\theta_{c, d}) -d \bigr)  }{c\bigl(4 \cos^2(\theta_{c, d}) - 1\bigr)}. 
\end{align*}

We can simplify inequality (\ref{eq:2.4}). By taking a closer look at the denominator, we apply the triple angle identity to $\cos(\theta_{c, d})$ and get
\begin{align*}
4 \cos^2(\theta_{c, d}) - 1 &= \frac{1}{\cos(\theta_{c, d})} \bigl(  4 \cos^3(\theta_{c, d}) - \cos(\theta_{c, d})    \bigr)  = \frac{1}{\cos(\theta_{c, d})}  \bigl(  \cos(3 \theta_{c, d})  + 2 \cos(\theta_{c,d})      \bigr) \\
&= \frac{1}{\cos(\theta_{c, d})}  \Bigl(  \frac{-d}{2c^{3/2}}  + 2 \cos(\theta_{c,d})      \Bigr),
\end{align*}thus inequality (\ref{eq:2.4}) becomes
\begin{align*}
e &< -3 \frac{ \bigl ( 4c^{3/2} \cos(\theta_{c, d}) -d \bigr) \bigl ( 2c^{3/2} \cos(\theta_{c, d}) -d \bigr)  }{c\bigl(4 \cos^2(\theta_{c, d}) - 1\bigr)} = -6 c^{1/2} \cos(\theta_{c, d}) \bigl ( 2c^{3/2} \cos(\theta_{c, d}) -d \bigr) \\
&= -12 c^2 \cos(\theta_{c, d}) \Bigl (  \cos(\theta_{c, d}) - \frac{d}{2c^{3/2}} \Bigr) =  -12 c^2 \cos(\theta_{c, d}) \Bigl (  \cos(\theta_{c, d}) + \cos(3\theta_{c, d})  \Bigr) \\
&= -24 c^2 \cos^2(\theta_{c, d})\cos(2\theta_{c, d}),
\end{align*}which contradicts to our hypothesis.\bigskip

To prove (f), similar as before, we can prove that $\Upsilon^n_{4; 1, 0, -c, d, -e} \cap \Upsilon^n_{3; 1, 0, -c, d} \cap \Upsilon^n_{2; 1, 0, -c} \cap \Upsilon^n_{1; 1, 0}$ only has one connected component. The difficulty will be trying to show that $\Upsilon^n_{4; 1, 0, -c, d, -e} \cap \Upsilon^n_{3; 1, 0, -c, d} \cap \Upsilon^n_{2; 1, 0, -c} \cap \Upsilon^n_{1; 1, 0}$ is indeed one of the connected component of $\Upsilon^n_{4; 1, 0, -c, d, -e}$. If not, consider the connected component of $\Upsilon^n_{4; 1, 0, -c, d, -e}$ containing $\Upsilon^n_{4; 1, 0, -c, d, -e} \cap \Upsilon^n_{3; 1, 0, -c, d} \cap \Upsilon^n_{2; 1, 0, -c} \cap \Upsilon^n_{1; 1, 0}$, then there exists a $\lambda$ in this connected component such that $\sigma_3 \bigl (   \lambda^{(3)}_s   \bigr) - c \sigma_1 \bigl (   \lambda^{(3)}_s   \bigr) + d = 0$ for some $s \in S_n$. By fixing this $\lambda$ and $s \in S_n$, we have
\begin{align*}
0 &< \sigma_4 \bigl (   \lambda^{(4)}_s   \bigr) -c \sigma_2 \bigl (   \lambda^{(4)}_s   \bigr) + d \sigma_1 \bigl (   \lambda^{(4)}_s   \bigr) - e \\
&= \lambda_{s(4)} \bigl(  \sigma_3 \bigl (   \lambda^{(3)}_s   \bigr) - c \sigma_1 \bigl (   \lambda^{(3)}_s   \bigr) + d  \bigr)   -c  \sigma_2 \bigl(  \lambda_s^{(3)}   \bigr)   + d \sigma_1 \bigl( \lambda_s^{3} \bigr) - e \\
&=  -c  \sigma_2 \bigl(  \lambda_s^{(3)}   \bigr)   + d \sigma_1 \bigl( \lambda_s^{3} \bigr) - e.
\end{align*}That is, $c  \sigma_2 \bigl(  \lambda_s^{(3)}   \bigr)   - d \sigma_1 \bigl( \lambda_s^{3} \bigr) + e < 0$, which contradicts to part (e) that $\Upsilon^n_{3; 1, 0, -c, d} \cap \Upsilon^n_{2; 1, 0, -c} \cap \Upsilon^n_{1; 1, 0}$ is contained in $\Upsilon^n_{3; 0, c, -d, e} \cap \Upsilon^n_{2; 1, 0, -c} \cap \Upsilon^n_{1; 1, 0}$.
\end{proof}

\subsection{\texorpdfstring{$C$}{}-Subsolution and \texorpdfstring{$\Upsilon$}{}-cones}
\label{sec:2.3}



\hypertarget{D:2.2}{\begin{fdefi}[\texorpdfstring{$C$}{}-Subsolution. Székelyhidi \cite{szekelyhidi2018fully} (see also Guan \cite{guan2014second} and Trudinger \cite{trudinger1995dirichlet})]} Consider an equation $F(\Lambda) = h$ on a compact connected Hermitian manifold of dimension $n$, where $\Lambda$ is the Hermitian Endomorphism $\omega^{i \bar{k}} \bigl ( X_u   \bigr )_{j \bar{k}}$ and $F(\Lambda) = f(\lambda_1, \cdots, \lambda_n)$ is a smooth symmetric function of the eigenvalues of $\Lambda$. We assume that $f$ is defined in an open symmetric cone $\Gamma_f \subset \mathbb{R}^n$ satisfying $f > 0$ on $\Gamma_f$, $\partial f/\partial \lambda_i < 0$ for all $i$, and $\inf_{\partial \Gamma_f}  f > \sup_M h$. We say that a smooth function $\ubar{u} \colon M   \rightarrow \mathbb{R}$ is a $C$-subsolution to the equation $F( \Lambda ) = h$ if the following holds: At each point $p \in M$ define the matrix $U^i_j \coloneqq \omega^{i \bar{k}} \bigl (  X + \sqrt{-1} \partial \bar{\partial} \ubar{u}   \bigr )_{j \bar{k}}$, then we require that the set
\begin{align*}
\label{eq:2.5}
F^{h}(\mu) (p) \coloneqq \bigl \{  \lambda  \colon  {f}(\lambda) = h(p) \text{ and } \lambda - \mu(p) \in \Gamma_n  \bigr \} \tag{2.5}
\end{align*}is bounded, where $\mu(p)$ is the $n$-tuple of eigenvalues of $U(p)$. 
\end{fdefi}

\begin{frmk}
In our paper, compared to the sign of $\partial f/\partial \lambda_i$ in Székelyhidi \cite{szekelyhidi2018fully}, we reverse the sign for convenience. In addition, we do not assume all the structural conditions; for example, we do not assume that $f$ is convex nor $\Gamma_f$ contains the positive orthant. Last, we assume $f > 0$ on $\Gamma_f$ which naturally gives us a lower bound, the reason is that in this paper, we are only considering equations satisfying these properties.
\end{frmk}

\hypertarget{D:2.3}{\begin{fdefi}[Alternative definition of Definition~\hyperlink{D:2.2}{2.2}. Székelyhidi \cite{szekelyhidi2018fully} (see also Trudinger \cite{trudinger1995dirichlet})]} 
Suppose that $f$ is defined in an open symmetric cone $\Gamma_f \subset \mathbb{R}^n$ satisfying $f > 0$ on $\Gamma_f$, $\partial f/\partial \lambda_i < 0$ for all $i$, and $\inf_{\partial \Gamma_f}  f > \sup_M h$. Define 
\begin{align*}
\label{eq:2.6}
\Gamma_f^h \coloneqq \bigl \{ \lambda \in \Gamma_f \colon f(\lambda) < h  \bigr \}. \tag{2.6}
\end{align*}For $\mu \in \mathbb{R}^n$, the set (\ref{eq:2.5}) $F^{h}(\mu)$ is bounded if and only if $\lim_{t \rightarrow \infty} f(\mu + t e_i) < h$ for all $i$, where $e_i$ is the $i$-th standard vector. We denote by ${\Gamma}_f^{n-1, h}$ the projection of $\Gamma_f^h$ onto $\mathbb{R}^{n-1}$ by dropping the last entry. Then for any $\mu' = (\mu_1, \cdots, \mu_{n-1} ) \in {\Gamma}_f^{n-1, h}$, define the function $f^{(n-1)}$ on ${\Gamma}_f^{n-1, h}$ by the following limit 
\begin{align*}
f^{(n-1)} (\mu_1, \cdots, \mu_{n-1}) &\coloneqq \lim_{\lambda_n \rightarrow \infty} f(\mu_1, \cdots, \mu_{n-1}, \lambda_n) \geq 0.
\end{align*}
First, the set $F^{h}(\mu)$ is bounded if and only if $f^{(n-1)} \bigl(\mu_{s(1)}, \cdots, \mu_{s(n-1)} \bigr) < h$ for every $s \in S_n$, where $S_n$ is the permutation group. This is well-defined since $f$ is a symmetric function. We can show that for $\mu \in \mathbb{R}^n$, $F^h(\mu)$ is bounded if and only if $\bigl(\mu_{s(1)}, \cdots, \mu_{s(n-1)}\bigr) \in \Gamma^{n-1, h}_f$ for every $s \in S_n$.

\end{fdefi}


Inspired by the work of Trudinger \cite{trudinger1995dirichlet} on the Dirichlet problem (over the reals) for equations of the eigenvalues of the Hessian (see also the results of Caffarlli--Nirenberg--Spruck \cite{caffarelli1985dirichlet} and Collins--Székelyhidi \cite{collins2017convergence}). We not only consider the projection onto a codimension one plane, we actually keep doing this process to record as much informations as we can. We introduce the following $\Upsilon$-cones, which will well describe how the $C$-subsolution cone of the dHYM equation changes along the continuity path. Roughly speaking, we will see how the cone, the cone of the cone, the cone of the cone of the cone, etc change.

\hypertarget{D:2.4}{\begin{fdefi}[$\Upsilon$-cones]} Consider an equation $F(\Lambda) = h$ on a compact connected Hermitian manifold of dimension $n$, where $\Lambda$ is the Hermitian Endomorphism $\omega^{i \bar{k}} \bigl ( X_u   \bigr )_{j \bar{k}}$ and $F(\Lambda) = f(\lambda_1, \cdots, \lambda_n)$ is a smooth symmetric function of the eigenvalues of $\Lambda$. We assume that $f$ is defined in an open symmetric cone $\Gamma_f \subset \mathbb{R}^n$ satisfying $f > 0$ on $\Gamma_f$, $\partial f/\partial \lambda_i < 0$ for all $i$, and $\inf_{\partial \Gamma_f}  f > \sup_M h$. At $p \in M$, we define $\Upsilon_1(p)$ to be
\begin{align*}
\Upsilon_1(p) \coloneqq \bigl \{  \mu \in \mathbb{R}^n  \colon    \bigl(\mu_{s(1)}, \cdots, \mu_{s(n-1)}\bigr) \in \Gamma^{n-1, h(p)}_f,\quad \forall s \in S_n \bigr \},  
\end{align*}where $\Gamma^{n-1, h}_f$ is defined in Definition~\hyperlink{D:2.3}{2.3} and for $n-1 \geq k \geq 2$, we define the following $\Upsilon$-cones
\begin{align*}
\Upsilon_k (p) \coloneqq \bigl \{  \mu \in \mathbb{R}^n  \colon    \bigl(\mu_{s(1)}, \cdots, \mu_{s(n-k)}\bigr) \in \Gamma^{n-k, h(p)}_f,\quad \forall s \in S_n  \bigr \},  
\end{align*}where we define $ \Gamma^{n-k, h(p)}_f$ inductively by the projection of $ \Gamma^{n+1-k, h(p)}_f$ onto $\mathbb{R}^{n-k}$ by dropping the last entry.
\end{fdefi}

\hypertarget{R:2.3}{\begin{frmk}}
$\Upsilon_1$ is the $C$-subsolution cone introduced by Székelyhidi \cite{szekelyhidi2018fully}.
\end{frmk}



By our definition, we get this Noetherian ascending chain structure:
\hypertarget{P:2.2}{\begin{fprop}}For an equation $F(\Lambda) = h$ on a compact connected Hermitian manifold of dimension $n$, where $\Lambda$ is the Hermitian Endomorphism $\omega^{i \bar{k}} \bigl ( X_u   \bigr )_{j \bar{k}}$ and $F(\Lambda) = f(\lambda_1, \cdots, \lambda_n)$ is a smooth symmetric function of the eigenvalues of $\Lambda$. We assume that $f$ is defined in an open symmetric cone $\Gamma_f \subset \mathbb{R}^n$ satisfying $f > 0$ on $\Gamma_f$, $\partial f/\partial \lambda_i < 0$ for all $i$ and $\inf_{\partial \Gamma_f}  f > \sup_M h$. At each point $p \in M$, we have
\begin{align*}
F^h(p) \subset \Upsilon_1(p) \subseteq \Upsilon_2(p) \subseteq \cdots  \subseteq \Upsilon_{n-1}(p),
\end{align*}where $F^h(p) \coloneqq \bigl\{  \lambda \in \Gamma_f \colon f(\lambda) < h(p)   \bigr\}$ is an open subset of $\mathbb{R}^n$. Moreover, the solution set $ \partial {F^h(p)} \subset \Upsilon_1(p)$ and $\Upsilon_k(p)$ are open subsets of $\mathbb{R}^n$ for any $k \in \{1, \cdots, n-1\}$.
\end{fprop}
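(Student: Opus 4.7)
The plan is to split the proposition into four sub-claims and dispatch each by unwinding the nested definition of the $\Gamma^{n-k,h}_{f}$, leaning on exactly two features of $f$: its symmetry and the strict monotonicity $\partial f/\partial \lambda_i < 0$. I expect no substantial obstacle; the entire argument is a structured definition-chase plus one application of strict monotonicity.

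First I would verify $F^h(p)\subset\Upsilon_1(p)$. For $\lambda\in F^h(p)$, symmetry gives $f(\lambda_{s(1)},\ldots,\lambda_{s(n)})=f(\lambda)<h(p)$ for every $s\in S_n$, so the full $n$-tuple lies in $\Gamma^h_f$, and its projection onto the first $n-1$ coordinates lies in $\Gamma^{n-1,h(p)}_f$ by the definition of the latter. This yields $\lambda\in\Upsilon_1(p)$.

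Next I would address $\partial F^h(p)\subset\Upsilon_1(p)$, which is the one place strict monotonicity really matters. For any $\lambda$ with $f(\lambda)=h(p)$ and any $s\in S_n$, the hypothesis $\partial f/\partial\lambda_n<0$ makes $t\mapsto f(\mu',t)$ strictly decreasing, so
$$f^{(n-1)}(\lambda_{s(1)},\ldots,\lambda_{s(n-1)})=\lim_{t\to\infty} f(\lambda_{s(1)},\ldots,\lambda_{s(n-1)},t)<f(\lambda)=h(p),$$
placing $\lambda\in\Upsilon_1(p)$. Note that $\lambda\in\Gamma_f$ (and not on $\partial\Gamma_f$) because of the hypothesis $\inf_{\partial\Gamma_f}f>\sup_M h$.

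Third, I would chain the inclusions $\Upsilon_1(p)\subseteq\Upsilon_2(p)\subseteq\cdots\subseteq\Upsilon_{n-1}(p)$ by induction on $k$. If $\mu\in\Upsilon_k(p)$, then $(\mu_{s(1)},\ldots,\mu_{s(n-k)})\in\Gamma^{n-k,h(p)}_f$ for every $s\in S_n$, and by the inductive definition of $\Gamma^{n-k-1,h(p)}_f$ as the coordinate projection of $\Gamma^{n-k,h(p)}_f$, dropping the last entry produces a point of $\Gamma^{n-k-1,h(p)}_f$. Since this works uniformly in $s$, we conclude $\mu\in\Upsilon_{k+1}(p)$.

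Finally, openness is handled by a parallel induction. The base set $\Gamma^h_f\subset\mathbb{R}^n$ is open by continuity of $f$ together with openness of $\Gamma_f$. Assuming $\Gamma^{n-k,h(p)}_f$ is open in $\mathbb{R}^{n-k}$, its image $\Gamma^{n-k-1,h(p)}_f$ under the coordinate projection $\mathbb{R}^{n-k}\to\mathbb{R}^{n-k-1}$ is open because linear projections are open maps. Each $\Upsilon_k(p)$ is the finite intersection, over $s\in S_n$, of preimages of these open sets under continuous coordinate-selection maps $\mathbb{R}^n\to\mathbb{R}^{n-k}$, hence open. The only mildly delicate point throughout is remembering to use strict (as opposed to merely non-strict) monotonicity in step two, which is what guarantees the boundary locus lands inside the open cone $\Upsilon_1(p)$ rather than merely its closure.
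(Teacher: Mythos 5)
Your proof is correct and takes essentially the same (and really the only natural) route; the paper itself omits the argument entirely, saying only "The proof should be straightforward," so your write-up is exactly the definition-chase the author had in mind. You correctly isolate the one place where strict monotonicity $\partial f/\partial\lambda_i<0$ is genuinely needed (to push a solution $f(\lambda)=h(p)$ strictly below the level set after projection, so that $\partial F^h(p)$ lands in the open cone $\Upsilon_1(p)$ rather than merely its closure), and you correctly invoke the hypothesis $\inf_{\partial\Gamma_f} f > \sup_M h$ to rule out boundary points of $F^h(p)$ sitting on $\partial\Gamma_f$. The openness argument via openness of linear coordinate projections and finite intersections of preimages of open sets under the continuous coordinate-selection maps is also exactly right.
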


\begin{proof}
The proof should be straightforward.
\end{proof}

 

\begin{figure}
	\centering
	\includegraphics[width=0.8\textwidth]{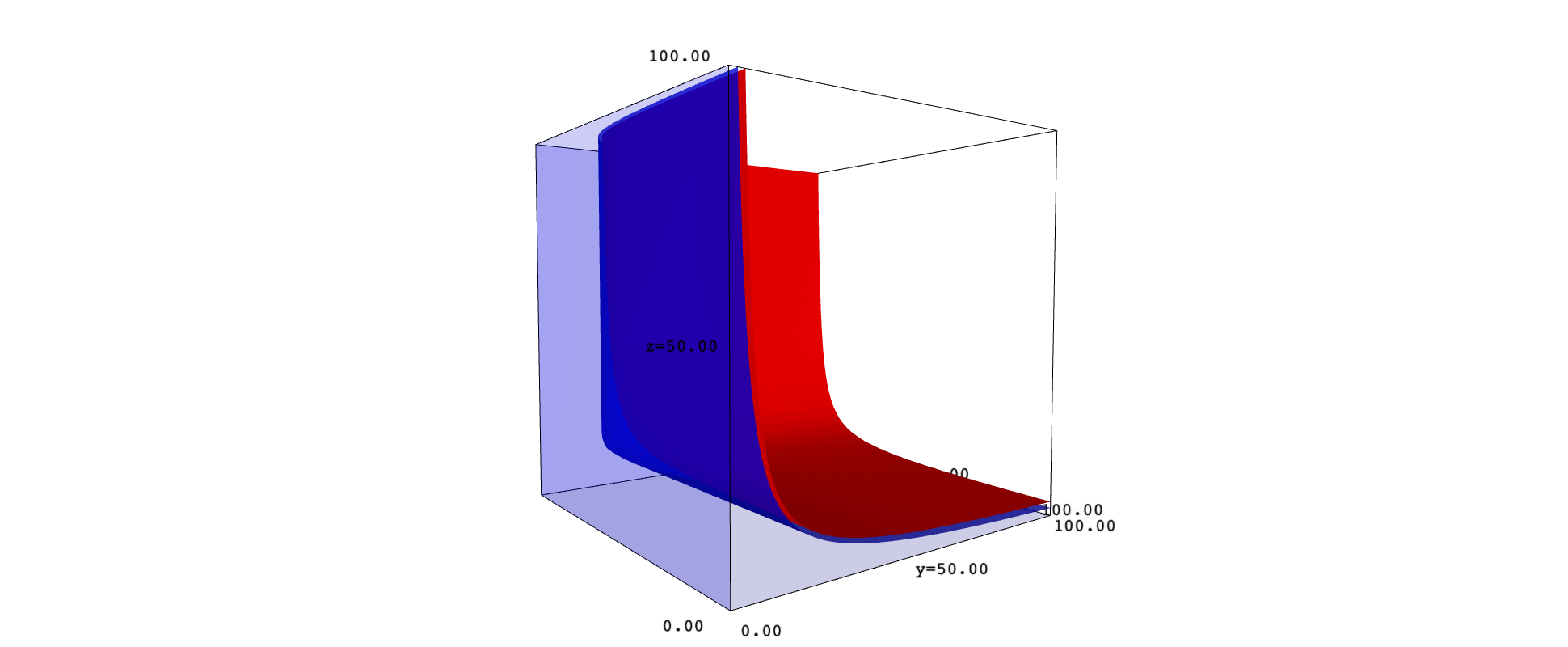}
	\caption{$\Upsilon$-cones for the three dimensional dHYM equation}
	\label{fig:2.3}
\end{figure}

Above Figure \ref{fig:2.3} is an example of the $\Upsilon$-cones for the three dimensional dHYM equation. The red hyperplane is the solution set $\{f(\lambda) =h\}$, the darker blue cone in between is the boundary of the $\Upsilon_1$-cone. By Remark~\hyperlink{R:2.3}{2.3}, the $\Upsilon_1$-cone is the $C$-subsolution cone introduced by Székelyhidi \cite{szekelyhidi2018fully}. Last, the outermost lighter blue cone is the boundary of the $\Upsilon_2$-cone. In fact, the $\Upsilon_2$-cone will be the positive orthant in this case.


\section{Constraints on Continuity Path}
\label{sec:3}

In this section, we will introduce the continuity paths we are considering and explain the reasons why we consider these continuity paths. The idea is to deform the original dHYM equation to some other type of equation of which we already knew the solvability. Along this continuity path, the $C$-subsolution will be preserved and hence the $C$-subsolution will provide us a priori estimates all over the continuity path. A small drawback is that we lose some convexity when we are moving along the path, but we get the solvability.

\subsection{When \texorpdfstring{$n =3$}{}}
\label{sec:3.1}
Here, we only consider the phase $\hat{\theta} \in \bigl( \pi/2, 5 \pi/6    \bigr)$ and the dHYM equation will be 
\begin{align*}
\Im   \bigl (  \omega + \sqrt{-1} \chi   \bigr )^3      = \tan(\hat{\theta}) \cdot  \Re   \bigl (  \omega +  \sqrt{-1} \chi    \bigr )^3   .
\end{align*}By doing a substitution $X \coloneqq \chi -\ta \omega$, the dHYM equation becomes
\begin{align*}
\label{eq:3.1}
X^3 - 3 \s \omega^2 \wedge X -2 \ta \s \omega^3 = 0. \tag{3.1}
\end{align*}We multiply equation (\ref{eq:3.1}) by $\css$ and get
\begin{align*}
\label{eq:3.2}
\css X^3 - 3   \omega^2 \wedge X -2 \ta   \omega^3 = 0. \tag{3.2}
\end{align*}The issue is that for our phase $\hat{\theta}$, $\ta < 0$, otherwise we can solve it using the generalized inverse $\sigma_k$ equation, see Collins--Székelyhidi \cite{collins2017convergence}. We consider the following continuity path inspired by Pingali \cite{pingali2019deformed}, that is,
\begin{align*}
\label{eq:3.3}
\css X^3 - 3 c_1(t)   \omega^2 \wedge X -2 c_0(t) \ta   \omega^3 = 0. \tag{3.3}
\end{align*}Here $t \in [0, 1]$ and $c_0(t)$ and $c_1(t)$ are smooth functions in $t$ which satisfy all the following \hypertarget{dim 3 cons}{4-dimensional version of the four constraints}:
\begin{enumerate}[leftmargin=4.5cm]
	\setlength\itemsep{+0.4em}
\item[Topological constraint:] $\cos^2(\hat{\theta}) \Omega_0 - 3c_1(t)    \Omega_2 - 2 c_0(t) \ta   \Omega_3  =0$.
\item[Boundary constraints:] $c_1(1) = c_0(1) = 1;\quad c_1(0)>0; \quad c_0(0) = 0$.
\item[Positivstellensatz constraint:] $c_1(t)^{ {3}/{2}} > c_0(t) \sin(\hat{\theta})$.
\item[$\Upsilon$-cone constraint:] $c_1'(t) > 0$.
\end{enumerate}\smallskip

Here we denote $\Omega_i \coloneqq \int_M \omega^i \wedge X^{3-i}$. \bigskip



From now on, we will automatically assume the pair $\bigl(c_1(t), c_0(t)\bigr)$ satisfy these \hyperlink{dim 3 cons}{3-dimensional four constraints} unless further notice. In the rest of this section, we will explain why we consider these constraints, the first two are not surprising. In Section~\ref{sec:5.1}, we will show that we can find a pair $\bigl(c_1(t), c_0(t)\bigr)$ such that these constraints will be satisfied. The first constraint, the topological constraint, should always be the same, that is, the integral should always be zero along the continuity path. For the second constraint, when $t = 1$, it is the original equation (\ref{eq:3.2}) and for $t = 0$, it becomes a general complex Hessian equation, which is solvable by the results of Fang--Lai--Ma \cite{fang2011class} provided that there exists a $C$-subsolution to equation (\ref{eq:3.2}).

\hypertarget{R:3.1}{\begin{frmk}}At any point $p \in M$, by picking the coordinates in Lemma~\hyperlink{L:2.2}{2.2}, we can write equation (\ref{eq:3.3}) as
\begin{align*}
\label{eq:3.4}
\css \lambda_1 \lambda_2 \lambda_3  - c_1(t)    \bigl( \lambda_1 + \lambda_2 + \lambda_3 \bigr)  - 2 c_0(t) \ta      = 0. \tag{3.4}
\end{align*}We can rewrite equation (\ref{eq:3.4}) as the following format
\begin{align*}
\label{eq:3.5}
f_t(\lambda_1, \lambda_2, \lambda_3) \coloneqq \frac{c_1(t) (\lambda_1+ \lambda_2 + \lambda_3) + 2c_0(t) \ta}{\lambda_1 \lambda_2 \lambda_3} = \css,   \tag{3.5}
\end{align*}
where $\lambda_i$ are the eigenvalues of $\omega^{-1}X$ at $p$.
\end{frmk}

\hypertarget{R:3.2}{\begin{frmk}}
For convenience, from mow on, we will denote the solution set as $\{f_t = \css\}$. There might be some ambiguities because this set has more than one connected component. But we pick the one such that the enclosed region contains $(R, R, R) \in \mathbb{R}^3$ for $R > 0$ large.
\end{frmk}


\hypertarget{R:3.3}{\begin{frmk}[Positivstellensatz constraint]}
For equation (\ref{eq:3.4}), we can rewrite it as
\begin{align*}
\css \lambda_{s(1)} \bigl(    \lambda_{s(2)} \lambda_{s(3)} - c_1(t) \s    \bigr) = c_1(t) \bigl (\lambda_{s(2)} + \lambda_{s(3)} \bigr ) + 2c_0(t) \ta,
\end{align*}where $s \in S_3$ is a permutation on three objects. By the Positivstellensatz Theorem~\hyperlink{T:2.1}{(a)}, we know $\Upsilon^3_{2; 1, 0, -c_1(t) \s } \cap \Upsilon^3_{1; 1, 0}$ is one of the connected component of $\Upsilon^3_{2; 1, 0, -c_1(t) \s }$. To have a nice $\Upsilon$-cone structure, the $C$-subsolution cone must contain the solution set $\{f_t = \css\}$, so we require both quantities $\lambda_{s(1)}$ and $\lambda_{s(2)} \lambda_{s(3)} - c_1(t) \s$ to be positive. This is true when the quantity $c_1(t) \bigl (\lambda_{s(2)} + \lambda_{s(3)} \bigr ) + 2c_0(t) \ta$ is positive (of course, we omit the case when both $\lambda_{s(1)}$ and $\lambda_{s(2)} \lambda_{s(3)} - c_1(t) \s$ are negative).\bigskip

$c_1(t) \bigl (\lambda_{s(2)} + \lambda_{s(3)} \bigr ) + 2c_0(t) \ta$ is always positive if $\Upsilon^3_{2; 1, 0, -c_1(t) \s} \cap \Upsilon^3_{1; 1, 0}$ is contained in $\Upsilon^3_{2; 0, c_1(t) \s, 2c_0(t) \ta \s} \cap \Upsilon^3_{1; 1, 0}$. By the Positivstellensatz Theorem~\hyperlink{T:2.1}{(d)}, this statement holds when $c_1(t)^{ {3}/{2}} > c_0(t) \sin(\hat{\theta})$.


\end{frmk}

\hypertarget{L:3.1}{\begin{flemma}}
A smooth function $\ubar{u} \colon M   \rightarrow \mathbb{R}$ is a $C$-subsolution to equation (\ref{eq:3.5}) if and only if at each point $p \in M$, we have
\begin{align*}
\mu \in  \Upsilon^3_{2; 1, 0, -c_1(t) \s }  \cap \Upsilon^3_{1; 1, 0},
\end{align*}where $  \{  \mu    \}$ are the eigenvalues of the Hermitian endomorphism $\omega^{i \bar{k}}    (  X + \sqrt{-1} \partial \bar{\partial} \ubar{u}       )_{j \bar{k}}(p)$. Note that the $\Upsilon$-cones for equation (\ref{eq:3.5}) at $t$ will be $\Upsilon_1(t) =  \Upsilon^3_{2; \css, 0, -c_1(t) } \cap \Upsilon^3_{1; 1, 0}$ and $\Upsilon_2(t) = \Upsilon^3_{1; 1, 0}$.
\end{flemma}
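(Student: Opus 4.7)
My plan is to apply the alternative form of the $C$-subsolution definition (Definition~\hyperlink{D:2.3}{2.3}) together with Definition~\hyperlink{D:2.4}{2.4} directly to the rational function
\[
f_t(\lambda_1,\lambda_2,\lambda_3)=\frac{c_1(t)(\lambda_1+\lambda_2+\lambda_3)+2c_0(t)\ta}{\lambda_1\lambda_2\lambda_3},
\]
restricted to the connected component of its admissible set singled out by Remark~\hyperlink{R:3.2}{3.2}, namely the component containing $(R,R,R)$ for $R$ large. On this component I would first verify the structural hypotheses of Definition~\hyperlink{D:2.2}{2.2}: $f_t>0$ is immediate, and the computation
\[
\frac{\partial f_t}{\partial\lambda_i}=-\frac{c_1(t)(\lambda_j+\lambda_k)+2c_0(t)\ta}{\lambda_i^2\lambda_j\lambda_k}
\]
combined with Remark~\hyperlink{R:3.3}{3.3} (which uses the Positivstellensatz constraint $c_1(t)^{3/2}>c_0(t)\sin(\hat\theta)$ to force $c_1(t)(\lambda_j+\lambda_k)+2c_0(t)\ta>0$ whenever $\lambda_j\lambda_k>c_1(t)\s$) yields $\partial f_t/\partial\lambda_i<0$ on the positive-orthant branch.

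Next I would compute the two successive limits that define the projected admissible cones. Since the numerator is linear and the denominator is a monomial,
\[
f_t^{(2)}(\mu_1,\mu_2):=\lim_{\lambda_3\to\infty}f_t(\mu_1,\mu_2,\lambda_3)=\frac{c_1(t)}{\mu_1\mu_2},\qquad f_t^{(1)}(\mu_1):=\lim_{\lambda_2\to\infty}\frac{c_1(t)}{\mu_1\lambda_2}=0.
\]
Restricted to the projection of the selected branch, the condition $f_t^{(2)}<\css$ becomes $\css\,\mu_1\mu_2-c_1(t)>0$ together with the retained positivity $\mu_1,\mu_2>0$, whereas $f_t^{(1)}=0<\css$ is automatic and leaves only $\mu_1>0$. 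Symmetrising over $s\in S_3$ and comparing to Definition~\hyperlink{D:2.4}{2.4} yields
\[
\Upsilon_1(t)=\Upsilon^3_{2;\css,0,-c_1(t)}\cap\Upsilon^3_{1;1,0}=\Upsilon^3_{2;1,0,-c_1(t)\s}\cap\Upsilon^3_{1;1,0},\qquad \Upsilon_2(t)=\Upsilon^3_{1;1,0},
\]
where the middle equality is just the rescaling by $\css>0$. The ``if and only if'' assertion of the lemma now follows immediately from Definition~\hyperlink{D:2.3}{2.3}: $F^{\css}(\mu(p))$ is bounded at every $p\in M$ exactly when $(\mu_{s(1)},\mu_{s(2)})\in\Gamma_{f_t}^{2,\css}$ for every $s\in S_3$, and the right-hand side is precisely the cone $\Upsilon_1(t)$ just computed.

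The step I expect to be the main obstacle is the branch selection: the sublevel set $\{f_t<\css\}$ is disconnected by the polar locus $\{\lambda_1\lambda_2\lambda_3=0\}$, so one must justify, using Remark~\hyperlink{R:3.2}{3.2} and the connected-component portion of the Positivstellensatz Theorem~\hyperlink{T:2.1}{2.1(a)} together with the Positivstellensatz constraint, that the ``correct'' admissible component is exactly the positive-orthant one $\Upsilon^3_{2;1,0,-c_1(t)\s}\cap\Upsilon^3_{1;1,0}$ and that any spurious component (for instance an all-negative-coordinate branch) is excluded. Once this is secured, the rest is a routine limit calculation and a relabelling of $\Upsilon$-cone parameters.
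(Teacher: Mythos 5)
Your proposal takes a genuinely different route from the paper. You want to invoke the ``boundedness of $F^h(\mu)$ iff $\lim_{t\to\infty} f(\mu + te_i) < h$ for all $i$'' characterization from Definition~\hyperlink{D:2.3}{2.3} as a black box, then reduce the lemma to the two limit computations $f^{(2)}_t = c_1(t)/(\mu_1\mu_2)$ and $f^{(1)}_t = 0$. The paper instead proves Lemma~\hyperlink{L:3.1}{3.1} \emph{directly}: for one direction it derives an explicit upper bound on $\lambda_2$ (and hence on all eigenvalues) from $\mu_2\mu_3 > c_1(t)\s + \delta$ via the algebraic identity $\lambda_2 = (c_1(\lambda_1+\lambda_3)+2c_0\ta)/(\css\lambda_1\lambda_3 - c_1)$; for the other direction it explicitly constructs unbounded families $(\lambda_1,\lambda_2,\lambda_3)$ with $\lambda_i \ge \mu_i$ whenever $\mu_3 = 0$ or $\mu_2\mu_3 \le c_1\s$, with sign control coming from the Positivstellensatz constraint via the constant $c_{\hat\theta} = \min_t(c_1^{3/2} - c_0\sin\hat\theta) > 0$.

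The concern with your approach is precisely whether the ``iff'' of Definition~\hyperlink{D:2.3}{2.3} is applicable here. That characterization is stated under the hypothesis that $\Gamma_f$ is an open symmetric \emph{cone}; the admissible set for $f_t$ is $\Upsilon^3_{2;1,0,-c_1(t)\s}\cap\Upsilon^3_{1;1,0}$, which is not a cone (the condition $\lambda_i\lambda_j > c_1\s$ is not scale invariant). Moreover, the remark after Definition~\hyperlink{D:2.2}{2.2} explicitly states that this paper does \emph{not} impose the full structural hypotheses of Székelyhidi's framework (concavity of $f$, $\Gamma_f \supset \Gamma_n$), and those are exactly the hypotheses under which the boundedness-via-limits criterion is usually proved. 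So treating the ``iff'' in Definition~\hyperlink{D:2.3}{2.3} as an off-the-shelf fact leaves a gap: in this non-conical, non-concave setting the criterion needs independent verification, and the verification is in substance the explicit two-sided estimate the paper carries out in its proof of the lemma. Your limit computations and the rescaling $\Upsilon^3_{2;\css,0,-c_1}=\Upsilon^3_{2;1,0,-c_1\s}$ are correct, your identification of the branch-selection issue is a good observation, but the branch selection is handled by Remark~\hyperlink{R:3.2}{3.2} and Theorem~\hyperlink{T:2.1}{2.1(a)} as you say; the real obstacle is the unproven ``boundedness iff limits'' step. Note also that the paper's direct proof yields quantitative bounds (e.g.\ $\lambda_2 \le 3c_1\sec^2(\hat\theta)/\mu_3$) that are reused in Lemma~\hyperlink{L:4.6}{4.6}, which an abstract argument would not supply.
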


\begin{proof}
For convenience, we assume that $\mu_1 \geq \mu_2 \geq \mu_3$. Suppose that for any $p \in M$, we have 
\begin{align*}
\mu \in  \Upsilon^3_{2; 1, 0, -c_1(t) \s }  \cap \Upsilon^3_{1; 1, 0}.
\end{align*}Then since $M$ is a compact manifold and the functions $c_1(t)$, $c_0(t)$, $\mu_1$, $\mu_2$, $\mu_3$ are all continuous functions on $M$, there exists a $\delta > 0$ such that for any $p \in M$
\begin{align*}
\mu_2 \mu_3 >  c_1(t) \s + \delta.
\end{align*} 

If $(\lambda_1, \lambda_2, \lambda_3)$ is a solution to equation (\ref{eq:3.4}) with $\lambda_1 \geq \lambda_2 \geq \lambda_3$ satisfying $\lambda_i \geq \mu_i > 0$, then we have
\begin{align*}
\lambda_2 = \frac{c_1(t)  (\lambda_1 + \lambda_3)  + 2c_0(t) \tan(\hat{\theta})   }{ \css \lambda_1 \lambda_3 - c_1(t)   }, 
\end{align*}which gives 
\begin{align*}
\lambda_2 \lambda_3 &= \frac{c_1(t)   (\lambda_1 
\lambda_3 + \lambda_3^2)   + 2c_0(t) \tan(\hat{\theta})   \lambda_3 }{ \css \lambda_1 \lambda_3 - c_1(t)  } \\
&= c_1(t) \sec^2(\hat{\theta}) + \frac{c_1(t)    \lambda_3^2  + 2c_0(t) \tan(\hat{\theta})   \lambda_3  + c_1^2(t) \sec^2(\hat{\theta}) }{ \css \lambda_1 \lambda_3 - c_1(t)  }.
\end{align*}

Since we assume that $\lambda_3$ is bounded from below by $\mu_3$, we claim that $\lambda_2$ is bounded from above. If not, for any $N > 0$, assume we can find a solution $(\lambda_1, \lambda_2, \lambda_3)$ with $\lambda_1 \geq \lambda_2 \geq N$. By picking $N$ sufficiently large, one can prove the following inequality 
\begin{align*}
\lambda_3 = \frac{c_1(t)   (\lambda_1 + \lambda_2)  + 2c_0(t) \tan(\hat{\theta}) }{ \css \lambda_1 \lambda_2 - c_1(t)  }  \leq \frac{3c_1(t)\sec^2(\hat{\theta})}{N},
\end{align*}which gives us a contradiction by picking $N >  {3c_1(t) \sec^2(\hat{\theta})}/{\mu_3}$.\bigskip

On the other hand, if $\ubar{u}$ is a $C$-subsolution, we first suppose that there exists $p \in M$ with $\mu_3(p) = 0$. We may pick $\epsilon > 0$ sufficiently small so that if we set $\lambda_3 \coloneqq \epsilon$ and $\lambda_2 \coloneqq   {c_1(t)} \sec^2(\hat{\theta})/{\epsilon} + 1$, then
\begin{align*}
\lambda_2 \geq \mu_2; \quad \lambda_3 \geq \mu_3.
\end{align*}
For this choice, we can also check that
\begin{align*}
\lambda_2 \lambda_3 &= c_1(t) \sec^2(\hat{\theta}) + \epsilon > c_1(t) \sec^2(\hat{\theta})
\end{align*}and 
\begin{align*}
c_1(t)  (\lambda_2 + \lambda_3)  &\geq 2 c_1(t)   \sqrt{\lambda_2 \lambda_3} > - 2 c_1^{3/2}(t)   \sec(\hat{\theta})      
\geq -2   \sec^3(\hat{\theta}) \bigl( c_{\hat{\theta}}   + c_0(t) \sin(\hat{\theta}) \bigr).
\end{align*}Here, by the Positivstellensatz constraint, we define $c_{\hat{\theta}} \coloneqq \min_{t \in [0, 1]} \bigl( c_1^{3/2}(t) - c_0(t) \sin(\hat{\theta}) \bigr)  > 0$. So, we may set
\begin{align*}
\lambda_1 \coloneqq \frac{c_1(t)  (\lambda_2 + \lambda_3) + 2c_0(t) \tan(\hat{\theta})  }{ \css \lambda_2 \lambda_3 - c_1(t)  } \geq \frac{-2 c_{\hat{\theta}}   \sec^3(\hat{\theta})    }{\epsilon} > 0.
\end{align*}Again, as long as $\epsilon > 0$ is sufficiently small, we get $\lambda_1 \geq \mu_1$. Hence, the set
\begin{align*}
   \{  \lambda  \colon  {f}_t(\lambda) = \css \text{ and } \lambda - \mu(p)  \in \Gamma_n      \}
\end{align*}is not bounded, which contradicts our assumption that $\ubar{u}$ is a $C$-subsolution.\bigskip

To check the second condition, suppose that there exists $p \in M$ such that $c_1(t) \css \geq \mu_2 \mu_3$. We may pick $\lambda_3 = \mu_3$ and $\lambda_2 = \bigl( {c_1(t) \sec^2(\hat{\theta}) + \epsilon}\bigr)/{ \mu_3}$, where $\epsilon > 0$ will be chosen sufficiently small. We claim that $\lambda_2 > \mu_2 > 0$, with this claim we obtain 
\begin{align*}
\lambda_1 &\coloneqq   \frac{c_1(t)    (\lambda_2 + \lambda_3) + 2 c_0(t) \tan(\hat{\theta})   }{ \css \lambda_2 \lambda_3 - c_1(t)    } = \s \frac{c_1(t)    (\lambda_2 + \lambda_3)   + 2c_0(t) \tan(\hat{\theta})  }{\epsilon} \\
&\geq \s \frac{2c_1(t)    \sqrt{\lambda_2 \lambda_3} + 2c_0(t) \tan(\hat{\theta})  }{\epsilon}  \geq -\frac{2 c_{\hat{\theta}}  \sec^3(\hat{\theta})   }{\epsilon},
\end{align*}where $c_{\hat{\theta}} \coloneqq \min_{t \in [0, 1]} \bigl ( c_1^{3/2}(t) - c_0(t) \sin(\hat{\theta})  \bigr) > 0$. This implies that $\lambda_1$ can be unbounded by picking $\epsilon$ sufficiently small, which contradicts to our assumption that $\ubar{u}$ is a $C$-subsolution. 
\end{proof}


\hypertarget{R:3.4}{\begin{frmk}[$\Upsilon$-cone constraint]} 
Along the continuity path, we hope that the $C$-subsolution condition will be maintained, so we can use this $C$-subsolution to obtain a priori estimates. That is, we are checking whether the $\Upsilon$-cones will be shrinking when $t$ goes from $0$ to $1$. By Lemma~\hyperlink{L:3.1}{3.1}, the $\Upsilon$-cones at $t$ will be 
\begin{align*}
\Upsilon_1(t) = \Upsilon^3_{2; 1, 0, -c_1(t) \s }  \cap \Upsilon^3_{1; 1, 0} \quad\text{and}\quad \Upsilon_2(t) = \Upsilon^3_{1; 1, 0}.
\end{align*}If $c_1'(t) > 0$, then for $1 \geq t > t' \geq 0$, we have $\Upsilon_1(1) \subset \Upsilon_1(t) \subset \Upsilon_1(t') \subset \Upsilon_1(0)$. This is the reason why we require the $\Upsilon$-cone constraint.
\end{frmk}\bigskip


Now with a pair $(c_1(t), c_0(t))$ satisfying the \hyperlink{dim 3 cons}{3-dimensional four constraints}, we consider a more general equation 
\begin{align*}
\label{eq:3.6}
f_t(\lambda_1, \lambda_2, \lambda_3) \coloneqq \frac{c_1(t) (\lambda_1+ \lambda_2 + \lambda_3) + 2c_0(t) \ta}{\lambda_1 \lambda_2 \lambda_3} = h(p),   \tag{3.6}
\end{align*}where $\lambda_i$ are the eigenvalues of $\omega^{-1}X_u$ at $p$ and $h \colon M   \rightarrow \mathbb{R}$ with $c_1^3(t) \ctt/ c_0^2(t) > h(p) > 0$ for all $p \in M$. We will explain why we restrict the range of $h$ in Section~\ref{sec:4.1}. Roughly speaking, we want to preserve the $\Upsilon$-cone structure. In particular, when $h = \css$, equation (\ref{eq:3.6}) is the original equation (\ref{eq:3.5}). The proof of the following Lemma is similar to the proof of Lemma~\hyperlink{L:3.1}{3.1}, the $C$-subsolution to equation (\ref{eq:3.6}) can be described as follows.
\hypertarget{L:3.2}{\begin{flemma}}
A smooth function $\ubar{u} \colon M   \rightarrow \mathbb{R}$ is a $C$-subsolution to equation (\ref{eq:3.6}) if and only if at each point $p \in M$, we have
\begin{align*}
\mu \in  \Upsilon^3_{2; 1, 0, -c_1(t) /h(p) }  \cap \Upsilon^3_{1; 1, 0},
\end{align*}where $  \{  \mu    \}$ are the eigenvalues of the Hermitian endomorphism $\omega^{i \bar{k}}    (  X + \sqrt{-1} \partial \bar{\partial} \ubar{u}       )_{j \bar{k}}(p)$. Note that the $\Upsilon$-cones for equation (\ref{eq:3.6}) at $t$ at point $p$ will be $\Upsilon_1(t, p) =  \Upsilon^3_{2; h(p), 0, -c_1(t) } \cap \Upsilon^3_{1; 1, 0} $ and $\Upsilon_2(t, p) = \Upsilon^3_{1; 1, 0}$.
\end{flemma}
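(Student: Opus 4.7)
The proof is a direct transcription of Lemma~\hyperlink{L:3.1}{3.1} with $\s$ replaced by $1/h(p)$ in every algebraic step, so I only plan the adaptation. The nontrivial point is to verify that the uniform positivity constant used in Lemma~\hyperlink{L:3.1}{3.1} still survives, and this is precisely what the restriction $0 < h(p) < c_1^3(t) \ctt / c_0^2(t)$ is designed to ensure.

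For the sufficiency direction, I would assume $\mu(p) \in \Upsilon^3_{2;1,0,-c_1(t)/h(p)} \cap \Upsilon^3_{1;1,0}$ at every $p \in M$. By compactness, there exists $\delta > 0$ with $\mu_{s(2)}(p)\mu_{s(3)}(p) > c_1(t)/h(p) + \delta$ uniformly in $p$ and $s \in S_3$. Given a solution $(\lambda_1,\lambda_2,\lambda_3)$ of equation (\ref{eq:3.6}) with $\lambda_i \geq \mu_i > 0$, solving for $\lambda_2$ yields
\begin{align*}
\lambda_2 \lambda_3 = \frac{c_1(t)}{h(p)} + \frac{c_1(t)\lambda_3^2 + 2c_0(t)\ta \lambda_3 + c_1^2(t)/h(p)}{h(p)\lambda_1 \lambda_3 - c_1(t)}.
\end{align*}
If $\lambda_2$ were unbounded, the symmetric expression for $\lambda_3$ in terms of $\lambda_1,\lambda_2$ forces $\lambda_3 \leq 3c_1(t)/(N h(p))$ for suitably large $N$, contradicting the lower bound $\lambda_3 \geq \mu_3 > 0$. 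Hence $F^{h(p)}(\mu(p))$ is bounded.

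For the necessity direction, I would assume $\ubar{u}$ is a $C$-subsolution and argue by contradiction in two cases. If $\mu_3(p) = 0$ at some $p$, choose $\lambda_3 = \epsilon$ and $\lambda_2 = c_1(t)/(\epsilon h(p)) + 1$; the key estimate is
\begin{align*}
c_1(t)(\lambda_2 + \lambda_3) + 2c_0(t)\ta \geq 2c_1^{3/2}(t)/\sqrt{h(p)} + 2c_0(t)\ta,
\end{align*}
and under the hypothesis on $h$ the right-hand side is strictly positive and, by compactness of $M \times [0,1]$, bounded below by a positive constant $c_{\hat{\theta}}$ independent of $\epsilon$. Solving for $\lambda_1$ then gives $\lambda_1 \to \infty$ as $\epsilon \to 0$, contradicting the boundedness of $F^{h(p)}(\mu(p))$. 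The second case $c_1(t)/h(p) \geq \mu_2(p) \mu_3(p)$ is handled identically by setting $\lambda_3 = \mu_3$ and $\lambda_2 = \bigl(c_1(t)/h(p) + \epsilon\bigr)/\mu_3$ and repeating the argument.

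The only step that genuinely uses the generality of $h(p)$ is the strict positivity of the auxiliary constant $c_{\hat{\theta}}$. In Lemma~\hyperlink{L:3.1}{3.1} this was guaranteed by the Positivstellensatz constraint $c_1^{3/2}(t) > c_0(t) \sin(\hat{\theta})$; here the analogous inequality $c_1^{3/2}(t)/\sqrt{h(p)} > c_0(t) |\ta|$ is equivalent to $h(p) < c_1^3(t)\ctt/c_0^2(t)$, which is exactly the upper bound on $h$ imposed in the paragraph preceding the lemma. The $\Upsilon$-cone identification then follows immediately, and $\Upsilon_2(t,p) = \Upsilon^3_{1;1,0}$ is just the positive orthant by Proposition~\hyperlink{P:2.1}{2.1}(b).
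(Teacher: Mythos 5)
Your proposal is correct and follows precisely the route the paper indicates — the paper itself says only that ``The proof of the following Lemma is similar to the proof of Lemma~3.1'' and leaves the transcription to the reader. You correctly isolate the one point that must be re-verified when $\s$ is replaced by $1/h(p)$: the strict positivity of $c_1(t)(\lambda_2+\lambda_3)+2c_0(t)\tan(\hat{\theta})$, which now hinges on $c_1^{3/2}(t)/\sqrt{h(p)} > c_0(t)\lvert\tan(\hat{\theta})\rvert$, i.e.\ on the hypothesis $0 < h(p) < c_1^3(t)\cot^2(\hat{\theta})/c_0^2(t)$, and you check that this is exactly the restriction the paper imposes on $h$.
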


Now, if $c_1^3(t) \ctt/ c_0^2(t) > h > 0$ for all $t \in [0, 1]$, then the $\Upsilon$-cone constraint will automatically be satisfied. So we have the following Lemma.

\hypertarget{L:3.3}{\begin{flemma}}
For $p \in M$, if $c_1^3(t) \ctt/ c_0^2(t) > h(p) > 0$ for all $t \in [0, 1]$, then 
\begin{align*}
 \{ f_1 = h(p) \} \subset \Upsilon_1(t, p) \subset \Upsilon_1(t', p) \text{ and } \Upsilon_2(t, p) \subset \Upsilon_2(t', p)
\end{align*}for $1 \geq t > t' \geq 0$. Here, the solution set $\{ f_t = h(p) \}$ has some ambiguities, because it has more than one connected component. The workaround is the same as Remark~\hyperlink{R:3.2}{3.2}.
\end{flemma}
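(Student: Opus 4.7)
The inclusion $\Upsilon_2(t,p) \subset \Upsilon_2(t',p)$ is trivial, since Lemma~3.2 gives $\Upsilon_2(t,p) = \Upsilon^3_{1;1,0}$ independently of $t$. For the monotonicity $\Upsilon_1(t,p) \subset \Upsilon_1(t',p)$ when $t > t'$, I would invoke the $\Upsilon$-cone constraint $c_1'(t) > 0$: since $c_1$ is strictly increasing, any $\mu$ satisfying $h(p)\mu_{s(1)}\mu_{s(2)} > c_1(t)$ for all $s \in S_3$ automatically satisfies $h(p)\mu_{s(1)}\mu_{s(2)} > c_1(t')$. Combined with the description in Lemma~3.2, this gives the inclusion and reduces the remaining claim $\{f_1 = h(p)\} \subset \Upsilon_1(t,p)$ to the single case $t = 1$, where $c_0(1) = c_1(1) = 1$.

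At $t = 1$ the target cone is $\Upsilon_1(1,p) = \Upsilon^3_{2;h(p),0,-1} \cap \Upsilon^3_{1;1,0}$, and the equation $f_1 = h(p)$ admits, as in Remark~3.3, the factorization
\begin{align*}
\lambda_{s(1)} \bigl[ h(p) \lambda_{s(2)} \lambda_{s(3)} - 1 \bigr] = \lambda_{s(2)} + \lambda_{s(3)} + 2 \ta
\end{align*}
for each permutation $s \in S_3$. The strategy is to combine the Positivstellensatz Theorem~2.1(a) — which identifies $\Upsilon^3_{2;h(p),0,-1} \cap \Upsilon^3_{1;1,0}$ as one connected component of $\Upsilon^3_{2;h(p),0,-1}$ — with the reference point $(R,R,R)$ from Remark~3.2, which lies in that component for $R$ sufficiently large. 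If the factor $h(p)\lambda_{s(2)}\lambda_{s(3)} - 1$ never vanishes along the selected solution component, then by continuity it retains the positive sign it has at the reference point, placing the whole component inside $\Upsilon^3_{2;h(p),0,-1}$ and hence inside $\Upsilon_1(1,p)$.

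The crux is thus to exclude solutions at which $h(p)\lambda_{s(2)}\lambda_{s(3)} = 1$ for some $s \in S_3$. Assuming such a solution, the factored identity forces $\lambda_{s(2)} + \lambda_{s(3)} = -2\ta = 2|\tan(\hat{\theta})|$, positivity using $\ta < 0$. Since $\lambda_{s(2)}\lambda_{s(3)} = 1/h(p) > 0$ the two eigenvalues share a sign: the negative case makes the sum negative and contradicts the identity, while the positive case triggers AM--GM to give $\lambda_{s(2)} + \lambda_{s(3)} \geq 2/\sqrt{h(p)}$, which forces $h(p) \geq \ctt$. But the hypothesis at $t = 1$ reads $c_1^3(1) \ctt / c_0^2(1) = \ctt > h(p)$, a contradiction.

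The principal technical obstacle is this sign analysis that weds the solution component to the correct $\Upsilon$-cone component: one must not only verify that the quadratic factor is nonvanishing on the solution set, but also anchor the sign through the reference point $(R,R,R)$ together with the connectedness of the appropriate component of both $\Upsilon^3_{2;h(p),0,-1}$ (Theorem~2.1(a)) and of the solution set itself (as fixed by Remark~3.2). The hypothesis $c_1^3(t) \ctt / c_0^2(t) > h(p)$ is calibrated precisely so that the AM--GM obstruction bites, and its $t$-uniformity then propagates the inclusion $\{f_1 = h(p)\} \subset \Upsilon_1(t,p)$ along the entire continuity path via the monotonicity established at the start.
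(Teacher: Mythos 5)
The monotonicity via $c_1'(t) > 0$, the observation that $\Upsilon_2(t,p) = \Upsilon^3_{1;1,0}$ is $t$-independent, and the reduction of the level-set inclusion to the case $t=1$ are all correct and align with the paper's Remark~3.4. The paper itself does not write out a proof of this Lemma, resting implicitly on Proposition~2.2 (which asserts $\partial F^{h}(p) \subset \Upsilon_1(p)$) together with Lemma~3.2, so your more explicit verification through the factorization and the AM--GM contradiction ruling out $h(p)\lambda_{s(2)}\lambda_{s(3)} = 1$ on the level set is a reasonable self-contained alternative; that computation is sound.

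The gap is in the sign-anchoring step. From the AM--GM argument you correctly conclude that each factor $h(p)\lambda_{s(2)}\lambda_{s(3)} - 1$ is nonvanishing, hence of constant sign, on the chosen connected component of $\{f_1 = h(p)\}$. But to determine which sign, you appeal to positivity at $(R,R,R)$, a point in the interior of the region $\{f_1 < h(p)\}$, not on the level set; positivity there does not propagate to the boundary by continuity along the boundary component, which is the only continuity the nonvanishing result licenses. You need a point \emph{on} the chosen solution component where the factor is manifestly positive. One is available along the diagonal: $f_1(\tau,\tau,\tau) = (3\tau + 2\tan(\hat{\theta}))/\tau^3$ has its maximum $\cot^2(\hat{\theta}) > h(p)$ at $\tau = -\tan(\hat{\theta})$, so the level $h(p)$ is attained at exactly two diagonal roots $\tau_1 < -\tan(\hat{\theta}) < \tau_2$; the larger root $(\tau_2,\tau_2,\tau_2)$ is the one bounding the region containing $(R,R,R)$, and rearranging the equation gives $h(p)\tau_2^2 - 1 = 2 + 2\tan(\hat{\theta})/\tau_2 > 0$ precisely because $\tau_2 > -\tan(\hat{\theta})$. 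Alternatively, you may simply invoke Proposition~2.2 to supply the anchor and then apply your monotonicity step.
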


\subsection{When \texorpdfstring{$n=4$}{}}
\label{sec:3.2}
Here, we only consider the phase $\hat{\theta} \in \bigl( \pi, 5 \pi/4    \bigr)$ and the dHYM equation will be \begin{align*}
\ct \cdot \Im   \bigl (  \omega + \sqrt{-1} \chi   \bigr )^4     =   \Re   \bigl (  \omega +  \sqrt{-1} \chi    \bigr )^4    .
\end{align*}By doing a substitution $X \coloneqq \chi + \ct \omega$, the dHYM equation becomes
\begin{align*}
\label{eq:3.7}
X^4 - 6 \cs \omega^2 \wedge X^2 + 8 \ct \cs \omega^3 \wedge X  - \cc \cs \omega^4 = 0.  \tag{3.7}
\end{align*}
We multiply equation (\ref{eq:3.7}) by $\sii$ and get
\begin{align*}
\label{eq:3.8}
\sii X^4 - 6   \omega^2 \wedge X^2 + 8 \ct   \omega^3 \wedge X  - \cc   \omega^4 = 0. \tag{3.8}
\end{align*}The issue is that for our phase $\hat{\theta}$, $\ct > 0$, otherwise we can solve it using the generalized inverse $\sigma_k$ equation, see Collins--Székelyhidi \cite{collins2017convergence}. We consider the following continuity path:
\begin{align*}
\label{eq:3.9}
\sii X^4 - 6 c_2(t)   \omega^2 \wedge X^2 + 8 c_1(t) \ct   \omega^3 \wedge X - c_0(t) \cc   \omega^4 = 0. \tag{3.9}
\end{align*}
Here $t \in [0, 1]$ and the triple $(c_2(t), c_1(t), c_0(t))$ are smooth functions in $t$ which satisfy the following \hypertarget{dim 4 cons}{4-dimensional version of the four constraints}:
\begin{enumerate}[leftmargin=4.5cm]
	\setlength\itemsep{+0.4em}
\item[Topological constraint:] $\sii \Omega_0 - 6c_2(t) \Omega_2 + 8c_1(t) \ct \Omega_3 - c_0(t) \cc   \Omega_4 =0$.
\item[Boundary constraints:] $c_2(1) = c_1(1) = c_0(1) = 1;\quad c_2(0)>0; \quad c_1(0) = 0$.
\item[Positivstellensatz constraint:] $\cc c_0(t) >  -24   {c_2^2(t) \cs \cos^2(\theta_{c_1, c_2})\cos(2\theta_{c_1, c_2})} $.
\item[$\Upsilon$-cone constraints:] $\frac{d}{dt} \bigl( c_2^{3/2}(t) \bigr) \geq -\cos(\hat{\theta}) c_1'(t); \quad c_2'(t) > 0; \quad c_1'(t) > 0$.
\end{enumerate}Here we denote $\Omega_i \coloneqq \int_M \omega^i \wedge X^{4-i}$ and we define $\theta_{c_1, c_2} \coloneqq  \arccos \bigl(   {  c_1(t) \coo}/{c_2^{3/2}(t)}  \bigr) / 3 -  {2\pi}/{3}$, where we specify the branch so that $\arccos \bigl(   {c_1(t) \cos(\hat{\theta})}/{c_2^{3/2}(t)}  \bigr) \in   \bigl( \pi, 3\pi/2 \bigr]$.\bigskip





From now on, we will automatically assume that the triple $\bigl(c_2(t), c_1(t), c_0(t)\bigr)$ satisfy these \hyperlink{dim 4 cons}{4-dimensional four constraints} unless further notice. In the rest of this section, we will explain why we consider these constraints, the first two are not surprising. The first constraint, the topological constraint, should always be the same, that is, the integral should always be zero along the continuity path. For the second constraint, when $t = 1$, equation (\ref{eq:3.9}) is the original equation (\ref{eq:3.8}). When $t = 0$, equation (\ref{eq:3.9}) becomes
\begin{align*}
\label{eq:3.10}
\sii X^4 - 6 c_2(0)   \omega^2 \wedge X^2   - c_0(0) \cc   \omega^4 = 0. \tag{3.10}
\end{align*}If $c_0(0) > 0$ as well, then equation (\ref{eq:3.10}) belongs to the generalized inverse $\sigma_k$ equation, we can use the results of Collins--Székelyhidi \cite{collins2017convergence} to solve it. Unfortunately, it is possible that $c_0(0) < 0$, but we are still able to fix this. In Section~\ref{sec:5.2}, we will show that we can find a triple $\bigl(\tilde{c}_2(t), \tilde{c}_1(t), \tilde{c}_0(t)\bigr)$ such that all \hyperlink{dim 4 cons}{4-dimensional four constraint} will be satisfied. Moreover, for this triple $\bigl(\tilde{c}_2(t), \tilde{c}_1(t), \tilde{c}_0(t)\bigr)$, we have $\tilde{c}_2(0) > 0$, $\tilde{c}_1(0) = 0$, and $\tilde{c}_0(0) \geq 0$.

\hypertarget{R:3.5}{\begin{frmk}}At any point $p \in M$, by picking the coordinates in Lemma~\hyperlink{L:2.2}{2.2}, we can write equation (\ref{eq:3.9}) as
\begin{align*}
\label{eq:3.12}
0 &= \sii \lambda_1 \lambda_2 \lambda_3 \lambda_4  - c_2(t)    \bigl( \lambda_1 \lambda_2 + \lambda_1 \lambda_3 + \lambda_1 \lambda_4 + \lambda_2 \lambda_3 + \lambda_2 \lambda_4 + \lambda_3 \lambda_4 \bigr)    \tag{3.11} \\
&\kern2em +2 c_1(t) \ct  (\lambda_1 + \lambda_2 + \lambda_3 + \lambda_4)    - c_0(t) \cc.
\end{align*}We can rewrite equation (\ref{eq:3.12}) as the following format
\begin{align*}
\label{eq:3.13}
f_t(\lambda_1, \lambda_2, \lambda_3, \lambda_4) =  \frac{c_2(t) \sigma_2(\lambda)  - 2 c_1(t) \cot(\hat{\theta}) \sigma_1(\lambda) + c_0(t) \cc  }{\lambda_1 \lambda_2 \lambda_3 \lambda_4} = \sii, \tag{3.12}   
\end{align*}
where $\lambda_i$ are the eigenvalues of $\omega^{-1}X$ at $p$ and we denote $\lambda =\{ \lambda_1, \lambda_2, \lambda_3, \lambda_4\}$.
\end{frmk}

\hypertarget{R:3.6}{\begin{frmk}}
For convenience, from mow on, we will denote the solution set as $\{f_t = \sii\}$. There might be some ambiguities because this set has more than one connected component. But we pick the one such that the enclosed region contains $(R, R, R, R) \in \mathbb{R}^4$ for $R > 0$ large.
\end{frmk}

For convenience, we abbreviate $c_2 = c_2(t)$, $c_1 = c_1(t)$, and $c_0 = c_0(t)$ unless specify otherwise.

\hypertarget{R:3.7}{\begin{frmk}[Positivstellensatz constraint]}
For equation (\ref{eq:3.12}), we can rewrite it as
\begin{align*}
\kern1em
&\kern-1em  \lambda_{s(1)} \Bigl(  \sii   \lambda_{s(2)} \lambda_{s(3)} \lambda_{s(4)} - c_2     \bigl(\lambda_{s(2)} + \lambda_{s(3)}  + \lambda_{s(4)} \bigr) + 2c_1  \ct    \Bigr) \\
&= c_2  \bigl (\lambda_{s(2)} \lambda_{s(3)} + \lambda_{s(2)} \lambda_{s(4)} + \lambda_{s(3)} \lambda_{s(4)} \bigr ) - 2c_1  \ct \bigl (\lambda_{s(2)}   +   \lambda_{s(3)} +   \lambda_{s(4)} \bigr )  + c_0  \cc,
\end{align*}where $s \in S_4$ is a permutation on four objects and we abbreviate $c_i = c_i(t)$ for $i \in \{0, 1, 2\}$. By the Positivstellensatz Theorem~\hyperlink{T:2.1}{(b)}, we know $\Upsilon^4_{3; 1, 0, -c_2 \cs, 2c_1 \ct \cs} \cap \Upsilon^4_{2; 1, 0, -c_2 \cs} \cap \Upsilon^4_{1; 1, 0}$ is one of the connected component of $\Upsilon^4_{3; 1, 0, -c_2 \cs, 2c_1 \ct \cs}$. To have a nice $\Upsilon$-cone structure, the $C$-subsolution cone must contain the solution set $\{f_t = \sii\}$, so we require both quantities $\lambda_{s(1)}$ and $\sii \lambda_{s(2)} \lambda_{s(3)} \lambda_{s(4)} - c_2      (\lambda_{s(2)} + \lambda_{s(3)}  + \lambda_{s(4)}  ) + 2c_1  \ct  $ to be positive. This is true when the quantity $c_2    (\lambda_{s(2)} \lambda_{s(3)} + \lambda_{s(2)} \lambda_{s(4)} + \lambda_{s(3)} \lambda_{s(4)}   ) - 2c_1  \ct   (\lambda_{s(2)}   +   \lambda_{s(3)} +   \lambda_{s(4)}   )  + c_0  \cc$ is positive (of course, we omit the case when both $\lambda_{s(1)}$ and $ \lambda_{s(2)} \lambda_{s(3)} \lambda_{s(4)} - c_2  \cs   (\lambda_{s(2)} + \lambda_{s(3)}  + \lambda_{s(4)}  ) + 2c_1  \ct \cs$ are negative).\bigskip

$c_2    (\lambda_{s(2)} \lambda_{s(3)} + \lambda_{s(2)} \lambda_{s(4)} + \lambda_{s(3)} \lambda_{s(4)}   ) - 2c_1  \ct   (\lambda_{s(2)}   +   \lambda_{s(3)} +   \lambda_{s(4)}   )  + c_0  \cc$ is always positive if $\Upsilon^4_{3; 1, 0, -c_2 \cs, 2c_1 \ct \cs} \cap \Upsilon^4_{2; 1, 0, -c_2 \cs} \cap \Upsilon^4_{1; 1, 0}$ is contained in \[\Upsilon^4_{3; 0, c_2\cs, -2c_1 \ct\cs, c_0\cs (3\cs-4)} \cap \Upsilon^4_{2; 1, 0, -c_2 \cs} \cap \Upsilon^4_{1; 1, 0}.\] By the Positivstellensatz Theorem~\hyperlink{T:2.1}{(e)}, this statement holds when $c_2^{ {3}/{2}} > -c_1 \coo$ and $\cc c_0  >  -24   {c_2^2 \cs \cos^2(\theta_{c_1, c_2})\cos(2\theta_{c_1, c_2})}$. 




\end{frmk}

\hypertarget{L:3.4}{\begin{flemma}}
A smooth function $\ubar{u} \colon M   \rightarrow \mathbb{R}$ is a $C$-subsolution to equation (\ref{eq:3.13}) if and only if at each point $p \in M$, we have
\begin{align*}
\mu \in  \Upsilon^4_{3; 1, 0, -c_2(t) \cs, 2c_1(t) \ct \cs} \cap \Upsilon^4_{2; 1, 0, -c_2(t) \cs} \cap \Upsilon^4_{1; 1, 0},
\end{align*}where $  \{  \mu    \}$ are the eigenvalues of the Hermitian endomorphism $\omega^{i \bar{k}}    (  X + \sqrt{-1} \partial \bar{\partial} \ubar{u}       )_{j \bar{k}}(p)$. Note that the $\Upsilon$-cones for equation (\ref{eq:3.13}) at $t$ will be $\Upsilon_1(t) =  \Upsilon^4_{3; \sii, 0, -c_2(t)  , 2c_1(t) \ct  } \cap \Upsilon^4_{2; \sii, 0, -c_2(t)  } \cap \Upsilon^4_{1; 1, 0}$, $\Upsilon_2(t) = \Upsilon^4_{2; \sii, 0, -c_2(t)  } \cap \Upsilon^4_{1; 1, 0}$, and $\Upsilon_3(t) = \Upsilon^4_{1; 1, 0}$.
\end{flemma}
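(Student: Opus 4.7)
The plan is to follow the proof template of Lemma~\hyperlink{L:3.1}{3.1}, upgraded to dimension four with the quantitative Positivstellensatz from Section~\ref{sec:2.2} controlling the new cubic factor. The first step is to rewrite equation (\ref{eq:3.13}) by solving for one eigenvalue; by symmetry it suffices to look at
\begin{align*}
\lambda_1 = \frac{c_2(t)\sigma_2(\lambda_2,\lambda_3,\lambda_4) - 2c_1(t)\ct\,\sigma_1(\lambda_2,\lambda_3,\lambda_4) + c_0(t)\cc}{\sii\,G_t(\lambda_2,\lambda_3,\lambda_4)},
\end{align*}
where $G_t(\lambda_2,\lambda_3,\lambda_4) := \lambda_2\lambda_3\lambda_4 - c_2(t)\cs\,\sigma_1(\lambda_2,\lambda_3,\lambda_4) + 2c_1(t)\ct\cs$ is precisely the polynomial cutting out $\Upsilon^4_{3;1,0,-c_2\cs,2c_1\ct\cs}$, while the numerator, after multiplication by $\cs$, is the polynomial cutting out $\Upsilon^4_{3;0,c_2\cs,-2c_1\ct\cs,c_0\cc\cs}$.

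For sufficiency I would assume $\mu$ lies in all three $\Upsilon$-cones throughout $M$ and use compactness to extract a uniform margin $\delta > 0$. Given any $\lambda \in F^{\sii}(\mu(p))$, I order the entries so that $\lambda_1$ is the largest. The $\Upsilon^4_{2;1,0,-c_2\cs}$-condition $\mu_i\mu_j > c_2\cs$ yields $\partial_i G_t = \lambda_j\lambda_k - c_2\cs > 0$ on the region $\{\lambda \geq \mu\}$, so $G_t$ is monotone there and bounded below by $G_t(\mu_2,\mu_3,\mu_4) > \delta$ thanks to the $\Upsilon^4_{3;1,0,-c_2\cs,2c_1\ct\cs}$-condition. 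A short case analysis on the asymptotic behaviour of $\{\lambda_2,\lambda_3,\lambda_4\}$ then rules out every unbounded scenario: if only one of the three diverges, the lower bound on the denominator combined with boundedness of the numerator contradicts $\lambda_1 \to \infty$; if two diverge, the asymptotic of $f_t$ forces the bounded pair to satisfy $\lambda_j\lambda_k = c_2\cs$, contradicting $\Upsilon^4_{2;1,0,-c_2\cs}$; if three or four diverge, $f_t \to 0 \neq \sii$.

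For necessity I would argue contrapositively: if $\mu$ fails one of the three cone conditions at some $p \in M$, I construct an unbounded branch of $F^{\sii}(\mu(p))$. The failures of $\Upsilon^4_{1;1,0}$ (some $\mu_i = 0$) and of $\Upsilon^4_{2;1,0,-c_2\cs}$ (some $\mu_i\mu_j = c_2\cs$) are handled verbatim as in Lemma~\hyperlink{L:3.1}{3.1}: choose the three complementary $\lambda_j$'s slightly above $\mu_j$ so that $G_t$ is arbitrarily small positive, and read off from the displayed formula that $\lambda_1$ blows up. The new case is the failure of the cubic condition: some permutation gives $G_t(\mu_{s(2)},\mu_{s(3)},\mu_{s(4)}) \leq 0$. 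I would perturb these three entries upward slightly to make $G_t = \eta > 0$ small, giving $\lambda_{s(1)} \to \infty$; to ensure $\lambda_{s(1)} > \mu_{s(1)} > 0$ one needs the numerator to stay positive and bounded away from zero, which is exactly the Positivstellensatz inclusion of Theorem~\hyperlink{T:2.1}{2.1}(e) applied with $c = c_2\cs$, $d = 2c_1\ct\cs$, $e = c_0\cc\cs$ and $\theta_{c,d} = \theta_{c_1,c_2}$. This is where the Positivstellensatz constraint $\cc c_0 > -24\,c_2^2\cs\cos^2(\theta_{c_1,c_2})\cos(2\theta_{c_1,c_2})$ enters in an essential way, and similarly the $\Upsilon$-cone constraint $c_2^{3/2} \geq -c_1\coo$ is exactly the hypothesis $2c^{3/2} > d$ needed to invoke Theorem~\hyperlink{T:2.1}{2.1}(e).

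The hard part will be the bookkeeping in the sufficiency direction, namely verifying that the monotonicity of $G_t$ and the asymptotic behaviour of $f_t$ are handled uniformly across all permutations of the eigenvalues; the $S_n$-symmetry built into Definition~\hyperlink{D:2.1}{2.1} reduces this to the single permutation handled above. In the necessity direction, the only nontrivial verification is that the perturbation constructed in the cubic-failure case remains in the positive orthant with $\lambda_{s(1)} > \mu_{s(1)}$, which again reduces to the Positivstellensatz inclusion quoted above; once this is in place, the identification $\Upsilon_1(t) = \Upsilon^4_{3;\sii,0,-c_2,2c_1\ct} \cap \Upsilon^4_{2;\sii,0,-c_2} \cap \Upsilon^4_{1;1,0}$, $\Upsilon_2(t) = \Upsilon^4_{2;\sii,0,-c_2} \cap \Upsilon^4_{1;1,0}$, and $\Upsilon_3(t) = \Upsilon^4_{1;1,0}$ follows by iteratively computing the limits $f_t^{(3)}$, $f_t^{(2)}$, $f_t^{(1)}$ in Definition~\hyperlink{D:2.3}{2.3}.
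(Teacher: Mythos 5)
The paper skips this proof entirely, saying only that it is ``similar to the proof of Lemma~\hyperlink{L:3.1}{3.1}''; your proposal is a correct reconstruction of that intended argument --- solve the equation for one eigenvalue, extract a uniform compactness margin, run the case analysis on which entries escape to infinity, and, in the one genuinely new step for dimension four, invoke the Positivstellensatz (Theorem~\hyperlink{T:2.1}{2.1}(e), exactly as the paper flags in Remark~\hyperlink{R:3.7}{3.7}) to control the sign of the numerator when perturbing off the cubic boundary. One small caveat: the labels in your sufficiency case split are each shifted by one relative to the argument attached to them --- the bounded-numerator/bounded-away-from-zero-denominator estimate belongs to the case where \emph{none} of $\lambda_2,\lambda_3,\lambda_4$ diverges, the $\lambda_j\lambda_k = c_2\cs$ contradiction to the case where exactly one of them diverges, and the $f_t \to 0$ argument to the remaining cases --- though the collection of cases and the estimates themselves are the right ones.
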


\begin{proof}
The proof is similar to the proof of Lemma~\hyperlink{L:3.1}{3.1}, so we will skip the proof here.
\end{proof}

\hypertarget{R:3.8}{\begin{frmk}[$\Upsilon$-cone constraints]} 
Along the continuity path, we hope that the $C$-subsolution condition will be maintained, so we can use this $C$-subsolution to obtain a priori estimates. That is, we are checking whether the $\Upsilon$-cones will be shrinking when $t$ goes from $0$ to $1$. By Lemma~\hyperlink{L:3.4}{3.4}, the $\Upsilon$-cones at $t$ will be 
\begin{align*}
\Upsilon_1(t) &= \Upsilon^4_{3; 1, 0, -c_2(t) \cs, 2c_1(t) \ct \cs} \cap \Upsilon^4_{2; 1, 0, -c_2(t) \cs} \cap \Upsilon^4_{1; 1, 0}; \\
\Upsilon_2(t) &= \Upsilon^4_{2; 1, 0, -c_2(t) \cs} \cap \Upsilon^4_{1; 1, 0};\quad \Upsilon_3(t) = \Upsilon^4_{1; 1, 0}.
\end{align*}If $c_2'(t) > 0$, then for $1 \geq t > t' \geq 0$, we have $\Upsilon_2(1)  \subseteq \Upsilon_2(t)  \subseteq \Upsilon_2(t')   \subseteq \Upsilon_2(0)$. Moreover, if $\frac{d}{dt} \bigl( c_2^{3/2}(t) \bigr) \geq -\cos(\hat{\theta}) c_1'(t)$, then for any $\mu \in \Upsilon_1(t)$, we have
\begin{align*}
\frac{d}{dt} c_2(t) \bigl( \mu_{s(2)} + \mu_{s(3)} + \mu_{s(4)} \bigr) &= c_2'(t)  \bigl( \mu_{s(2)} + \mu_{s(3)} + \mu_{s(4)} \bigr) \\ 
&\geq   c_2'(t) \bigl( \sqrt{\mu_{s(2)} \mu_{s(3)}} + \sqrt{\mu_{s(2)} \mu_{s(4)}} + \sqrt{\mu_{s(3)} \mu_{s(4)}} \bigr) \\
&\geq  -3 c_2^{1/2}(t) c_2'(t) \cso = -2 \frac{d}{dt}  \bigl( c_2^{3/2}(t) \bigr) \cso \\
&= 2  c_1'(t)\ct.
\end{align*}Thus,
\begin{align*}
&\kern-1em \frac{d}{dt} \Bigl( \mu_{s(2)} \mu_{s(3)} \mu_{s(4)} - c_2(t) \cs \bigl( \mu_{s(2)} + \mu_{s(3)} + \mu_{s(4)} \bigr) + 2c_1(t) \ct \cs \Bigr) \\
&= -c_2'(t) \cs \bigl( \mu_{s(2)} + \mu_{s(3)} + \mu_{s(4)} \bigr) + 2  c_1'(t) \ct \cs \leq 0.
\end{align*}Then for $t> t'$, we obtain 
\begin{align*}
0<&\mu_{s(2)} \mu_{s(3)} \mu_{s(4)} - c_2(t) \cs \bigl( \mu_{s(2)} + \mu_{s(3)} + \mu_{s(4)} \bigr) + 2c_1(t) \ct \cs \\
\leq &\mu_{s(2)} \mu_{s(3)} \mu_{s(4)} - c_2(t') \cs \bigl( \mu_{s(2)} + \mu_{s(3)} + \mu_{s(4)} \bigr) + 2c_1(t') \ct \cs.
\end{align*}This implies that $\mu \in \Upsilon_1(t')$. The constraint that $\frac{d}{dt} \bigl( c_2^{3/2}(t) \bigr) \geq -\cos(\hat{\theta}) c_1'(t)$ also implies that 
\begin{align*}
\int_0^{s}  \frac{d}{dt} \bigl( c_2^{3/2}(t) \bigr) dt \geq - \int_0^{s} \cos(\hat{\theta}) c_1'(t) dt &   \Longrightarrow  c_2^{3/2}(s) \geq c_2^{3/2}(0) -  \coo c_1(s).
\end{align*}So if we assume $c_2(0) > 0$ and $\frac{d}{dt} \bigl( c_2^{3/2}(t) \bigr) \geq -\cos(\hat{\theta}) c_1'(t)$, then for the \hyperlink{dim 4 cons}{Positivstellensatz constraint}, we do not need to assume $c_2^{3/2}(t) > -\coo c_1(t)$.
\end{frmk}


Now with a triple $(c_2(t), c_1(t), c_0(t))$ satisfying the \hyperlink{dim 4 cons}{4-dimensional four constraints}, we consider a more general equation 
\begin{align*}
\label{eq:3.14}
f_t(\lambda_1, \lambda_2, \lambda_3, \lambda_4) =  \frac{c_2(t) \sigma_2(\lambda)  - 2 c_1(t) \cot(\hat{\theta}) \sigma_1(\lambda) + c_0(t) \cc  }{\lambda_1 \lambda_2 \lambda_3 \lambda_4}  = h(p),   \tag{3.13}
\end{align*}where $\lambda_i$ are the eigenvalues of $\omega^{-1}X_u$ at $p$ and $h \colon M   \rightarrow \mathbb{R}$ is a smooth function satisfying $c_2^3 (t) \taa/ c_1^2(t) > h(p) > 0$ and \[c_0(t) \cc h(p) + 24 c_2^2(t) \cos^2(\theta_{h(p), c_1(t), c_2(t)})\cos(2\theta_{h(p), c_1(t), c_2(t)}) > 0\] for all $p \in M$. Here $\theta_{h(p), c_1(t), c_2(t)} \coloneqq \arccos \bigl(   {- c_1(t) \ct h^{1/2}(p) }\big/{c_2^{3/2}(t) }  \bigr)\big/3  - {2\pi}/{3}$ and we specify the branch so that $ \arccos  (  \bullet ) \in  ( \pi, 3\pi/2  ]$. We will explain why we restrict the range of $h$ in Section~\ref{sec:4.2}. Roughly speaking, we want to preserve the $\Upsilon$-cone structure. In particular, when $h = \sii$, equation (\ref{eq:3.14}) is the original equation (\ref{eq:3.13}). The proof of the following Lemma is similarly to the proof of Lemma~\hyperlink{L:3.4}{3.4}, the $C$-subsolution to equation (\ref{eq:3.14}) can be described as follows.
\hypertarget{L:3.5}{\begin{flemma}}
A smooth function $\ubar{u} \colon M   \rightarrow \mathbb{R}$ is a $C$-subsolution to equation (\ref{eq:3.14}) if and only if at each point $p \in M$, we have
\begin{align*}
\mu \in  \Upsilon^4_{3; 1, 0, -c_2(t) /h(p), 2c_1(t) \ct /h(p)} \cap \Upsilon^4_{2; 1, 0, -c_2(t) /h(p)} \cap \Upsilon^4_{1; 1, 0},
\end{align*}where $  \{  \mu    \}$ are the eigenvalues of the Hermitian endomorphism $\omega^{i \bar{k}}    (  X + \sqrt{-1} \partial \bar{\partial} \ubar{u}       )_{j \bar{k}}(p)$. Note that the $\Upsilon$-cones for equation (\ref{eq:3.14}) at $t$ at point $p$ will be $\Upsilon_1(t, p) = \Upsilon^4_{3; h(p), 0, -c_2(t), 2c_1(t) \ct } \cap \Upsilon^4_{2; h(p), 0, -c_2(t)} \cap \Upsilon^4_{1; 1, 0}$, $\Upsilon_2(t, p) = \Upsilon^4_{2; h(p), 0, -c_2(t)} \cap \Upsilon^4_{1; 1, 0}$, and $\Upsilon_3(t, p) =  \Upsilon^4_{1; 1, 0}$.
\end{flemma}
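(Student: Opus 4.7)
The plan is to follow the template of Lemma 3.1 verbatim with the adaptations forced by one additional coordinate and by the fact that the constants $c,d,e$ in the relevant Positivstellensatz now depend on $h(p)$. The argument splits into the two usual implications, and the restrictions placed on $h$ in the statement are exactly what is needed to push the four–dimensional Positivstellensatz (Theorem 2.1(f)) through pointwise.

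\textbf{Forward direction.} Assume that at each $p \in M$ the eigenvalue vector $\mu = \mu(p)$ lies in the triple intersection of the three $\Upsilon$-cones. Using Definition 2.3, the boundedness of the set $F^h(\mu)$ is equivalent to the inequality $\lim_{\lambda_4 \to \infty} f_t(\mu_{s(1)}, \mu_{s(2)}, \mu_{s(3)}, \lambda_4) < h(p)$ for every $s \in S_4$. Reading off the leading behaviour of $f_t$ in (3.14), this condition rearranges to
\begin{align*}
h(p)\,\mu_{s(1)}\mu_{s(2)}\mu_{s(3)} - c_2(t)\bigl(\mu_{s(1)}+\mu_{s(2)}+\mu_{s(3)}\bigr) + 2c_1(t)\ct > 0,
\end{align*}
which is exactly the defining condition for $\Upsilon^4_{3; h(p), 0, -c_2(t), 2c_1(t)\ct}$. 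Iterating the same projection step yields the $\Upsilon^4_{2; h(p), 0, -c_2(t)}$ and $\Upsilon^4_{1;1,0}$ conditions, and a compactness-on-$M$ argument (as in the middle of the proof of Lemma 3.1) upgrades the strict inequalities to a uniform $\delta$-gap from the boundary, thereby bounding $F^h(\mu)$.

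\textbf{Reverse direction.} By contrapositive, assume one of the three cone conditions fails at some $p$. For each failure mode, I plan to mimic the explicit constructions from Lemma 3.1: if $\mu_{s(i)} \leq 0$ for some $s$ and some $i$, set $\lambda_{s(i)} = \mu_{s(i)} + \epsilon$ with $\epsilon$ small and solve (3.14) for one of the remaining entries after choosing the other two large; if the $\Upsilon^4_{2}$-condition fails, set two of the $\lambda$'s near the corresponding $\mu$'s and let the third blow up, using the quotient structure of $f_t$ and the Cauchy--Schwarz bound $\sigma_1 \geq 2\sqrt{\sigma_2}$ to force the fourth to grow; if the $\Upsilon^4_{3}$-condition fails, an analogous construction works at the level of three indices. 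In each case, the ingredients are the same estimates that appeared in the three-dimensional proof, together with the sign of $c_1(t)\ct$.

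\textbf{Main obstacle.} The delicate point is that the $\Upsilon$-cones in the statement are \emph{not} simply defined by one polynomial inequality each; they must be the \emph{correct connected component}, and this identification is what the Positivstellensatz (Theorem 2.1(f)) provides. That theorem is applied at every $p$ with the parameter triple
\begin{align*}
(c,d,e) = \bigl(c_2(t)/h(p),\ -2c_1(t)\ct/h(p),\ c_0(t)\cc/h(p)\bigr),
\end{align*}
and the two hypotheses $c_2^3(t)\taa/c_1^2(t) > h(p) > 0$ and $c_0(t)\cc h(p) + 24c_2^2(t)\cos^2(\theta_{h(p),c_1(t),c_2(t)})\cos(2\theta_{h(p),c_1(t),c_2(t)}) > 0$ translate precisely into the inequalities $2c^{3/2} > d$ and $e > -24c^2\cos^2(\theta_{c,d})\cos(2\theta_{c,d})$ required by Theorem 2.1(f). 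Once this translation is verified, the intersection written in the lemma is a single connected component of the ambient cubic cone, the solution set $\{f_t = h(p)\}$ lies inside it, and the boundedness-of-$F^h(\mu)$ characterization of $C$-subsolutions matches the cone membership condition; after that, the remaining work is bookkeeping identical to the lower dimensional case.
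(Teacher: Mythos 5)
Your overall strategy — transplant the proof of Lemma~\hyperlink{L:3.1}{3.1} to four dimensions, splitting into the two implications and using the Positivstellensatz to control the cone geometry — is the same route the paper takes (the paper simply cites Lemma~\hyperlink{L:3.4}{3.4}, which in turn cites Lemma~\hyperlink{L:3.1}{3.1}, without writing the proof out). However there are two concrete problems with your write-up.

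First, a sign error that is not cosmetic. You set $d = -2c_1(t)\ct/h(p)$, but the correct substitution is $d = 2c_1(t)\ct/h(p)$ (this is what Lemma~\hyperlink{L:4.12}{4.12} uses, and it is what your own rearrangement in the forward direction produces after dividing by $h(p)$). Since $\hat{\theta}\in(\pi,5\pi/4)$ gives $\ct>0$, your $d$ is negative, which violates the hypothesis $d\geq0$ of Theorem~\hyperlink{T:2.1}{2.1}, makes the inequality $2c^{3/2}>d$ vacuous rather than equivalent to $c_2^3\taa/c_1^2>h$, and pushes the argument of $\arccos$ out of the branch $(\pi,3\pi/2]$ specified in the statement of $\theta_{c,d}$. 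With the corrected sign all three translations you claim go through as stated.

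Second, the forward-direction exposition elides the point it is supposed to settle. Definition~\hyperlink{D:2.3}{2.3} says boundedness of $F^{h}(\mu)$ is equivalent to $\mu^{(3)}_s\in\Gamma^{3,h(p)}_f$ for all $s\in S_4$; the limiting computation you carry out only produces the top-level inequality defining $\Upsilon^4_{3;h(p),0,-c_2(t),2c_1(t)\ct}$, and the phrase ``iterating the same projection step yields the $\Upsilon^4_2$ and $\Upsilon^4_1$ conditions'' reverses the logical flow. What is actually needed is the identification $\Gamma^{3,h(p)}_f = \Upsilon^4_{3;\ldots}\cap\Upsilon^4_{2;\ldots}\cap\Upsilon^4_{1;1,0}$, and this is where the Positivstellensatz (specifically parts~(b) and~(e), not~(f) which concerns the full four-variable cone enclosing $\{f_t<h\}$) identifies the correct connected component. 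Also, in the reverse direction, if some $\mu_{s(i)}\leq0$ you should take $\lambda_{s(i)}=\epsilon$ rather than $\mu_{s(i)}+\epsilon$, since the latter need not be positive; this is how Lemma~\hyperlink{L:3.1}{3.1} handles the boundary case and the argument covers $\mu_{s(i)}<0$ a fortiori.
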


Now, if $c_2^3(t) \taa/ c_1^2(t) > h > 0$ and \[c_0(t) \cc h + 24 c_2^2(t) \cos^2(\theta_{h, c_1(t), c_2(t)})\cos(2\theta_{h, c_1(t), c_2(t)}) > 0\] for all $t \in [0, 1]$. Here  $\theta_{h, c_1(t), c_2(t)} \coloneqq \arccos \bigl(   {- c_1(t) \ct h^{1/2} }\big/{c_2^{3/2}(t) }  \bigr)\big/3  - {2\pi}/{3}$ and we specify the branch so that $ \arccos  (  \bullet   ) \in  ( \pi, 3\pi/2  ]$. Then the $\Upsilon$-cone constraint will automatically be satisfied. So we have the following Lemma.

\hypertarget{L:3.6}{\begin{flemma}}
For $p \in M$, if $c_2^3(t) \taa/ c_1^2(t) > h(p) > 0$ and \[c_0(t) \cc h(p) + 24 c_2^2(t) \cos^2(\theta_{h(p), c_1(t), c_2(t)})\cos(2\theta_{h(p), c_1(t), c_2(t)}) > 0\] for all $t \in [0, 1]$. Here  $\theta_{h(p), c_1(t), c_2(t)} \coloneqq \arccos \bigl(   {- c_1(t) \ct h^{1/2}(p) }\big/{c_2^{3/2}(t) }  \bigr)\big/3  - {2\pi}/{3}$ and we specify the branch so that $ \arccos  (  \bullet ) \in \bigl( \pi, 3\pi/2 \bigr]$. Then 
\begin{align*}
 \{ f_1 = h(p) \} \subset \Upsilon_1(t, p) \subset \Upsilon_1(t', p);\quad  \Upsilon_2(t, p) \subset \Upsilon_2(t', p);\quad \text{and } \Upsilon_3(t, p) \subset \Upsilon_3(t', p)
\end{align*}for $1 \geq t > t' \geq 0$. Here, the solution set $\{ f_t = h(p) \}$ has some ambiguities, because it has more than one connected component. The workaround is the same as Remark~\hyperlink{R:3.6}{3.6}.
\end{flemma}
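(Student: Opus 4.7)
The plan is to verify each of the four inclusions in turn, in increasing order of difficulty. The two easy monotonicities come directly from the $\Upsilon$-cone constraints. First, $\Upsilon_3(t,p) = \Upsilon^4_{1;1,0}$ is the open positive orthant, which is independent of both $t$ and $p$, so the inclusion $\Upsilon_3(t,p) \subset \Upsilon_3(t',p)$ is trivial with equality. Second, for $\Upsilon_2$, the constraint $c_2'(s) > 0$ gives $c_2(t) > c_2(t')$ when $t > t'$, so the defining inequalities $h(p)\mu_{\sigma(1)}\mu_{\sigma(2)} > c_2(t)$ and $\mu_{\sigma(1)} > 0$ (for all $\sigma \in S_4$) at time $t$ force the strictly weaker inequality $h(p)\mu_{\sigma(1)}\mu_{\sigma(2)} > c_2(t')$ at time $t'$, placing $\mu \in \Upsilon_2(t',p)$.

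For the main inclusion $\Upsilon_1(t,p) \subseteq \Upsilon_1(t',p)$, I would mirror the derivative calculation in Remark~3.8 with general $h(p)$ replacing $\sin^2(\hat{\theta})$. Fix $\mu \in \Upsilon_1(t,p)$ and $\sigma \in S_4$, and set
\[
Q(s) \coloneqq h(p)\,\mu_{\sigma(2)}\mu_{\sigma(3)}\mu_{\sigma(4)} - c_2(s)\bigl(\mu_{\sigma(2)} + \mu_{\sigma(3)} + \mu_{\sigma(4)}\bigr) + 2c_1(s)\cot(\hat{\theta}).
\]
By the preceding step, $\mu \in \Upsilon_2(s,p)$ for all $s \in [t',t]$, so $\sqrt{\mu_{\sigma(i)}\mu_{\sigma(j)}} > c_2^{1/2}(s)/h^{1/2}(p)$ for each pair, and the identity $a^2+b^2+c^2 \geq ab+bc+ca$ yields $\mu_{\sigma(2)}+\mu_{\sigma(3)}+\mu_{\sigma(4)} > 3c_2^{1/2}(s)/h^{1/2}(p)$. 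Combining this with $c_2'(s) > 0$ and the $\Upsilon$-cone constraint $\tfrac{d}{ds}(c_2^{3/2}(s)) \geq -\cos(\hat{\theta})c_1'(s)$, together with the first hypothesis $c_2^3(t)\tan^2(\hat{\theta})/c_1^2(t) > h(p)$, should yield $Q'(s) \leq 0$ on $[t',t]$, hence $Q(t') \geq Q(t) > 0$ and $\mu \in \Upsilon_1(t',p)$. For the solution-set containment $\{f_1 = h(p)\} \subset \Upsilon_1(1,p)$, the second hypothesis of the lemma is precisely the Positivstellensatz condition of Theorem~2.1~(f) evaluated at $t=1$: it makes $\Upsilon_1(1,p) \cap \Upsilon^4_{4;h(p),0,-c_2(1),2c_1(1)\cot(\hat{\theta}),-c_0(1)(3\csc^2(\hat{\theta})-4)}$ a single connected component of $\Upsilon^4_{4;\dots}$, on the $\Upsilon^4_4$-boundary piece of which the three strict inequalities defining $\Upsilon_1(1,p)$ persist; thus $\{f_1 = h(p)\} \subset \Upsilon_1(1,p)$, and this extends to all $t \in [0,1]$ by the monotonicity just established.

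The subtle step I expect to be the main obstacle is verifying $Q'(s) \leq 0$ in the third inclusion. The naive chain of AM-GM plus the $\Upsilon$-cone constraint produces a sufficient condition equivalent to $h(p) \leq \sin^2(\hat{\theta})$, which is strictly stronger than what the lemma's hypothesis allows, since $c_2^3(t)\tan^2(\hat{\theta})/c_1^2(t)$ can exceed $\sin^2(\hat{\theta})$. Closing this gap should require either using both Positivstellensatz hypotheses in tandem through Theorem~2.1~(f) rather than (b) alone, or replacing the pointwise derivative estimate on $Q$ by a global continuation argument along $s \in [t',t]$ and excluding boundary crossings of $\mu$ out of the connected component $\Upsilon_1(s,p)$ guaranteed by the Positivstellensatz at every $s$.
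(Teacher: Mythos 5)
Your decomposition mirrors the paper's implicit argument exactly: the paper gives no explicit proof of this lemma, deferring to the discussion surrounding Remark~3.8, and what that discussion carries out is precisely the chain you describe — the $\Upsilon_3$ inclusion is trivial, the $\Upsilon_2$ inclusion follows from $c_2'(t) > 0$, and the $\Upsilon_1$ inclusion comes from bounding $\mu_{\sigma(2)} + \mu_{\sigma(3)} + \mu_{\sigma(4)}$ below by $\sqrt{\mu_{\sigma(i)}\mu_{\sigma(j)}}$ via the rearrangement inequality, then invoking $\Upsilon_2$-membership and the constraint $\tfrac{d}{ds}(c_2^{3/2}) \geq -\cos(\hat{\theta})\,c_1'$. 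Your two easy inclusions are correct as written.

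Your skepticism about the $\Upsilon_1$ step is well-founded and is the genuinely substantive observation in your proposal. Running the chain with general $h$ instead of $\sin^2(\hat{\theta})$: $\Upsilon_2(s,p)$ gives $\sqrt{\mu_{\sigma(i)}\mu_{\sigma(j)}} > \sqrt{c_2(s)/h}$, so $c_2'(s)\,\sigma_1 > 3c_2'(s)\sqrt{c_2(s)/h} = 2h^{-1/2}\tfrac{d}{ds}(c_2^{3/2}) \geq -2h^{-1/2}\cos(\hat{\theta})\,c_1'(s)$, and to conclude $Q'(s) \leq 0$ you need $-\cos(\hat{\theta})h^{-1/2} \geq \cot(\hat{\theta})$. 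With $\cos, \sin < 0$ on $(\pi, 5\pi/4)$ this rearranges to $\sqrt{h} \leq |\sin(\hat{\theta})|$, i.e. $h \leq \sin^2(\hat{\theta})$ — exactly your claimed sufficient condition, and it is strictly stronger than hypothesis~1 (which at $t = 1$ only requires $h < \tan^2(\hat{\theta})$). So the naive pointwise estimate on $Q'$ does not by itself reach the full stated range of $h$. Note that the paper only ever applies this lemma with $h = \sin^2(\hat{\theta})$ (equations (3.13)--(4.19) fix $h = \sin^2(\hat{\theta})$ as a constant), which is precisely where the chain closes, so the broader statement is unexercised. Your suggestion that the second (Positivstellensatz) hypothesis should be the resource that salvages the $\Upsilon_1$ monotonicity for $h$ beyond $\sin^2(\hat{\theta})$ is the right instinct — that hypothesis does cut down the admissible $h$ well below $\tan^2(\hat{\theta})$ once $c_0(1) = c_1(1) = c_2(1) = 1$ are imposed — but neither your proposal nor the paper actually demonstrates that it forces $\inf_{\mu \in \Upsilon_1(s,p)} \sigma_1(\mu_{\sigma;3}) \geq 2c_1'(s)\cot(\hat{\theta})/c_2'(s)$. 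This is a real loose end that the paper's Remark~3.8 does not address for the more general $h$ of the lemma. For the final inclusion $\{f_1 = h(p)\} \subset \Upsilon_1(t,p)$, your route through Theorem~2.1(f) is the right one and again matches the paper's intent (Remark~3.7): the level set is the boundary of the relevant connected component of the $\Upsilon^4_4$-cone, and the Positivstellensatz identifies that component with its intersection against $\Upsilon^4_3 \cap \Upsilon^4_2 \cap \Upsilon^4_1$, so the boundary stays inside the latter; combined with the monotonicity just established this gives $\{f_1 = h(p)\} \subset \Upsilon_1(1,p) \subset \Upsilon_1(t,p)$.
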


\section{A Priori Estimate}
\label{sec:4}
The original a priori estimates for the dHYM equation on Kähler manifolds is due to Collins--Jacob--Yau \cite{collins20151} (See also Lin \cite{lin2020} for Hermitian manifolds). Here, by making a substitution, the dHYM equation has a different format (See equations (\ref{eq:3.1}) and (\ref{eq:3.7})). Moreover, we find a continuity path to connect the dHYM equation to a general inverse $\sigma_k$-type equation (See equations (\ref{eq:3.3}) and (\ref{eq:3.9})). The dHYM equation and general inverse $\sigma_k$-type equation are different equations, and indeeed they have different $C$-subsolution cone and $\Upsilon$-cones. Notice that a priori estimates depend on $C$-subsolution hence depend on the $C$-subsolution cone as well. So we need to obtain a priori estimates which can be applied all over the continuity path we choose.\bigskip

First, let us summarize the proof of a priori estimates. Under the assumption of $C$-subsolution and $\Upsilon$-cones, we apply the Alexandroff--Bakelman--Pucci estimate. The $C^0$ estimate can be obtained following the proof in Székelyhidi \cite{szekelyhidi2018fully}, which is based on the method that Błocki \cite{blocki2005uniform,blocki2011uniform} used in the case of the complex Monge--Ampère equation. We will skip the proof of the $C^0$ estimate because it follows verbatim.\bigskip

Second, we use the maximum principle to obtain that the $C^2$ norm can be bounded by the $C^1$ norm. The method is inspired by Hou--Ma--Wu \cite{hou2010second} for the complex Hessian equations and used by Székelyhidi \cite{szekelyhidi2018fully}. The interested reader is referred to \cite{collins20151, szekelyhidi2018fully} and the references therein. Once we have the above type inequality, by a blow-up argument due to Dinew--Kołodziej \cite{dinew2017liouville}, we can get an indirect $C^1$ estimate. \bigskip



Last, to get $C^{2, \alpha}$ estimate, we follow the proof of the complex version of the Evans--Krylov theory in Siu \cite{siu2012lectures}, we can exploit the convexity of the solution sets to obtain a $C^{2, \alpha}$ estimates by a blow-up argument. Furthermore, for the higher regularity, we apply the standard Schauder estimates and bootstrapping.

\subsection{When \texorpdfstring{$n=3$}{}}
\label{sec:4.1}
In this subsection, first, we always assume that $\hat{\theta} \in \bigl( \pi/2, 5 \pi/6    \bigr)$ and there exists a $C$-subsolution $\ubar{u} \colon M   \rightarrow \mathbb{R}$. We also call $X_{\ubar{u}}$ this $C$-subsolution and by changing representative, we may assume $X$ is this $C$-subsolution. We also abbreviate $\lambda = \{\lambda_1, \lambda_2, \lambda_3\}$ and we always assume $\lambda_1 \geq \lambda_2 \geq \lambda_3$ unless further notice. Most of the time, to save spaces, we will abbreviate $f = f_t =  f_t(\lambda)$, $f_i = \partial f /\partial \lambda_i, f_{ij} = \partial^2 f/\partial \lambda_i \partial \lambda_j$ for $i, j \in \{1, 2, 3\}$ for notational convention. Last, unless specify otherwise, we always abbreviate $c_1 = c_1(t)$ and $c_0 = c_0(t)$. We assume $(c_1, c_0)$ satisfy all the \hyperlink{dim 3 cons}{3-dimensional four constraints} in Section~\ref{sec:3.1} and consider equation (\ref{eq:3.5})
\begin{align*}
\label{eq:4.1}
f_t(  {\lambda}_1,   {\lambda}_2,   {\lambda}_3) =  \frac{c_1(t) \sigma_1( {\lambda}) + 2 c_0(t) \tan(\hat{\theta}) }{  {\lambda}_1   {\lambda}_2   {\lambda}_3} = h, \tag{4.1}
\end{align*}where $\lambda_i$ are the eigenvalues of $\omega^{-1}X_u$ and $h = \css$ is a constant. By the \hyperlink{dim 3 cons}{Positivstellensatz constraint}, it automatically satisfies $c_1^3(t) \ctt/ c_0^2(t) > h > 0$. 

\begin{frmk}
The author thinks that a priori estimates hold for smooth function $h \colon M   \rightarrow \mathbb{R}$ satisfying $c_1^3(t) \ctt/ c_0^2(t) > h > 0$. The difficulty is that if $h$ is not a constant, then the vector $(X_u)_{i \bar{i}, k}$ in equation (\ref{eq:4.14}) will not be a vector on the tangent plane of $\{ f = h\}$. Thus we can not apply the convexity of the solution set to get $\sum_{i, j} f_{ij}   (  X_u     )_{i \bar{i}, \bar{k}}    (  X_u     )_{j \bar{j}, {k}}  \geq 0$.
\end{frmk}

\subsubsection{The $C^2$ Estimates}
\label{sec:4.1.1}
Define a Hermitian endomorphism $\Lambda \coloneqq \omega^{-1}X_u$, where $X_u = X + \sqrt{-1} \partial \bar{\partial} u$, and let $\lambda = \{ \lambda_1, \lambda_2, \lambda_3\}$ be the eigenvalues of $\Lambda$. We consider the following function $G(\Lambda) = \log(1 + \lambda_1) =g(\lambda_1, \lambda_2, \lambda_3)$ and the following test function
\begin{align*}
U \coloneqq - Au + G(\Lambda), 
\end{align*}where $A \gg 0$ will be determined later. We want to apply the maximum principle to $U$, but since the eigenvalues of $\Lambda$ might not be distinct at the maximum point $p \in M$ of $U$, we do a perturbation here. The perturbation here, though not necessarily, is made to preserve the $\Upsilon$-cone structure for convenience. Assume $\lambda_1$ is large, otherwise we are done, then 

\begin{itemize}
\hypertarget{pert for dim 3}{\item} we pick the constant matrix $B$ to be a diagonal matrix with real entries 
\begin{align*}
B_{11} = \epsilon;\quad B_{22} = \epsilon/2;\quad B_{33} = 0
\end{align*}
such that $\tilde{\lambda}_i = \lambda_i + B_{ii}$ with $\lambda_1 + \epsilon = \tilde{\lambda}_1 > \tilde{\lambda}_2  = \lambda_2 + \epsilon/2> \tilde{\lambda}_3 = \lambda_3 > 0$ and assume $\epsilon > 0$ is sufficiently small.
\end{itemize}

By defining $\tilde{\Lambda} = \Lambda + B$, then $\tilde{\Lambda}$ has distinct eigenvalues near $p \in M$, which are $\{\tilde{\lambda}_1, \tilde{\lambda}_2, \tilde{\lambda}_3\}$. The eigenvalues of $\tilde{\Lambda}$ define smooth functions near the maximum point $p$. And we can check $p$ is still the maximum point of the following locally defined test function
\begin{align*}
\label{eq:4.2}
\tilde{U} \coloneqq -Au + G(\tilde{\Lambda}). \tag{4.2}
\end{align*}


Near the maximum point $p$ of $\tilde{U}$, we always use the coordinates in Lemma~\hyperlink{L:2.4}{2.4} unless otherwise noted. We instantly get the following lemma. 

\hypertarget{L:4.1}{\begin{flemma}}
At the maximum point $p$ of $\tilde{U}$, by taking the first derivative of $\tilde{U}$ at $p$, we get
\begin{align*}
\label{eq:4.3}
0 &= -A u_k(p) + \frac{1}{1+ \tilde{\lambda}_1} ( X_u )_{1\bar{1}, k}, \tag{4.3}
\end{align*}where we denote $u_k = \partial u/\partial z_k$ and $( X_u )_{1\bar{1}, k} = \partial (X_u)_{1 \bar{1}} / \partial z_k$.
\end{flemma}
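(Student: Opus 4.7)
The plan is to exploit that $p$ is an interior maximum of the locally defined smooth function $\tilde{U}$, so that $\partial \tilde{U}/\partial z_k(p) = 0$ for each $k$, and then to expand this identity using the chain rule together with Lemmas~\hyperlink{L:2.1}{2.1} and~\hyperlink{L:2.4}{2.4}. The key point making $\tilde{U}$ smooth in a neighborhood of $p$ is precisely the \hyperlink{pert for dim 3}{perturbation} built into $\tilde{\Lambda} = \Lambda + B$: since we arranged $\tilde{\lambda}_1 > \tilde{\lambda}_2 > \tilde{\lambda}_3$ at $p$, standard perturbation theory gives smooth eigenvalue functions of $\tilde{\Lambda}$ in a neighborhood of $p$, hence $G(\tilde{\Lambda}) = \log(1 + \tilde{\lambda}_1)$ is smooth there.

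From $\tilde{U} = -Au + G(\tilde{\Lambda})$ and the maximum principle, I would first write
\begin{align*}
0 = \frac{\partial \tilde{U}}{\partial z_k}(p) = -A u_k(p) + \sum_{i, j} \frac{\partial G}{\partial \tilde{\Lambda}_i^j}(p)\, \frac{\partial \tilde{\Lambda}_i^j}{\partial z_k}(p).
\end{align*}
Since $g(\tilde{\lambda}_1, \tilde{\lambda}_2, \tilde{\lambda}_3) = \log(1+\tilde{\lambda}_1)$ depends only on the first eigenvalue, we have $g_1 = 1/(1+\tilde{\lambda}_1)$ and $g_i = 0$ for $i \geq 2$. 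Applying Lemma~\hyperlink{L:2.1}{2.1} to $\tilde{\Lambda}$ at $p$, which has distinct eigenvalues, yields
\begin{align*}
\frac{\partial G}{\partial \tilde{\Lambda}_i^j}(p) = \delta_{ij}\, g_i(\tilde{\lambda}) = \delta_{i1}\delta_{j1}\, \frac{1}{1+\tilde{\lambda}_1}.
\end{align*}
Hence only the $(1,1)$-term survives in the sum, and the identity reduces to
\begin{align*}
0 = -A u_k(p) + \frac{1}{1+\tilde{\lambda}_1}\, \frac{\partial \tilde{\Lambda}_1^1}{\partial z_k}(p).
\end{align*}

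It remains to identify $\partial \tilde{\Lambda}_1^1/\partial z_k$ at $p$ with $(X_u)_{1\bar{1}, k}$. Since $B$ is a constant matrix, $\tilde{\Lambda}_1^1 = \Lambda_1^1 + \epsilon$ near $p$, so $\partial \tilde{\Lambda}_1^1/\partial z_k = \partial \Lambda_1^1/\partial z_k$. Taking the conjugate of the first identity in Lemma~\hyperlink{L:2.4}{2.4} (or equivalently applying the coordinates of Lemma~\hyperlink{L:2.2}{2.2} to $\partial \Lambda_i^j/\partial z_k = -\omega^{j\bar{b}}\omega_{a\bar{b},k}\omega^{a\bar{p}}(X_u)_{i\bar{p}} + \omega^{j\bar{p}}(X_u)_{i\bar{p}, k}$ and using that $\omega_{a\bar{b},k}(p) = 0$) gives $\partial \Lambda_1^1/\partial z_k(p) = (X_u)_{1\bar{1}, k}$, yielding the desired identity~(\ref{eq:4.3}). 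The only subtlety worth emphasising is the smoothness of the eigenvalue function at $p$, which the perturbation step was precisely designed to guarantee; everything else is a chain-rule computation in the adapted coordinates.
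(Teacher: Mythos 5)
Your proof is correct and follows essentially the same route as the paper: both apply the interior maximum condition, expand $\partial_{z_k} G(\tilde{\Lambda})$ by the chain rule, invoke Lemma~\hyperlink{L:2.1}{2.1} (valid because the perturbation forces the eigenvalues of $\tilde{\Lambda}$ to be distinct at $p$, so only the $(1,1)$-entry survives since $g$ depends only on $\tilde{\lambda}_1$), and identify $\partial \tilde{\Lambda}_1^1/\partial z_k(p)$ with $(X_u)_{1\bar{1},k}$ via Lemma~\hyperlink{L:2.4}{2.4} and the adapted coordinates of Lemma~\hyperlink{L:2.2}{2.2}. Your write-up is merely more explicit about the intermediate steps, which the paper leaves to the reader.
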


\begin{proof}
First, since $\tilde{U} = -Au + G(\tilde{\Lambda})$, if we take the first derivative, we obtain
\begin{align*}
\frac{\partial}{\partial z_k} \tilde{U} &= - A u_k + \frac{\partial G}{\partial \Lambda^j_i} (\tilde{\Lambda}) \frac{\partial \tilde{\Lambda}^j_i}{\partial z_k}.
\end{align*}By Lemma~\hyperlink{L:2.1}{2.1}, Lemma~\hyperlink{L:2.2}{2.2}, and Lemma~\hyperlink{L:2.4}{2.4}, at the maximum point $p \in M$, we have
\begin{align*}
0 = - A u_k(p) + \frac{1}{1+ \tilde{\lambda}_1} ( X_u )_{1\bar{1}, k}.
\end{align*}This finishes the proof.
\end{proof}

We may define the following operator \hypertarget{dim 3 operator}{$\mathcal{L}_t$} by
\begin{align*}
\label{eq:4.4}
\mathcal{L}_t \coloneqq - \sum_{i, j, k} \frac{\partial F_t}{\partial \Lambda^k_i} ( {\Lambda}) \omega^{k \bar{j}}  \frac{\partial^2}{\partial z_i \partial \bar{z}_j}, \tag{4.4}
\end{align*}where $F_t = F_t( {\Lambda}) = f_t( {\lambda}_1,  {\lambda}_2,  {\lambda}_3)$ is defined by $f_t( {\lambda}) =  \bigl( c_1(t) \sigma_1( {\lambda}) + 2c_0(t) \ta \bigr)/  {\lambda}_1   {\lambda}_2   {\lambda}_3$. We immediately have the following Lemmas.

\hypertarget{L:4.2}{\begin{flemma}}
By taking $f_t( {\lambda}) =  \bigl( c_1(t) \sigma_1( {\lambda}) + 2c_0(t) \ta \bigr)/  {\lambda}_1   {\lambda}_2   {\lambda}_3$ and $g( {\lambda})=\log(  1 +  {\lambda}_1)$, we have 
\begin{align*}
f_i &= \frac{    - c_1  \sigma_1(  {\lambda}_{;i}) - 2c_0  \tan(\hat{\theta})}{   {\lambda}_1   {\lambda}_2   {\lambda}_3   {\lambda}_i }; &f_{ij}& = \frac{ -c_1  (  {\lambda}_i +   {\lambda}_j) + \bigl [  c_1 \sigma_1( {\lambda})  + 2c_0  \tan(\hat{\theta})  \bigr ] (1 + \delta_{ij})}{  {\lambda}_1   {\lambda}_2   {\lambda}_3   {\lambda}_i   {\lambda}_j}; \\
g_i &= \delta_{1i} \frac{1}{ 1 +   {\lambda}_1}; &g_{ij}&=-\delta_{1i}\delta_{1j}\frac{1}{( 1 +  {\lambda}_1)^2}. 
\end{align*}Here, $\lambda_{;i}$ means we exclude $\lambda_i$ from $\lambda = \{ \lambda_1, \lambda_2, \lambda_3\}$ and we denote $f_i \coloneqq  {\partial  f_t}/{\partial \lambda_i}, \, g_i \coloneqq  {\partial g}/{\partial \lambda_i}$, $(f_t)_{ij} \coloneqq  {\partial^2 f_t}/{\partial \lambda_i \partial \lambda_j}$, and $g_{ij} \coloneqq  {\partial^2 g}/{\partial \lambda_i \partial \lambda_j}.$
\end{flemma}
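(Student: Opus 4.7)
The formulas are obtained by direct differentiation of the explicit expressions for $f_t$ and $g$, so the plan is just to organize the bookkeeping cleanly. For $g(\lambda)=\log(1+\lambda_1)$, since $g$ depends only on $\lambda_1$ via the outer logarithm, the chain rule gives $g_i=\delta_{1i}/(1+\lambda_1)$ in one line, and differentiating once more yields $g_{ij}=-\delta_{1i}\delta_{1j}/(1+\lambda_1)^2$; this takes care of the last two formulas.

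The real work is in the formula for $f_t$. I would write $f_t=N/D$ with
\[
N(\lambda)\coloneqq c_1\sigma_1(\lambda)+2c_0\tan(\hat{\theta}),\qquad D(\lambda)\coloneqq\lambda_1\lambda_2\lambda_3,
\]
record the elementary identities $\partial_iN=c_1$ and $\partial_iD=D/\lambda_i$, and then apply the quotient rule. For the first derivative this gives
\[
f_i=\frac{c_1}{D}-\frac{N}{D\lambda_i}=\frac{c_1\lambda_i-N}{D\lambda_i}=\frac{-c_1\sigma_1(\lambda_{;i})-2c_0\tan(\hat{\theta})}{\lambda_1\lambda_2\lambda_3\lambda_i},
\]
where the last step uses $c_1\lambda_i-N=-c_1\sigma_1(\lambda_{;i})-2c_0\tan(\hat{\theta})$. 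This matches the stated formula for $f_i$.

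For the second derivatives, I would differentiate the expression $f_i=A_i/(D\lambda_i)$ with $A_i\coloneqq -c_1\sigma_1(\lambda_{;i})-2c_0\tan(\hat{\theta})$, splitting into the two cases. When $j=i$, $\partial_iA_i=0$ and $\partial_i(D\lambda_i)=2D$, so $f_{ii}=-2A_i/(D\lambda_i^2)$, which is exactly the $i=j$ case of the claimed formula once one substitutes back the definition of $A_i$ and uses the identity $-2A_i=2[c_1\sigma_1(\lambda)+2c_0\tan(\hat{\theta})]-2c_1\lambda_i$. When $j\neq i$, one has $\partial_jA_i=-c_1$ and $\partial_j(D\lambda_i)=D\lambda_i/\lambda_j$, and the quotient rule yields
\[
f_{ij}=\frac{-c_1\lambda_j-A_i}{D\lambda_i\lambda_j}=\frac{c_1\bigl(\sigma_1(\lambda_{;i})-\lambda_j\bigr)+2c_0\tan(\hat{\theta})}{\lambda_1\lambda_2\lambda_3\lambda_i\lambda_j},
\]
which after rewriting $\sigma_1(\lambda_{;i})-\lambda_j=\sigma_1(\lambda)-\lambda_i-\lambda_j$ is exactly the $i\neq j$ case of the stated formula (with $\delta_{ij}=0$). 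There is no real obstacle here; the only thing to watch is keeping the identities $\sigma_1(\lambda)-\lambda_i=\sigma_1(\lambda_{;i})$ and the factor $(1+\delta_{ij})$ consistent between the two cases, which is cosmetic.
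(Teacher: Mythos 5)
Your proposal is correct: the formulas for $g$ follow immediately from the chain rule, and the formulas for $f_t$ follow by the quotient rule exactly as you describe, with the identities $c_1\lambda_i - N = -c_1\sigma_1(\lambda_{;i}) - 2c_0\tan(\hat{\theta})$ and $\sigma_1(\lambda_{;i}) - \lambda_j = \sigma_1(\lambda) - \lambda_i - \lambda_j$ tying the two cases $j=i$ and $j\neq i$ to the unified expression with the $(1+\delta_{ij})$ factor. The paper treats this lemma as an immediate computation and gives no proof, so your direct differentiation is the expected argument.
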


\hypertarget{L:4.3}{\begin{flemma}}
If $c_1^3 \ctt/ c_0^2 > h > 0$, then for any point on the solution set $\{ f_t = h\}$, we have
\begin{align*}
-f_i =  \frac{    c_1  \sigma_1(  {\lambda}_{;i}) + 2c_0  \tan(\hat{\theta})}{   {\lambda}_1   {\lambda}_2   {\lambda}_3   {\lambda}_i } > 0
\end{align*}for any $i \in \{1, 2, 3\}$ at this point. Here $f_i = \partial f_t / \partial \lambda_i$, where $i \in \{1, 2, 3\}$.
\end{flemma}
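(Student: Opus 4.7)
The plan is to reduce the claim to an algebraic identity using the defining equation $f_t = h$, and then invoke the $\Upsilon$-cone structure already established.

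First, I would use $\sigma_1(\lambda_{;i}) = \sigma_1(\lambda) - \lambda_i$ together with the equation $f_t(\lambda) = h$ to rewrite
\begin{equation*}
c_1 \sigma_1(\lambda_{;i}) + 2 c_0 \tan(\hat{\theta}) = \bigl(c_1 \sigma_1(\lambda) + 2c_0 \tan(\hat{\theta})\bigr) - c_1 \lambda_i = h \, \lambda_1 \lambda_2 \lambda_3 - c_1 \lambda_i.
\end{equation*}
The assertion $-f_i > 0$ therefore amounts to $h \, \lambda_1 \lambda_2 \lambda_3 > c_1 \lambda_i$ for each $i \in \{1, 2, 3\}$, which (assuming all $\lambda_j > 0$) is equivalent to the pairwise inequalities $h \, \lambda_j \lambda_k > c_1$ for every $\{j, k\} \subset \{1, 2, 3\}$.

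Next I would identify these two conditions with membership in a $\Upsilon$-cone. Clearing the $h$, the pairwise inequalities together with positivity of each $\lambda_i$ are exactly the defining inequalities of
\begin{equation*}
\Upsilon^3_{2; h, 0, -c_1(t)} \cap \Upsilon^3_{1; 1, 0} = \Upsilon_1(t, p),
\end{equation*}
which by Lemma~3.2 is the $\Upsilon_1$-cone for equation~(\ref{eq:4.1}). By Proposition~2.2, the solution set $\{f_t = h\}$ (on the connected component specified in Remark~3.2) is contained in $\Upsilon_1(t, p)$. The hypothesis $c_1^3 \ctt / c_0^2 > h > 0$, equivalent to $c_1^{3/2} > c_0 h^{1/2} |\tan(\hat{\theta})|/h^{1/2} \cdot h^{1/2}/\cdots$, is precisely the Positivstellensatz condition of Theorem~2.1(a) applied with constant $c_1/h$, ensuring that $\Upsilon^3_{2; 1, 0, -c_1/h} \cap \Upsilon^3_{1; 1, 0}$ is indeed one of the connected components of $\Upsilon^3_{2; 1, 0, -c_1/h}$, so that the $\Upsilon_1$-cone identification is meaningful.

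Combining these two steps yields the result: on the good component of $\{f_t = h\}$ we have $\lambda_i > 0$ and $h \lambda_j \lambda_k > c_1$, hence $h \lambda_1 \lambda_2 \lambda_3 - c_1 \lambda_i > 0$, which is $-f_i > 0$. There is no real obstacle here; the lemma is essentially a bookkeeping consequence of Proposition~2.2 and Lemma~3.2 together with the algebraic identity derived from the equation, and the restriction on $h$ is designed exactly so that the $\Upsilon_1$-cone contains the solution set in the correct way.
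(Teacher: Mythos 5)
Your algebraic core is correct and gives a genuinely more direct route than the paper's. Using $f_t(\lambda)=h$ to rewrite
\begin{align*}
c_1 \sigma_1(\lambda_{;i}) + 2c_0 \tan(\hat\theta) \;=\; h\,\lambda_1\lambda_2\lambda_3 - c_1\lambda_i
\end{align*}
reduces $-f_i>0$ (given all $\lambda_j>0$) to $h\,\lambda_j\lambda_k > c_1$ for $\{j,k\}=\{1,2,3\}\setminus\{i\}$, which together with positivity is precisely membership in $\Upsilon^3_{2;1,0,-c_1/h}\cap\Upsilon^3_{1;1,0}$. The paper instead invokes the Positivstellensatz Theorem~\hyperlink{T:2.1}{2.1}(d) with $c=c_1/h$, $d=-2c_0\tan(\hat\theta)/h$, to conclude $c\sigma_1(\lambda_{;i})-d>0$ everywhere on the cone. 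That argument never uses the equation $f_t=h$ and therefore proves positivity on the whole cone, not just on the solution set; your version uses the equation and only gives it on $\{f_t=h\}$, which is all the lemma asks for. Both approaches still need to know that the relevant connected component of $\{f_t=h\}$ is contained in $\Upsilon^3_{2;1,0,-c_1/h}\cap\Upsilon^3_{1;1,0}$, and your appeal to Proposition~\hyperlink{P:2.2}{2.2} and Lemma~\hyperlink{L:3.2}{3.2} for this is the right instinct.

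Where your writeup goes wrong is in explaining the role of the hypothesis $c_1^3\cot^2(\hat\theta)/c_0^2 > h > 0$. Theorem~\hyperlink{T:2.1}{2.1}(a) needs only $c>0$; there is no ``Positivstellensatz condition'' in part (a), and the unfinished chain $c_1^{3/2}>c_0 h^{1/2}|\tan(\hat\theta)|/h^{1/2}\cdot h^{1/2}/\cdots$ shows the bookkeeping was not actually carried out. The hypothesis is what guarantees the containment you need. Indeed, if some point of the solution set satisfied $\lambda_j\lambda_k = c_1/h$, then plugging into $f_t=h$ gives $c_1(\lambda_j+\lambda_k)+2c_0\tan(\hat\theta)=0$, and combining with $\lambda_j+\lambda_k\geq 2\sqrt{\lambda_j\lambda_k}=2\sqrt{c_1/h}$ forces $-c_0\tan(\hat\theta)\geq c_1^{3/2}h^{-1/2}$, i.e.\ $h\geq c_1^3\cot^2(\hat\theta)/c_0^2$, contradicting the hypothesis; so the chosen connected component of $\{f_t=h\}$ cannot reach $\partial\bigl(\Upsilon^3_{2;1,0,-c_1/h}\cap\Upsilon^3_{1;1,0}\bigr)$ and stays inside. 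This is precisely the condition $2c^{3/2}>d$ in Theorem~\hyperlink{T:2.1}{2.1}(d) and the hypothesis of Lemma~\hyperlink{L:3.3}{3.3}. Replace the reference to (a) with this containment argument and your proof is complete.
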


\begin{proof}
By the Positivstellensatz Theorem~\hyperlink{T:2.1}{(d)}, for $c > 0$, $d \geq 0$ and $2 c^{3/2} > d$, then $\Upsilon^3_{2; 1, 0, -c} \cap \Upsilon^3_{1; 1, 0}$ is contained in $\Upsilon^3_{2; 0, c, -d} \cap \Upsilon^3_{1; 1, 0}$. By letting $c = c_1/h$ and $d = -2c_0 \ta/h$, if $2 c^{3/2} > d$, then 
\begin{align*}
0< c \sigma_1(  {\lambda}_{; i}) -d = c_1 \sigma_1(  {\lambda}_{; i}) /h + 2 c_0 \ta/h 
\end{align*}for all $i \in \{1, 2, 3\}$. By checking $2 c^{3/2} - d$, we get
\begin{align*}
2 c^{3/2} - d = \frac{2 c_1^{3/2}}{h^{3/2}} + \frac{2c_0 \ta}{h} =  \frac{2}{h^{3/2}} \bigl( c_1^{3/2}  + c_0 \ta h^{1/2} \bigr) > 0
\end{align*}by our hypothesis. This finishes the proof.
\end{proof}

With Lemma~\hyperlink{L:4.3}{4.3}, if we assume $\bigl(c_1(t), c_0(t)\bigr)$ satisfy all \hyperlink{dim 3 cons}{3-dimensional four constraints}, then we get that the operator \hyperlink{dim 3 operator}{$\mathcal{L}_t $} is indeed an elliptic operator on the solution set $\{ f_t = h\}$. Here $c_1^3 \ctt/ c_0^2 > h > 0$.

\hypertarget{L:4.4}{\begin{flemma}}
If $c_1^3 \ctt/ c_0^2 > h > 0$, then the solution set $\{ f_t = h\}$ is convex.  
\end{flemma}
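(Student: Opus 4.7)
The plan is to parametrize the connected component of $\{f_t=h\}$ containing $(R,R,R)$ for large $R$ as a graph $\lambda_1 = \phi(\lambda_2,\lambda_3)$ and to show $\phi$ is a convex function on its domain. Rewrite the equation as $P(\lambda) := h\lambda_1\lambda_2\lambda_3 - c_1\sigma_1(\lambda) - 2c_0\ta = 0$. Theorem~\hyperlink{T:2.1}{2.1}(a) places this component inside the $C$-subsolution cone $\Upsilon^3_{2;h,0,-c_1}\cap\Upsilon^3_{1;1,0}$, so $P_i = h\lambda_j\lambda_k - c_1 > 0$ for each $i$, and the implicit function theorem gives
\[
\lambda_1 = \phi(\lambda_2,\lambda_3) = \frac{c_1(\lambda_2+\lambda_3) + 2c_0\ta}{h\lambda_2\lambda_3 - c_1}.
\]
Since $f_t(R,R,R)<h$ and $f_i<0$ on the surface by Lemma~\hyperlink{L:4.3}{4.3}, the region $\{f_t\le h\}$ agrees with the epigraph $\{\lambda_1\ge\phi\}$, and convexity of the solution set is equivalent to convexity of $\phi$.

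To verify convexity of $\phi$, I would compute its Hessian by implicit differentiation of $P(\phi,\lambda_2,\lambda_3)\equiv 0$. Multilinearity of $P$ yields $P_{ii}=0$ and $P_{ij} = h\lambda_k$ for distinct $i,j,k$, whence
\begin{align*}
P_1^3\,\phi_{\lambda_2\lambda_2} &= 2h\lambda_3\,P_1 P_2, \qquad P_1^3\,\phi_{\lambda_3\lambda_3} = 2h\lambda_2\,P_1 P_3,\\
P_1^3\,\phi_{\lambda_2\lambda_3} &= 2hP_1(c_1\lambda_1 + c_0\ta).
\end{align*}
The last identity uses the on-surface Euler-type relation $\lambda_2 P_2 + \lambda_3 P_3 - \lambda_1 P_1 = 2(c_1\lambda_1 + c_0\ta)$, which follows from $\sum_i\lambda_i P_i = 3h\lambda_1\lambda_2\lambda_3 - c_1\sigma_1$ together with $P=0$. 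Positivity of $\phi_{\lambda_2\lambda_2}$ and $\phi_{\lambda_3\lambda_3}$ is immediate.

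The essential step is $\phi_{\lambda_2\lambda_2}\phi_{\lambda_3\lambda_3}\ge \phi_{\lambda_2\lambda_3}^2$. Eliminating $\lambda_1=\phi$ via $P=0$ expresses $P_2=Q(\lambda_3)/P_1$ and $P_3=Q(\lambda_2)/P_1$, where $Q(t):= hc_1t^2 + 2hc_0\ta\,t+c_1^2$, so the determinantal inequality reduces to
\[
\lambda_2\lambda_3\,Q(\lambda_2)\,Q(\lambda_3) \;\ge\; \bigl(hc_0\ta\,\lambda_2\lambda_3 + c_0\ta\,c_1 + c_1^2(\lambda_2+\lambda_3)\bigr)^2.
\]
The Positivstellensatz hypothesis $c_1^3\,\ctt/c_0^2 > h$ is precisely the condition $\mathrm{disc}(Q)<0$, permitting the sum-of-squares decomposition
\[
Q(t) = hc_1\bigl(t + c_0\ta/c_1\bigr)^2 + \frac{c_1^3 - c_0^2 h\,\taa}{c_1}
\]
with both summands positive. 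After normalizing $\tilde\lambda_i := \sqrt{h/c_1}\,\lambda_i$ and $\sigma := c_0\ta\sqrt{h/c_1^3}$ (so $|\sigma|<1$), the inequality becomes a universal polynomial inequality in $(\tilde\lambda_2,\tilde\lambda_3,\sigma)$ on the semialgebraic set $\{\tilde\lambda_2,\tilde\lambda_3>0,\ \tilde\lambda_2\tilde\lambda_3\ge 1\}$, with equality on the boundary $\tilde\lambda_2\tilde\lambda_3=1$.

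The main obstacle is this final algebraic step. A direct Cauchy--Schwarz on the sum-of-squares yields only $Q(\lambda_2)Q(\lambda_3)\ge (hc_1\lambda_2\lambda_3 + hc_0\ta(\lambda_2+\lambda_3)+c_1^2)^2$, and converting this to the target requires a further manipulation sensitive both to the negative sign of $c_0\ta$ (since $\hat\theta\in(\pi/2,5\pi/6)$) and to the $\Upsilon$-cone constraint $h\lambda_2\lambda_3>c_1$. In the normalized variables the problem reduces to a tractable bivariate polynomial inequality parametrized by $\sigma\in(-1,0)$, which should succumb to direct factorization once the equality locus on the boundary is identified.
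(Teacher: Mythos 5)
Your parametrization route is a genuinely different execution of the same idea. The paper works with a tangent vector $V$ satisfying $\sum_i f_i V_i=0$, eliminates $V_3$, and checks the discriminant of the resulting quadratic form in $(V_1,V_2)$; this reduces to the condition $c_1^2\sigma_2(\lambda)+2c_0c_1\ta\,\sigma_1(\lambda)+3c_0^2\taa\ge 0$, which is then established by invoking the Positivstellensatz (Theorem~\hyperlink{T:2.1}{2.1}(e)) and recognizing the resulting threshold as a perfect square. Your computation of $\Hess\phi$ leads, after clearing $P_1^2$, to $\lambda_2\lambda_3 P_2 P_3 \ge (c_1\lambda_1+c_0\ta)^2$, and one can check that on $\{P=0\}$ this is \emph{literally the same} quantity: $\lambda_2\lambda_3 P_2 P_3 - (c_1\lambda_1+c_0\ta)^2 = c_1^2\sigma_2(\lambda)+2c_0c_1\ta\,\sigma_1(\lambda)+3c_0^2\taa$. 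The advantage of your formulation is that it is an explicit two-variable polynomial inequality, so it can in principle be settled bare-handed without appealing to Theorem~\hyperlink{T:2.1}{2.1}(e). Your reduction — the Hessian formulas, the Euler-type identity, the sum-of-squares form of $Q$, the identification $\mathrm{disc}(Q)<0\Leftrightarrow h<c_1^3\ctt/c_0^2$, and the normalization — is correct throughout.

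The gap is that the decisive inequality is asserted, not proved: you stop at the normalized form and write that it ``should succumb to direct factorization.'' This step is not a formality; it is precisely where the hypothesis $|\sigma|<1$ and the $\Upsilon$-cone bound $h\lambda_2\lambda_3>c_1$ must enter, and without it the lemma is not established. For the record, the factorization exists and you correctly guessed the equality locus: with $p=\tilde\lambda_2\tilde\lambda_3$, $s=\tilde\lambda_2+\tilde\lambda_3$, and $\tilde Q(t)=t^2+2\sigma t+1$, one has the polynomial identity
\begin{align*}
p\,\tilde Q(\tilde\lambda_2)\tilde Q(\tilde\lambda_3)-\bigl(\sigma(p+1)+s\bigr)^2 = (p-1)\Bigl(s^2+2\sigma(p+1)s+p^2-p+\sigma^2(3p+1)\Bigr),
\end{align*}
and the bracketed quadratic in $s$ has discriminant $4(\sigma^2-1)p(p-1)<0$ once $|\sigma|<1$ and $p>1$, hence is strictly positive; the prefactor $p-1>0$ is the cone constraint. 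So the route works and would give a cleaner, more elementary proof than the paper's, but as written the argument halts exactly at the place where all the content of the lemma lives, and guessing the equality locus is not the same as producing the factorization.
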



\begin{proof}
Let $V = (V_1, V_2, V_3) \in T_{ {\lambda}} \bigl\{ f_t = h   \bigr\}$ be a tangent vector, which gives, $\sum_i f_i V_i =0$. Then we are trying to show that the following quantity 
\begin{align*}
\label{eq:4.5}
\sum_{i, j} f_{ij} V_i  {V}_{\bar{j}}  \tag{4.5}
\end{align*}is positive. First, since $V$ is a tangent vector, we can write $V_3 = - \bigl({f_1 V_1 + f_2 V_2 }\bigr)/{f_3}$. By plugging in quantity (\ref{eq:4.5}), we obtain
\begin{align*}
\label{eq:4.6}
\sum_{i, j} f_{ij} V_i V_{\bar{j}} &= f_{11} |V_1|^2 + f_{22} |V_2|^2  + f_{33} |V_3|^2 \tag{4.6} \\
&\kern2em + f_{12} \bigl( V_{1} V_{\bar{2}}  + V_{\bar{1}} V_{{2}}  \bigr)  + f_{13} \bigl( V_{1} V_{\bar{3}}  + V_{\bar{1}} V_{{3}}  \bigr)  + + f_{23} \bigl( V_{2} V_{\bar{3}}  + V_{\bar{2}} V_{{3}}  \bigr)  \\
&= \Bigl(  f_{11} + f_{33} \frac{f_{1}^2}{f_{3}^2}  -2 f_{13} \frac{f_{1}}{f_{3}}  \Bigr) |V_1|^2 + \Bigl(  f_{22} + f_{33} \frac{f_{2}^2}{f_{3}^2}  -2 f_{23} \frac{f_{2}}{f_{3}}  \Bigr) |V_2|^2 \\
&\kern2em + \Bigl (  f_{12}  + f_{33} \frac{f_{1} f_{2}}{f_{3}^2}  - f_{13} \frac{f_{2}}{f_{3}}  - f_{23} \frac{f_{1}}{f_{3}}  \Bigr) \bigl( V_1 V_{\bar{2}} + V_{\bar{1}} V_2  \bigr) \\
&= 2\frac{c_1 \sigma_1(  {\lambda}_{;1}) + 2c_0  \ta}{  {\lambda}_1^3   {\lambda}_2   {\lambda}_3} \frac{c_1 \sigma_1( {\lambda}) + 2c_0 \ta}{ c_1 \sigma_1(  {\lambda}_{;3}) + 2c_0  \ta} |V_1|^2 \\
&\kern2em + 2\frac{c_1 \sigma_1(  {\lambda}_{;2}) + 2c_0  \ta}{  {\lambda}_1   {\lambda}_2^3   {\lambda}_3}\frac{c_1 \sigma_1( {\lambda}) + 2c_0 \ta}{ c_1 \sigma_1(  {\lambda}_{;3}) + 2c_0  \ta} |V_2|^2 \\
&\kern2em + 2\frac{c_1    {\lambda}_3 + c_0  \ta}{  {\lambda}_1^2   {\lambda}_2^2   {\lambda}_3} \frac{c_1 \sigma_1( {\lambda}) + 2c_0 \ta}{ c_1 \sigma_1(  {\lambda}_{;3}) + 2c_0  \ta} \bigl( V_1 V_{\bar{2}} + V_{\bar{1}} V_2  \bigr).
\end{align*}By checking the discriminant of above quadratic form (\ref{eq:4.6}), we have
\begin{align*}
&\kern-1em \Bigl ( 2\frac{c_1    {\lambda}_3 + c_0  \ta}{  {\lambda}_1^2   {\lambda}_2^2   {\lambda}_3}   \Bigr )^2 - 2\frac{c_1 \sigma_1(  {\lambda}_{;1}) + 2c_0  \ta}{  {\lambda}_1^3   {\lambda}_2   {\lambda}_3} \cdot 2\frac{c_1 \sigma_1(  {\lambda}_{;2}) + 2c_0  \ta}{  {\lambda}_1   {\lambda}_2^3   {\lambda}_3} \\
&= \frac{-4}{  {\lambda}_1^4   {\lambda}_2^4   {\lambda}_3^2} \bigl (  c_1^2 \sigma_2( {\lambda}) + 2c_0  c_1  \ta \sigma_1( {\lambda})  + 3c_0^2 \tan^2 (\hat{\theta} )    \bigr).
\end{align*}If $c_1^2 \sigma_2( {\lambda}) + 2c_0  c_1  \ta \sigma_1( {\lambda})  + 3c_0^2 \tan^2 (\hat{\theta} )  \geq 0$, then quantity (\ref{eq:4.5}) will be non-negative, which proves that the solution set $\{ f_t = h\}$ is convex.\bigskip


%
%
%
%


By the \hyperlink{T:2.1}{Positivstellensatz Theorem}, assume $c > 0$, $2 c^{3/2} > d$, and $e > -24 c^2 \cos^2(\theta_{c, d})\cos(2\theta_{c, d})$. Then $\Upsilon_{3; 1, 0, -c, d} \cap \Upsilon_{2; 1, 0, -c} \cap \Upsilon_{1; 1, 0}$ is contained in $\Upsilon_{3; 0, c, -d, e} \cap \Upsilon_{2; 1, 0, -c} \cap \Upsilon_{1; 1, 0}$. Here, by setting $c = c_1/h$, $d= -2c_0 \ta/h$, and $e =  {3c_0^2  \tan^2 (\hat{\theta} )}/{(hc_1)}$, if the above statement holds, then 
\begin{align*}
c_1\sigma_2( {\lambda})/h + 2c_0 \ta \sigma_1( {\lambda})/h  + 3c_0^2 \tan^2 (\hat{\theta} )/(hc_1)  > 0,
\end{align*}which implies that $c_1^2 \sigma_2( {\lambda}) + 2c_0 c_1 \ta \sigma_1( {\lambda}) + 3 c_0^2 \ta > 0$.\bigskip

Thus, we check whether $e + 24 c^2 \cos^2(\theta_{c, d})\cos(2\theta_{c, d})$ is positive, which is equivalent to whether
\begin{align*}
h c_0^2 \taa + 8 c_1^3 \cos^2(\theta_{h, c_0, c_1}) \cos(2\theta_{h, c_0, c_1})
\end{align*}is positive. Here $\theta_{h, c_0, c_1} \coloneqq \arccos \bigl(   {c_0 \ta h^{1/2} }\big/{c_1^{3/2} }  \bigr)\big/3  - {2\pi}/{3}$ and we specify the branch so that $\arccos  ( \bullet )  \in  ( \pi, 3\pi/2  ]$. We can write 
\begin{align*}
&\kern-1em h c_0^2 \taa + 8 c_1^3 \cos^2(\theta_{h, c_0, c_1}) \cos(2\theta_{h, c_0, c_1}) \\
&= h c_0^2 \taa + 4c_1^3 \cos(\theta_{h, c_0, c_1}) \bigl(    \cos(\theta_{h, c_0, c_1}) + \cos(3\theta_{h, c_0, c_1}) \bigr) \\
&=  h c_0^2 \taa + 4c_1^3 \cos(\theta_{h, c_0, c_1}) \bigl( \cos(\theta_{h, c_0, c_1}) + c_0\ta h^{1/2}c_1^{-3/2} \bigr) \\
&= \bigl( 2 c_1^{3/2}  \cos(\theta_{h, c_0, c_1})   + h^{1/2} c_0 \ta \bigr)^2 > 0.
\end{align*}The above inequality holds due to the choice of branch, so $\cos(\theta_{h, c_0, c_1}) > 1/2$. Also, by our hypothesis, we have $c_1^{3/2}      + h^{1/2} c_0 \ta > 0$. Thus the quantity will always be positive, this finishes the proof.
\end{proof}

Now, by taking the first and second derivatives of equation (\ref{eq:4.1}), we have the following Lemma. The proof should be straightforward; we apply Lemma~\hyperlink{L:2.1}{2.1}, Lemma~\hyperlink{L:2.2}{2.2}, Lemma~\hyperlink{L:2.3}{2.3}, Lemma~\hyperlink{L:2.4}{2.4}, and Lemma~\hyperlink{L:4.2}{4.2}. Or one can check the following reference Lin \cite{lin2020} for more details.

\hypertarget{L:4.5}{\begin{flemma}}
Let $F_t(  {\Lambda} )= {h}(p)$, then we have
\begin{align*}
 \frac{\partial {h}}{\partial  {z}_k}  = \sum_{i ,j}  \frac{\partial F_t (   {\Lambda}     ) }{\partial  {\Lambda}^j_i}    \frac{\partial  {\Lambda}^j_i }{\partial  {z}_k}; \quad  \frac{\partial^2 {h}}{\partial z_k \partial \bar{z}_k} =  \sum_{i ,j}  \frac{\partial^2 F_t (   {\Lambda}     ) }{\partial \Lambda^j_i \partial \Lambda_r^s}    \frac{\partial  {\Lambda}_i^j }{\partial \bar{z}_k} \frac{\partial  {\Lambda}_r^s}{\partial z_k} + \sum_{i ,j}  \frac{\partial F_t (   {\Lambda}     )}{\partial \Lambda_i^j }    \frac{\partial^2  {\Lambda}_i^j }{\partial z_k \partial \bar{z}_k}.
\end{align*}In particular, at the maximum point $p \in M$ of $\tilde{U}$, we have
\begin{align*}
\label{eq:4.7}
h_k &=  \sum_i f_i   (  X_u     )_{i \bar{i},  {k}};     \tag{4.7}   \\
\label{eq:4.8}
h_{k \bar{k}} &= \sum_{i, j}  f_{ij}   (  X_u     )_{i \bar{i}, \bar{k}}    (  X_u     )_{j \bar{j}, {k}} + \sum_{i \neq j}  \frac{f_i - f_j}{\lambda_i - \lambda_j}    | (X_u)_{j \bar{i},k}    |^2     +  \sum_i \Bigl( f_i  ( X_u   )_{i \bar{i},k \bar{k}} -f_i \lambda_i \omega_{i \bar{i},k \bar{k}} \Bigr) \tag{4.8}  \\
&=\sum_{i, j}  f_{ij}   (  X_u     )_{i \bar{i}, \bar{k}}    (  X_u     )_{j \bar{j}, {k}} + \sum_{i \neq j}  \frac{ h }{\lambda_i \lambda_j}   | (X_u)_{j \bar{i},k}    |^2    +  \sum_i \Bigl( f_i  ( X_u   )_{i \bar{i},k \bar{k}} -f_i  {\lambda}_i \omega_{i \bar{i},k \bar{k}} \Bigr). 
\end{align*}
\end{flemma}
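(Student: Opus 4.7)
The statement is essentially a chain-rule computation for the composite function $h(p) = F_t(\Lambda(p))$, followed by specialization to the preferred coordinates of Lemma~\hyperlink{L:2.2}{2.2} at the maximum point $p$. My plan is to carry out the two differentiations in full generality first, and only then evaluate at $p$ to collapse the expressions into the displayed form.

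First I would differentiate the identity $F_t(\Lambda) = h$ once in $z_k$. Since $F_t$ is a smooth function of the matrix entries $\Lambda_i^j$, the chain rule gives $h_k = \sum_{i,j} (\partial F_t/\partial \Lambda_i^j)(\partial \Lambda_i^j/\partial z_k)$, which is the first displayed identity. Differentiating once more in $\bar{z}_k$ and applying the product rule splits the result into a quadratic term in the first derivatives of $\Lambda$ (weighted by $\partial^2 F_t/\partial \Lambda_i^j \partial \Lambda_r^s$) and a linear term in the second derivatives of $\Lambda$ (weighted by $\partial F_t/\partial \Lambda_i^j$). This is the second displayed identity; no specialization of coordinates is used yet.

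Next I would specialize to the coordinates of Lemma~\hyperlink{L:2.2}{2.2} at the maximum point $p$, in which $\omega_{i\bar{j}}(p) = \delta_{ij}$, $\omega_{i\bar{j},k}(p) = 0$, and $\Lambda$ is diagonal with distinct eigenvalues (after the perturbation introduced above Lemma~\hyperlink{L:4.1}{4.1}, so that eigenvalues define smooth functions). Lemma~\hyperlink{L:2.4}{2.4} then gives $\partial \Lambda_i^j/\partial \bar{z}_k = (X_u)_{i\bar{j},\bar{k}}$ and $\partial^2 \Lambda_i^j/\partial z_k \partial \bar{z}_k = -\lambda_i \omega_{i\bar{j},\bar{k}k} + (X_u)_{i\bar{j},\bar{k}k}$. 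At a diagonal matrix, Lemma~\hyperlink{L:2.1}{2.1} factors the first and second derivatives of $F_t$ as a diagonal piece (giving the $f_i$ and $f_{ij}$ contributions) and an off-diagonal piece of the form $(f_i - f_j)/(\lambda_i - \lambda_j)$ multiplied by $(1 - \delta_{ij})\delta_{is}\delta_{jr}$. Substituting these into the general formulas produces exactly the diagonal quadratic term $\sum_{i,j} f_{ij}(X_u)_{i\bar{i},\bar{k}}(X_u)_{j\bar{j},k}$, the off-diagonal sum $\sum_{i\neq j}((f_i - f_j)/(\lambda_i - \lambda_j))|(X_u)_{j\bar{i},k}|^2$, and the linear term $\sum_i f_i((X_u)_{i\bar{i},k\bar{k}} - \lambda_i \omega_{i\bar{i},k\bar{k}})$, which is the first line of~(\ref{eq:4.8}).

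Finally, to obtain the second line of~(\ref{eq:4.8}), I would simplify $(f_i - f_j)/(\lambda_i - \lambda_j)$ on the solution set $\{f_t = h\}$ using the explicit formula for $f_i$ from Lemma~\hyperlink{L:4.2}{4.2}. A short algebraic computation shows that the numerator $\lambda_j(-c_1\sigma_1(\lambda_{;i}) - 2c_0\tan(\hat{\theta})) - \lambda_i(-c_1\sigma_1(\lambda_{;j}) - 2c_0\tan(\hat{\theta}))$ factors as $(\lambda_i - \lambda_j)\bigl[c_1\sigma_1(\lambda) + 2c_0\tan(\hat{\theta})\bigr]$, so that $(f_i - f_j)/(\lambda_i - \lambda_j)$ equals $\bigl[c_1\sigma_1(\lambda) + 2c_0\tan(\hat{\theta})\bigr]/(\lambda_1\lambda_2\lambda_3\lambda_i\lambda_j) = f_t(\lambda)/(\lambda_i\lambda_j) = h/(\lambda_i\lambda_j)$. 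This is the only step with any content; the rest is mechanical bookkeeping, and no genuine obstacle arises. Inserting this identity yields precisely the stated expression. The perturbation preserves the computation because all quantities appearing in~(\ref{eq:4.7})–(\ref{eq:4.8}) depend continuously on the diagonal perturbation, and we may take $\epsilon \to 0$ at the end if desired.
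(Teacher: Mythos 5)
Your argument is correct and follows essentially the same route as the paper's: chain rule for the two derivatives, specialization via Lemmas~\hyperlink{L:2.1}{2.1}, \hyperlink{L:2.2}{2.2}, and \hyperlink{L:2.4}{2.4}, and then the factorization of $(f_i - f_j)/(\lambda_i - \lambda_j)$ into $h/(\lambda_i\lambda_j)$ on the solution set using Lemma~\hyperlink{L:4.2}{4.2}. One small slip in the framing: the perturbation $\tilde{\Lambda} = \Lambda + B$ only makes the eigenvalues of $\tilde{\Lambda}$ (which feed into the test function $G$) distinct, not those of $\Lambda$ in the equation $F_t(\Lambda) = h$; the paper instead first supposes the $\lambda_i$ are pairwise distinct and then notes the formula extends by continuity since $(f_i - f_j)/(\lambda_i - \lambda_j)$ has a finite limit as $\lambda_i \to \lambda_j$, which is what your closing $\epsilon \to 0$ remark is really invoking.
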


\begin{proof}
The first and second derivatives should be straightforward. At the maximum point, suppose the eigenvalues are pairwise distinct satisfying $\lambda_1 > \lambda_2 > \lambda_3$. Since $\Lambda$ is a diagonal matrix, then
\begin{align*}
h_k = \sum_{i, j} \frac{\partial F_t (   {\Lambda}     ) }{\partial  {\Lambda}^j_i}    \frac{\partial  {\Lambda}^j_i }{\partial  {z}_k} = \sum_i f_i (  X_u     )_{i \bar{i},  {k}}. 
\end{align*}This is also true when the eigenvalues are not pairwise distinct. For the second derivative, if the eigenvalues at the maximum point $p$ are pairwise distinct, then 
\begin{align*}
h_{k \bar{k}}
&= \sum_{i, j}  f_{ij}   (  X_u     )_{i \bar{i}, \bar{k}}    (  X_u     )_{j \bar{j}, {k}} + \sum_{i \neq j}  \frac{f_i - f_j}{ {\lambda}_i -  {\lambda}_j}    | (X_u)_{j \bar{i},k}    |^2     +  \sum_i \Bigl( f_i  ( X_u   )_{i \bar{i},k \bar{k}} -f_i  {\lambda}_i \omega_{i \bar{i},k \bar{k}} \Bigr) \\
&= \sum_{i, j}  f_{ij}   (  X_u     )_{i \bar{i}, \bar{k}}    (  X_u     )_{j \bar{j}, {k}} + \sum_{i \neq j}  \frac{c_1 \sigma_1(\lambda) +2c_0 \ta}{\lambda_1 \lambda_2 \lambda_3 \lambda_i \lambda_j}   | (X_u)_{j \bar{i},k}    |^2     +  \sum_i f_i \Bigl(   ( X_u   )_{i \bar{i},k \bar{k}} -   {\lambda}_i \omega_{i \bar{i},k \bar{k}} \Bigr).
\end{align*}This is also true when the eigenvalues are not pairwise distinct.
\end{proof}

For the remainder of this subsection, we let $O_i$ be the Big $O$ notation that describes the limiting behavior when $\lambda_I$ approaches infinity. So $O_i(1)$ means the quantity will be bounded by a uniform constant if $\lambda_i$ is sufficiently large. 

\hypertarget{L:4.6}{\begin{flemma}}
There exists uniform constants $N > 0$ and $\kappa > 0$, which are independent of $t \in [0, 1]$, such that if $\lambda_1 > N$,  
\begin{align*}
\sum_i f_i u_{i \bar{i}} \geq -\kappa \sum_i f_i.
\end{align*}
\end{flemma}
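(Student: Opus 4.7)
The plan is to follow the standard $C$-subsolution strategy of Sz\'ekelyhidi \cite{szekelyhidi2018fully}: combine a Von Neumann trace reduction with the key $C$-subsolution lemma, and verify both ingredients depend on $t$ only through quantities uniformly controlled by the four constraints.

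\textbf{Matrix reduction.} In the coordinates of Lemma~\hyperlink{L:2.2}{2.2} we have $u_{i\bar i} = \lambda_i - X_{i\bar i}$, and by Lemma~\hyperlink{L:2.1}{2.1} the matrix $\partial F_t / \partial \Lambda^j_i$ is diagonal with entries $f_i$, so $\sum_i f_i u_{i\bar i} = \sum_i f_i \lambda_i - \mathrm{tr}(F_t \cdot X)$. Ordering $\lambda_1 \geq \lambda_2 \geq \lambda_3$, Lemma~\hyperlink{L:4.2}{4.2} gives $f_1 \geq f_2 \geq f_3$ (all negative). Letting $\mu_1 \geq \mu_2 \geq \mu_3$ denote the eigenvalues of $\omega^{-1}X$ and applying Von Neumann's trace inequality to the Hermitian matrices $F_t$ and $X$ with both eigenvalue sequences sorted decreasingly yields $\mathrm{tr}(F_t \cdot X) \leq \sum_i f_i \mu_i$, and consequently
\begin{align*}
\sum_i f_i u_{i\bar i} \geq \sum_i f_i(\lambda_i - \mu_i).
\end{align*}

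\textbf{Uniform $C$-subsolution plus Sz\'ekelyhidi's lemma.} By Lemma~\hyperlink{L:3.2}{3.2} and the standing assumption that $X$ itself is the $C$-subsolution, $\mu(p) \in \Upsilon_1(1, p) = \Upsilon^3_{2;1,0,-c_1(1)\s}\cap\Upsilon^3_{1;1,0}$; the constraint $c_1'(t) > 0$ (equivalently Lemma~\hyperlink{L:3.3}{3.3}) then yields $\mu(p) \in \Upsilon_1(t, p)$ for every $t \in [0, 1]$, and compactness of $M \times [0, 1]$ together with continuity of $\mu, c_0, c_1$ produces a uniform strictness margin $\epsilon_0 > 0$, independent of $(t, p)$. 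Feeding this $\epsilon_0$ into the key $C$-subsolution lemma of Sz\'ekelyhidi \cite{szekelyhidi2018fully} (adapted to our sign convention $\partial f_t / \partial \lambda_i < 0$) produces constants $N, \delta > 0$, depending only on $M$, $X$, $\omega$, $\hat\theta$, and the family $(c_0, c_1)$, such that for every $\lambda$ on $\{f_t = h\}$ with $\lambda_1 > N$,
\begin{align*}
\sum_i f_i(\lambda_i - \mu_i) \geq -\delta \sum_j f_j.
\end{align*}
Combined with the previous display, this gives $\sum_i f_i u_{i\bar i} \geq -\delta \sum_j f_j$, which is Lemma~\hyperlink{L:4.6}{4.6} with $\kappa = \delta$.

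\textbf{Main obstacle.} The critical point is that $N$ and $\delta$ must be chosen independently of $t \in [0, 1]$. This reduces to the observation that $\Upsilon_1(t, p)$ in Lemma~\hyperlink{L:3.2}{3.2} depends on $t$ only through the single parameter $c_1(t)/h$, together with the fact that the boundary and Positivstellensatz constraints confine $c_0(t), c_1(t)$ to a compact set bounded away from degenerate values; Sz\'ekelyhidi's constants can therefore be selected uniformly. A secondary technical point is the correct matching of sortings of $\{f_i\}, \{\lambda_i\}, \{\mu_i\}$ in the Von Neumann step --- sorting all three decreasingly produces the required direction of the trace inequality, and is what makes the Von Neumann and Sz\'ekelyhidi estimates cooperate to give a lower bound rather than an upper one.
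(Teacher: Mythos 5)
Your Von Neumann reduction is clean and correct: in the coordinates of Lemma~\hyperlink{L:2.2}{2.2}, $\sum_i f_i u_{i\bar i} = \sum_i f_i\lambda_i - \sum_i f_i X_{i\bar i}$, and since $f_1 \geq f_2 \geq f_3 < 0$ are the (decreasingly sorted) eigenvalues of $\partial F_t/\partial\Lambda$, Von Neumann's trace inequality gives $\sum_i f_i X_{i\bar i} \leq \sum_i f_i\mu_i$, hence $\sum_i f_i u_{i\bar i} \geq \sum_i f_i(\lambda_i-\mu_i)$. Your compactness observation for uniformity in $t$ is also the right idea and essentially what the paper does implicitly when it fixes a single $\delta$ and $\kappa$ via $(1-\delta)h(X-\kappa\omega)^2 > \omega^2$.

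The gap is the final step. You invoke Sz\'ekelyhidi's key subsolution lemma (Proposition~5 of \cite{szekelyhidi2018fully}) as though it delivers $\sum_i f_i(\lambda_i-\mu_i) \geq -\delta\sum_j f_j$ unconditionally. It does not; it delivers a dichotomy: either that inequality holds, or $f_i > \kappa\sum_j f_j$ for all $i$ (in the paper's sign convention). To close the argument you must rule out the second branch. In the present setting this is actually possible --- if $f_i > \kappa\sum_j f_j$ for all $i$ then $-f_2 - f_3 < 2\kappa(-f_1-f_2-f_3)$, so $(1-2\kappa)(-f_2-f_3) < 2\kappa(-f_1) \to 0$ as $\lambda_1 \to \infty$, contradicting Lemma~\hyperlink{L:4.7}{4.7} --- but you do not perform this step, so the argument as written does not yield the stated conclusion. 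There is also a foundational concern: Proposition~5 of \cite{szekelyhidi2018fully} is proved under structural hypotheses (most importantly concavity of $f$) which the paper explicitly disclaims in the remark following Definition~\hyperlink{D:2.2}{2.2}; what is available here is only the weaker Lemma~\hyperlink{L:4.4}{4.4} that the \emph{solution set} $\{f_t=h\}$ is convex. The paper sidesteps both issues by a direct calculation tailored to the explicit form of $f_t$: it writes $\sum_i f_i(u_{i\bar i}+\kappa)$ out, splits into cases depending on whether $\lambda_3 \leq (X_{3\bar3}-\kappa)/3$, and in the hard case uses $\sqrt{\lambda_2\lambda_3} \to c_1^{1/2}h^{-1/2}$ together with a Cauchy--Schwarz bound derived from $(1-\delta)h(X-\kappa\omega)^2 > \omega^2$. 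If you want to pursue your route, you should supply the missing elimination of the second branch and justify an adapted version of the dichotomy lemma that does not rely on concavity of $f_t$; as it stands your step ``Sz\'ekelyhidi's lemma $\Rightarrow$ case~(a)'' is not valid.
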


\begin{proof}
First, by Lemma~\hyperlink{L:3.2}{3.2}, if $X$ is a $C$-subsolution to equation (\ref{eq:4.1}), then $h X^2 - \omega^2 > 0$, where $h$ is a constant satisfying $c_1^3 \ctt/ c_0^2 > h > 0$. We may fix $\delta > 0$ sufficiently small such that
\begin{align*}
(1-\delta)hX^2 - \omega^2  > 0 \text{ for all } p \in M.
\end{align*}
Now, unless further notice, the following inequalities hold for any point $p \in M $. Choose $\kappa > 0$ to be also small enough so that 
\begin{align*}
(1-\delta) h \bigl ( X - \kappa \omega \bigr )^2 > \omega^2; \quad X - \kappa \omega > 0.
\end{align*}Due to our \hyperlink{dim 3 cons}{Boundary constraints} that $c_1$ is increasing and $c_1(0) > 0$, for $t \in [0, 1]$ we have
\begin{align*}
\label{eq:4.9}
(1-\delta)h \bigl ( X - \kappa \omega \bigr )^2 >   \omega^2 \geq  c_1(t)  \omega^2. \tag{4.9}
\end{align*}
%

Note that $u_{ i \bar{i}} = \lambda_i - X_{i \bar{i}}$. We can write

\begin{align*}
\label{eq:4.10}
\sum_i  f_i   ( u_{i \bar{i}}  + \kappa     ) &= \sum_i f_i  \bigl (  {\lambda}_i - X_{i \bar{i}}    + \kappa   \bigr ) = \sum_i  \frac{- c_1 \sigma_1(  {\lambda}_{;i}) - 2 c_0 \ta }{   {\lambda}_1   {\lambda}_2   {\lambda}_3     {\lambda}_i }     (   {\lambda}_i - X_{i \bar{i}}     + \kappa     ) \tag{4.10} \\
&= -\frac{ \sum_i A_i }{  {\lambda}_1   {\lambda}_2   {\lambda}_3  }  +  \sum_i   ( X_{i \bar{i}}   - \kappa     ) \frac{ A_i }{   {\lambda}_1   {\lambda}_2   {\lambda}_3     {\lambda}_i },
\end{align*}where we denote $A_i = c_1 \sigma_1( {\lambda}_{;i})  + 2 c_0 \ta$ for $i \in \{1, 2, 3\}$. Since $  {\lambda}_1 \geq   {\lambda}_2 \geq   {\lambda}_3$, we have $A_3 \geq A_2 \geq A_1 > 0$. \bigskip



There are two cases to be considered:\bigskip

$\bullet$ If $0 <   {\lambda}_3 \leq \frac{ X_{3 \bar{3}} - \kappa}{3}$, then 
\begin{align*}
( X_{3 \bar{3}} - \kappa     ) \frac{ A_3 }{   {\lambda}_1   {\lambda}_2   {\lambda}_3   {\lambda}_3 } \geq 3 \frac{ A_3}{  {\lambda}_1   {\lambda}_2   {\lambda}_3 }.
\end{align*}Hence inequality (\ref{eq:4.10}) becomes
\begin{align*}
\sum_i f_i   ( u_{i \bar{i}}  + \kappa     ) &\geq -3 \frac{ A_3 }{  {\lambda}_1   {\lambda}_2   {\lambda}_3 }   +  \sum_i   ( X_{i \bar{i}} - \kappa     ) \frac{ A_i }{   {\lambda}_1   {\lambda}_2   {\lambda}_3    {\lambda}_i }   \geq 0.
\end{align*}

$\bullet$ If $  {\lambda}_3 \geq \frac{ X_{3 \bar{3}} - \kappa}{3}$, similar as the proof of Lemma~\hyperlink{L:3.1}{3.1}, then we can show that $  {\lambda}_2$ is bounded from above. To be more precise, we get
\begin{align*}
\frac{6c_1 }{h(X_{3 \bar{3}} - \kappa)} \geq       {\lambda}_2 \geq   {\lambda}_3 \geq \frac{ X_{3 \bar{3}} - \kappa}{3} > 0.
\end{align*}With this, we can do a better estimate for $  {\lambda}_2   {\lambda}_3$. We have
\begin{align*}
  {\lambda}_2   {\lambda}_3 = \frac{c_1(  {\lambda}_1  {\lambda}_2 +   {\lambda}_2^2)  + 2c_0 \tan(\hat{\theta})    {\lambda}_2 }{  h  {\lambda}_1   {\lambda}_2 - c_1   } \xrightarrow[\text{ as }   {\lambda}_1   \rightarrow \infty]{} \frac{c_1}{h},
\end{align*}since $  {\lambda}_2$ and $  {\lambda}_3$ are both bounded from above and below. Thus for $  {\lambda}_1$ sufficiently large, we get
\begin{align*}
\label{eq:4.11}
\sqrt{  {\lambda}_2   {\lambda}_3} < (1 +  {\delta}/{4}) {c^{1/2}_1 } h^{-1/2}. \tag{4.11}
\end{align*}\bigskip

On the other hand, by inequality (\ref{eq:4.9}), we get the following,
\begin{align*}
\label{eq:4.12}
 (X_{2 \bar{2}} - \kappa) \lambda_3 + (X_{3 \bar{3}} - \kappa) \lambda_2  \geq 2  \sqrt{\lambda_2  \lambda_3 } {\sqrt{(X_{2 \bar{2}} - \kappa) (X_{3 \bar{3}} - \kappa)}} \geq   \frac{2c_1^{1/2}   h^{-1/2} \sqrt{\lambda_2  \lambda_3 }}{\sqrt{1- \delta}}. \tag{4.12}
\end{align*}

By combining inequalities (\ref{eq:4.10}), (\ref{eq:4.11}), and (\ref{eq:4.12}), we may write
\begin{align*}
\sum_i f_i   ( u_{i \bar{i}}  + \kappa     ) &= \frac{-2c_1 \sigma_1(\lambda) - 6c_0 \ta }{\lambda_1 \lambda_2 \lambda_3  }  +  \sum_i   ( X_{i \bar{i}} - \kappa     ) \frac{ A_i }{ \lambda_1 \lambda_2 \lambda_3   \lambda_i } \\
&\geq \frac{-2c_1}{\lambda_2 \lambda_3} + (X_{2 \bar{2}} - \kappa) \frac{c_1}{\lambda_2^2 \lambda_3} + (X_{3 \bar{3}} - \kappa) \frac{c_1}{\lambda_2 \lambda_3^2}  + \lambda_1^{-1} \cdot O_1(1) \\
&=   c_1 \frac{(X_{2 \bar{2}} - \kappa)\lambda_3 + ( X_{3 \bar{3}} - \kappa) \lambda_2 - 2 \lambda_2 \lambda_3 }{\lambda_2^2 \lambda_3^2}    + \lambda_1^{-1} \cdot O_1(1) \\
&\geq   c_1 \frac{ 2 (1-\delta)^{-1/2} c_1^{1/2} h^{-1/2} \sqrt{\lambda_2 \lambda_3}  - 2 (1+ \delta/4) c_1^{1/2} h^{-1/2} \sqrt{\lambda_2 \lambda_3} }{\lambda_2^2 \lambda_3^2}    + \lambda_1^{-1} \cdot O_1(1) \\
&= 2c_1^{3/2} h^{-1/2} \frac{   (1- \delta)^{-1/2}  -  (1+ \delta/4)   }{\lambda_2^{3/2} \lambda_3^{3/2}}    + \lambda_1^{-1} \cdot O_1(1)  \\
&\geq c_1^{3/2}(0)  \cdot  h^{-1/2} \cdot  \frac{   \delta/2  }{\lambda_2^{3/2} \lambda_3^{3/2}}    + \lambda_1^{-1} \cdot O_1(1).
\end{align*}Here, because in this case $\lambda_3$ has a lower bound, otherwise we will not get a lower order term $\lambda_1^{-1} \cdot O_1(1)$. Now since $c_1$ is increasing with $c_1(0) > 0$ and $\lambda_2, \lambda_3$ have a positive lower bound, thus for sufficiently large $\lambda_1$, $\sum_i f_i   ( u_{i \bar{i}}  + \kappa     )$ will be non-negative. \bigskip

In conclusion, from above, we can find a uniform $N>0$ such that if $\lambda_1 > N$, we have
\begin{align*}
\sum_i f_i   ( u_{i \bar{i}}  + \kappa     ) \geq 0 \Longrightarrow \sum_i f_i   u_{i \bar{i}}  \geq - \sum_i f_i \kappa.
\end{align*}
\end{proof}

\hypertarget{L:4.7}{\begin{flemma}}
There exists a uniform $N > 0$ independent of $t \in [0, 1]$ such that if $\lambda_1 > N$, then we have the following estimate
\begin{align*}
-f_2 - f_3 > \frac{  h^{3/2}}{5} > 0.
\end{align*}
\end{flemma}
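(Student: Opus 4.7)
My plan is to derive a clean closed form for $-f_2-f_3$ by eliminating $\lambda_1$ using the defining equation $f_t=h$, then to estimate the remaining two-variable expression. Starting from Lemma~\hyperlink{L:4.2}{4.2},
\begin{align*}
-f_2 - f_3 = \frac{c_1 \lambda_1 (\lambda_2 + \lambda_3) + c_1(\lambda_2^2 + \lambda_3^2) + 2c_0 \tan(\hat{\theta})(\lambda_2 + \lambda_3)}{\lambda_1 \lambda_2^2 \lambda_3^2}.
\end{align*}
Substituting $c_1 \lambda_1 + 2c_0 \tan(\hat{\theta}) = h\lambda_1\lambda_2\lambda_3 - c_1(\lambda_2 + \lambda_3)$ from the equation and then using the identity $(\lambda_2+\lambda_3)^2 - (\lambda_2^2+\lambda_3^2) = 2\lambda_2\lambda_3$, the numerator collapses to $h\lambda_1\lambda_2\lambda_3(\lambda_2+\lambda_3) - 2c_1\lambda_2\lambda_3$, yielding the key identity
\begin{align*}
-f_2 - f_3 = h\Bigl(\frac{1}{\lambda_2} + \frac{1}{\lambda_3}\Bigr) - \frac{2c_1}{\lambda_1\lambda_2\lambda_3}.
\end{align*}

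Next I bound $\lambda_2\lambda_3$ from above and below using the equation in the form $\lambda_2\lambda_3 = \bigl(c_1\sigma_1(\lambda) + 2c_0\tan(\hat{\theta})\bigr)/(h\lambda_1)$. Since $\lambda_2,\lambda_3 \leq \lambda_1$ and $\tan(\hat{\theta}) < 0$, the numerator is at most $3c_1\lambda_1$, giving $\lambda_2\lambda_3 \leq 3c_1/h$. For the lower bound, $\sigma_1(\lambda) \geq \lambda_1$ together with the boundedness of $c_0\tan(\hat{\theta})$ yields $\lambda_2\lambda_3 \geq c_1/(2h)$ once $\lambda_1$ exceeds a constant depending only on $\sup_{[0,1]} c_0$, $\hat{\theta}$ and $h$. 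All these constants are uniform in $t$ by the Boundary and Positivstellensatz constraints: $c_1(t) \in [c_1(0),1]$ with $c_1(0) > 0$, and $c_0(t)$ lies in a bounded interval.

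Finally, AM--GM gives $\tfrac{1}{\lambda_2} + \tfrac{1}{\lambda_3} \geq 2/\sqrt{\lambda_2\lambda_3} \geq 2\sqrt{h}/\sqrt{3c_1}$, so $h(\tfrac{1}{\lambda_2} + \tfrac{1}{\lambda_3}) \geq 2h^{3/2}/\sqrt{3}$, using $c_1 \leq 1$. The subtracted term satisfies $2c_1/(\lambda_1\lambda_2\lambda_3) \leq 4h/\lambda_1$, which is strictly less than $2h^{3/2}/\sqrt{3} - h^{3/2}/5$ once $\lambda_1 > N$ for some $N$ depending only on $h$. Combining gives $-f_2 - f_3 > h^{3/2}/5$. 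The only genuine obstacle is the algebraic simplification in the first step: without the identity that isolates the factor $h$, the specific constant $h^{3/2}/5$ looks unmotivated, and any naive estimate of the original rational expression would carry $c_0, c_1$-dependence and fail to be uniform in $t$.
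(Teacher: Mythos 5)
Your proof is correct and takes a genuinely different, somewhat slicker route than the paper. The paper starts from the same numerator $c_1\lambda_1(\lambda_2+\lambda_3)+c_1(\lambda_2^2+\lambda_3^2)+2c_0\tan(\hat\theta)(\lambda_2+\lambda_3)$ but instead of substituting the equation, it \emph{lower bounds} it: using the Positivstellensatz consequence $c_1(\lambda_2+\lambda_3)+2c_0\tan(\hat\theta)\geq 0$ (Lemma~\hyperlink{L:4.3}{4.3}, equivalently $(\lambda_2+\lambda_3)\bigl[c_1(\lambda_2+\lambda_3)+2c_0\tan(\hat\theta)\bigr]\geq 0$) one gets $c_1(\lambda_2^2+\lambda_3^2)+2c_0\tan(\hat\theta)(\lambda_2+\lambda_3)\geq -2c_1\lambda_2\lambda_3$, so the numerator is $\geq c_1\lambda_1(\lambda_2+\lambda_3)-2c_1\lambda_2\lambda_3$, giving $-f_2-f_3\geq \tfrac{c_1(\lambda_2+\lambda_3)}{\lambda_2^2\lambda_3^2}+\lambda_1^{-1}\cdot O_1(1)$. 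Your substitution of $f_t=h$ produces instead the exact identity $-f_2-f_3=h\bigl(\tfrac{1}{\lambda_2}+\tfrac{1}{\lambda_3}\bigr)-\tfrac{2c_1}{\lambda_1\lambda_2\lambda_3}$, which is \emph{sharper} since $h\lambda_2\lambda_3>c_1$. Past this point the two arguments coincide: both apply AM--GM and the upper bound $\lambda_2\lambda_3\leq 3c_1/h$ for $\lambda_1$ large, and both use $c_1\leq 1$ to get the numerical constant $2/\sqrt{3}>1/5$. What your identity buys is transparency: it does not invoke Lemma~\hyperlink{L:4.3}{4.3} at all for the algebraic step, it explains visibly why the answer carries an $h^{3/2}$, and it makes the $\lambda_1^{-1}$ error term explicit instead of hiding it in Big-$O$ notation; what the paper's route emphasizes is that the Positivstellensatz constraint feeds into these estimates (which it relies on in the neighbouring lemmas). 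One small caveat on your framing: the paper's final bound is in fact uniform in $t$ (it writes $2h^{3/2}/(3^{3/2}c_1^{1/2})\geq 2h^{3/2}/3^{3/2}$ using $c_1\leq 1$), so the issue you raise at the end is not really uniformity but motivation of the constant.
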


\begin{proof}
We have
\begin{align*}
-f_2 - f_3 &= \frac{c_1 \sigma_1(\lambda_{;2}) + 2c_0 \ta }{\lambda_1 \lambda_2^2 \lambda_3}  +  \frac{c_1 \sigma_1(\lambda_{;3}) + 2c_0 \ta}{\lambda_1 \lambda_2 \lambda_3^2} \\
&= \frac{c_1(\lambda_1 \lambda_2 + \lambda_1 \lambda_3 + \lambda_2^2 + \lambda_3^2)  + 2c_0 \ta (\lambda_2 + \lambda_3)}{\lambda_1 \lambda_2^2 \lambda_3^2} \geq    \frac{c_1(\lambda_1 \lambda_2 + \lambda_1 \lambda_3 - 2 \lambda_2 \lambda_3)   }{\lambda_1 \lambda_2^2 \lambda_3^2}  \\
&\geq \frac{c_1(\lambda_2 + \lambda_3)}{\lambda_2^2 \lambda_3^2} + \lambda_1^{-1} \cdot O_1(1) \geq \frac{2c_1 }{\lambda_2^{3/2} \lambda_3^{3/2}} + \lambda_1^{-1} \cdot O_1(1) \geq   \frac{2c_1 h^{3/2}}{3^{3/2} c_1^{3/2}} + \lambda_1^{-1} \cdot O_1(1) \\
&> \frac{  h^{3/2}}{5 c^{1/2}_1(1)}  = \frac{  h^{3/2}}{5 } > 0,
\end{align*}provided that $\lambda_1$ is sufficiently large. Here we use the fact that if $\lambda_1$ is sufficiently large, then $3c_1/h \geq \lambda_2  \lambda_3$.
\end{proof}

Now we let $C$ be a constant depending only on the stated data, but which may change from line to line. We can finish the proof of the following $C^2$ estimate.

\hypertarget{T:4.1}{\begin{fthm}}
Suppose $X$ is a $C$-subsolution to equation (\ref{eq:4.1}) and $u \colon M   \rightarrow \mathbb{R}$ is a smooth function solving equation (\ref{eq:4.1}). Then there exists a constant $C$ independent of $t$ such that 
\begin{align*}
|\partial \bar{\partial} u | \leq C \bigl ( 1 + \sup_M \bigl|\nabla u\bigr|^2  \bigr), 
\end{align*}where $C = C(\hat{\theta}, c_0, c_1, \osc_M u, M, X, \omega)$ is a constant, $h = \css$ is a constant, and $\nabla$ is the Levi-Civita connection with respect to $\omega$.\bigskip
\end{fthm}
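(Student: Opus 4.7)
The plan is to apply the maximum principle to the perturbed test function $\tilde U = -Au + \log(1 + \tilde\lambda_1)$ from (4.2), for a constant $A > 0$ to be determined. Let $p$ be a maximum point of $\tilde U$; by the eigenvalue perturbation introduced just before Lemma 4.1, $\tilde U$ is smooth in a neighbourhood of $p$, so both the first- and second-order optimality conditions apply. If $\tilde\lambda_1(p)$ lies below the thresholds of Lemmas 4.6 and 4.7 the estimate is immediate, so I assume $\tilde\lambda_1(p)$ is as large as needed. In the Lemma 2.2 coordinates, Lemma 4.3 shows that $\mathcal{L}_t = \sum_k (-f_k) \partial_k \partial_{\bar k}$ is a positive elliptic operator, while Lemma 4.1 gives the first-order condition $(X_u)_{1\bar 1, k}(p) = A(1+\tilde\lambda_1)\, u_k(p)$ for every $k$.

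I would then expand the inequality $\mathcal{L}_t \tilde U(p) \leq 0$ by means of Lemmas 2.1, 2.4, and 4.2. The $-Au$ contribution is $A \sum_i f_i u_{i\bar i}$, which by Lemma 4.6 is at least $A\kappa \sum_i (-f_i)$ once $\tilde\lambda_1$ is large. The contribution from $\mathcal{L}_t G(\tilde\Lambda)$ splits into three pieces: a quadratic first-derivative piece $-\sum_k (-f_k) |(X_u)_{1\bar 1, k}|^2 / (1+\tilde\lambda_1)^2$, which becomes $-A^2 \sum_k (-f_k) |u_k|^2 \geq -A^2 \sup_M |\nabla u|^2 \sum_k (-f_k)$ after substituting the first-order condition; a manifestly non-negative mixed piece $\tfrac{2}{1+\tilde\lambda_1} \sum_k (-f_k) \sum_{j > 1} |(X_u)_{j\bar 1, k}|^2 / (\tilde\lambda_1 - \tilde\lambda_j)$ that may be discarded; and a genuine second-derivative piece $\sum_k (-f_k) [(X_u)_{1\bar 1, k\bar k} - \tilde\lambda_1 \omega_{1\bar 1, k\bar k}] / (1+\tilde\lambda_1)$.

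To control the second-derivative piece, I would commute fourth derivatives using the Kähler identity $(X_u)_{1\bar 1, k\bar k} = (X_u)_{k\bar k, 1\bar 1} + \text{curvature correction}$, and then exploit the twice-differentiated equation from Lemma 4.5 with $k = 1$. Since $h_1(p) = 0$, the vector $V_l \coloneqq (X_u)_{l\bar l, 1}(p)$ is tangent to $\{f_t = h\}$, so the convexity Lemma 4.4 forces $\sum_{k, l} f_{kl} V_k \bar V_l \geq 0$; moreover the off-diagonal third-order term $\sum_{k \neq l} h\, |(X_u)_{l\bar k, 1}|^2 / (\lambda_k \lambda_l) \geq 0$ since $h > 0$ and the $\lambda_i > 0$. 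Absorbing both non-negative contributions leaves only the curvature piece $-\sum_k f_k \lambda_k \omega_{k\bar k, 1\bar 1}$, which is bounded by $C \sum_k (-f_k)$ using the closed-form identity $\sum_k (-f_k) \lambda_k = 2h + 2 c_0 \tan(\hat\theta) / (\lambda_1 \lambda_2 \lambda_3)$ together with the lower bound on $\lambda_1 \lambda_2 \lambda_3$ for $\tilde\lambda_1$ large.

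Assembling all the estimates, dividing by $\sum_k (-f_k) \geq h^{3/2}/5$ (Lemma 4.7), and choosing $A$ so that $A\kappa$ dominates the structural constants yields $\tilde\lambda_1(p) \leq C(1 + \sup_M |\nabla u|^2)$; the pointwise $C^2$ estimate at an arbitrary point then follows from the definition of $\tilde U$ together with the $C^0$ control $\osc_M u \leq C$. The main obstacle is the second-derivative piece: without the convexity of the solution set (Lemma 4.4), the term $\sum_k f_k (X_u)_{k\bar k, 1\bar 1}$ could contribute with unfavourable sign and of order $\tilde\lambda_1$, defeating the argument. It is precisely the convexity guaranteed by the Positivstellensatz constraint on $(c_1, c_0)$ that supplies the uniform control required, and the fact that all the lower bounds in Lemmas 4.6 and 4.7 are independent of $t \in [0, 1]$ is what makes the estimate uniform along the continuity path.
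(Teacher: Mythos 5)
You have the right skeleton — test function $\tilde U = -Au + \log(1+\tilde\lambda_1)$, expansion of $\mathcal{L}_t \tilde U$ at the maximum, absorption of the second-derivative piece via convexity of the solution set and $h_1 = 0$ — but the way you assemble the pieces cannot produce a bound on $\lambda_1$, which is the whole point of the theorem. The issue is in your treatment of the first-derivative piece $\sum_k f_k\,|(X_u)_{1\bar 1,k}|^2/(1+\tilde\lambda_1)^2$. You substitute the first-order condition for every $k$ and bound the result by $-A^2\sup_M|\nabla u|^2\sum_k(-f_k)$. After dividing by $\sum_k(-f_k)$, your final inequality reads $0 \geq A\kappa - A^2\sup_M|\nabla u|^2 - C$, in which $\lambda_1$ does not appear at all; it gives no contradiction with $\lambda_1$ being large, and hence no bound.

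The mechanism the paper actually uses is that $|f_1|$ is small when $\lambda_1$ is large: on the solution set one has $f_1 = c_1/(\lambda_1\lambda_2\lambda_3) - h/\lambda_1$, so $|f_1| \leq h/\lambda_1$. The first-order condition is therefore applied \emph{only} to the $k=1$ piece, turning $f_1|(X_u)_{1\bar 1,1}|^2/(1+\tilde\lambda_1)^2$ into $A^2 f_1|u_1|^2$, which is $\geq -A^2 h\,\sup_M|\nabla u|^2/\lambda_1$. That is the only place $\lambda_1$ enters the final inequality and the source of $\lambda_1 \leq (20A/\kappa h^{1/2})\sup_M|\nabla u|^2$. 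Treating $k=1$ on the same footing as $k=2,3$ erases this smallness, because $|f_2|,|f_3|$ stay bounded away from zero.

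A related problem: you propose to \emph{discard} the nonnegative third-order terms $\sum_{j\neq 1}\frac{h}{\lambda_1\lambda_j}|(X_u)_{j\bar 1,1}|^2$ coming from (4.8). But the $j\neq 1$ first-derivative pieces $f_j|(X_u)_{1\bar 1,j}|^2/(1+\tilde\lambda_1)^2$ are negative and cannot be absorbed into $-C$ on their own without invoking the first-order condition — which is precisely what you must avoid for $j \neq 1$ (see above). The paper keeps those third-order terms and pairs them with the $j\neq 1$ first-derivative pieces using the torsion identity $S_j = (X_u)_{j\bar 1,1} - (X_u)_{1\bar 1,j} = X_{j\bar 1,1} - X_{1\bar 1,j}$, which depends only on the fixed background $X$; the algebraic cancellation in inequality (4.17), driven by $f_j\lambda_j + h = c_1\lambda_j/(\lambda_1\lambda_2\lambda_3) \geq 0$, produces the bound $\geq -C$ with no gradient term. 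So the third-order pieces are not optional — they are what controls the $j\neq 1$ first-derivative pieces, leaving only the $k=1$ piece, whose smallness yields the $\lambda_1$ bound.
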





\begin{proof}

First, by applying the operator \hyperlink{dim 3 operator}{$\mathcal{L}_t$} to $G(\tilde{\Lambda})$, at the maximum point, we obtain
\begin{align*}
\label{eq:4.13}
\mathcal{L}_t  \bigl (  G( \tilde{\Lambda} )   \bigr )   \tag{4.13}
&= - \sum_{i, j, k}f_k   g_{ij}  \frac{\partial \tilde{\Lambda}_i^i}{\partial z_k} \frac{\partial \tilde{\Lambda}_j^j}{\partial \bar{z}_k} - \sum_k f_k   \sum_{i \neq j} \frac{g_i - g_j}{ \tilde{\lambda}_i - \tilde{\lambda}_j}   \frac{\partial \tilde{\Lambda}_j^i}{\partial z_k} \frac{\partial \tilde{\Lambda}_i^j}{\partial \bar{z}_k} - \sum_{i,k} f_k   g_i   \frac{\partial^2 \tilde{\Lambda}_i^i}{\partial z_k \partial \bar{z}_k}    \\
&=  \sum_k f_k \frac{1}{(1 + \tilde{\lambda}_1)^2}   \bigl|    (  X_u     )_{1 \bar{1},k }    \bigr |^2 + \sum_k f_k  \frac{ {\lambda}_1}{1 + \tilde{\lambda}_1} \omega_{1\bar{1},k \bar{k}}  -  \sum_k f_k \frac{1}{1+ \tilde{\lambda}_1}   ( X_u   )_{1 \bar{1},k \bar{k}} \\
&\kern2em  - \sum_{k}f_k \sum_{j \neq 1}  \frac{1}{(1 + \tilde{\lambda}_1)( \tilde{\lambda}_1 - \tilde{\lambda}_j)}   \Bigl(    \bigl |    (  X_u    )_{j \bar{1},k}     \bigr |^2   +      \bigl |    (  X_u    )_{1 \bar{j},k}    \bigr  |^2      \Bigr ) \\
&\geq   \sum_i f_i \frac{1}{(1 + \tilde{\lambda}_1)^2}  \bigl |    (  X_u     )_{1 \bar{1}, i}    \bigr |^2 +  C \sum_i f_i  - \sum_i f_i \frac{1}{1+ \tilde{\lambda}_1}   (  X_u   )_{1 \bar{1}, i \bar{i}}.
\end{align*}Here we change the index from $k$ to $i$ for convenience. Then by equation (\ref{eq:4.8}), we have
\begin{align*}
\label{eq:4.14}
0 = h_{k \bar{k}} &= \sum_{i, j}  f_{ij}   (  X_u     )_{i \bar{i}, \bar{k}}    (  X_u     )_{j \bar{j}, {k}} + \sum_{i \neq j}  \frac{f_i - f_j}{\lambda_i - \lambda_j}    | (X_u)_{j \bar{i},k}    |^2     +  \sum_i \Bigl( f_i  ( X_u   )_{i \bar{i},k \bar{k}} -f_i \lambda_i \omega_{i \bar{i},k \bar{k}} \Bigr) \tag{4.14} \\
&\geq \sum_{i \neq j}  \frac{ h }{  \lambda_i \lambda_j}    | (X_u)_{j \bar{i},k}    |^2     +  \sum_i \Bigl( f_i  ( X_u   )_{i \bar{i},k \bar{k}} -f_i \lambda_i \omega_{i \bar{i},k \bar{k}} \Bigr) \\
&\geq \sum_{i \neq j}  \frac{h}{\lambda_i \lambda_j}    | (X_u)_{j \bar{i},k}    |^2     +  \sum_i  f_i  ( X_u   )_{i \bar{i},k \bar{k}} + C \sum_i f_i \lambda_i,
\end{align*}where the inequality on the second line is due to Lemma~\hyperlink{L:4.4}{4.4}. Since the solution set $\{f_t = h\}$ is convex and $0 = h_k = \sum_i f_i (X_u)_{i \bar{i}, k}$ implies $(X_u)_{i \bar{i}, k}$ is a tangent vector, we obtain that $\sum_{i, j} f_{ij}   (  X_u     )_{i \bar{i}, \bar{k}}    (  X_u     )_{j \bar{j}, {k}}  \geq 0$. Hence by setting $k = 1$, inequality (\ref{eq:4.14}) gives
\begin{align*}
\label{eq:4.15}
-\sum_i f_i   (  X_u     )_{1 \bar{1},i \bar{i}} &= - \sum_i f_i (X_u  )_{i  \bar{i},1 \bar{1}}  + \sum_i f_i \bigl (    (X_u  )_{i  \bar{i},1 \bar{1}} -   ( X_u   )_{1 \bar{1}, i \bar{i}}  \bigr ) \tag{4.15} \\
&= - \sum_i f_i (X_u  )_{i  \bar{i},1 \bar{1}}  + \sum_i f_i \bigl (    (X   )_{i  \bar{i},1 \bar{1}} -   ( X    )_{1 \bar{1}, i \bar{i}}  \bigr ) \\
&\geq   C \sum_i f_i (1 + \lambda_i)   +   \sum_{i \neq j}  \frac{ h }{  \lambda_i \lambda_j}    | (X_u)_{j \bar{i},1}    |^2 \\
&\geq   C \sum_i f_i (1 + \lambda_i)   +   \sum_{j \neq 1}  \frac{h }{  \lambda_1 \lambda_j}    | (X_u)_{j \bar{1},1}    |^2.
\end{align*}

%
%

Combining inequalities (\ref{eq:4.13}) and (\ref{eq:4.15}), at the maximum point $p \in M$, if $\lambda_1$ is sufficiently large, then we have
\begin{align*}
\label{eq:4.16}
0 &\geq \mathcal{L}_t   ( U   ) \geq   A   \sum_i f_i u_{i \bar{i}} + \sum_i f_i \frac{\bigl |    (  X_u     )_{1 \bar{1},i}    \bigr |^2}{(1 + \tilde{\lambda}_1)^2}  +  C \sum_i f_i  - \sum_i f_i  \frac{(  X_u   )_{1 \bar{1}, i\bar{i}}}{1+ \tilde{\lambda}_1}    \tag{4.16} \\
&\geq    \sum_i f_i (Au_{i \bar{i}} + C ) + \sum_i  f_i \frac{  \bigl |    (  X_u     )_{1 \bar{1},i}    \bigr |^2}{(1 + \tilde{\lambda}_1)^2}        +   \sum_{j \neq 1}  \frac{h | (X_u)_{j \bar{1},1}    |^2 }{(1+ \tilde{\lambda}_1)  \lambda_1 \lambda_j}      \\
&\geq \bigl( C-A \kappa \bigr)  \sum_i f_i    + \sum_i f_i \frac{ \bigl |    (  X_u     )_{1 \bar{1},i}    \bigr |^2}{(1 + \tilde{\lambda}_1)^2}          +   \sum_{j \neq 1}  \frac{ h | (X_u)_{j \bar{1},1}    |^2 }{(1+ \tilde{\lambda}_1)  \lambda_1 \lambda_j}     \\
&\geq \frac{A \kappa   h^{3/2}}{10}     +  f_1 \frac{\bigl |    (  X_u     )_{1 \bar{1},1}    \bigr |^2}{(1 + \tilde{\lambda}_1)^2}  +  \sum_{j \neq 1} f_j \frac{  \bigl |    (  X_u     )_{1 \bar{1},j}    \bigr |^2}{(1 + \tilde{\lambda}_1)^2}          + \frac{ h   | (X_u)_{j \bar{1},1}    |^2 }{(1+ \tilde{\lambda}_1)  \lambda_1 \lambda_j}. 
\end{align*}Here, provided that $A$ is sufficiently large such that $A \kappa - C > A \kappa/2$ and $\lambda_1$ is sufficiently large. The inequality on the third line is by Lemma~\hyperlink{L:4.6}{4.6} and the last inequality is by Lemma~\hyperlink{L:4.7}{4.7}. \bigskip

We can also simplify the last two terms in inequality (\ref{eq:4.16}): 
\begin{align*}
\label{eq:4.17}
&\kern-1em \sum_{j \neq 1} f_j  \frac{ \bigl |    (  X_u     )_{1 \bar{1},j}    \bigr |^2}{(1 + \tilde{\lambda}_1)^2}          + \frac{ h  | (X_u)_{j \bar{1},1}    |^2 }{(1+ \tilde{\lambda}_1)   \lambda_1 \lambda_j}    
=  \sum_{j \neq 1} f_j \frac{ \bigl |    (  X_u     )_{1 \bar{1},j}    \bigr |^2}{(1 + \tilde{\lambda}_1)^2}   +   \sum_{j \neq 1}  \frac{ h | (X_u)_{1 \bar{1},j} + S_j   |^2 }{(1+ \tilde{\lambda}_1)  \lambda_1 \lambda_j}      \tag{4.17} \\
&\geq   \sum_{j \neq 1}  f_j \frac{ \bigl |    (  X_u     )_{1 \bar{1},j}    \bigr |^2}{(1 + \tilde{\lambda}_1)^2}   +   \sum_{j \neq 1}  \frac{ h }{(1+ \tilde{\lambda}_1)   \lambda_1 \lambda_j}  \Bigl(  \frac{\lambda_1}{1+ \tilde{\lambda}_1} | (X_u)_{1 \bar{1},j} |^2 - \frac{\lambda_1 - B_{11}}{1+ B_{11}} | S_j   |^2 \Bigr) \\
&\geq    \sum_{j \neq 1}  \frac{ f_j \lambda_j +  h }{(1+ \tilde{\lambda}_1)^2    \lambda_j}      | (X_u)_{1 \bar{1},j} |^2    -  \sum_{j \neq 1}  \frac{ h | S_j   |^2 }{(1+ \tilde{\lambda}_1)    \lambda_j}         \\
&=   \sum_{j \neq 1}  \frac{\bigl |    (  X_u     )_{1 \bar{1},j}    \bigr |^2}{(1 + \tilde{\lambda}_1)^2 \lambda_1 \lambda_2 \lambda_3}   -   \sum_{j \neq 1}  \frac{ h | S_j   |^2 }{(1+ \tilde{\lambda}_1)    \lambda_j}         
\geq    -   \sum_{j \neq 1}  \frac{  h  | S_j   |^2 }{(1+ \tilde{\lambda}_1)     \lambda_j}         \geq -C,
\end{align*}where we denote $S_j \coloneqq  (X_u)_{j \bar{1}, 1}- (X_u)_{1 \bar{1}, j} = X_{j \bar{1}, 1}- X_{1 \bar{1}, j}$. Thus, by inequalities (\ref{eq:4.3}), (\ref{eq:4.16}), and (\ref{eq:4.17}), at the point $p$ we obtain

\begin{align*}
0 &\geq \mathcal{L}_t    ( U     ) \geq \frac{A \kappa   h^{3/2}}{10}     +  f_1 \frac{\bigl |    (  X_u     )_{1 \bar{1},1}    \bigr |^2}{(1 + \tilde{\lambda}_1)^2}  -C \geq \frac{A \kappa   h^{3/2}}{20}- A^2 |u_1|^2 \frac{c_1 (\lambda_2 + \lambda_3) + 2c_0  \tan(\hat{\theta})}{ \lambda_1^2 \lambda_2 \lambda_3 } ,
\end{align*}provided that $A$ is sufficiently large. This implies,
\begin{align*}
\frac{A \kappa   h^{3/2}}{20}  &\leq  A^2 |u_1|^2 \frac{c_1  (\lambda_2 + \lambda_3) + 2c_0  \tan(\hat{\theta})}{ \lambda_1^2 \lambda_2 \lambda_3 } \leq A^2 h |u_1|^2 \frac{c_1  (\lambda_2 + \lambda_3) + 2c_0  \tan(\hat{\theta})}{ \lambda_1 \bigl( c_1 \sigma_1(\lambda)+ 2c_0 \ta  \bigr) } \\
&\leq A^2 h |u_1|^2 \frac{1}{ \lambda_1} \leq A^2 h \cdot    \sup_M |\nabla u|^2  \cdot \frac{1}{ \lambda_1} .
\end{align*}Hence at the maximum point $p$ of $\tilde{U}$, we have
\begin{align*}
\lambda_1 \leq \frac{20 A   }{\kappa   h^{1/2}}   \sup_M |\nabla u|^2.
\end{align*}By plugging back to the original test function $U = -Au +G(\Lambda)$, we will obtain the $C^2$ estimate independent of $t$. This finishes the proof.
\end{proof}



\subsubsection{The $C^1$ Estimates}
\label{sec:4.1.2}
Here, we use a blow-up argument proved by Collins--Jacob--Yau \cite{collins20151} to obtain the $C^1$ estimate. One can also check a more general setting considered by Székelyhidi \cite{szekelyhidi2018fully}, or the complex Hessian equation studied by Dinew--Kołodziej \cite{dinew2017liouville}.

\hypertarget{P:4.1}{\begin{fprop}[Collins--Jacob--Yau \cite{collins20151}]}
Suppose $u \colon M   \rightarrow \mathbb{R}$ satisfies 
\begin{itemize}
\item[(a)] $X + \sqrt{-1} \partial \bar{\partial} u \geq -K \omega$,
\item[(b)] $\|u\|_{L^\infty(M)} \leq K$,
\item[(c)] $\| \partial \bar{\partial} u \|_{L^\infty(M)} \leq K \bigl( 1 + \sup_M |\nabla u |^2  \bigr)$,
\end{itemize}for a uniform constant $K < \infty$. Then there exists a constant $C$, depending only on $M, \omega$, $X$, and $K$ such that
\begin{align*}
\sup_M |\nabla u| \leq C.
\end{align*}
\end{fprop}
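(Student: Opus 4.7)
The plan is to argue by contradiction via a blow-up, in the style of Dinew--Kołodziej \cite{dinew2017liouville} (see also Collins--Jacob--Yau \cite{collins20151} and Székelyhidi \cite{szekelyhidi2018fully}). Suppose no such uniform $C$ exists; then there is a sequence $u_j \colon M \to \mathbb{R}$ satisfying hypotheses (a)--(c) with $L_j \coloneqq \sup_M |\nabla u_j| \to \infty$, attained at points $p_j \in M$. By compactness of $M$, pass to a subsequence so that $p_j \to p_\infty$, and work in a fixed holomorphic coordinate chart $(U,z)$ around $p_\infty$, for instance the one provided by Lemma~\hyperlink{L:2.2}{2.2}, chosen small enough that $p_j \in U$ for all large $j$.

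Next I would rescale at the natural scale $L_j^{-1}$: define $v_j(w) \coloneqq u_j\bigl(p_j + w/L_j\bigr)$, which is defined on larger and larger balls in $\mathbb{C}^3$. The chain rule together with (b) and (c) yields $\|v_j\|_{L^\infty} \leq K$, $|\nabla v_j(w)| \leq 1$ with $|\nabla v_j(0)| = 1$, and $\|\partial\bar\partial v_j\|_{L^\infty} \leq L_j^{-2} K(1 + L_j^2) \leq 2K$ for $j$ large; meanwhile hypothesis (a) rescales to $\sqrt{-1}\,\partial\bar\partial v_j \geq -L_j^{-2}(X + K\omega)\bigl(p_j + w/L_j\bigr)$, whose right-hand side tends to $0$ uniformly on compact subsets of $\mathbb{C}^3$. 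By Arzelà--Ascoli applied on each fixed ball and a standard diagonal extraction one obtains a subsequential limit $v_\infty \in C^{1,1}(\mathbb{C}^3)$, with $v_j \to v_\infty$ in $C^{1,\alpha}_{\mathrm{loc}}$ for every $\alpha \in (0,1)$, which is bounded, plurisubharmonic, and satisfies $|\nabla v_\infty(0)| = 1$.

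The contradiction then comes from the classical Liouville theorem for plurisubharmonic functions: any bounded plurisubharmonic function on $\mathbb{C}^n$ is constant, so $\nabla v_\infty \equiv 0$, contradicting $|\nabla v_\infty(0)| = 1$. The only point requiring some care is to verify that hypothesis (a) genuinely survives the blow-up, i.e.\ that the pullback of $X + K\omega$ under the contracting map $w \mapsto p_j + w/L_j$ is uniformly bounded on each fixed compact subset of $\mathbb{C}^3$; this is immediate from smoothness of these tensors on $M$, and the $L_j^{-2}$ prefactor then annihilates the obstruction in the limit, yielding plurisubharmonicity of $v_\infty$. Everything else is a routine compactness-plus-Liouville combination.
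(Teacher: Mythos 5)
The paper itself does not give a proof of this proposition: it is cited directly from Collins--Jacob--Yau \cite{collins20151} (with the remark that Székelyhidi \cite{szekelyhidi2018fully} and Dinew--Kołodziej \cite{dinew2017liouville} treat closely related statements). Your blow-up argument is the standard one used in those references, and it is essentially correct. Two small points of precision are worth flagging. First, Arzelà--Ascoli together with the uniform Lipschitz bound $|\nabla v_j|\le 1$ only yields $C^{0,\alpha}_{\mathrm{loc}}$ convergence of $v_j$, which is not enough to pass the normalization $|\nabla v_j(0)|=1$ to the limit. The step you should make explicit is that the bound $\|\partial\bar\partial v_j\|_{L^\infty}\le 2K$ controls the Euclidean Laplacian $\Delta v_j$, so interior Calderón--Zygmund estimates plus the $L^\infty$ bound give uniform $W^{2,p}_{\mathrm{loc}}$ bounds for all $p<\infty$, hence uniform $C^{1,\alpha}_{\mathrm{loc}}$ bounds for all $\alpha<1$; it is to these bounds that Arzelà--Ascoli is applied to extract $C^{1,\alpha'}_{\mathrm{loc}}$ convergence. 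Without this elliptic step the argument does not close, since the contradiction lives at the level of $\nabla v_\infty(0)$. Second, the assertion $v_\infty\in C^{1,1}(\mathbb{C}^3)$ is slightly too strong: $L^\infty$ control of $\Delta v_j$ gives $W^{2,p}_{\mathrm{loc}}$ for every finite $p$ but not $W^{2,\infty}_{\mathrm{loc}}$, so the limit should be asserted in $C^{1,\alpha}_{\mathrm{loc}}$ for every $\alpha<1$ (which is all that is needed). A final minor remark: $|\nabla u_j|$ is the Riemannian gradient norm, so after transferring to a fixed coordinate chart one only gets $|\nabla^{\mathrm{Eucl}} v_j(0)|$ pinched between positive constants rather than equal to $1$; this is harmless, since any positive lower bound contradicts the Liouville theorem for bounded plurisubharmonic functions.
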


\subsubsection{Higher Order Estimates}
\label{sec:4.1.3}


The proof follows from Siu \cite{siu2012lectures}, here we use a standard blow-up argument inspired by Collins--Jacob--Yau \cite{collins20151}. The equation is elliptic and the solution set is convex when $h \in \bigl (  0, c_1^3(t) \ctt / c_0^2(t)      \bigr )$. As long as $h \in \bigl (  0, c_1^3(t) \ctt / c_0^2(t)      \bigr )$ for all $t \in [0, 1]$, we can exploit the convexity of the solution set to obtain $C^{2, \alpha}$ estimates by a blow-up argument. \bigskip


By shrinking the coordinate charts if necessary, we may assume that the manifold $M$ can be covered by finitely many coordinate charts $\bar{U}_a \subset V_a$ such that $X_u = \sqrt{-1} \partial \overline{\partial} u_a$ on $V_a$ for a smooth function $u_{\alpha}$ satisfying $\| u_a \|_{C^2(\bar{U}_a)}   \leq K$, where we use the standard Euclidean metric on $\mathbb{C}^3$ and $K$ is a uniform constant independent of $a$ and $t \in [0, 1]$. For convenience, we focus on a fixed coordinate chart $V_{a}$, we drop the subscript $a$. The function $u$ on $V$ satisfies 
\begin{align*}
\label{eq:4.18}
 {F}_t(x, \partial \bar{\partial} u  ) = F_t \bigl (\Lambda(x) \bigr)  = h, \text{ for } x \in V, \tag{4.18}
\end{align*}where $\Lambda^j_i(x) = \omega^{j \bar{k}}(x) u_{i \bar{k}}(x)$ with eigenvalues $\lambda \bigl ( \Lambda^j_i (x) \bigr) \in \Upsilon^3_{2; 1, 0, -c_1(t) /h }  \cap \Upsilon^3_{1; 1, 0}$ and $h = \css$ is a constant satisfying $c_1^3(t) \ctt / c_0^2(t)  > h > 0$ for all $t \in [0, 1]$ by our \hyperlink{dim 3 cons}{Positivstellensatz constraint}. Moreover, fix $\tilde{x} \in U$, we define the following operator which does not depend on $x \in V$, 
\begin{align*}
\tilde{F}_{t, \tilde{x}}( \partial \bar{\partial} u ) \coloneqq  {F}_t(    \omega^{j \bar{k}}(\tilde{x})  u_{i \bar{k}}   ).
\end{align*}

\bigskip


First, we prove a Hölder estimate for the second derivatives. We have the following.

\hypertarget{L:4.8}{\begin{flemma}}
Fix $t \in [0, 1]$, let $U \subset \mathbb{C}^3$ be a connected open set, and fix $\tilde{x} \in U$. Suppose $u \colon U \subset  \mathbb{C}^3   \rightarrow \mathbb{R}$ is a $C^3$ function such that $\| \partial \bar{\partial}u \|_{L^\infty( U )}   < \infty$ and $\lambda \bigl (\omega^{j \bar{k}}(\tilde{x}) u_{i \bar{k}}(\tilde{x})  \bigr )   \in \Upsilon^3_{2; 1, 0, -c_1(t) /h }  \cap \Upsilon^3_{1; 1, 0}$. If for all $x \in U$,
\begin{align*}
\tilde{F}_{t, \tilde{x}}   ( \partial \bar{\partial} u  )(x) = h,
\end{align*}then there exists a constant $\alpha \in (0, 1)$ such that for any $R > 0$ with $\overline{B_{2R}} \subset U$, the function $u$ satisfies 
\begin{align*}
\| \partial \bar{\partial} u \|_{C^\alpha(B_{R})} \leq C    \cdot R^{-\alpha}.
\end{align*}Here $C = C\bigl( \hat{\theta}, c_0, c_1, h, \| \partial \bar{\partial}u \|_{L^\infty( U )}  \bigr)$ is a constant, $h \in (0, c_1^3(t) \ctt / c_0^2(t))$ is a constant, and $\lambda \bigl (\omega^{j \bar{k}}(\tilde{x}) u_{i \bar{k}}(\tilde{x})  \bigr )$ are the eigenvalues of $\omega^{j \bar{k}}(\tilde{x}) u_{i \bar{k}}(\tilde{x})$. 
\end{flemma}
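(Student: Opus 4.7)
The plan is to invoke a frozen coefficient complex Evans--Krylov argument in the style of Siu \cite{siu2012lectures}, using the two structural ingredients that have already been verified for the operator $\tilde{F}_{t, \tilde{x}}$: uniform ellipticity (from Lemma~\hyperlink{L:4.3}{4.3}) and convexity of the level set $\{f_t = h\}$ (from Lemma~\hyperlink{L:4.4}{4.4}). Both rely on the assumption $c_1^3(t)\ctt/c_0^2(t) > h > 0$ and on the eigenvalue configuration $\lambda \in \Upsilon^3_{2;1,0,-c_1(t)/h}\cap \Upsilon^3_{1;1,0}$. Since the $L^\infty$ bound on $\partial\bar{\partial}u$ is prescribed and the $\Upsilon$-cone is open, the eigenvalues of $\omega^{j\bar k}(\tilde x) u_{i\bar k}(\tilde x)$ lie in a compact subregion bounded away from the cone's boundary, and there the quantities $-f_i$ are bounded above and below by positive constants; this yields uniform ellipticity constants $\Lambda_0 \geq \lambda_0 > 0$ for the linearized operator $\tilde F^{i \bar j}\coloneqq \partial \tilde F_{t,\tilde x}/\partial \Lambda_i^j$ depending only on the stated data.

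First I would rescale by setting $v(y) \coloneqq R^{-2} u(\tilde x + Ry)$ on $B_2 \subset \mathbb{C}^3$. Then $v$ solves the same frozen equation, $\|\partial \bar\partial v\|_{L^\infty(B_2)} \leq \|\partial\bar\partial u\|_{L^\infty(U)}$, and the claim reduces to $\|\partial\bar\partial v\|_{C^\alpha(B_1)} \leq C$. Next, for each unit direction $e$, differentiating $\tilde F_{t,\tilde x}(\partial\bar\partial v) = h$ gives the linear equation
\begin{align*}
\sum_{i,j} \tilde F^{i\bar j}\bigl(\partial \bar\partial v\bigr) \cdot (v_e)_{i \bar j} = 0,
\end{align*}
a uniformly elliptic equation in non-divergence form with measurable coefficients. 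Applying Krylov--Safonov to $\pm v_e$ yields a Hölder estimate for first derivatives.

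To upgrade from first to second derivatives one uses the concavity of $\tilde F_{t,\tilde x}$ on the relevant eigenvalue region (equivalent to convexity of its super-level set, which is Lemma~\hyperlink{L:4.4}{4.4}). The standard trick is to note that for any pair of points $x,y$ and any Hermitian matrix $A$, concavity gives
\begin{align*}
\tilde F_{t,\tilde x}(A) - \tilde F_{t,\tilde x}\bigl(\partial\bar\partial v(y)\bigr) \leq \sum_{i,j} \tilde F^{i\bar j}\bigl(\partial\bar\partial v(y)\bigr)\bigl(A_{i\bar j} - v_{i\bar j}(y)\bigr).
\end{align*}
Specializing $A = \partial\bar\partial v(x)$ and combining with the dual inequality obtained by swapping the roles of $x$ and $y$, one arrives at a pointwise bound of the oscillation of any pure second derivative $v_{e\bar e}$ by a supersolution plus a subsolution of a uniformly elliptic linear equation. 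Iterating the resulting weak Harnack inequality on dyadic balls produces the geometric decay $\osc_{B_r} v_{e\bar e} \leq C r^\alpha$ for some $\alpha \in (0,1)$ and every unit direction $e$. Finally, a linear-algebra identity expressing mixed second derivatives $v_{i\bar j}$ in terms of pure second derivatives in finitely many directions promotes this to the full Hölder estimate on $\partial\bar\partial v$.

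The main obstacle I expect is quantitative: making sure the ellipticity constants $(\lambda_0, \Lambda_0)$ and the modulus of concavity of $\tilde F_{t,\tilde x}$ on the relevant eigenvalue set depend only on the stated data $(\hat\theta, c_0, c_1, h, \|\partial\bar\partial u\|_{L^\infty(U)})$, so that the final exponent $\alpha$ and constant $C$ are uniform in $t \in [0,1]$ and in the base point $\tilde x$. The $\Upsilon$-cone monotonicity of Lemma~\hyperlink{L:3.3}{3.3} provides exactly this uniformity: since $\Upsilon_1(t) \subset \Upsilon_1(0)$ for all $t$, and since the range condition on $h$ keeps the eigenvalues at $\tilde x$ inside a fixed compact subset of $\Upsilon_1(0)$, all the quantitative constants entering the Evans--Krylov machinery can be chosen $t$-independent.
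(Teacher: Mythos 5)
Your outline follows the right overall structure (complex Evans--Krylov: ellipticity from Lemma~\hyperlink{L:4.3}{4.3}, level-set convexity from Lemma~\hyperlink{L:4.4}{4.4}, weak Harnack, supporting hyperplane), but it has one genuine gap and one sign error worth correcting.

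The gap is at the very start. The hypothesis only places the eigenvalues of $\omega^{j\bar k}(\tilde x) u_{i\bar k}(\tilde x)$ in $\Upsilon^3_{2;1,0,-c_1(t)/h}\cap\Upsilon^3_{1;1,0}$ \emph{at the single base point} $\tilde x$, yet your argument silently assumes uniform ellipticity and level-set convexity at every $x\in U$ (``the eigenvalues lie in a compact subregion bounded away from the cone's boundary''). Nothing you write establishes that the eigenvalues of $\omega^{j\bar k}(\tilde x) u_{i\bar k}(x)$ remain in the cone for all $x\in U$. The paper's proof begins precisely with this propagation step: by continuity and the intermediate value theorem, if some $x'$ had eigenvalues on the cone boundary (so $\lambda_2\lambda_3 = c_1(t)/h$ after ordering), substituting into $\tilde F_{t,\tilde x}(\partial\bar\partial u)(x')=h$ forces $c_1(\lambda_2+\lambda_3)+2c_0\tan(\hat\theta)=0$; but the Positivstellensatz constraint $c_1^{3/2}+c_0\sqrt{h}\tan(\hat\theta)>0$ together with $\lambda_2+\lambda_3\geq 2\sqrt{c_1/h}$ makes that quantity strictly positive, a contradiction. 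Without this step, you cannot invoke uniform ellipticity of the frozen linearization nor the convexity of $\{f_t=h\}$ at the other points where you need them, so Krylov--Safonov and the supporting-hyperplane inequality are not yet available.

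The sign error: you invoke ``concavity of $\tilde F_{t,\tilde x}$\ldots equivalent to convexity of its super-level set, which is Lemma~\hyperlink{L:4.4}{4.4}.'' But Lemma~\hyperlink{L:4.4}{4.4} shows the \emph{level set} $\{f_t=h\}$ is convex, and by Remark~\hyperlink{R:3.2}{3.2} the enclosed region contains $(R,R,R)$ for large $R$, where $f_t\to 0<h$; so the enclosed region is the \emph{sub}-level set $\{f_t<h\}$. Since here $\tilde F^{i\bar j}=f_i<0$, sublevel convexity makes $\tilde F_{t,\tilde x}$ locally \emph{convex} on that region, not concave, and the supporting-hyperplane inequality at two boundary points $A=\partial\bar\partial u(x)$, $B=\partial\bar\partial u(y)$ of the convex sublevel set reads $\tilde F^{i\bar j}(B)(A_{i\bar j}-B_{i\bar j})\leq 0$, the \emph{reverse} of what your displayed concavity inequality yields. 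The Evans--Krylov machinery still runs after applying the standard concave-operator theorem to $-\tilde F_{t,\tilde x}$, whose linearization is positive definite, but as written the inequality you plug in is backwards. Two smaller points: the Krylov--Safonov step on $\pm v_e$ gives a $C^{1,\alpha}$ estimate that is not needed and does not feed into the $C^{2,\alpha}$ bound --- the paper passes directly to the pure second derivative $w=u_{\gamma\bar\gamma}$; and the closing remark about $t$-independence via $\Upsilon$-cone monotonicity is addressing a claim the lemma does not make (per Remark~\hyperlink{R:4.1}{4.1}, the constant here is allowed to depend on $\|\partial\bar\partial u\|_{L^\infty(U)}$, and hence implicitly on $t$).
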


\hypertarget{R:4.1}{\begin{frmk}}
Notice that the constant $C$ in fact depends on $t$ indirectly as it directly depends on $\| \partial \bar{\partial}u \|_{L^\infty(U)}$. So as long as we find uniform bound for $\| \partial \bar{\partial}u \|_{L^\infty(U)}$, we can have a uniform $C^{2, \alpha}$ bound for $u$ independent of $t \in [0, 1]$.
\end{frmk}

\begin{proof}
First, we may prove that for all $x \in U$,
\begin{align*}
\lambda \bigl (\omega^{j \bar{k}}(\tilde{x}) u_{i \bar{k}}(x)  \bigr )   \in \Upsilon^3_{2; 1, 0, -c_1(t) /h }  \cap \Upsilon^3_{1; 1, 0}.
\end{align*}If not, then by the intermediate value theorem, say there exists $\tilde{x} \in U$ such that 
\begin{align*}
\tilde{F}_{t, \tilde{x}} \bigl (\partial \bar{\partial} u \bigr) (\tilde{x})    = \frac{c_1(t) \sigma_1( {\lambda}) + 2 c_0(t) \tan(\hat{\theta}) }{\lambda_1 \lambda_2 \lambda_3} = h  \text{ and } \lambda_2 \lambda_3 = \frac{c_1(t)}{h},
\end{align*}where $\{\lambda_1, \lambda_2, \lambda_3\}$ are the eigenvalues of $\omega^{j \bar{k}}(\tilde{x}) u_{i \bar{k}}(x)$, which will be a contradiction.\bigskip

Let $\gamma$ be an arbitrary vector of $\mathbb{C}^n$, differentiating $\tilde{F}_{t, \tilde{x}} (\partial \bar{\partial} u) = h$ with respect to $\gamma$ and then with respect to $\bar{\gamma}$ gives
\begin{align*}
\sum_{i, j} \frac{\partial \tilde{F}_{t, \tilde{x}}}{\partial u_{i \bar{j}}} (\partial \bar{\partial} u) u_{i \bar{j} \gamma} = h_\gamma = 0; \quad \sum_{i, j, k, l} \frac{\partial^2 \tilde{F}_{t, \tilde{x}}}{\partial u_{i \bar{j}} \partial u_{k \bar{l}}} (\partial \bar{\partial} u) u_{k \bar{l} \bar{\gamma}} u_{i \bar{j} \gamma} + \sum_{i, j} \frac{\partial \tilde{F}_{t, \tilde{x}}}{\partial u_{i \bar{j}}} ( \partial \bar{\partial} u) u_{i \bar{j} \gamma \bar{\gamma}} = h_{\gamma \bar{\gamma}} = 0.
\end{align*}

By Lemma~\hyperlink{L:4.3}{4.3} and Lemma~\hyperlink{L:4.4}{4.4}, we have
\begin{align*}
\sum_{i, j} \frac{\partial \tilde{F}_{t, \tilde{x}}}{\partial u_{i \bar{j}}} (  \partial \bar{\partial} u) u_{i \bar{j} \gamma \bar{\gamma}} \leq h_{\gamma \bar{\gamma}} = 0. 
\end{align*}Let $w = u_{\gamma \bar{\gamma}}$, then we may rewrite the equation as $-  \sum_{i, j} \tilde{F}_{t, \tilde{x}}^{i \bar{j}} (  \partial \bar{\partial} u)  \partial_i \bar{\partial}_j w  \geq -h_{\gamma \bar{\gamma}} = 0$, where we denote $\tilde{F}_{t, \tilde{x}}^{i \bar{j}} (  \partial \bar{\partial} u)  \coloneqq \partial \tilde{F}_{t, \tilde{x}} /\partial u_{i \bar{j}} (  \partial \bar{\partial} u)$. By the hypothesis that $\| \partial \bar{\partial}u \|_{L^\infty(U)} < \infty$, the eigenvalues of $\sqrt{-1}\partial \bar{\partial}u$ have an upper bound and thus a positive lower bound by equation (\ref{eq:4.18}) itself. Hence the operator
\begin{align*}
-\tilde{F}_{t, \tilde{x}}^{i \bar{j}} (  \partial \bar{\partial} u) \frac{\partial^2}{\partial z_i \bar{\partial} z_j}
\end{align*} is uniformly elliptic.

For $s= 1, 2$, let $M_s \coloneqq \sup_{B_{sR}} w$, where $B_{sR}$ is a ball of radius $sR$ contained in $U$ having the same center. By the Krylov--Safanov's weak Harnack inequality \cite{gilbarg2015elliptic}, there is a constant $p > 0$ and $C > 0$ such that
\begin{align*}
\Bigl(  \frac{1}{R^{6}} \int_{B_R} (M_2 - w)^p   \Bigr)^{1/p} \leq C \bigl(   M_2 - M_1 + R^{\frac{2(q-3)}{q}} \| h_{\gamma \bar{\gamma}} \|_{L^q(B_{2R})}     \bigr ),
\end{align*}where $q >3$. Then, by the smoothness and convexity of the solution set $\{f_t = h\}$, the tangent plane to the graph of $\tilde{F}_t$ at the point $\bigl(u_{i \bar{j}}(y)\bigr)$ is below the graph of $\tilde{F}_t$. Hence the tangent plane will be the supporting hyperplane, which implies


\begin{align*}
\tilde{F}_{t, \tilde{x}} ( \partial \bar{\partial} u(y))  - \tilde{F}_{t, \tilde{x}}^{i \bar{j}} (  \partial \bar{\partial} u(y)) \bigl( u_{i \bar{j}}(x) - u_{i \bar{j}}(y) \bigr) \geq \tilde{F}_{t, \tilde{x}} (  \partial \bar{\partial} u(x)), 
\end{align*}that is,
\begin{align*}
 0 = h(y) - h(x) \geq - \tilde{F}_{t, \tilde{x}}^{i \bar{j}} (  \partial \bar{\partial} u(y)) \bigl( u_{i \bar{j}}(y) - u_{i \bar{j}}(x) \bigr).
\end{align*}Lastely, the rest follows directly from the proof of the complex version of the Evans--Krylov theory in Siu \cite{siu2012lectures}.
\end{proof}

Then, with the above Lemma~\hyperlink{L:4.8}{4.8}, we can prove a Louiville-type result.

\hypertarget{P:4.2}{\begin{fprop}}
Fix $t \in [0, 1]$ and $\tilde{x} \in \mathbb{C}^3$. Suppose $u \colon \mathbb{C}^3   \rightarrow \mathbb{R}$ is a $C^3$ function such that $\| \partial \bar{\partial}u \|_{L^\infty( \mathbb{C}^3 )} < \infty$ and $\lambda \bigl (\omega^{j \bar{k}}(\tilde{x}) u_{i \bar{k}}(\tilde{x})  \bigr )   \in \Upsilon^3_{2; 1, 0, -c_1(t) /h }  \cap \Upsilon^3_{1; 1, 0}$. If for all $x \in \mathbb{C}^3$,
\begin{align*}
\tilde{F}_{t, \tilde{x}}   ( \partial \bar{\partial} u  )(x) = h,
\end{align*}then $u$ is a quadratic polynomial. Here $\lambda \bigl (\omega^{j \bar{k}}(\tilde{x}) u_{i \bar{k}}(\tilde{x})  \bigr )$ are the eigenvalues of $\omega^{j \bar{k}}(\tilde{x}) u_{i \bar{k}}(\tilde{x})$ and $h \in (0, c_1^3(t) \ctt / c_0^2(t))$ is a constant.
\end{fprop}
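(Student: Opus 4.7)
The plan is a standard blow-down argument based on the H\"older estimate of Lemma~\hyperlink{L:4.8}{4.8}. Taking $U = \mathbb{C}^3$, every hypothesis of Lemma~\hyperlink{L:4.8}{4.8} is met globally: the equation $\tilde{F}_{t,\tilde{x}}(\partial\bar{\partial}u)(x) = h$ holds throughout $\mathbb{C}^3$, the quantity $\|\partial\bar{\partial}u\|_{L^\infty(\mathbb{C}^3)}$ is finite by hypothesis, and the eigenvalue condition at $\tilde{x}$ is assumed. For every $R > 0$ we have $\overline{B_{2R}(0)} \subset \mathbb{C}^3$, so Lemma~\hyperlink{L:4.8}{4.8} delivers
\begin{align*}
\|\partial\bar{\partial}u\|_{C^\alpha(B_R(0))} \leq C \cdot R^{-\alpha},
\end{align*}
where $C$ depends only on $\hat{\theta}$, $c_0(t)$, $c_1(t)$, $h$, and $\|\partial\bar{\partial}u\|_{L^\infty(\mathbb{C}^3)}$, and is in particular independent of $R$.

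Letting $R \to \infty$ forces the H\"older seminorm $[\partial\bar{\partial}u]_{C^\alpha(\mathbb{C}^3)}$ to vanish, so $\partial\bar{\partial}u$ is a constant Hermitian matrix on $\mathbb{C}^3$. With $a_{ij} \coloneqq u_{i\bar{j}}(0)$, the function $v(z) \coloneqq u(z) - \sum_{i,j} a_{ij} z_i \bar{z}_j$ satisfies $\partial\bar{\partial}v \equiv 0$, hence is pluriharmonic on $\mathbb{C}^3$. Differentiating $v_{i\bar{j}} \equiv 0$ holomorphically and commuting derivatives (valid since $u \in C^3$) shows that the second holomorphic derivatives $v_{ik}$ are entire holomorphic functions on $\mathbb{C}^3$; combined with the global $L^\infty$ bound on $\partial\bar{\partial}u$, the same Liouville-type argument used in Dinew--Ko\l odziej \cite{dinew2017liouville} for the complex Hessian equation forces these entire functions to be constant, so $u$ is a quadratic polynomial.

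The main technical hurdle is to confirm that the constant $C$ appearing in Lemma~\hyperlink{L:4.8}{4.8} is genuinely independent of $R$, with all $R$-dependence absorbed into the explicit factor $R^{-\alpha}$. This amounts to checking that, in the proof of Lemma~\hyperlink{L:4.8}{4.8}, both the Krylov--Safonov weak Harnack inequality and the supporting-hyperplane step (which exploits the convexity of the solution set established in Lemma~\hyperlink{L:4.4}{4.4}) scale correctly under the natural rescaling $u_R(z) \coloneqq R^{-2} u(Rz)$, which preserves the equation $\tilde{F}_{t,\tilde{x}}(\partial\bar{\partial}u_R) = h$ and sends $B_R(0)$ to $B_1(0)$. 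Once this scaling point is verified, the blow-down argument above is immediate and the proposition follows.
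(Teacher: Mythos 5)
Your approach is exactly the paper's: apply Lemma~\hyperlink{L:4.8}{4.8} on balls $B_R(0)$ and let $R \to \infty$, which drives the Hölder seminorm of $\partial\bar\partial u$ to zero and forces $\partial\bar\partial u$ to be a constant Hermitian matrix. Two small comments on your elaboration. First, the ``main technical hurdle'' you flag about the $R$-independence of $C$ is illusory: Lemma~\hyperlink{L:4.8}{4.8} is already stated with $C$ depending only on $\hat\theta, c_0, c_1, h$, and $\|\partial\bar\partial u\|_{L^\infty(U)}$, and all $R$-dependence explicitly isolated as $R^{-\alpha}$; there is nothing further to verify, as the scaling you describe is what produced that form of the estimate in the first place. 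Second, your final paragraph over-reaches. Once $\partial\bar\partial u$ is constant, $v = u - \sum a_{ij} z_i\bar z_j$ is pluriharmonic on $\mathbb{C}^3$, but a pluriharmonic function need not be a polynomial (e.g.\ $\Re(e^{z_1})$), and the global $L^\infty$ bound on $\partial\bar\partial u$ gives no control whatsoever on the pure holomorphic second derivatives $v_{ik}$, so the invocation of a Liouville theorem there is not justified by the stated hypotheses. Fortunately this gap is harmless: what is actually needed downstream in Lemma~\hyperlink{L:4.9}{4.9} is only that $\partial\partial\bar\partial u \equiv 0$, which already follows from $\partial\bar\partial u$ being a constant matrix; the paper's phrase ``quadratic polynomial'' is the standard loose shorthand for exactly this, and the paper itself does not supply the pluriharmonicity detour you attempted.
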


\begin{proof}
The proof follows from Lemma~\hyperlink{L:4.8}{4.8} by letting $R   \rightarrow \infty$.
\end{proof}

%


\hypertarget{L:4.9}{\begin{flemma}}
For $r > 0$, suppose $u \colon B_{2r} \subset \mathbb{C}^3   \rightarrow \mathbb{R}$ is a smooth function satisfying 
\begin{align*}
{F}_t (x,  \partial \bar{\partial} u ) = h,
\end{align*}where $h \in \bigl[  \epsilon, c_1^3(t) \ctt / c_0^2(t) -\epsilon \bigr]$ is a constant and $\epsilon > 0$ small. Then, for every $\alpha \in (0, 1)$, we have the estimate
\begin{align*}
\|\partial \bar{\partial} u\|_{C^{\alpha} (B_{r/2})} \leq C( \alpha, \hat{\theta}, c_0, c_1, h, \epsilon, \|\partial \bar{\partial} u\|_{L^{\infty} (B_{2r})}).
\end{align*}
\end{flemma}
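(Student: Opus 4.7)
The strategy is a blow-up and compactness argument that reduces the statement to a Liouville-type rigidity: in the non-degenerate regime, the nonlinear Liouville theorem in Proposition~4.2 for the frozen equation, and in the degenerate regime, the classical Liouville theorem for linear elliptic PDEs with constant coefficients. Assume the estimate fails; extract smooth solutions $u_k$ of $F_t(x, \partial\bar\partial u_k) = h$ on $B_{2r}$ with $\|\partial\bar\partial u_k\|_{L^\infty(B_{2r})} \leq K$ uniformly but $M_k := [\partial\bar\partial u_k]_{C^\alpha(B_{r/2})} \to \infty$. By smoothness and compactness, choose $x_k, y_k \in B_{r/2}$ essentially realizing the Hölder seminorm, and set $\rho_k := |x_k - y_k|$; the $L^\infty$ bound forces $M_k \rho_k^\alpha \leq 4K$, hence $\rho_k \to 0$. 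After subsequences we may assume $x_k \to x_\infty$, $\partial\bar\partial u_k(x_k) \to \Lambda_\infty$ (which lies in the closed $C$-subsolution cone $\overline{\Upsilon^3_{2;1,0,-c_1(t)/h}\cap\Upsilon^3_{1;1,0}}$, since it solves $f_t = h$), $(y_k - x_k)/\rho_k \to z_\infty^\ast$ with $|z_\infty^\ast| = 1$, and $M_k \rho_k^\alpha \to L \in [0, 4K]$.

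If $L > 0$, take the coarse rescaling $v_k(z) := \rho_k^{-2}[u_k(x_k + \rho_k z) - u_k(x_k) - \rho_k(\nabla u_k)(x_k)\cdot z]$; its Hessian $\partial\bar\partial v_k(z) = \partial\bar\partial u_k(x_k + \rho_k z)$ is uniformly bounded by $K$, has $C^\alpha$-seminorm $\rho_k^\alpha M_k \to L$, and the oscillation $|\partial\bar\partial v_k(0) - \partial\bar\partial v_k((y_k - x_k)/\rho_k)|$ is at least $L/2$. Arzelà--Ascoli and a diagonal extraction produce a $C^{2,\beta}_{\mathrm{loc}}$ limit $v_\infty \colon \mathbb{C}^3 \to \mathbb{R}$ with bounded Hessian satisfying the frozen equation $\tilde F_{t, x_\infty}(\partial\bar\partial v_\infty) = h$ on $\mathbb{C}^3$; Proposition~4.2 then forces $v_\infty$ to be a quadratic polynomial with constant Hessian, contradicting the oscillation lower bound. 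If instead $L = 0$, use the fine rescaling
\[
w_k(z) := \frac{u_k(x_k + \rho_k z) - P_k(z)}{M_k\, \rho_k^{2+\alpha}},
\]
where $P_k$ is the order-$2$ real Taylor polynomial of $u_k$ at $x_k$. A direct calculation gives $\partial\bar\partial w_k(0) = 0$, $[\partial\bar\partial w_k]_{C^\alpha(B_R)} = 1$, $|\partial\bar\partial w_k(z)| \leq |z|^\alpha$, and $|\partial\bar\partial w_k(0) - \partial\bar\partial w_k((y_k - x_k)/\rho_k)| \geq 1/2$. Subtracting the identity $F_t(x_k, \partial\bar\partial u_k(x_k)) = h$ from the equation and dividing by $M_k \rho_k^\alpha$, the $x$-dependent perturbation has size $O(\rho_k/(M_k\rho_k^\alpha)) = O(\rho_k^{1-\alpha}/M_k) \to 0$, so the $C^{2,\beta}_{\mathrm{loc}}$ limit $w_\infty$ satisfies the constant-coefficient linear elliptic PDE $\sum_{i,j} \tilde F^{i\bar j}_{t, x_\infty}(\Lambda_\infty)(w_\infty)_{i\bar j} \equiv 0$ on $\mathbb{C}^3$. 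Each second derivative $(w_\infty)_{i\bar j}$ solves the same linear PDE with sub-linear growth $|z|^\alpha$, $\alpha < 1$; classical Liouville (via a linear change of coordinates to the usual Laplacian) forces it to be constant, and the normalization $\partial\bar\partial w_\infty(0) = 0$ then yields $\partial\bar\partial w_\infty \equiv 0$, again contradicting the oscillation bound.

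The principal obstacle is the split into two regimes: the coarse rescaling produces a limit with bounded Hessian (fitting Proposition~4.2) but trivializes the blow-up when $L = 0$, while the fine rescaling preserves the oscillation but yields a limit with unbounded Hessian outside the reach of Proposition~4.2; the argument therefore has to appeal to the nonlinear and linear Liouville theorems in the two regimes respectively. A secondary subtlety is verifying that the limit matrix $\Lambda_\infty$ remains inside the $C$-subsolution cone so that the limiting PDE is uniformly elliptic---this follows from Lemma~4.3 together with Proposition~2.2, which place the solution set $\{f_t = h\}$ inside the open cone.
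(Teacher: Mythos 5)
Your argument is correct in outline but takes a genuinely different route from the paper, and there is one small gap you should patch.

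The paper uses the Siu--Simon style of blow-up: it measures the weighted sup norm $N_u := \sup_{x\in B_r} d_x\, |\partial\pa\bar\partial u(x)|$ of the \emph{third} derivative, rescales $\tilde u(z) := u(x_0 + d_{x_0}z/N_u)N_u^2/d_{x_0}^2 - A - A_i z_i$ so that $|\partial\pa\bar\partial\tilde u(0)| = 1$ and $\|\partial\pa\bar\partial\tilde u\|_{L^\infty}\leq 1$, applies Schauder (Evans--Krylov plus bootstrap, using the convexity from Lemma~\hyperlink{L:4.4}{4.4}) to get uniform $C^{3,\alpha}$ control, and obtains a single blow-up limit $\tilde u_\infty$ on $\mathbb{C}^3$ solving the frozen equation with $|\partial\pa\bar\partial\tilde u_\infty(0)| = 1$. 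Proposition~\hyperlink{P:4.2}{4.2} then forces $\tilde u_\infty$ to be quadratic, which is the contradiction. Note this actually proves a Lipschitz bound on $\partial\bar\partial u$ in $B_{r/2}$, which is stronger than $C^\alpha$ for all $\alpha < 1$, and there is no case split. You instead rescale on the Hölder seminorm of the Hessian itself, which forces the dichotomy $L = \lim M_k\rho_k^\alpha \in (0,4K]$ versus $L = 0$: the coarse rescaling needs the nonlinear Liouville theorem (Proposition~\hyperlink{P:4.2}{4.2}) whereas the fine rescaling produces an unbounded limit Hessian and needs the classical linear Liouville theorem with sub-linear growth. Both arms are sound, and your verification that the linearization in the $L=0$ case becomes constant-coefficient (because $\partial\bar\partial u_k(x_k+\rho_k z)-\partial\bar\partial u_k(x_k) = M_k\rho_k^\alpha\,\partial\bar\partial w_k(z) \to 0$) is the right observation. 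The trade-off: your route is more machinery (two Liouville theorems, two scalings) for the same conclusion, while the paper's single third-derivative normalization keeps both the cutoff and the rigidity step in one package.

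The one genuine gap: in the $L > 0$ case you obtain $v_\infty$ only in $C^{2,\beta}_{\mathrm{loc}}$ by Arzel\`a--Ascoli, but Proposition~\hyperlink{P:4.2}{4.2} (and the Lemma~\hyperlink{L:4.8}{4.8} behind it, whose proof differentiates the equation twice) is stated for $C^3$ functions. You should either run the Evans--Krylov/Schauder bootstrap on the limit $v_\infty$ (the frozen operator $\tilde F_{t,x_\infty}$ is uniformly elliptic on the range of $\partial\bar\partial v_\infty$ and has the concavity coming from Lemma~\hyperlink{L:4.4}{4.4}, so a $C^{2,\beta}$ solution is automatically smooth), or, closer to the paper's method, first establish a uniform $C^{3,\alpha}$ bound on the rescaled sequence and take the limit in $C^{3,\alpha'}$. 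The same remark applies in the $L=0$ case before you differentiate $w_\infty$ to conclude that $(w_\infty)_{i\bar j}$ solves the linear equation, though there the constant-coefficient linear elliptic regularity makes the upgrade immediate. This is a repairable omission, not a wrong step, but it should be stated.
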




\begin{proof}
%

For each $x \in B_r$, we consider the following quantity 
\begin{align*}
N_u \coloneqq \sup_{x \in B_r} d_x |\partial \partial \bar{\partial} u(x) |, 
\end{align*}where $d_x \coloneqq \dist(x, \partial B_r)$. Suppose the supremum is achieved at $x_0 \in B_r$, then we consider the following smooth function $\tilde{u} \colon B_{N_u}(0)   \rightarrow \mathbb{R}$ defined by 
\begin{align*}
\tilde{u}(z) \coloneqq  u \bigl ( x_0 + {d_{x_0}}z/N_u \bigr){N_u^2}/{d_{x_0}^2} - A - A_i z_i,
\end{align*}where $A$, $A_i$ are chosen so that $\tilde{u}(0) = 0 = \partial \tilde{u}(0)$. Notice that
\begin{align*}
\partial \bar{\partial} \tilde{u} (z) = \partial \bar{\partial} u(x_0 + d_{x_0} z/N_u); \quad \| \partial \partial \bar{\partial} \tilde{u} \|_{L^{\infty} (B_{N_u} (0)) } = 1 = | \partial \partial \bar{\partial} \tilde{u} (0) |.
\end{align*}In particular, we have $\|\partial \bar{\partial} \tilde{u} \|_{C^{\alpha} (B_r)} \leq r$ for every $\alpha \in (0, 1)$ and $\tilde{u}$ solves 
\begin{align*}
{F}_t \bigl  ( x_0  + {d_{x_0}}{}z/N_u, \partial \bar{\partial} \tilde{u}  \bigr ) (z) = h,\quad z \in B_{N_u}(0).
\end{align*}
By the hypothesis that $\| \partial \bar{\partial}u \|_{L^\infty(B_{2r})} < \infty$, the eigenvalues of $\sqrt{-1}\partial \bar{\partial}u$ have an upper bound and thus a positive lower bound, so $F_t(x, \cdot)$ is uniformly elliptic. The Schauder theory for fully nonlinear uniformly elliptic operators of the form $F(x, \partial \bar{\partial} u)$ implies that $\partial \tilde{u}$ is bounded in $C^{2, \alpha}(B_{N_u /2} (0))$, and so $\tilde{u}$ is controlled in $C^{3, \alpha}(B_{N_u /2} (0))$. Now, we prove by contradiction. Suppose we have a sequence $\{u_n\}$ satisfying ${F}_t (x,  \partial \bar{\partial} u_n ) = h_n$, where $u_n \colon B_{2r}   \rightarrow \mathbb{R}$ such that $\| \partial \bar{\partial} u_n \|_{L^{\infty}(B_{2r})} \leq K$ but $N_{u_n} \geq n$. For each $n$, we let $x_n \in B_r$ be a point where $N_{u_n}$ is achieved. Since $\overline{B_r}$ is compact, by passing to a subsequence, we may assume that $x_n   \rightarrow x_{\infty} \in \overline{B_r}$ and $h_n$ converges to $h_{\infty} \in \bigl[  \epsilon, c_1^3(t) \ctt / c_0^2(t) -\epsilon \bigr]$. \bigskip



Thus, we have functions $\tilde{u}_n \colon B_{N_{u_n}}(0)   \rightarrow \mathbb{R}$ such that
\begin{align*}
\| \tilde{u}_n \|_{C^{3, \alpha}(B_{N_{u_n}}(0) )} \leq C \text{ and } {F}_t \bigl  ( x_n  + {d_{x_n}}{}z/N_{u_n}, \partial \bar{\partial} \tilde{u}_n  \bigr ) (z) = h_n   \text{ for } z \in B_{N_{u_n}}(0).
\end{align*}Since $N_{u_n} \geq n$, by a diagonal argument, there exists a function $\tilde{u}_{\infty} \colon \mathbb{C}^3   \rightarrow \mathbb{R}$ and a subsequence such that $\{ \tilde{u}_n \}_{n \geq k}$ converges uniformly to $\tilde{u}_{\infty}$ in $C^{3, \alpha'}(B_k(0))$ for some $\alpha' \in (0, 1)$. In particular, we have
\begin{align*}
\tilde{F}_{t, x_\infty}( \partial \bar{\partial} \tilde{u}_{\infty})(x) = h_{\infty}   \text{ and }   |\partial \partial \bar{\partial} \tilde{u}_{\infty}(0)| = 1. 
\end{align*} Since $h_{\infty}  \in \bigl[  \epsilon, c_1^3(t) \ctt / c_0^2(t) - \epsilon     \bigr]$, then by the Proposition~\hyperlink{P:4.2}{4.2}, $\tilde{u}_\infty$ is a quadratic polynomial, which leads to a contradiction. 
\end{proof}

By arguing locally,  with Lemma~\hyperlink{L:4.9}{4.9} we have the following.

\hypertarget{C:4.1}{\begin{fcor}}
Suppose $X$ is a $C$-subsolution to equation (\ref{eq:4.1}) and $u \colon M  \rightarrow \mathbb{R}$ is a solution to equation (\ref{eq:4.1}), that is,
\begin{align*}
F_t \bigl(  \omega^{-1} X_u    \bigr) = h,
\end{align*}where $X_u \coloneqq X + \sqrt{-1} \partial \bar{\partial} u$ and $h  = \css$ is a constant. Then for every $\alpha \in (0, 1)$, we have
\begin{align*}
\| \partial \bar{\partial} u \|_{C^\alpha(M)} \leq C(M, X, \omega, \alpha, \hat{\theta}, c_0, c_1, \| \partial \bar{\partial} u\|_{L^{\infty}(M)}).
\end{align*}

\end{fcor}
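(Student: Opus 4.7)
The plan is to extract the global $C^\alpha$ bound for $\partial\bar{\partial} u$ on $M$ from the local Euclidean estimate of Lemma~4.9 by a standard covering argument. Since $M$ is a compact Kähler manifold, I would first cover it by finitely many coordinate charts $\{U_\beta\}$ together with larger open neighborhoods $V_\beta \supset \overline{U_\beta}$ on each of which the smooth closed $(1,1)$-form $X$ admits a local potential $X = \sqrt{-1}\,\partial\bar{\partial}\phi_\beta$. Setting $\tilde{u}_\beta \coloneqq \phi_\beta + u$ on $V_\beta$, equation (4.1) becomes $F_t(x,\partial\bar{\partial}\tilde{u}_\beta)(x) = h$ in Euclidean coordinates, which is exactly the local form to which Lemma~4.9 applies.

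Next I would verify the admissibility of the right-hand side $h = \csc^2(\hat\theta)$. Since $h$ is a fixed positive constant, only the upper bound $h < c_1^3(t)\cot^2(\hat\theta)/c_0^2(t)$ requires justification. Rearranging, this is equivalent to
\begin{align*}
c_1^{3}(t)\cos^2(\hat\theta) > c_0^2(t)\sin^2(\hat\theta)\cos^2(\hat\theta),
\end{align*}
which in turn follows from the Positivstellensatz constraint $c_1^{3/2}(t) > c_0(t)\sin(\hat\theta)$ imposed on our continuity path in Section~3.1. By continuity of $c_0,c_1$ and compactness of $[0,1]$, a uniform $\epsilon > 0$ can be chosen so that $h \in [\epsilon,\, c_1^3(t)\cot^2(\hat\theta)/c_0^2(t) - \epsilon]$ for every $t \in [0,1]$.

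Now for each $\beta$, pick a radius $r_\beta > 0$ with $\overline{B_{2r_\beta}(x)} \subset V_\beta$ for every $x \in U_\beta$; by the compactness of $\overline{U_\beta}$ finitely many such balls $B_{r_\beta/2}(x_k)$ cover $U_\beta$. Applying Lemma~4.9 to $\tilde{u}_\beta$ on each ball $B_{2r_\beta}(x_k)$, and bounding
\begin{align*}
\| \partial\bar{\partial}\tilde{u}_\beta \|_{L^\infty(V_\beta)} \leq \|\partial\bar{\partial}u\|_{L^\infty(M)} + \|\partial\bar{\partial}\phi_\beta\|_{L^\infty(V_\beta)},
\end{align*}
one obtains a Euclidean $C^\alpha$ bound on $\partial\bar{\partial}\tilde{u}_\beta$, and hence on $\partial\bar{\partial}u$, over $U_\beta$. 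Since finitely many $U_\beta$ cover $M$ and the background data $\phi_\beta, \omega$ are all smooth, the Euclidean and Kähler $C^\alpha$-norms differ by multiplicative constants depending only on $M,\omega$; taking the maximum over the finite cover produces the global estimate with the asserted dependence on $M,X,\omega,\alpha,\hat\theta,c_0,c_1,\|\partial\bar{\partial} u\|_{L^\infty(M)}$.

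The only mild subtlety is uniformity in $t \in [0,1]$: the constant in Lemma~4.9 depends on $h$ through $\epsilon$, so I would invoke the uniform choice of $\epsilon$ from paragraph two to conclude that the resulting constant $C$ can be taken independent of $t$. Since Lemma~4.9 already supplies the delicate blow-up/Evans--Krylov analysis, no further analytic obstacle arises; the only work here is the bookkeeping of chart-to-manifold transition.
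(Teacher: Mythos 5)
Your argument is essentially the one the paper intends: cover $M$ by finitely many charts $\overline{U}_a \subset V_a$ on which $X_u$ has a local potential (your $\tilde u_\beta = \phi_\beta + u$ is exactly the paper's local function $u_a$), verify that the constant right-hand side lies in the admissible range for Lemma~4.9 uniformly in $t$ via the Positivstellensatz constraint, apply Lemma~4.9 ball by ball, and patch, noting that Euclidean and $\omega$-$C^\alpha$ norms are comparable on a compact manifold. No new analytic content is needed beyond Lemma~4.9, and your bookkeeping is correct.

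One notational slip worth flagging: in this paper the macro \verb|\css| expands to $\cos^2(\hat\theta)$, not $\csc^2(\hat\theta)$ (the macro for the latter is \verb|\cs|). So the constant is $h = \cos^2(\hat\theta)$. Your rearrangement "$h < c_1^3\cot^2(\hat\theta)/c_0^2 \iff c_1^3 > c_0^2\sin^2(\hat\theta)$" is precisely the correct computation for $h = \cos^2(\hat\theta)$, so what you actually proved matches the Positivstellensatz constraint $c_1^{3/2} > c_0\sin(\hat\theta)$; only the label "$h=\csc^2(\hat\theta)$" at the top of that paragraph is a misreading and should be corrected, since with $h = \csc^2(\hat\theta)$ the rearrangement would instead give $c_0 < c_1^{3/2}\lvert\cos(\hat\theta)\rvert$, which is a different (and stronger) condition.
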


\subsection{When \texorpdfstring{$n =4$}{}}
\label{sec:4.2}

In this subsection, first, we always assume that $\hat{\theta} \in \bigl( \pi, 5 \pi/4    \bigr)$ and there exists a $C$-subsolution $\ubar{u} \colon M   \rightarrow \mathbb{R}$. We also call $X_{\ubar{u}}$ this $C$-subsolution and by changing representative, we may assume $X$ is this $C$-subsolution. We also abbreviate $\lambda = \{\lambda_1, \lambda_2, \lambda_3, \lambda_4\}$ and we always assume $\lambda_1 \geq \lambda_2 \geq \lambda_3 \geq \lambda_4$ unless further notice. Most of the time, to save spaces, we will abbreviate $f = f_t =  f_t(\lambda)$, $f_i = \partial f /\partial \lambda_i, f_{ij} = \partial^2 f/\partial \lambda_i \partial \lambda_j$ for $i, j \in \{1, 2, 3, 4\}$ for notational convention. Last, unless specify otherwise, we always abbreviate $c_2 = c_2(t)$, $c_1 = c_1(t)$, and $c_0 = c_0(t)$. We assume $(c_2, c_1, c_0)$ satisfy all \hyperlink{dim 4 cons}{4-dimensional four constraints} in Section~\ref{sec:3.2} and consider equation (\ref{eq:3.13})
\begin{align*}
\label{eq:4.19}
f_t(\lambda_1, \lambda_2, \lambda_3, \lambda_4) =  \frac{c_2(t) \sigma_2(\lambda)  - 2 c_1(t) \cot(\hat{\theta}) \sigma_1(\lambda) + c_0(t) \cc  }{\lambda_1 \lambda_2 \lambda_3 \lambda_4} = h, \tag{4.19}
\end{align*}where $\lambda_i$ are the eigenvalues of $\omega^{-1}X_u$ and $h = \sii$ is a constant. By the \hyperlink{dim 4 cons}{$\Upsilon$-cone constraints} it automatically satisfies $c_2^3(t) \taa/ c_1^2(t) > h > 0$ and \[c_0(t) \cc h  + 24 c_2^2(t)  \cos^2(\theta_{h, c_1(t), c_2(t)})\cos(2\theta_{h, c_1(t), c_2(t)}) > 0.\] Here \[\theta_{h, c_1(t), c_2(t)} \coloneqq \arccos \bigl(   {- c_1(t) \ct h^{1/2} }\big/{c_2^{3/2}(t) }  \bigr)\big/3  - {2\pi}/{3}\] and we specify the branch so that $ \arccos  ( \bullet  ) \in \bigl( \pi, 3\pi/2 \bigr]$. \bigskip

\subsubsection{The $C^2$ Estimates}
\label{sec:4.2.1}
Define a Hermitian endomorphism $\Lambda \coloneqq \omega^{-1}X_u$, where $X_u = X + \sqrt{-1} \partial \bar{\partial} u$, and let $\lambda = \{ \lambda_1, \lambda_2, \lambda_3, \lambda_4\}$ be the eigenvalues of $\Lambda$. We consider the following function $G(\Lambda) = \log(1 + \lambda_1) =g(\lambda_1, \lambda_2, \lambda_3, \lambda_4)$ and the following test function
\begin{align*}
U \coloneqq - Au + G(\Lambda), 
\end{align*}where $A \gg 0$ will be determined later. We want to apply the maximum principle to $U$, but since the eigenvalues of $\Lambda$ might not be distinct at the maximum point $p \in M$ of $U$, we do a perturbation here. The perturbation here, though not necessarily, is made to preserve the $\Upsilon$-cone structure for convenience. Assume $\lambda_1$ is large, otherwise we are done, then 
\begin{itemize}
\hypertarget{pert for dim 4}{\item} we pick the constant matrix $B$ to be a diagonal matrix with real entries 
\begin{align*}
B_{11} = \epsilon;\quad B_{22} = \epsilon/2;\quad B_{33} = \epsilon/3;\quad B_{44} = 0
\end{align*}
such that $\tilde{\lambda}_i = \lambda_i + B_{ii}$ with $\lambda_1 + \epsilon = \tilde{\lambda}_1 > \tilde{\lambda}_2   > \tilde{\lambda}_3 > \tilde{\lambda}_4 = \lambda_4 > 0$ and assume $\epsilon > 0$ is sufficiently small.
\end{itemize}
By defining $\tilde{\Lambda} = \Lambda + B$, then $\tilde{\Lambda}$ has distinct eigenvalues near $p \in M$, which are $\{\tilde{\lambda}_1, \tilde{\lambda}_2, \tilde{\lambda}_3, \tilde{\lambda}_4\}$. The eigenvalues of $\tilde{\Lambda}$ define smooth functions near the maximum point $p$. And we can check $p$ is still the maximum point of the following locally defined test function
\begin{align*}
\label{eq:4.20}
\tilde{U} \coloneqq -Au + G(\tilde{\Lambda}). \tag{4.20}
\end{align*}

Near the maximum point $p$ of $\tilde{U}$, we always use the coordinates in Lemma~\hyperlink{L:2.4}{2.4} unless otherwise noted. We instantly get the following.
\hypertarget{L:4.10}{\begin{flemma}}
At the maximum point $p$ of $\tilde{U}$, by taking the first derivative of $\tilde{U}$ at $p$, we get
\begin{align*}
\label{eq:4.21}
0 &= -A u_k(p) + \frac{1}{1+ \tilde{\lambda}_1} ( X_u )_{1\bar{1}, k}, \tag{4.21}
\end{align*}where we denote $u_k = \partial u/\partial z_k$ and $( X_u )_{1\bar{1}, k} = \partial (X_u)_{1 \bar{1}} / \partial z_k$.
\end{flemma}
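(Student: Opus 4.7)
The plan is to mimic verbatim the proof of Lemma 4.1 in the three-dimensional case, adapted to four variables. Since $\tilde{U} = -Au + G(\tilde{\Lambda})$ attains its maximum at $p$, the first-order necessary condition gives $\partial \tilde{U}/\partial z_k (p) = 0$ for each $k$, so by the chain rule
\[
0 = -A u_k(p) + \sum_{i,j} \frac{\partial G}{\partial \tilde{\Lambda}^j_i}(\tilde{\Lambda}(p))\, \frac{\partial \tilde{\Lambda}^j_i}{\partial z_k}(p).
\]
The remaining task is to simplify each factor at $p$.

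For the derivative of $G$, I invoke Lemma 2.1 together with the expression for $g_i$ and $g_{ij}$ recorded in Lemma 4.2 (whose $n=4$ analogue, with $g(\lambda) = \log(1+\tilde{\lambda}_1)$, reads $g_i = \delta_{1i}/(1+\tilde{\lambda}_1)$). Because the perturbation matrix $B$ is constructed so that $\tilde{\Lambda}(p)$ is diagonal with pairwise distinct eigenvalues $\tilde{\lambda}_1 > \tilde{\lambda}_2 > \tilde{\lambda}_3 > \tilde{\lambda}_4$, Lemma 2.1 applies and yields
\[
\frac{\partial G}{\partial \tilde{\Lambda}^j_i}(\tilde{\Lambda}(p)) = \delta_{ij}\, g_i = \delta_{ij}\delta_{1i}\,\frac{1}{1+\tilde{\lambda}_1}.
\]

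For the derivative of $\tilde{\Lambda}^j_i = \Lambda^j_i + B^j_i$, note that $B$ is constant, so $\partial \tilde{\Lambda}^j_i/\partial z_k = \partial \Lambda^j_i/\partial z_k$. Using the holomorphic normal coordinates of Lemma 2.2 (so that $\omega_{i\bar j}(p)=\delta_{ij}$ and $\omega_{i\bar j, k}(p)=0$), Lemma 2.4 gives
\[
\frac{\partial \Lambda^j_i}{\partial z_k}(p) = (X_u)_{i\bar{j}, k}(p).
\]
Substituting both simplifications into the first-order equation collapses the double sum to the single term $i=j=1$, producing
\[
0 = -A u_k(p) + \frac{1}{1+\tilde{\lambda}_1}\,(X_u)_{1\bar{1}, k}(p),
\]
which is the claimed identity. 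There is no real obstacle here: the only subtlety is that one needs the eigenvalues of $\tilde{\Lambda}$ to be pairwise distinct at $p$ (to apply Lemma 2.1 without off-diagonal complications), and this is precisely the purpose of the perturbation introduced before \eqref{eq:4.20}.
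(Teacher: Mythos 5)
Your argument reproduces exactly the paper's proof of Lemma 4.1 (which Lemma 4.10 mirrors in four variables and which the paper explicitly references with ``We instantly get the following''): take $\partial/\partial z_k$ of $\tilde{U}$, apply the chain rule, and simplify using Lemmas 2.1, 2.2, and 2.4 at the diagonal matrix $\tilde{\Lambda}(p)$ with distinct eigenvalues. Your write-up is slightly more explicit about the intermediate simplifications, but the approach and all the ingredients are the same as the paper's.
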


We may define the following operator \hypertarget{dim 4 operator}{$\mathcal{L}_t$} by
\begin{align*}
\label{eq:4.22}
\mathcal{L}_t \coloneqq - \sum_{i, j, k} \frac{\partial F_t}{\partial \Lambda^k_i} ( {\Lambda}) \omega^{k \bar{j}}  \frac{\partial^2}{\partial z_i \partial \bar{z}_j}, \tag{4.22}
\end{align*}where $F_t = F_t( {\Lambda}) = f_t( {\lambda}_1,  {\lambda}_2,  {\lambda}_3, \lambda_4)$ is defined by $f_t( {\lambda}) =  \bigl( c_2(t) \sigma_2(\lambda)  - 2 c_1(t) \cot(\hat{\theta}) \sigma_1(\lambda) + c_0(t) \cc \bigr)/  {\lambda}_1   {\lambda}_2   {\lambda}_3 \lambda_4$. We immediately have the following Lemmas.



\hypertarget{L:4.11}{\begin{flemma}}
By taking $f_t(\lambda) =    (  {c_2(t) \sigma_2(\lambda) -2c_1(t) \ct \sigma_1(\lambda) + c_0(t) \cc } )/{\sigma_4(\lambda)}$ and $g(\lambda)=\log(  1 +\lambda_1)$, we have 
\begin{align*}
f_i &= \frac{- c_2 \sigma_2(\lambda_{;i}) + 2 c_1  \cot(\hat{\theta}) \sigma_1(\lambda_{;i}) - c_0  (3 \csc^2(\hat{\theta}) -4) }{ \lambda_1 \lambda_2 \lambda_3 \lambda_4 \lambda_i };     \\
f_{ij} &= \frac{ -c_2  (\lambda_i \sigma_1(\lambda_{;i}) + \lambda_l \sigma_1(\lambda_{;i, j}))  +2c_1 \cot(\hat{\theta}) (\lambda_i + \lambda_j) }{\lambda_1 \lambda_2 \lambda_3 \lambda_i \lambda_j} \\
&\kern2em  + \frac{ \bigl [  c_2 \sigma_2(\lambda) - 2c_1 \cot(\hat{\theta}) \sigma_1 (\lambda)   + c_0  (3 \csc^2(\hat{\theta}) -4)  \bigr ] (1 + \delta_{ij})}{\lambda_1 \lambda_2 \lambda_3 \lambda_4 \lambda_i \lambda_j}; \\
g_i &= \delta_{1i} \frac{1}{ 1 + \lambda_1}; \quad  g_{ij}=-\delta_{1i}\delta_{1j}\frac{1}{( 1 +\lambda_1)^2}.
\end{align*}Here, $\lambda_{;i}$ means we exclude $\lambda_i$ from $\lambda = \{ \lambda_1, \lambda_2, \lambda_3, \lambda_4\}$, $\lambda_{;i, j}$ means we exclude both $\lambda_i$ and $\lambda_j$ from $\lambda = \{ \lambda_1, \lambda_2, \lambda_3, \lambda_4\}$, and we denote $f_i \coloneqq  {\partial  f_t}/{\partial \lambda_i}, g_i \coloneqq  {\partial g}/{\partial \lambda_i}$, $(f_t)_{ij} \coloneqq  {\partial^2 f_t}/{\partial \lambda_i \partial \lambda_j}$, and $g_{ij} \coloneqq  {\partial^2 g}/{\partial \lambda_i \partial \lambda_j}$.
\end{flemma}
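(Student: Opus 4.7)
This lemma is a direct computational identity, with no conceptual obstacle beyond organizing the algebra. My plan is to write $f_t = N/D$ with
\[
N \coloneqq c_2 \sigma_2(\lambda) - 2c_1 \cot(\hat{\theta}) \sigma_1(\lambda) + c_0(3\csc^2(\hat{\theta}) - 4), \qquad D \coloneqq \sigma_4(\lambda) = \lambda_1\lambda_2\lambda_3\lambda_4,
\]
and apply the quotient rule, using the elementary symmetric function identities
\[
\sigma_k(\lambda) = \lambda_i \sigma_{k-1}(\lambda_{;i}) + \sigma_k(\lambda_{;i}), \qquad \frac{\partial \sigma_k}{\partial \lambda_i} = \sigma_{k-1}(\lambda_{;i}),
\]
together with $\partial^2 \sigma_k/(\partial \lambda_i \partial \lambda_j) = \sigma_{k-2}(\lambda_{;i,j})$ for $i \neq j$ and $0$ for $i = j$.

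For $f_i$, I differentiate to obtain $f_i = (N_i D - N D_i)/D^2$ with $N_i = c_2 \sigma_1(\lambda_{;i}) - 2c_1\cot(\hat{\theta})$ and $D_i = \sigma_3(\lambda_{;i}) = \sigma_4(\lambda)/\lambda_i$. Combining over the common denominator $\sigma_4(\lambda)\lambda_i$, the numerator is $\lambda_i N_i - N$. Substituting the identities $\lambda_i \sigma_1(\lambda_{;i}) = \sigma_2(\lambda) - \sigma_2(\lambda_{;i})$ and $\lambda_i = \sigma_1(\lambda) - \sigma_1(\lambda_{;i})$, the $\sigma_2(\lambda)$ and $\sigma_1(\lambda)$ terms cancel against the corresponding pieces of $N$, leaving exactly
\[
-c_2 \sigma_2(\lambda_{;i}) + 2c_1 \cot(\hat{\theta}) \sigma_1(\lambda_{;i}) - c_0\bigl(3\csc^2(\hat{\theta}) - 4\bigr),
\]
as claimed.

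For $f_{ij}$, I differentiate the expression for $f_i$ a second time using the quotient rule on the numerator (call it $M_i$) over $\sigma_4(\lambda)\lambda_i$. Writing $f_i = M_i/(\sigma_4(\lambda)\lambda_i)$, I get
\[
f_{ij} = \frac{(M_i)_j}{\sigma_4(\lambda)\lambda_i} - \frac{M_i \bigl[\sigma_3(\lambda_{;j})\lambda_i + \sigma_4(\lambda)\delta_{ij}\bigr]}{\sigma_4^2(\lambda)\lambda_i^2}.
\]
For $i \neq j$ the derivative $(M_i)_j$ produces $-c_2\sigma_1(\lambda_{;i,j}) + 2c_1\cot(\hat{\theta})$, and for $i = j$ it vanishes. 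I then multiply both terms by $\lambda_j/\lambda_j$ and combine over the common denominator $\sigma_4(\lambda)\lambda_i\lambda_j \cdot \lambda_1\lambda_2\lambda_3$ (using $\sigma_3(\lambda_{;j})\lambda_j = \sigma_4(\lambda)$ and $M_i = -\lambda_i f_i \sigma_4(\lambda)$). After expanding $\sigma_2(\lambda) = \lambda_i\lambda_j + \lambda_i\sigma_1(\lambda_{;i,j}) + \lambda_j\sigma_1(\lambda_{;i,j}) + \sigma_2(\lambda_{;i,j})$ and cancelling crossed terms, the $(1+\delta_{ij})$-factor emerges in the form stated; the residual symmetric combination $-c_2[\lambda_i\sigma_1(\lambda_{;i}) + \lambda_j \sigma_1(\lambda_{;i,j})] + 2c_1\cot(\hat{\theta})(\lambda_i+\lambda_j)$ is precisely the contribution from differentiating $N$ and from the $D_j$ factor paired with $N_i$. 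The formulas for $g_i$ and $g_{ij}$ are immediate from $g(\lambda) = \log(1+\lambda_1)$.

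The only delicate point is bookkeeping for the case $i = j$ versus $i \neq j$ in the second derivative, since $(M_i)_j$ drops a term when $i = j$ but the quotient-rule piece $-M_i \cdot \sigma_4(\lambda)\delta_{ij}/(\sigma_4^2(\lambda)\lambda_i^2)$ contributes exactly the extra $(1+\delta_{ij})$-weight that matches the stated formula; I would verify both cases explicitly to make sure the diagonal and off-diagonal entries agree.
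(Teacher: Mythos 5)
Your computation is correct and is exactly the routine verification the paper leaves to the reader (the lemma is stated without proof, as is its three-dimensional analogue, Lemma 4.2). The quotient-rule decomposition $f_t = N/D$, the identities $\sigma_k(\lambda) = \lambda_i\sigma_{k-1}(\lambda_{;i}) + \sigma_k(\lambda_{;i})$, $\partial\sigma_k/\partial\lambda_i = \sigma_{k-1}(\lambda_{;i})$, and the reduction of $\lambda_i N_i - N$ to the compact $\lambda_{;i}$-form are the intended route. One remark: the printed formula for $f_{ij}$ has two typos that you are implicitly correcting — the stray index $\lambda_l$ should read $\lambda_j$, and the denominator of the first fraction should contain $\lambda_4$ (i.e.\ it is $\sigma_4(\lambda)\lambda_i\lambda_j$, not $\lambda_1\lambda_2\lambda_3\lambda_i\lambda_j$); with those fixed, your computation and the stated formula agree, and for $i=j$ both reduce to the identity $f_{ii} = -2f_i/\lambda_i$ used later in equation (4.25).
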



\hypertarget{L:4.12}{\begin{flemma}}
If $ {   c_2^{3}} \taa / c_1^2 > h > 0$ and $c_0 \cc h  + 24 c_2^2  \cos^2(\theta_{h, c_1, c_2})\cos(2\theta_{h, c_1, c_2})$ $> 0$, where $\theta_{h, c_1, c_2} \coloneqq \arccos \bigl(   {- c_1 \ct h^{1/2} }\big/{c_2^{3/2} }  \bigr)\big/3  - {2\pi}/{3}$. Then for any point on the solution set $\{ f_t = h\}$, we have
\begin{align*}
-f_i =  \frac{c_2 \sigma_2(\lambda_{;i}) - 2 c_1  \cot(\hat{\theta}) \sigma_1(\lambda_{;i}) + c_0  (3 \csc^2(\hat{\theta}) -4) }{ \lambda_1 \lambda_2 \lambda_3 \lambda_4 \lambda_i }  > 0
\end{align*}for any $i \in \{1, 2, 3, 4\}$ at this point. Here $f_i = \partial f_t / \partial \lambda_i$, where $i \in \{1, 2, 3, 4\}$ and we specify the branch so that $ \arccos  ( \bullet ) \in  ( \pi, 3\pi/2  ]$.
\end{flemma}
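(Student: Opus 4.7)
The plan is to reduce the claim to a direct application of Theorem 2.1(e). First I would observe that any $\lambda$ on the solution set $\{f_t = h\}$ lies in the $C$-subsolution cone $\Upsilon_1(t,p) = \Upsilon^4_{3; 1, 0, -c_2/h, 2c_1\ct/h} \cap \Upsilon^4_{2; 1, 0, -c_2/h} \cap \Upsilon^4_{1;1,0}$, by Proposition 2.2 together with Lemma 3.5. In particular $\lambda_i > 0$ for every $i$, so positivity of $-f_i$ is equivalent to positivity of the numerator $c_2 \sigma_2(\lambda_{;i}) - 2c_1\ct\, \sigma_1(\lambda_{;i}) + c_0(3\cs - 4)$.

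Next, I would reinterpret positivity of this numerator for every $i \in \{1,2,3,4\}$ as saying $\lambda \in \Upsilon^4_{3; 0, c_2/h, -2c_1\ct/h, c_0(3\cs-4)/h}$, after dividing by $h$ and reindexing $\lambda_{;i}$ as $\lambda_s^{(3)}$ for a suitable $s \in S_4$. I would then apply Theorem 2.1(e) with the identifications $c = c_2/h$, $d = 2c_1\ct/h$, $e = c_0(3\cs-4)/h$. The verification of hypotheses is the bulk of the work: $c > 0$ is immediate; $d \geq 0$ holds because $c_1 \geq 0$ and $\ct > 0$ on $\hat{\theta} \in (\pi, 5\pi/4)$; the condition $2c^{3/2} > d$ rearranges to $c_2^{3/2} > c_1\ct\, h^{1/2}$, which is equivalent to the assumed bound $h < c_2^3\taa/c_1^2$; and since $-d/(2c^{3/2}) = -c_1\ct\, h^{1/2}/c_2^{3/2}$, the angle $\theta_{c,d}$ in Theorem 2.1 coincides with $\theta_{h,c_1,c_2}$ from the statement of the lemma. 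The last hypothesis $e > -24 c^2 \cos^2(\theta_{c,d})\cos(2\theta_{c,d})$ then translates exactly to the second assumed inequality, $c_0\cc h + 24 c_2^2 \cos^2(\theta_{h,c_1,c_2})\cos(2\theta_{h,c_1,c_2}) > 0$, after multiplying through by $h$.

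Because the argument is essentially a matching of notation, I do not expect any analytically hard step. The only delicate points are tracking the sign of $\ct$, pinning down the branch $\arccos \in (\pi, 3\pi/2]$ as in Theorem 2.1(e), and keeping the factor of $h$ consistent between the Positivstellensatz normalization and the formulation of Lemma 3.5/Lemma 4.12. Once this bookkeeping is settled, Theorem 2.1(e) directly yields $\lambda \in \Upsilon^4_{3; 0, c_2/h, -2c_1\ct/h, c_0(3\cs-4)/h}$, and therefore $-f_i > 0$ for every $i \in \{1,2,3,4\}$, completing the proof.
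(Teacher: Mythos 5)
Your proposal is correct and is essentially the same argument as the paper's: the paper also applies Theorem 2.1(e) with the identifications $c = c_2/h$, $d = 2c_1\ct/h$, $e = c_0\cc/h$, verifies $2c^{3/2} > d$ from the hypothesis $c_2^3\taa/c_1^2 > h$, and translates the remaining hypothesis into the assumed positivity of $c_0\cc h + 24c_2^2\cos^2(\theta_{h,c_1,c_2})\cos(2\theta_{h,c_1,c_2})$. You are slightly more explicit than the paper in invoking Proposition~\hyperlink{P:2.2}{2.2} and Lemma~\hyperlink{L:3.5}{3.5} to justify that points on the solution set lie in the relevant $\Upsilon$-cone, but the substance is identical.
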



\begin{proof}
By the Positivstellensatz Theorem~\hyperlink{T:2.1}{(e)}, the $\Upsilon$-cone $\Upsilon^4_{3; 1, 0, -c, d} \cap \Upsilon^4_{2; 1, 0, -c} \cap \Upsilon^4_{1; 1, 0}$ is contained in $\Upsilon^4_{3; 0, c, -d, e} \cap \Upsilon^4_{2; 1, 0, -c} \cap \Upsilon^4_{1; 1, 0}$, for $c > 0$, $d \geq 0$, $2 c^{3/2} > d$, and $e > -24 c^2 \cos^2(\theta_{c, d})\cos(2\theta_{c, d})$. Here $\theta_{c,d} \coloneqq   \arccos \bigl(   {-d}/{2c^{3/2}}  \bigr)/3  - {2\pi}/{3}$ and we specify the branch so that $\arccos  (  \bullet  ) \in  ( \pi, 3\pi/2  ]$. By letting $c = c_2/h$, $d = 2c_1 \ct/h$, and $e = c_0 \cc/h$. If $2 c^{3/2} > d$ and $e > -24 c^2 \cos^2(\theta_{c, d})\cos(2\theta_{c, d})$, then 
\begin{align*}
0< c \sigma_2(\lambda_{; i}) -d \sigma_1(\lambda_{;i}) + e = c_2  \sigma_2(\lambda_{; i}) /h -2 c_1 \ct \sigma_1(\lambda_{;i})/h + c_0 \cc/h 
\end{align*}for all $i \in \{1, 2, 3, 4\}$. By checking the quantity $2 c^{3/2} - d$, we get
\begin{align*}
2 c^{3/2} - d =  {2 c_2^{3/2}}{h^{-3/2}} -  {2c_1 \ct}{h^{-1}} =   {2}{h^{-3/2}} \bigl( c_2^{3/2}  - c_1 \ct h^{1/2} \bigr) > 0
\end{align*}by our hypothesis. In addition, for the quantity $e  + 24 c^2 \cos^2(\theta_{c, d})\cos(2\theta_{c, d})$, we have
\begin{align*}
e + 24 c^2 \cos^2(\theta_{c, d})\cos(2\theta_{c, d}) &=  {c_0 \cc}{h^{-1}}  + 24  {c_2^2 \cos^2(\theta_{c, d})\cos(2\theta_{c, d}) }{h^{-2}} \\
&=  {h^{-2}} \Bigl(  c_0 \cc h + 24 c_2^2 \cos^2(\theta_{h, c_1, c_2})\cos(2\theta_{h, c_1, c_2})    \Bigr), 
\end{align*}this quantity is positive, which finishes the proof.
\end{proof}

With Lemma~\hyperlink{L:4.12}{4.12}, if we assume $\bigl(c_2(t), c_1(t), c_0(t)\bigr)$ satisfy all \hyperlink{dim 4 cons}{4-dimensional four constraints}, then we get that the operator \hyperlink{dim 4 operator}{$\mathcal{L}_t $} is indeed an elliptic operator on the solution set $\{ f_t = h\}$. Here $ {   c_2^{3}} \taa / c_1^2 > h > 0$ and $c_0 \cc h  + 24 c_2^2  \cos^2(\theta_{h, c_1, c_2})\cos(2\theta_{h, c_1, c_2}) > 0$, where \[\theta_{h, c_1, c_2} \coloneqq \arccos \bigl(   {- c_1 \ct h^{1/2} }\big/{c_2^{3/2} }  \bigr)\big/3  - {2\pi}/{3}\]
and we specify the branch so that $ \arccos  (  \bullet ) \in  ( \pi, 3\pi/2  ]$.


\hypertarget{L:4.13}{\begin{flemma}}
If $ {   c_2^{3}} \taa / c_1^2 > h > 0$ and $c_0 \cc h  + 24 c_2^2  \cos^2(\theta_{h, c_1, c_2})\cos(2\theta_{h, c_1, c_2})$ $> 0$, then the solution set $\{ f_t = h\}$ is convex. 
\end{flemma}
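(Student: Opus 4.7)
\medskip

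\noindent\textbf{Proof proposal.} My plan is to adapt the argument of Lemma~\hyperlink{L:4.4}{4.4} to one higher dimension. I would fix a point $\lambda = (\lambda_1,\lambda_2,\lambda_3,\lambda_4)$ on $\{f_t = h\}$ and an arbitrary tangent vector $V = (V_1,V_2,V_3,V_4) \in T_\lambda \{f_t = h\}$, which by definition satisfies the linear constraint $\sum_i f_i V_i = 0$. My goal is to prove that the Hermitian quadratic form $Q(V) \coloneqq \sum_{i,j} f_{ij} V_i \overline{V_j}$ is non-negative, since this is equivalent to convexity of the level set (combined with the fact that $f_t$ has a definite sign for the gradient on the side where the $\Upsilon$-cone lies, as guaranteed by Lemma~\hyperlink{L:4.12}{4.12}).

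First, I would solve the tangency constraint for one coordinate, say $V_4 = -(f_1 V_1 + f_2 V_2 + f_3 V_3)/f_4$, which is legitimate because $f_4 \neq 0$ under our running hypotheses by Lemma~\hyperlink{L:4.12}{4.12}. Substituting into $Q(V)$ produces an unconstrained Hermitian quadratic form $\tilde{Q}$ in the three free variables $(V_1, V_2, V_3)$ whose $3 \times 3$ coefficient matrix $A = (A_{ij})$ has entries
\[
A_{ij} = f_{ij} + f_{44}\frac{f_i f_j}{f_4^2} - f_{i4}\frac{f_j}{f_4} - f_{j4}\frac{f_i}{f_4},\qquad 1 \le i,j \le 3.
\]
Using the explicit expressions for $f_i$ and $f_{ij}$ from Lemma~\hyperlink{L:4.11}{4.11}, I would expand and collect terms. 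By analogy with the dim~$3$ computation, I expect that an overall positive factor (a power of $\lambda_1\lambda_2\lambda_3\lambda_4$ together with the gradient normalizer $c_2\sigma_2(\lambda_{;4}) - 2c_1\cot(\hat\theta)\sigma_1(\lambda_{;4}) + c_0(3\csc^2(\hat\theta)-4)$) can be pulled out, leaving a quadratic form whose entries are polynomial in the remaining $\lambda_i$'s with coefficients built from $c_0, c_1, c_2$ and $\hat\theta$.

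Positive semidefiniteness of $A$ is then equivalent to non-negativity of its three principal minors. The crucial one is the full $3\times 3$ determinant; the $1\times 1$ and $2\times 2$ minors should follow from the dim~$3$ analysis (Lemma~\hyperlink{L:4.4}{4.4}) applied to the three-variable subslices, which is already governed by Theorem~\hyperlink{T:2.1}{2.1(e)}. For the top minor, I expect the computation to collapse (after using $f_t(\lambda) = h$ and the identity $c_2\sigma_2 - 2c_1\cot(\hat\theta)\sigma_1 + c_0(3\csc^2(\hat\theta)-4) = h\,\sigma_4(\lambda)$) into a symmetric expression of the same shape as the one appearing in Theorem~\hyperlink{T:2.1}{2.1(f)}. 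Concretely, setting $c = c_2/h$, $d = 2c_1\cot(\hat\theta)/h$, $e = c_0(3\csc^2(\hat\theta)-4)/h$, the needed inequality should match precisely the Positivstellensatz condition $e > -24c^2\cos^2(\theta_{c,d})\cos(2\theta_{c,d})$, which after multiplying through by $h^2$ becomes exactly our hypothesis $c_0(3\csc^2(\hat\theta)-4)h + 24 c_2^2\cos^2(\theta_{h,c_1,c_2})\cos(2\theta_{h,c_1,c_2}) > 0$, together with $c_2^{3/2} > c_1\cot(\hat\theta)h^{1/2}/1$ coming from the $C$-subsolution cone. I would verify the matching of $\theta_{c,d}$ with $\theta_{h,c_1,c_2}$ using the same branch choice as in Theorem~\hyperlink{T:2.1}{2.1(f)}.

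The main obstacle I anticipate is the purely algebraic one: carrying out the $3\times 3$ determinant expansion in step two, in a form where the Positivstellensatz constants $(c,d,e)$ can be read off cleanly. In the dim~$3$ case the analogous computation produced a miraculous factorization $\bigl(2c_1^{3/2}\cos(\theta_{h,c_0,c_1}) + h^{1/2}c_0\tan(\hat\theta)\bigr)^2$; I expect an analogous, though bulkier, identity here, and the bookkeeping with $\sigma_2(\lambda_{;i,j})$ and the mixed terms $f_{i4}f_{j4}$ will be the delicate part. If a direct expansion becomes unwieldy, a possible simplification would be to diagonalize the problem by choosing $V$ so that only two of $V_1,V_2,V_3$ are nonzero at a time, reducing the positivity check to the three $2\times 2$ principal submatrices, each of which is a dim~$3$-type problem and can be handled by invoking Theorem~\hyperlink{T:2.1}{2.1(e)} directly.
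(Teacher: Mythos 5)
Your high-level setup --- parametrize the tangent space by eliminating $V_4$ and study the resulting $3\times 3$ Hermitian form $A = (A_{ij})_{1\le i,j\le 3}$ --- matches the opening of the paper's proof. But the argument you propose for deducing positive semidefiniteness is not what the paper does, and one of your two routes has a real gap.

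Your fallback idea --- "reduce the positivity check to the three $2\times 2$ principal submatrices" --- is not sufficient. Positivity of every $2\times 2$ principal minor does not imply a $3\times 3$ Hermitian matrix is PSD; you would also need the $3\times 3$ determinant to be non-negative (Sylvester's criterion for PSD requires \emph{all} principal minors, not just the $2\times 2$ ones), and indeed there are matrices whose $2\times 2$ principal minors are all positive but which are indefinite. So "choosing $V$ so that only two of $V_1,V_2,V_3$ are nonzero at a time" does not cover the general tangent vector. The paper avoids this pitfall by a different pairwise decomposition: it shows, for each pair $i<j$, the \emph{amplified} inequality
\begin{align*}
A_{ii}|V_i|^2 + A_{jj}|V_j|^2 + 2\,A_{ij}\bigl(V_i\overline{V_j}+\overline{V_i}V_j\bigr) \ge 0,
\end{align*}
whose cross term carries an extra factor of $2$ compared with the plain $2\times 2$ form. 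Summing over the three pairs then produces $2\,Q(V)$ on the left, because each diagonal entry appears twice. The discriminant condition for this amplified form is $4A_{ij}^2 \le A_{ii}A_{jj}$, which is strictly stronger than PSD of the $2\times 2$ submatrix; the paper verifies it by showing the product $r_1(\lambda)\,r_2(\lambda)$ in the discriminant is non-negative.

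Your primary route (all principal minors up to and including the $3\times 3$ determinant) is logically sound in principle, but you have not carried it out, and your prediction that the determinant will "collapse into a symmetric expression of the same shape" as Theorem~\hyperlink{T:2.1}{2.1}(e)--(f) does not reflect what actually happens. In the paper, even the $2\times 2$-level estimate already requires two separate non-trivial steps: the factor $r_1(\lambda)$ is disposed of by completing a square (giving $c_2^3 - h c_1^2\cot^2(\hat\theta) > 0$ from the first hypothesis), but the factor $r_2(\lambda)$ does \emph{not} reduce to a clean square. Its non-negativity requires (i) an auxiliary inequality $A \ge 2h^{-1/2}\sqrt{c_2}\,B$ where $A = \sigma_2(\lambda_{;3}) - 3\lambda_4^2$ and $B = \lambda_1 + \lambda_2 - 2\lambda_4$, proved by an extremal analysis on the constraint set; and (ii) a comparison trick in which the lower bound $-24c^2\cos^2(\theta_{c,d})\cos(2\theta_{c,d})$ from the Positivstellensatz (attained on the defining region of Figure~\ref{fig:2.1}) is shown to dominate the value $-5c^2 + 4c^{1/2}d$ obtained by evaluating the test function at a point \emph{outside} that region. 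Neither of these steps is a "miraculous factorization" analogous to the perfect square $\bigl(2c_1^{3/2}\cos(\theta) + h^{1/2}c_0\tan(\hat\theta)\bigr)^2$ that closed the $n=3$ case. As written, your proposal identifies the right objects but omits the two genuinely hard steps of the dimension-four argument and rests its fallback on an insufficient criterion.
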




\begin{proof}
For convenience, we assume $\lambda_4$ is the smallest eigenvalue and we drop the assumption that $\lambda_1 \geq \lambda_2 \geq \lambda_3$. Let $V = (V_1, V_2, V_3, V_4) \in T_{ {\lambda}} \bigl\{ f_t = h   \bigr\}$ be a tangent vector, which gives, $\sum_i f_i V_i =0$. Then we are trying to show that the following quantity
\begin{align*}
\label{eq:4.23}
\sum_{i, j} f_{ij} V_i  {V}_{\bar{j}}  \tag{4.23}
\end{align*}is positive. First, since $V$ is a tangent vector, we can write
$V_4 = - \bigl({f_1 V_1 + f_2 V_2 + f_3 V_3 }\bigr) \big/{f_4}$. By plugging in quantity (\ref{eq:4.23}), we obtain 
\begin{align*}
\label{eq:4.24}
\sum_{i, j} f_{ij} V_i V_{\bar{j}} &= f_{11} |V_1|^2 + f_{22} |V_2|^2  + f_{33} |V_3|^2  + f_{44} |V_4|^2  + \sum_{1 \leq i< j \leq 4} f_{ij} (V_i V_{\bar{j}} + V_{\bar{i}} V_j) \tag{4.24} \\
&= \Bigl(  f_{11} + f_{44} \frac{f_{1}^2}{f_{4}^2}  -2 f_{14} \frac{f_{1}}{f_{4}}  \Bigr) |V_1|^2  + \Bigl(  f_{22} + f_{44} \frac{f_{2}^2}{f_{4}^2}  -2 f_{24} \frac{f_{2}}{f_{4}}  \Bigr) |V_2|^2 \\
&\kern2em + \Bigl(  f_{33} + f_{44} \frac{f_{3}^2}{f_{4}^2}  -2 f_{34} \frac{f_{3}}{f_{4}}  \Bigr) |V_3|^2 \\
&\kern2em + \Bigl (  f_{12}  + f_{44} \frac{f_{1} f_{2}}{f_{4}^2}  - f_{14} \frac{f_{2}}{f_{4}}  - f_{24} \frac{f_{1}}{f_{4}}  \Bigr) \bigl( V_1 V_{\bar{2}} + V_{\bar{1}} V_2  \bigr) \\
&\kern2em + \Bigl (  f_{13}  + f_{44} \frac{f_{1} f_{3}}{f_{4}^2}  - f_{14} \frac{f_{3}}{f_{4}}  - f_{34} \frac{f_{1}}{f_{4}}  \Bigr) \bigl( V_1 V_{\bar{3}} + V_{\bar{1}} V_3  \bigr) \\
&\kern2em + \Bigl (  f_{23}  + f_{44} \frac{f_{2} f_{3}}{f_{4}^2}  - f_{24} \frac{f_{3}}{f_{4}}  - f_{34} \frac{f_{2}}{f_{4}}  \Bigr) \bigl( V_2 V_{\bar{3}} + V_{\bar{2}} V_3  \bigr). 
\end{align*}If we can show that
\begin{align*}
&\kern-1em \Bigl(  f_{ii} + f_{44} \frac{f_{i}^2}{f_{4}^2}  -2 f_{i4} \frac{f_{i}}{f_{4}}  \Bigr) |V_i|^2 + \Bigl(  f_{jj} + f_{44} \frac{f_{j}^2}{f_{4}^2}  -2 f_{j4} \frac{f_{j}}{f_{4}}  \Bigr) |V_j|^2 \\
&\geq -2 \Bigl (  f_{ij}  + f_{44} \frac{f_{i} f_{j}}{f_{4}^2}  - f_{i4} \frac{f_{j}}{f_{4}}  - f_{j4} \frac{f_{i}}{f_{4}}  \Bigr) \bigl( V_i V_{\bar{j}} + V_{\bar{i}} V_j  \bigr), 
\end{align*}for all $1 \leq i < j \leq 3$, then by summing over all pairs $(i, j)$, quantity (\ref{eq:4.24}) will be non-negative. Without loss of generality, we consider the case when $i = 1$ and $j = 2$. First, we have the following observation
\begin{align*}
\label{eq:4.25}
f_{ii} = \frac{2 c_2 \sigma_2(\lambda_{;i}) -4 c_1 \ct \sigma_1(\lambda_{;i}) + c_0 \cc }{\lambda_1 \lambda_2 \lambda_3  \lambda_4 \lambda_i^2} = \frac{-2}{\lambda_i} f_{i}. \tag{4.25}
\end{align*}

Then the coefficient of $|V_1|^2$ will be
\begin{align*}
f_{11} + f_{44} \frac{f_{1}^2}{f_{4}^2}  -2 f_{14} \frac{f_{1}}{f_{4}}  
= &\frac{1}{f_4^2} \bigl( f_{11} f_4^2 + f_{44}  f_{1}^2   -2 f_{14} f_{1} f_{4} \bigr) = \frac{1}{f_4^2} \Bigl( \frac{-2}{\lambda_1} f_1 f_4^2 +  \frac{-2}{\lambda_4} f_{4}  f_{1}^2   -2 f_{14} f_{1} f_{4} \Bigr) \\
= &\frac{-2 f_1}{ \lambda_1 \lambda_4 f_4} \Bigl( \lambda_4 f_4 +  \lambda_1  f_{1}   + \lambda_1 \lambda_4 f_{14}   \Bigr) \\
= &\frac{2 f_1}{ \lambda_1^2 \lambda_2 \lambda_3 \lambda_4^2 f_4}  \Bigl( c_2 \sigma_2(\lambda) - c_2 \lambda_1 \lambda_4  - 2 c_1 \ct \sigma_1(\lambda) + c_0 \cc    \Bigr)  \\
> &\frac{2 f_1}{ \lambda_1^2 \lambda_2 \lambda_3 \lambda_4^2  f_4}  \Bigl( c_2 (\lambda_2 \lambda_4 + \lambda_3 \lambda_4)  - 2 c_1 \ct \lambda_4    \Bigr)  \\
= &\frac{2 f_1}{ \lambda_1^2 \lambda_2 \lambda_3 \lambda_4  f_4}  \Bigl( c_2 (\lambda_2   + \lambda_3  )  - 2 c_1 \ct      \Bigr) > 0.
\end{align*}
The inequality on the second to last line is due to Lemma~\hyperlink{L:4.12}{4.12} and the last inequality is by Lemma~\hyperlink{L:3.4}{3.4}. 
%
%
%
%
Similarly, the coefficient of $|V_2|^2$ will also be positive. By checking the discriminant of the following quadratic form 
\begin{align*}
\label{eq:4.26}
&\kern-1em \Bigl(  f_{11} + f_{44} \frac{f_{1}^2}{f_{4}^2}  -2 f_{14} \frac{f_{1}}{f_{4}}  \Bigr) |V_1|^2 + \Bigl(  f_{22} + f_{44} \frac{f_{2}^2}{f_{4}^2}  -2 f_{24} \frac{f_{2}}{f_{4}}  \Bigr) |V_2|^2 \tag{4.26} \\
&\kern2em +2 \Bigl (  f_{12}  + f_{44} \frac{f_{1} f_{2}}{f_{4}^2}  - f_{14} \frac{f_{2}}{f_{4}}  - f_{24} \frac{f_{1}}{f_{4}}  \Bigr) \bigl( V_1 V_{\bar{2}} + V_{\bar{1}} V_2  \bigr)  \\
&=\frac{2 f_1}{ \lambda_1^2 \lambda_2 \lambda_3 \lambda_4^2 f_4}  \Bigl( c_2 \sigma_2(\lambda) - c_2 \lambda_1 \lambda_4  - 2 c_1 \ct \sigma_1(\lambda) + c_0 \cc    \Bigr)  |V_1|^2 \\
&\kern2em + \frac{2 f_2}{ \lambda_1 \lambda_2^2 \lambda_3 \lambda_4^2 f_4}  \Bigl( c_2 \sigma_2(\lambda) - c_2 \lambda_2 \lambda_4  - 2 c_1 \ct \sigma_1(\lambda) + c_0 \cc    \Bigr)  |V_2|^2 \\
&\kern2em + \frac{2}{\lambda_4 f_4} \Bigl (   \lambda_4 f_{12} f_4   -2{f_{1} f_{2}}  - \lambda_4 f_{14}  {f_{2}} - \lambda_4 f_{24}  {f_{1}}  \Bigr) \bigl( V_1 V_{\bar{2}} + V_{\bar{1}} V_2  \bigr),  
\end{align*}if the discriminant is non-positive, then the quadratic form will be non-negative. To save spaces, we do not expand the discriminant, the discriminant will be the following 
\begin{align*}
\label{eq:4.27}
&\kern-1em \biggl ( \frac{2}{\lambda_4 f_4} \Bigl (   \lambda_4 f_{12} f_4   -2{f_{1} f_{2}}  - \lambda_4 f_{14}  {f_{2}} - \lambda_4 f_{24}  {f_{1}}  \Bigr) \biggr)^2 \tag{4.27}  \\
&\kern2em - \frac{2 f_1}{ \lambda_1^2 \lambda_2 \lambda_3 \lambda_4^2 f_4}  \Bigl( c_2 \sigma_2(\lambda) - c_2 \lambda_1 \lambda_4  - 2 c_1 \ct \sigma_1(\lambda) + c_0 \cc    \Bigr)    \\
&\kern2em   \times \frac{2 f_2}{ \lambda_1 \lambda_2^2 \lambda_3 \lambda_4^2 f_4}  \Bigl( c_2 \sigma_2(\lambda) - c_2 \lambda_2 \lambda_4  - 2 c_1 \ct \sigma_1(\lambda) + c_0 \cc    \Bigr) \\
&= - \frac{4}{\lambda_1^6 \lambda_2^6 \lambda_3^4 \lambda_4^6 f_4^2 } r_1(\lambda) \cdot r_2(\lambda),
\end{align*}where 
\begin{align*}
\label{eq:4.28}
r_1(\lambda) &= c_2^2 \bigl( \lambda_3 \sigma_2(\lambda) + \lambda_1 \lambda_2 \lambda_4 \bigr)  - 2c_1 c_2 \ct \bigl( \lambda_3 \sigma_1(\lambda) + \sigma_2(\lambda)    \bigr) + 4 c_1^2 \cot^2\bigl(\hat{\theta}\bigr) \sigma_1(\lambda) \tag{4.28} \\
&\kern2em  + c_0c_2 \cc \lambda_3 - 2c_0 c_1 \ct \cc
\end{align*}and
\begin{align*}
\label{eq:4.29}
 r_2(\lambda) &= c_2^2 \bigl (  \lambda_3  (\sigma_2(\lambda_{;3}) -3 \lambda_4^2   ) \sigma_2(\lambda) + \lambda_1 \lambda_2 \lambda_4 (\sigma_2(\lambda) -3\lambda_4(\lambda_3 + \lambda_4)  )  \bigr ) \tag{4.29} \\
&\kern2em +4 c_1^2 \cot^2 (\hat{\theta} ) \sigma_1(\lambda)     (  \sigma_2(\lambda) -3 \lambda_4(\lambda_3 + \lambda_4)    )   + c_0^2 \cc^2   (\lambda_1 + \lambda_2 - 2 \lambda_4)     \\
&\kern2em- 2 c_1 c_2 \ct \Bigl(  \sigma_2(\lambda)    \bigl (  \sigma_2(\lambda) -3 \lambda_4(\lambda_3 + \lambda_4)  \bigr  )  + \sigma_1(\lambda) \lambda_3 \bigl (\sigma_2(\lambda_{;3}) -3 \lambda_4^2 \bigr  ) \\ 
&\kern22em + \lambda_1 \lambda_2 \lambda_4 (\lambda_1 + \lambda_2 - 2 \lambda_4) \Bigr) \\
&\kern2em +  c_0 c_2 \cc \bigl( \sigma_2(\lambda)(\lambda_1 + \lambda_2 -2 \lambda_4) + \lambda_3 (\sigma_2(\lambda_{;3}) -3 \lambda_4^2   )   \bigr) \\
&\kern2em - 2c_0 c_1 \ct \cc  \bigl(  \sigma_1(\lambda)(\lambda_1 + \lambda_2 - 2\lambda_4) + \sigma_2(\lambda) -3\lambda_4(\lambda_3+\lambda_4)     \bigr). 
\end{align*}We use the equation $f_t = h$ to simplify these expressions, for $r_1(\lambda)$, we have

\begin{align*}
\label{eq:4.30}
r_1(\lambda) 
&= hc_2   \lambda_3   \lambda_1 \lambda_2 \lambda_3 \lambda_4 - 2h c_1 \ct   \lambda_1 \lambda_2 \lambda_3 \lambda_4  + c_2^2  \lambda_1 \lambda_2 \lambda_4 \tag{4.30}\\
&= \lambda_1 \lambda_2 \lambda_4 \bigl ( h c_2 \lambda_3^2  -2 h c_1   \ct \lambda_3 + c_2^2    \bigr) \\
&= \lambda_1 \lambda_2 \lambda_4 \Bigl( h c_2 \bigl( \lambda_3 -  {c_1 \ct}/{c_2} \bigr)^2 +    \bigl({c_2^3 - h c_1^2 \ctt }\bigr)/{c_2}       \Bigr) > 0.
\end{align*}In addition, for $r_2(\lambda)$, by using the equation $f_t = h$ several times, we find

\begin{align*}
\label{eq:4.31}
&\kern-1em r_2(\lambda) \tag{4.31}  
 = c_2^2         \bigl (\sigma_2(\lambda_{;3}) -3 \lambda_4^2  \bigr  )      \lambda_1 \lambda_2 \lambda_4 +  h c_2     \lambda_3 \bigl (\sigma_2(\lambda_{;3}) -3 \lambda_4^2  \bigr  )    \lambda_1 \lambda_2 \lambda_3 \lambda_4 \\ 
&  \kern2em -2 h c_1 \ct    \bigl (\sigma_2(\lambda_{;3}) -3 \lambda_4^2  \bigr  )   \lambda_1 \lambda_2 \lambda_3 \lambda_4  + c_2^2       \lambda_3 (\lambda_1 + \lambda_2 -2\lambda_4)      \lambda_1 \lambda_2 \lambda_4    \\
& \kern2em     -2c_1 c_2          \ct   (\lambda_1 + \lambda_2 - 2 \lambda_4)    \lambda_1 \lambda_2 \lambda_4   -2 h c_1 \ct     \lambda_3 (\lambda_1 + \lambda_2 -2\lambda_4)    \lambda_1 \lambda_2 \lambda_3 \lambda_4 \\
&  \kern2em      + h c_0 \cc (\lambda_1 + \lambda_2 -2\lambda_4)\lambda_1 \lambda_2 \lambda_3 \lambda_4 \\
&= \lambda_1 \lambda_2 \lambda_4 \Bigl[  \bigl (\sigma_2(\lambda_{;3}) -3 \lambda_4^2  \bigr  ) \bigl(   h c_2 \lambda_3^2   -2 h c_1 \ct \lambda_3 + c_2^2  \bigr )   \\
&\kern5em   +   (\lambda_1 + \lambda_2 - 2\lambda_4) \bigl(    -2 h c_1 \ct \lambda_3^2 +  c_2^2       \lambda_3+  h c_0 \cc \lambda_3 \\ 
&\kern28em -2c_1 c_2 \ct  \bigr)  \Bigr] \\
&= \lambda_1 \lambda_2 \lambda_4 \Bigl[  A \bigl(   h c_2 \lambda_3^2   -2 h c_1 \ct \lambda_3 + c_2^2  \bigr )   \\
&\kern5em   +  B \bigl(    -2 h c_1 \ct \lambda_3^2 +  c_2^2       \lambda_3+  h c_0 \cc \lambda_3  -2c_1 c_2 \ct  \bigr)  \Bigr],
\end{align*}
where we denote $A = \sigma_2(\lambda_{;3}) -3 \lambda_4^2$ and $B= \lambda_1 + \lambda_2 - 2\lambda_4$. If we can show that the following quantity is always non-negative, then quantity (\ref{eq:4.31}) will also be non-negative
\begin{align*}   
\label{eq:4.32}
\kern1em
&\kern-1em \cc c_0  +  \Bigl(  \frac{c_2^2  }{h } -\frac{2  {A}c_1 \ct}{B} \Bigr) + \Bigl(  \frac{{A}c_2}{B }  -  2c_1 \ct     \Bigr) \lambda_3  +  \frac{  {A}{B^{-1}}    {     c_2^2} - {2c_1c_2 \ct}      }{h \lambda_3} \tag{4.32} \\
&= \cc c_0  +  \Bigl(  \frac{c_2^2  }{h } -\frac{2  {A}c_1 \ct}{B} \Bigr) + \Bigl(  \frac{{A}c_2}{B } - 2c_1 \ct    \Bigr) \bigl( \lambda_3  + \frac{c_2}{h \lambda_3} \bigr).
\end{align*}

We claim that $A \geq 2 h^{-1/2} c_2^{1/2}  B$. To prove this claim, for $k \geq 0$, define
\begin{align*}
h_k (\tilde{\lambda}_1, \tilde{\lambda}_2, \lambda_4) &=  \sigma_2(\lambda_{;3}) -3 \lambda_4^2 - k (\lambda_1 + \lambda_2 -2 \lambda_4) \\
&= \lambda_1 \lambda_2 + \lambda_1 \lambda_4 + \lambda_2 \lambda_4 -3 \lambda_4^2 -k(\lambda_1 + \lambda_2 -2 \lambda_4) \\
&= \tilde{\lambda}_1  \tilde{\lambda}_2 - 4 \lambda_4^2 -k (\tilde{\lambda}_1 + \tilde{\lambda}_2 -4 \lambda_4),
\end{align*}where we denote $\tilde{\lambda}_1 = \lambda_1 + \lambda_4$ and $\tilde{\lambda}_2 = \lambda_2 + \lambda_4$. They satisfy the following constraints 
\begin{align*}
\tilde{\lambda}_1 &= \lambda_1 + \lambda_4 \geq 2 \sqrt{\lambda_1 \lambda_4} > 2 h^{-1/2} \sqrt{c_2};  \\
\tilde{\lambda}_2 &= \lambda_1 + \lambda_4 \geq 2 \sqrt{\lambda_2 \lambda_4} > 2 h^{-1/2} \sqrt{c_2};  \\
 \lambda_4 &<  h^{-1/2} \sqrt{c_2}.
\end{align*}By taking the partial derivatives of $h_k$, we get
\begin{align*}
\frac{\partial h_k}{\partial \tilde{\lambda}_1} = \tilde{\lambda}_2 - k; \quad \frac{\partial h_k}{\partial \tilde{\lambda}_2} = \tilde{\lambda}_1 - k; \quad \frac{\partial h_k}{\partial  {\lambda}_4} = -8 \lambda_4 +4k.
\end{align*}So the infimum happens when $\tilde{\lambda}_1 = \tilde{\lambda}_2 = \max\{k, 2 h^{-1/2} \sqrt{c_2}  \}$ and $\lambda_4 = 0$ or $\lambda_4 = h^{-1/2} \sqrt{c_2} $. If $k > 2 h^{-1/2} \sqrt{c_2} $, then we may check that
\begin{align*}
h_k(k, k, 0) = -k^2 < 0;\quad h_k(k, k, h^{-1/2} \sqrt{c_2} ) = - \bigl(k - 2 h^{-1/2} \sqrt{c_2}  \bigr)^2 < 0.
\end{align*}On the other hand, if $k \leq 2 \sqrt{c_2} h^{-1/2}$, then we get
\begin{align*}
h_k\bigl(2 h^{-1/2} \sqrt{c_2}, 2 h^{-1/2} \sqrt{c_2}, 0\bigr) &=   4h^{-1} c_2  -  2k h^{-1/2}\sqrt{c_2}  \geq 0;    \\
h_k\bigl(2  h^{-1/2} \sqrt{c_2}, 2  h^{-1/2} \sqrt{c_2},  h^{-1/2}  \sqrt{c_2} \bigr) &= 0.
\end{align*}So by the arguments above, we always have $A - 2 \sqrt{c_2} h^{-1/2} B \geq 0$, which proves the claim. Thus, by the claim, quantity (\ref{eq:4.32}) becomes 
\begin{align*}
&\kern-1em \cc c_0  +  \Bigl(  \frac{c_2^2  }{h } -\frac{2  {A}c_1 \ct}{B} \Bigr) + \Bigl(  \frac{{A}c_2}{B } - 2c_1 \ct    \Bigr) \bigl( \lambda_3  + \frac{c_2}{h \lambda_3} \bigr) \\
&\geq \cc c_0  + \Bigl( \frac{c_2^2  }{h } - \frac{2  {A}c_1 \ct }{B} \Bigr) + 2 h^{-1/2} {c_2}^{1/2}   \Bigl(  \frac{{A}c_2}{B } - 2c_1 \ct    \Bigr) \\
&=  \cc c_0  + h^{-1} c_2^2  - 4 h^{-1/2} c_1  {c_2}^{1/2}  \ct + 2 \bigl( h^{-1/2} c_2^{3/2}  - c_1 \ct  \bigr) \frac{A}{B}  \\
&\geq  \cc c_0  +  5 h^{-1} c_2^2  - 8 h^{-1/2} c_1  {c_2}^{1/2}  \ct.
\end{align*}Then by our hypothesis, we obtain
\begin{align*}
\label{eq:4.33}
&\kern-1em \cc c_0  +  5 h^{-1} c_2^2  - 8 h^{-1/2} c_1  {c_2}^{1/2}  \ct \tag{4.33} \\
&> -24 h^{-1} c_2^2   \cos^2(\theta_{h, c_1, c_2})\cos(2\theta_{h, c_1, c_2})  +  5 h^{-1} c_2^2   - 8 h^{-1/2}  c_1  {c_2}^{1/2} \ct.
\end{align*}
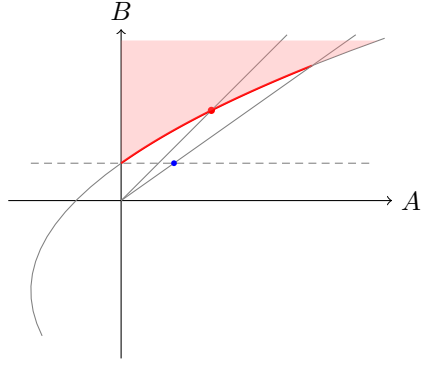
\begin{figure}
\centering
\begin{tikzpicture}[scale=0.6]
  \draw[->] (-2.5,0) -- (6,0) node[right] {$A$};
  \draw[->] (0,-3.5) -- (0,3.8) node[above] {$B$};
  \draw[color=gray,domain=-3:3.6]    plot ({(\x+2)*(\x+2)/4 -2},\x)             ;
  \draw[color=red,thick,domain={2*sqrt(2)-2}:{2*sqrt(2)-2+sqrt(16-8*sqrt(2))},samples=101]    plot ({(\x+2)*(\x+2)/4 -2},\x)             ;
  \draw[color=gray,domain=0:5.2]    plot (\x,{\x/sqrt(2)})             ;
  \draw[color=gray,domain=0:{5.2/sqrt(2)}]    plot (\x,{\x})             ;
  \draw[color=gray,densely dashed,domain=-2:5.5] plot (\x,{2*sqrt(2)-2})             ;
  \fill[blue] ({4-2*sqrt(2)},{2*sqrt(2)-2}) circle (0.064cm);
  \fill[red] (2,{2}) circle (0.08cm);
\begin{scope}
    \fill[color=red!50,opacity=0.3,thick,domain={2*sqrt(2)-2}:3.55,samples=101]
    (0, 3.55) -- plot ({(\x+2)*(\x+2)/4 -2},\x) ;
  \end{scope}
\end{tikzpicture}
\caption{Infimum on the defining region and another smaller value}
\label{fig:4.1}
\end{figure}

A clever way is to consider the test function $-c^2 + c^{-1}d^2 - c A + d B - c^2  {B^2}/{A}$ in the proof of \hyperlink{T:2.1}{Positivstellensatz Theorem}, where $c = c_2/h$ and $d = 2c_1 \ct/h$. By taking a pair $(A, B)$ outside the defining region, which is the blue dot in Figure \ref{fig:4.1}, we will get a value smaller than the infimum on the defining region. Here, by picking $A = c^{1/2} B$ and $B = 2c^{1/2} - c^{-1}d$, we get
\begin{align*}
-c^2 + c^{-1}d^2 - c A + d B - c^2  \frac{B^2}{A} &= -c^2 + c^{-1}d^2 - ( 2c^{3/2} -d)B = -c^2 + c^{-1}d^2 - c^{-1} ( 2c^{3/2} -d)^2 \\
&= -5c^2 + 4 c^{1/2}d = -5 h^{-2}c_2^2  + 8h^{-3/2} c_1 c_2^{1/2} \ct.
\end{align*}On the other hand, by the Positivstellensatz Theorem~\hyperlink{T:2.1}{(e)}, the infimum on the defining region will be $-24 c_2^2 h^{-1}  \cos^2(\theta_{h, c_1, c_2})\cos(2\theta_{h, c_1, c_2})$, which will be greater than $ -5 h^{-2}c_2^2  + 8h^{-3/2} c_1 c_2^{1/2} \ct$. So quantity (\ref{eq:4.33}) will always be non-negative, which implies quantity (\ref{eq:4.32}) will always be non-negative, which finishes the proof.
\end{proof}

Then, by taking the first and second derivatives of equation (\ref{eq:4.19}), we have the following Lemma. The proof should be straightforward, we consider Lemma~\hyperlink{L:2.1}{2.1}, Lemma~\hyperlink{L:2.2}{2.2}, Lemma~\hyperlink{L:2.3}{2.3}, Lemma~\hyperlink{L:2.4}{2.4}, and Lemma~\hyperlink{L:4.11}{4.11}. 

\hypertarget{L:4.14}{\begin{flemma}}
Let $F_t(  {\Lambda} )= {h}(p)$, then we have
\begin{align*}
 \frac{\partial {h}}{\partial  {z}_k}  = \sum_{i ,j}  \frac{\partial F_t (   {\Lambda}     ) }{\partial  {\Lambda}^j_i}    \frac{\partial  {\Lambda}^j_i }{\partial  {z}_k}; \quad  \frac{\partial^2 {h}}{\partial z_k \partial \bar{z}_k} =  \sum_{i ,j}  \frac{\partial^2 F_t (   {\Lambda}     ) }{\partial \Lambda^j_i \partial \Lambda_r^s}    \frac{\partial  {\Lambda}_i^j }{\partial \bar{z}_k} \frac{\partial  {\Lambda}_r^s}{\partial z_k} + \sum_{i ,j}  \frac{\partial F_t (   {\Lambda}     )}{\partial \Lambda_i^j }    \frac{\partial^2  {\Lambda}_i^j }{\partial z_k \partial \bar{z}_k}.
\end{align*}In particular, at the maximum point $p \in M$ of $\tilde{U}$, we have
\begin{align*}
\label{eq:4.34}
h_k &=  \sum_i f_i   (  X_u     )_{i \bar{i},  {k}};     \tag{4.34}   \\
\label{eq:4.35}
h_{k \bar{k}} &= \sum_{i, j}  f_{ij}   (  X_u     )_{i \bar{i}, \bar{k}}    (  X_u     )_{j \bar{j}, {k}} + \sum_{i \neq j}  \frac{f_i - f_j}{\lambda_i - \lambda_j}    | (X_u)_{j \bar{i},k}    |^2     +  \sum_i \Bigl( f_i  ( X_u   )_{i \bar{i},k \bar{k}} -f_i \lambda_i \omega_{i \bar{i},k \bar{k}} \Bigr) \tag{4.35}  \\
&=\sum_{i, j}  f_{ij}   (  X_u     )_{i \bar{i}, \bar{k}}    (  X_u     )_{j \bar{j}, {k}}  + \sum_{i \neq j} \Bigl (  \frac{h}{\lambda_i \lambda_j} - \frac{ c_2(t)   }{\lambda_1 \lambda_2 \lambda_3 \lambda_4  } \Bigr) | (X_u)_{j \bar{i},k}    |^2   \\
&\kern2em +  \sum_i \Bigl( f_i  ( X_u   )_{i \bar{i},k \bar{k}} -f_i  {\lambda}_i \omega_{i \bar{i},k \bar{k}} \Bigr) .  
\end{align*}
\end{flemma}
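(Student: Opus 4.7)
The plan is to derive both identities by a direct application of the chain rule to $h = F_t(\Lambda)$, together with Lemma~\hyperlink{L:2.1}{2.1}'s formulas for the eigenvalue derivatives of a matrix function and Lemma~\hyperlink{L:2.4}{2.4}'s simplified expressions for $\partial \Lambda / \partial z_k$ and $\partial^2 \Lambda / \partial z_k \partial \bar{z}_k$ in the adapted coordinates of Lemma~\hyperlink{L:2.2}{2.2}. The general (un-evaluated) identities in the first display are pure chain rule; only the evaluation at $p$ requires the adapted frame.

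For the first derivative, since $\Lambda$ is diagonal at $p$, Lemma~\hyperlink{L:2.1}{2.1} gives $\partial F_t/\partial \Lambda_i^j(\Lambda) = \delta_{ij} f_i(\lambda)$, so only diagonal contributions survive. Combined with Lemma~\hyperlink{L:2.4}{2.4}'s $\partial \Lambda_i^i/\partial z_k(p) = (X_u)_{i\bar{i},k}$, this yields $h_k = \sum_i f_i \,(X_u)_{i\bar{i},k}$. For the second derivative, I would differentiate again and apply Lemma~\hyperlink{L:2.1}{2.1} to pick up the three expected pieces: a $f_{ij}$ contribution from paired diagonal indices, a $(f_i - f_j)/(\lambda_i - \lambda_j)$ contribution from off-diagonal pairs arising in the Hessian of a spectral function, and a linear-in-$f_i$ contribution coming from $\partial^2 \Lambda_i^i/\partial z_k \partial \bar{z}_k(p) = -\lambda_i \omega_{i\bar{i}, k\bar{k}} + (X_u)_{i\bar{i}, k\bar{k}}$ (Lemma~\hyperlink{L:2.4}{2.4}). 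This produces the first displayed expression for $h_{k\bar{k}}$.

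The only substantive algebraic step is the rewriting on the last line, which replaces $(f_i - f_j)/(\lambda_i - \lambda_j)$ by $h/(\lambda_i \lambda_j) - c_2(t)/(\lambda_1 \lambda_2 \lambda_3 \lambda_4)$. The cleanest derivation is to multiply the equation $f_t(\lambda) = h$ through by $\sigma_4(\lambda) = \lambda_1 \lambda_2 \lambda_3 \lambda_4$ to obtain the polynomial identity
\begin{align*}
\sigma_4(\lambda) \, h = c_2(t) \sigma_2(\lambda) - 2 c_1(t) \cot(\hat{\theta}) \sigma_1(\lambda) + c_0(t) \cc,
\end{align*}
then differentiate in $\lambda_i$, use $\partial \sigma_4 / \partial \lambda_i = \sigma_4(\lambda)/\lambda_i$, and solve for $f_i$ to get
\begin{align*}
f_i = \frac{c_2(t) \sigma_1(\lambda_{;i}) - 2 c_1(t) \cot(\hat{\theta})}{\sigma_4(\lambda)} - \frac{h}{\lambda_i}.
\end{align*}
Subtracting the analogous expression for $f_j$, the $\sigma_1(\lambda_{;i}) - \sigma_1(\lambda_{;j}) = \lambda_j - \lambda_i$ telescoping produces the stated identity.

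I expect no genuine obstacle here; the lemma is a pointwise algebraic identity and the proof is essentially the same bookkeeping as in Lemma~\hyperlink{L:4.5}{4.5}, extended from three to four variables, with the extra step of the elementary algebraic simplification above. The one place to be careful is to note that the derivation is valid at $p$ whether or not the eigenvalues are pairwise distinct: the $f_{ij}$ form is used only when the eigenvalues are distinct, and the general case follows by continuity exactly as in the proof of Lemma~\hyperlink{L:4.5}{4.5}.
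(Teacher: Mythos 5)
Your proof is correct and matches the paper's (very sketchy) indicated approach: the paper simply points to Lemmas~2.1--2.4 and 4.11 and declares the computation straightforward, mirroring the proof given for the three-variable analogue Lemma~\hyperlink{L:4.5}{4.5}, where the only nontrivial step is exactly this simplification of $(f_i-f_j)/(\lambda_i-\lambda_j)$. Your route through the cleared-denominator polynomial identity $\sigma_4(\lambda)\,f_t(\lambda) = c_2\sigma_2(\lambda) - 2c_1\cot(\hat\theta)\sigma_1(\lambda) + c_0(3\csc^2\hat\theta-4)$, differentiating and then evaluating $f_t=h$ on the solution set, is a tidier way to organize the algebra than subtracting the raw expressions from Lemma~4.11 (though both give $f_i = \bigl(c_2\sigma_1(\lambda_{;i}) - 2c_1\cot\hat\theta\bigr)/\sigma_4(\lambda) - h/\lambda_i$, as one can check they are equal), and the telescoping $\sigma_1(\lambda_{;i})-\sigma_1(\lambda_{;j})=\lambda_j-\lambda_i$ immediately yields $\frac{f_i-f_j}{\lambda_i-\lambda_j}=\frac{h}{\lambda_i\lambda_j}-\frac{c_2}{\lambda_1\lambda_2\lambda_3\lambda_4}$, which is precisely what the last display of the lemma asserts. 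The continuity remark for non-distinct eigenvalues is the same caveat the paper makes for Lemma~4.5, and is warranted here as well.
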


For the remainder of this subsection, we let $O_i$ be the Big $O$ notation that describes the limiting behavior when $\lambda_i$ approaches infinity. So $O_i(1)$ means the quantity will be bounded by a uniform constant if $\lambda_i$ is sufficiently large. 

\hypertarget{L:4.15}{\begin{flemma}}
There exists uniform constants $N > 0$ and $\kappa > 0$, which are independent of $t \in [0, 1]$, such that if $\lambda_1 > N$, then
\begin{align*}
\sum_i f_i u_{i \bar{i}} \geq -\kappa \sum_i f_i.
\end{align*}
\end{flemma}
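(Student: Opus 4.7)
The plan is to mirror the structure of Lemma~\hyperlink{L:4.6}{4.6}, but now the $C$-subsolution cone at time $t$ is the richer set
\[
\Upsilon^4_{3;1,0,-c_2\cs,2c_1\ct\cs}\cap\Upsilon^4_{2;1,0,-c_2\cs}\cap\Upsilon^4_{1;1,0}
\]
coming from Lemma~\hyperlink{L:3.4}{3.4}, and by Lemma~\hyperlink{L:4.12}{4.12} each $-f_i$ is a positive ratio of the three polynomials $c_2\sigma_2(\lambda_{;i})-2c_1\ct\sigma_1(\lambda_{;i})+c_0(3\cs-4)$ over $\sigma_4(\lambda)\lambda_i$. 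Since $X$ is a $C$-subsolution and the $\Upsilon$-cones are open, by the \hyperlink{dim 4 cons}{Boundary constraints} (in particular $c_2(0),c_1(1),c_0(1)>0$ and all three are continuous on $[0,1]$), compactness of $M\times[0,1]$ gives a uniform $\delta>0$ and $\kappa>0$ such that $X-\kappa\omega$ still lies in all three $\Upsilon$-cones with margin $\delta$, uniformly in $t\in[0,1]$ and $p\in M$. This is the analog of inequality~(\ref{eq:4.9}); it is the only place the $C$-subsolution enters.

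Writing $u_{i\bar i}=\lambda_i-X_{i\bar i}$ and $A_i\coloneqq c_2\sigma_2(\lambda_{;i})-2c_1\ct\sigma_1(\lambda_{;i})+c_0(3\cs-4)>0$, a direct expansion analogous to (\ref{eq:4.10}) yields
\[
\sum_i f_i(u_{i\bar i}+\kappa)=\frac{-\sum_i A_i+\bigl(c_2\sigma_2-2c_1\ct\sigma_1+c_0(3\cs-4)\bigr)}{\sigma_4}+\sum_i(X_{i\bar i}-\kappa)\frac{A_i}{\sigma_4\lambda_i}.
\]
Here I will use $f_t=h$ (with $h=\sii$) to simplify the first bracket, and observe that under our ordering $\lambda_1\geq\lambda_2\geq\lambda_3\geq\lambda_4>0$ one has $A_4\geq A_3\geq A_2\geq A_1>0$, exactly as in dimension three.

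Next I split into two cases on $\lambda_4$. First, if $0<\lambda_4\leq(X_{4\bar 4}-\kappa)/4$, then $(X_{4\bar 4}-\kappa)\,A_4/(\sigma_4\lambda_4)\geq 4A_4/\sigma_4$, which is enough to dominate the negative term and give $\sum_i f_i(u_{i\bar i}+\kappa)\geq 0$ directly, provided $\lambda_1$ is large. Second, if $\lambda_4\geq(X_{4\bar 4}-\kappa)/4$, then an argument analogous to the proof of Lemma~\hyperlink{L:3.1}{3.1} (now using the quadratic Positivstellensatz factor $\sigma_2(\lambda_{;1})-c_2/h>0$ in place of the cubic one) forces $\lambda_2,\lambda_3$ to be bounded above. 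In this regime I will show $\sqrt{\lambda_2\lambda_3\lambda_4}<(1+\delta/4)c_2^{1/2}h^{-1/2}$ asymptotically as $\lambda_1\to\infty$, which combined with the margin $(1-\delta)^{-1/2}$ from $X-\kappa\omega$ and the AM--GM inequality gives a strictly positive leading term plus an $O(\lambda_1^{-1})$ error, as in the dimension-three computation at the end of Lemma~\hyperlink{L:4.6}{4.6}.

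The main obstacle is the case $\lambda_4\geq(X_{4\bar 4}-\kappa)/4$: in dimension three there is only a single quadratic constraint $\lambda_2\lambda_3>c_1/h$ which pins down $\lambda_2\lambda_3$ in the limit $\lambda_1\to\infty$, whereas here the asymptotic identity is $\lambda_2\lambda_3\lambda_4\to c_2/h$ and one must verify that the cross terms involving $c_1\ct$ and $c_0(3\cs-4)$ (which come in with mixed signs because $c_1\ct>0$) do not spoil the sharpness of the Positivstellensatz bound. The relevant optimal inequality is precisely the one certified by Theorem~\hyperlink{T:2.1}{2.1(e)}--(f), and I would invoke it through the fact that the polynomial $c_2(\lambda_2+\lambda_3+\lambda_4)-2c_1\ct$ is strictly positive on the $C$-subsolution cone (used already in the proof of Lemma~\hyperlink{L:4.13}{4.13}) to absorb the cross terms. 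Combining the two cases and absorbing all lower-order $O(\lambda_1^{-1})$ contributions yields a uniform $N$ so that $\lambda_1>N$ implies $\sum_i f_i(u_{i\bar i}+\kappa)\geq 0$, which is the claimed bound.
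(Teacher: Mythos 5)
Your case analysis for $\lambda_4\geq(X_{4\bar4}-\kappa)/4$ has a genuine gap. The claim that this forces $\lambda_2$ to be bounded above as $\lambda_1\to\infty$ is incorrect. Writing $f_t=h$ as $\lambda_1\bigl(h\sigma_3(\lambda_{;1})-c_2\sigma_1(\lambda_{;1})+2c_1\ct\bigr) = A_1$, with $\lambda_4$ bounded below one does get that $\lambda_3$ is bounded above, but $\lambda_2$ remains free: if $\lambda_2\to\infty$ then $\lambda_3\lambda_4\to c_2/h$, while if $\lambda_2$ stays bounded the factor in parentheses tends to $0$ and $(\lambda_2,\lambda_3,\lambda_4)$ merely approaches the level set $\{g_t=h\}$ of the dimension-three operator, with $\lambda_2\lambda_3\lambda_4$ having no universal limit. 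In particular your claimed asymptotic $\sqrt{\lambda_2\lambda_3\lambda_4}<(1+\delta/4)\,c_2^{1/2}h^{-1/2}$ is false in both regimes; the correct two-variable limit, valid only when $\lambda_2\to\infty$ together with $\lambda_1$, is $\lambda_3\lambda_4\to c_2/h$. Your proposed AM--GM calculation with three factors therefore does not close.

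The paper accordingly splits this case further. When $\lambda_2$ is large the AM--GM argument you outline does go through, but with $\sqrt{\lambda_3\lambda_4}$ rather than $\sqrt{\lambda_2\lambda_3\lambda_4}$, and with lower-order errors in both $\lambda_1^{-1}$ and $\lambda_2^{-1}$. When $\lambda_2$ is bounded a genuinely different mechanism is needed: one uses Hadamard's inequality together with the $\Upsilon$-cone constraints (specifically $c_2'\sum_{j\geq 2}(X_{j\bar j}-\kappa)\geq 2c_1'\ct$) to show that $(X_{2\bar2}-\kappa,X_{3\bar3}-\kappa,X_{4\bar4}-\kappa)$ lies strictly inside the convex region enclosed by $\{g_t=(1-\tilde\delta)h\}$ for all $\tilde\delta\in[0,\delta/2]$ and $t\in[0,1]$, interprets $-\sum_{j\geq 2}(X_{j\bar j}-\kappa-\lambda_j)g_j$ as an inner product with the inward normal via the supporting hyperplane theorem, and extracts a uniform positive lower bound by compactness of the parameter slab. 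This reduction to a three-dimensional convexity statement along the continuity path, with a compactness argument over $(\tilde\delta,t)$, is the essential new ingredient of Lemma~\hyperlink{L:4.15}{4.15} over Lemma~\hyperlink{L:4.6}{4.6}, and it is absent from your proposal.
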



\begin{proof}
First, by Lemma~\hyperlink{L:3.5}{3.5}, if $X$ is a $C$-subsolution to equation (\ref{eq:4.19}), then $h X^3 -3   \omega^2 \wedge X + 2 \ct   \omega^3 > 0;\quad   h X^2 -    \omega^2 > 0; \quad  X> 0$. We may fix $\delta > 0$ sufficiently small such that
\begin{align*}
(1-\delta) h X^2 -    \omega^2  > 0;\quad (1-\delta) h X^3 -3   \omega^2 \wedge X + 2 \ct   \omega^3 > 0.
\end{align*}
Also, we choose $\kappa > 0$ to be also small enough so that $ (1-\delta) h (X - \kappa \omega)^3 -3   \omega^2 \wedge (X - \kappa \omega ) + 2 \ct   \omega^3 > 0;   (1-\delta) h   ( X - \kappa \omega   )^2 >   \omega^2;   X - \kappa \omega > 0$. Due to the fact that $c_2$ is decreasing, $c_2(0) > 0$, and the choice of $\delta$, we get
\begin{align*}
(1-\delta) h ( X - \kappa \omega   )^2 > c_2(t)   \omega^2.
\end{align*}
%

Note that $u_{ i \bar{i}} = \lambda_i - X_{i \bar{i}}$, we can write


\begin{align*}
\label{eq:4.36}
\sum_i f_i   ( u_{i \bar{i}}  + \kappa     ) &= \sum_i f_i \bigl ( \lambda_i - X_{i \bar{i}}  + \kappa   \bigr ) \tag{4.36}\\ 
&= \sum_i  \frac{- c_2 \sigma_2(\lambda_{;i}) + 2 c_1 \cot(\hat{\theta}) \sigma_1(\lambda_{;i}) - c_0 (3 \csc^2(\hat{\theta}) -4) }{ \lambda_1 \lambda_2 \lambda_3 \lambda_4 \lambda_i }     ( \lambda_i - X_{i \bar{i}}  + \kappa     ) \\
&=  - \frac{ \sum_i A_i }{\lambda_1 \lambda_2 \lambda_3 \lambda_4}  +  \sum_i   ( X_{i \bar{i}} - \kappa     ) \frac{ A_i }{ \lambda_1 \lambda_2 \lambda_3 \lambda_4 \lambda_i },
\end{align*}where we denote $A_i = c_2 \sigma_2(\lambda_{;i}) - 2 c_1 \cot(\hat{\theta}) \sigma_1(\lambda_{;i}) + c_0 \cc$ for $i \in \{1, 2, 3, 4\}$. Since $\lambda_1 \geq \lambda_2 \geq \lambda_3 \geq \lambda_4$, which implies $A_4 \geq A_3 \geq A_2 \geq A_1 > 0$.\bigskip


There are two cases to be considered: \smallskip

$\bullet$ If $0 < \lambda_4 \leq \frac{X_{4 \bar{4}} - \kappa}{4}$, then 
\begin{align*}
( X_{4 \bar{4}} - \kappa     ) \frac{ A_4 }{ \lambda_1 \lambda_2 \lambda_3 \lambda_4 \lambda_4 } \geq 4 \frac{ A_4}{\lambda_1 \lambda_2 \lambda_3 \lambda_4}.
\end{align*}So
\begin{align*}
\sum_i f_i   ( u_{i \bar{i}}  + \kappa     ) &\geq -4 \frac{ A_4 }{\lambda_1 \lambda_2 \lambda_3 \lambda_4}   +  \sum_i   ( X_{i \bar{i}} - \kappa     ) \frac{ A_i }{ \lambda_1 \lambda_2 \lambda_3 \lambda_4 \lambda_i }   \geq 0.
\end{align*}

$\bullet$ If $\lambda_4 \geq \frac{ X_{4 \bar{4}} - \kappa}{4}$, similar as before, then we can show that $\lambda_3$ is bounded from above. We can simplify equation (\ref{eq:4.36}) to
\begin{align*}
\label{eq:4.37}
\kern1em
&\kern-1em \sum_i f_i   ( u_{i \bar{i}}  + \kappa     ) \tag{4.37}  \\
&= \frac{-2c_2 \sigma_2(\lambda) + 6c_1 \ct \sigma_1(\lambda) -4c_0 \cc }{\lambda_1 \lambda_2 \lambda_3 \lambda_4}  +  \sum_i   ( X_{i \bar{i}} - \kappa     ) \frac{ A_i }{ \lambda_1 \lambda_2 \lambda_3 \lambda_4 \lambda_i } \\
&\geq  \frac{ -2c_2 \bigl ( \lambda_2 + \lambda_3 + \lambda_4\bigr ) + 6c_1 \ct }{\lambda_2 \lambda_3 \lambda_4} + \lambda_1^{-1} \cdot O_1(1)   +  ( X_{2 \bar{2}} - \kappa  ) \frac{c_2 (\lambda_3 + \lambda_4) - 2c_1 \ct }{\lambda_2^2 \lambda_3 \lambda_4}         \\
&\kern2em         +  ( X_{3 \bar{3}} - \kappa  ) \frac{c_2 (\lambda_2 + \lambda_4) - 2c_1\ct }{\lambda_2 \lambda_3^2 \lambda_4}  +  ( X_{4 \bar{4}} - \kappa  ) \frac{c_2 (\lambda_2 + \lambda_3) - 2c_1 \ct }{\lambda_2 \lambda_3 \lambda_4^2}.
\end{align*}\bigskip

In this case, if $\lambda_2$ is also large, then we can have a better estimate for $\lambda_3 \lambda_4$, that is,
\begin{align*}
\lambda_3 \lambda_4 &= \frac{c_2 \lambda_3 \sigma_2(\lambda_{;4}) -2c_1 \ct \lambda_3 \sigma_1(\lambda_{;4}) + c_0 \cc \lambda_3 }{ h \lambda_1 \lambda_2 \lambda_3 - c_2 \sigma_1(\lambda_{;4}) + 2c_1 \ct} 
\xrightarrow[\text{ as } \lambda_2   \rightarrow \infty]{} \frac{c_2}{h}.
\end{align*}Here, since we assume $\lambda_1 \geq \lambda_2$, if $\lambda_2    \rightarrow \infty$, then $\lambda_1    \rightarrow \infty$ as well. If $\lambda_2$ is sufficiently large, then 
\begin{align*}
\label{eq:4.38}
\sqrt{\lambda_3 \lambda_4} < (1 +  {\delta}/{4})  {c^{1/2}_2 } h^{-1/2}. \tag{4.38}
\end{align*}

By combining inequalities (\ref{eq:4.37}) and (\ref{eq:4.38}), we may write
\begin{align*}
\sum_i f_i   ( u_{i \bar{i}}  + \kappa     ) &\geq  c_2 \frac{  ( X_{3 \bar{3}} - \kappa  ) \lambda_4 + ( X_{4 \bar{4}} - \kappa  ) \lambda_3  -2\lambda_3 \lambda_4}{\lambda_3^2 \lambda_4^2}  + \lambda_1^{-1} \cdot O_1(1) + \lambda_2^{-1} \cdot O_2(1) \\
&\geq 2c_2 \frac{ \sqrt{ ( X_{3 \bar{3}} - \kappa  )   ( X_{4 \bar{4}} - \kappa  ) }    - \sqrt{\lambda_3 \lambda_4}}{\lambda_3^{3/2} \lambda_4^{3/2}}  + \lambda_1^{-1} \cdot O_1(1) + \lambda_2^{-1} \cdot O_2(1) \\
&\geq 2c_2^{3/2} h^{-1/2} \frac{    (1- \delta)^{-1/2}      -  (1 +  {\delta}/{4}) }{\lambda_3^{3/2} \lambda_4^{3/2}}  + \lambda_1^{-1} \cdot O_1(1) + \lambda_2^{-1} \cdot O_2(1) \\
&\geq c_2^{3/2}(0) \cdot h^{-1/2} \cdot \frac{    \delta/2 }{\lambda_3^{3/2} \lambda_4^{3/2}}  + \lambda_1^{-1} \cdot O_1(1) + \lambda_2^{-1} \cdot O_2(1).
\end{align*}Here, because $\lambda_4$ has a lower bound, otherwise we will not get another lower order term $\lambda_2^{-1} \cdot O_2(1)$. Now since $c_2$ is increasing with $c_2(0) > 0$ and $\lambda_3, \lambda_4$ have a positive lower bound, for sufficiently large $\lambda_2$, $\sum_i f_i   ( u_{i \bar{i}}  + \kappa     )$ will be non-negative. \bigskip


If $\lambda_2$ is also bounded from above, then we need to estimate it more carefully. From equation (\ref{eq:4.36}), we have
\begin{align*}
\label{eq:4.39}
 \sum_i f_i   ( u_{i \bar{i}}  + \kappa     ) \geq  - ( X_{2 \bar{2}} - \kappa - \lambda_2 ) g_2 - ( X_{3 \bar{3}} - \kappa - \lambda_3 ) g_3  - ( X_{4 \bar{4}} - \kappa - \lambda_4 ) g_4 + \lambda_1^{-1} \cdot O_1(1), \tag{4.39}
\end{align*}where we denote $g_t = g_t(\lambda_2, \lambda_3, \lambda_4) = \bigl( c_2 (\lambda_2 + \lambda_3 + \lambda_4) -2c_1 \ct  \bigr)/\lambda_2 \lambda_3 \lambda_4$ and $g_j = \partial g_t/\partial \lambda_j$ with $j \in \{2, 3, 4\}$. We have


\begin{align*}
\lambda_1 = \frac{c_2 \sigma_2(\lambda_{;1} )  -2c_1 \ct \sigma_1(\lambda_{;1}) + c_0 \cc}{ h \lambda_2 \lambda_3 \lambda_4  - c_2   \sigma_1(\lambda_{;1}) + 2c_1 \ct}.
\end{align*}Since $\lambda_2, \lambda_3, \lambda_4$ are all bounded, say $S \geq \lambda_2, \lambda_3, \lambda_4 \geq s = (X_{4\bar{4}} -\kappa)/4 > 0$, the numerator will also be bounded from above. If $\lambda_1$ approaches infinity, then $h \lambda_2 \lambda_3 \lambda_4  - c_2   \sigma_1(\lambda_{;1}) + 2c_1 \ct$ will approach $0$. If $\lambda_1$ is sufficiently large and $\lambda_2, \lambda_3, \lambda_4$ are all bounded, then we obtain
\begin{align*}
\label{eq:4.40}
\delta h \lambda_2 \lambda_3 \lambda_4/2 > h \lambda_2 \lambda_3 \lambda_4  - c_2   \sigma_1(\lambda_{;1}) + 2c_1 \ct > 0. \tag{4.40}
\end{align*}On the other hand, since 
\begin{align*}
(1- \delta ) h (X - \kappa \omega)^3 -3   \omega^2 \wedge (X - \kappa \omega ) + 2 \ct   \omega^3 > 0,
\end{align*}which implies
\begin{align*}
\label{eq:4.41}
0  &< 6 (1-\delta)  h \det \bigl( \tilde{X} - \kappa \tilde{\omega}   \bigr) - 6   \sum_{j=2}^4 (X_{j \bar{j}} - \kappa ) +12 \ct  \tag{4.41} \\
&\leq  6 (1-\delta) h  (X_{2 \bar{2}} - \kappa ) (X_{3 \bar{3}} - \kappa ) (X_{4 \bar{4}} - \kappa )   - 6   \sum_{j=2}^4 (X_{j \bar{j}} - \kappa ) +12 \ct.
\end{align*}Here $\tilde{X} - \kappa \tilde{\omega}$ is a positive definite Hermitian matrix obtained by evaluating $X - \kappa \omega$ on the tangent subspace spanned by $\bigl\{ \partial/\partial z^2, \partial/\partial \bar{z}^2, \partial/\partial z^3, \partial/\partial \bar{z}^3, \partial/\partial z^4, \partial/\partial \bar{z}^4 \bigr\}$. Notice that the last inequality is due to Hadamard's inequality, that is, the determinant of a positive-semidefinite Hermitian matrix is less than or equal to the product of its diagonal entries. Inequality (\ref{eq:4.41}) gives us that
\begin{align*}
&\kern-1em (1- \tilde{\delta}) h (X_{2 \bar{2}} - \kappa ) (X_{3 \bar{3}} - \kappa ) (X_{4 \bar{4}} - \kappa )  - c_2   \sum_{j=2}^4 (X_{j \bar{j}} - \kappa ) + 2c_1 \ct \\
&>   ( \delta -\tilde{\delta}) h  (X_{2 \bar{2}} - \kappa ) (X_{3 \bar{3}} - \kappa ) (X_{4 \bar{4}} - \kappa ) + (1-c_2)  \sum_{j=2}^4 (X_{j \bar{j}} - \kappa )  + 2 (c_1-1)\ct  \\
&\geq  ( \delta -\tilde{\delta}) h  (X_{2 \bar{2}} - \kappa ) (X_{3 \bar{3}} - \kappa ) (X_{4 \bar{4}} - \kappa )  \geq  \delta   h  (X_{2 \bar{2}} - \kappa ) (X_{3 \bar{3}} - \kappa ) (X_{4 \bar{4}} - \kappa )/2 > 0,
\end{align*}where $\tilde{\delta} \in [0, \delta/2]$. The inequality on the second line is because of the quantity $(1-c_2)  \sum_{j=2}^4 (X_{j \bar{j}} - \kappa )  + 2 (c_1-1)\ct$ being decreasing, thus the quantity will obtain its minimum when $t = 1$. We can also check this fact by applying the Cauchy--Schwarz inequality, we get
\begin{align*}
c_2' \bigl(  ( X_{j \bar{j}} - \kappa ) + ( X_{k \bar{k}} - \kappa ) \bigr) &\geq 2 c_2' \sqrt{ ( X_{j \bar{j}} - \kappa )  ( X_{k \bar{k}} - \kappa )} \geq 2 (1-\delta)^{-1/2}\sqrt{c_2} c_2' h^{-1/2}   \geq 2 \sqrt{c_2} c_2' h^{-1/2},
\end{align*}where $j \neq k \in \{2, 3, 4\}$. Thus, by the \hyperlink{dim 4 cons}{$\Upsilon$-cone constraints}, we have
\begin{align*}
c_2' \sum_{j=2}^4 (X_{j \bar{j}} - \kappa ) \geq  3 \sqrt{c_2} c_2' h^{-1/2} \geq 2 c_1' \ct.
\end{align*}

So for $\tilde{\delta} \in [0, \delta/2]$, $(X_{2 \bar{2}}- \kappa, X_{3 \bar{3}}- \kappa, X_{4 \bar{4}}- \kappa)$ will be in the enclosed region of $\{ g_t = (1 - \tilde{\delta}) h \} \cap \Upsilon^3_{2; (1-\tilde{\delta})h, 0, -c_2} \cap \Upsilon^3_{1; 1, 0}$, where $g_t = g_t(y, z, w) = \bigl({ c_2(t) (y + z + w) -2c_1(t) \ct  }\bigr)/{y z w}$. Also, $(\lambda_2, \lambda_3, \lambda_4)$ will satisfy 
\begin{align*}
h > g_t(\lambda_2, \lambda_3, \lambda_4) > (1-\delta/2) h.
\end{align*} 
In fact, $g_t$ is the continuity path when we are solving the complex three dimensional dHYM equation. We can similarly show that the set $\{ g_t = (1 - \tilde{\delta}) h \}$ is convex for any $\tilde{\delta} \in [0, \delta/2]$ and $t \in [0, 1]$ as before, provided that $\delta > 0$ is small. \bigskip

Now, by the supporting hyperplane theorem, if we let $x$ be a fixed point in the interior of a convex set, then for any point $p$ on the boundary of this convex set, the inner product of the vector $x-p$ and the inner normal vector at $p$ will always be positive. We may view the inner product $- ( X_{2 \bar{2}} - \kappa - \lambda_2 ) g_2 - ( X_{3 \bar{3}} - \kappa - \lambda_3 ) g_3  - ( X_{4 \bar{4}} - \kappa - \lambda_4 ) g_4$ as a continuous function defined on the following compact subset of $\mathbb{R}^3_{y, z, w} \times [0, 1]$:
\begin{align*}
\bigcup_{\substack{\tilde{\delta} \in [0, \delta/2] \\t \in [0, 1] }} \Bigl( \{ g_t = (1 - \tilde{\delta}) h \} \cap [s, S]^3 \Bigr) \times \{t\}.
\end{align*}



Here, for convenience, we denote the solution set $\{ g_t = (1 - \tilde{\delta}) h \}$ as the connected component $\{ g_t = (1 - \tilde{\delta}) h \} \cap \bigl( \Upsilon^3_{2; (1 - \tilde{\delta})h, 0, -c_2} \cap \Upsilon^3_{1; 1, 0} \bigr)$. So we have a positive lower bound for the inner product, since on this compact set the continuous function is always positive. In conclusion, we can find a uniform $N>0$ such that if $\lambda_1 > N$, we have
\begin{align*}
 \sum_i f_i   ( u_{i \bar{i}}  + \kappa     ) \geq  - ( X_{2 \bar{2}} - \kappa - \lambda_2 ) f_2 - ( X_{3 \bar{3}} - \kappa - \lambda_3 ) f_3  - ( X_{4 \bar{4}} - \kappa - \lambda_4 ) f_4 + \lambda_1^{-1} \cdot O_1(1) \geq 0.
\end{align*}\end{proof}

\hypertarget{L:4.16}{\begin{flemma}}
There exists a uniform $N > 0$ and a constant $\tilde{\mathcal{F}}_0 > 0$ independent of $t \in [0, 1]$ such that if $\lambda_1 > N$, then we have the following estimates
\begin{align*}
-f_2 - f_3 -f_4\geq \tilde{\mathcal{F}}_0 > 0.
\end{align*}
\end{flemma}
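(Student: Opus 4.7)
The plan is to follow the pattern of the three-dimensional analogue Lemma~\hyperlink{L:4.7}{4.7}, combined with the case split used in Lemma~\hyperlink{L:4.15}{4.15}. Using the explicit formula for $f_i$ from Lemma~\hyperlink{L:4.11}{4.11} and the decompositions $\sigma_2(\lambda_{;i}) = \lambda_1 \sigma_1(\lambda_{;1,i}) + \sigma_2(\lambda_{;1,i})$ and $\sigma_1(\lambda_{;i}) = \lambda_1 + \sigma_1(\lambda_{;1,i})$ for $i\in\{2,3,4\}$, I would first isolate the $\lambda_1$-independent leading part:
\begin{align*}
-f_2 - f_3 - f_4 &= \frac{c_2(\lambda_3+\lambda_4) - 2c_1\ct}{\lambda_2^2 \lambda_3 \lambda_4} + \frac{c_2(\lambda_2+\lambda_4) - 2c_1\ct}{\lambda_2 \lambda_3^2 \lambda_4} \\
&\kern2em + \frac{c_2(\lambda_2+\lambda_3) - 2c_1\ct}{\lambda_2 \lambda_3 \lambda_4^2} + \lambda_1^{-1}\cdot O_1(1).
\end{align*}

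Next I would establish a uniform-in-$t$ positivity for each coefficient in the numerators. Since $\lambda\in\Upsilon_2(t)$ on the solution set, Lemma~\hyperlink{L:3.4}{3.4} yields $\lambda_j\lambda_k > c_2(t)/h$ for every distinct pair $(j,k)$ in $\{2,3,4\}$. Combined with the Cauchy--Schwarz inequality $\lambda_j+\lambda_k \geq 2\sqrt{\lambda_j\lambda_k}$, this gives
\begin{align*}
c_2(\lambda_j+\lambda_k) - 2c_1\ct \geq \frac{2\bigl(c_2^{3/2}(t) + c_1(t)\cos(\hat\theta)\bigr)}{h^{1/2}}.
\end{align*}
Integrating the $\Upsilon$-cone constraint $\frac{d}{dt}c_2^{3/2}(t) \geq -\cos(\hat\theta)c_1'(t)$ from $0$ to $t$ and using $c_1(0)=0$ gives $c_2^{3/2}(t) + c_1(t)\cos(\hat\theta) \geq c_2^{3/2}(0) > 0$, so each coefficient is bounded below by the uniform positive constant $2 c_2^{3/2}(0)/h^{1/2}$.

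Then I would split into two cases based on the size of $\lambda_2$. In the bounded case $\lambda_2 \leq M$, the $\Upsilon$-cone lower bounds $\lambda_3 \lambda_4, \lambda_2\lambda_4, \lambda_2\lambda_3 > c_2(0)/h$ combined with $\lambda_i \leq M$ force all three denominators to lie in a fixed compact range, so each term of the sum is bounded below by an explicit positive constant. In the unbounded case $\lambda_2 \to \infty$, dividing $f_t=h$ by $\lambda_2$ yields $h\lambda_3\lambda_4 \to c_2$ (as in the proof of Lemma~\hyperlink{L:4.15}{4.15}), and the second and third terms asymptotically contribute $h/\lambda_3 + h/\lambda_4 \geq 2h/\sqrt{\lambda_3\lambda_4} \to 2 h^{3/2}/c_2^{1/2}$ by AM--GM, again bounded below. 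Taking $\tilde{\mathcal{F}}_0$ to be the minimum of the two case bounds and choosing $N$ large enough that the $O(\lambda_1^{-1})$ remainder is dominated by half of $\tilde{\mathcal{F}}_0$ completes the argument.

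The main obstacle is ensuring the lower bound $2c_2^{3/2}(0)/h^{1/2}$ is genuinely uniform in $t\in[0,1]$; this is exactly where the integrated form of the $\Upsilon$-cone constraint is essential, since without it the difference $c_2^{3/2}(t) + c_1(t)\cos(\hat\theta)$ could degenerate as $t$ varies. The case $\lambda_2\to\infty$ also requires care because one cannot simply bound each term separately — the key observation is that the solution equation forces $\lambda_3\lambda_4$ to concentrate near $c_2/h$, which is what converts the formally vanishing denominators into a uniform lower bound via AM--GM.
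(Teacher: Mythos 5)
Your overall strategy matches the paper's --- both split on whether $\lambda_2$ is bounded --- but the execution differs in an interesting way. In the bounded case your argument is cleaner than the paper's: the uniform numerator lower bound $c_2(\lambda_j+\lambda_k) - 2c_1\ct \geq 2c_2^{3/2}(0)h^{-1/2}$, obtained from AM--GM on $\lambda_j+\lambda_k$, the $\Upsilon_2$-cone bound $\lambda_j\lambda_k > c_2/h$, and the integrated $\Upsilon$-cone constraint (Remark~\hyperlink{R:3.8}{3.8}), lets you control each term of the symmetric decomposition directly once the $\lambda_i$ lie in a compact interval. The paper instead transforms $-f_4$ via the solution equation into $h/\lambda_4$ plus corrections and, in the bounded case, falls back on a compactness argument over the convex near-solution set $Q$ inherited from Lemma~\hyperlink{L:4.15}{4.15}; your route avoids that machinery entirely.

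There is, however, a real gap in your unbounded case. The claim ``$h\lambda_3\lambda_4 \to c_2$'' is borrowed from inequality~(\ref{eq:4.38}) in Lemma~\hyperlink{L:4.15}{4.15}, but there the case split is on $\lambda_4$: with $\lambda_4$ bounded below, the solution equation forces $\lambda_3$ bounded above, and the limit is legitimate. Here the split is on $\lambda_2$, and nothing prevents $\lambda_3 \sim \lambda_2 \sim \lambda_1 \to \infty$ with $\lambda_4 \to 0$; in that regime one finds $h\lambda_3\lambda_4 \to c_2(1 + \lambda_3/\lambda_2 + \lambda_3/\lambda_1)$, which can approach $3c_2$, not $c_2$. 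Consequently your asymptotic identifications of the second and third terms with $h/\lambda_3$ and $h/\lambda_4$ also acquire extra factors. The paper sidesteps this by proving the algebraic bound $\lambda_4 \leq 4c_2/(h\lambda_3) < 4c_2^{1/2}h^{-1/2}$ in (\ref{eq:4.44}), valid whether or not $\lambda_3$ is bounded, which gives $h/\lambda_4 > h^{3/2}/(4c_2^{1/2})$ directly. Your argument is repairable --- you only need boundedness of $h\lambda_3\lambda_4$ (by some absolute multiple of $c_2$) rather than convergence, and AM--GM then still returns a positive floor of order $h^{3/2}$, merely with a worse constant --- but as written the step ``$\to 2h^{3/2}/c_2^{1/2}$'' is not justified and you should either prove $\lambda_3$ stays bounded in this regime or switch to the paper's explicit inequality for $\lambda_4$.
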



\begin{proof}
First, if $\lambda_1$ is sufficiently large, then we have
\begin{align*}
\label{eq:4.42}
-f_2 - f_3 -f_4 
&=  \frac{c_2(\lambda_3 + \lambda_4) -2c_1\ct}{\lambda_2^2 \lambda_3 \lambda_4}  + \frac{c_2(\lambda_2 + \lambda_4) -2c_1\ct }{\lambda_2 \lambda_3^2 \lambda_4} \tag{4.42}  \\
&\kern2em  + \frac{c_2 \sigma_2(\lambda_{;4}) -2c_1\ct  \sigma_1(\lambda_{;4}) + c_0 \cc }{ \lambda_1\lambda_2 \lambda_3 \lambda_4^2}    + \lambda_1^{-1} \cdot O_1(1) \\
&=  \frac{c_2(\lambda_3 + \lambda_4) -2c_1\ct}{\lambda_2^2 \lambda_3 \lambda_4}  + \frac{c_2(\lambda_2 + \lambda_4) -2c_1\ct}{\lambda_2 \lambda_3^2 \lambda_4} + \frac{ h  }{   \lambda_4 }  \\
&\kern2em    - \frac{c_2(\lambda_1 + \lambda_2 + \lambda_3) - 2c_1 \ct}{\lambda_1 \lambda_2 \lambda_3 \lambda_4}  + \lambda_1^{-1} \cdot O_1(1).
\end{align*}If $\lambda_2$ is also sufficiently large, then we obtain 
\begin{align*}
\label{eq:4.43}
 -f_2 - f_3 -f_4  \tag{4.43}
&= \frac{c_2}{\lambda_3^2 \lambda_4} + \frac{h }{  \lambda_4} + \lambda_2^{-1}\cdot O_2(1) + \lambda_1^{-1}\cdot O_1(1) \geq   \frac{h }{  \lambda_4} + \lambda_2^{-1}\cdot O_2(1) + \lambda_1^{-1}\cdot O_1(1).
\end{align*}We have
\begin{align*}
\label{eq:4.44}
\lambda_4 &= \frac{c_2 \sigma_2(\lambda_{; 4}) -2c_1 \ct \sigma_1(\lambda_{;4}) + c_0 \cc }{h \lambda_1 \lambda_2 \lambda_3 - c_2 (\lambda_1 + \lambda_2 + \lambda_3) + 2 c_1 \ct} \leq \frac{4 c_2 }{h \lambda_3} <  {4 c_2^{1/2} h^{-1/2} }   \tag{4.44}
\end{align*} provided that $\lambda_2$ is large. The last inequality is by the lower bound of $\lambda_3$, that is, $h \lambda_3^2 >   c_2$. Combining inequalities (\ref{eq:4.43}) and (\ref{eq:4.44}), we get
\begin{align*}
 -f_2 - f_3 -f_4   &\geq   \frac{h }{  \lambda_4} + \lambda_2^{-1}\cdot O_2(1) + \lambda_1^{-1}\cdot O_1(1) > \frac{h^{3/2}}{4 c_2^{1/2}} + \lambda_2^{-1}\cdot O_2(1) + \lambda_1^{-1}\cdot O_1(1) > \frac{h^{3/2}}{8 },
\end{align*}provided that $\lambda_2$ is sufficiently large.\bigskip


On the other hand, if $\lambda_2$ is bounded from above, say $S$, then $\lambda_3$ is bounded from above, which implies that $\lambda_4$ is bounded from below. Hence, say $S \geq \lambda_2 \geq \lambda_3 \geq \lambda_4 \geq s$, for $S$ and $s$ both greater than zero. If $\lambda_1$ is sufficiently large, then by inequality (\ref{eq:4.40}), we have
\begin{align*}
\delta h \lambda_2 \lambda_3 \lambda_4/2 > h \lambda_2 \lambda_3 \lambda_4  - c_2   \sigma_1(\lambda_{;1}) + 2c_1 \ct > 0. 
\end{align*}By Lemma~\hyperlink{L:4.3}{4.3}, if $\delta > 0$ small, then the solution set $\{ g_t = h(1 -\delta) \}$ is convex. Similar to before, we define the following quantity 
\begin{align*}
\mathcal{F}_0 \coloneqq \min_{(\lambda_2, \lambda_3, \lambda_4, t) \in Q}  -(f_2 + f_3 + f_4) > 0,
\end{align*}where $Q$ is a compact set defined by
\begin{align*}
Q \coloneqq \bigcup_{\substack{\tilde{\delta} \in [0, \delta/2] \\t \in [0, 1] }} \Bigl( \{ g_t = (1 - \tilde{\delta}) h \} \cap [s, S]^3 \Bigr) \times \{t\} .
\end{align*}

By denoting $\tilde{\mathcal{F}}_0 \coloneqq \min \{ \mathcal{F}_0, h^{3/2}/8\} > 0$, if $\lambda_1$ is sufficiently large, then 
\begin{align*}
- f_2 - f_3 - f_4 \geq  \tilde{\mathcal{F}}_0 > 0.
\end{align*}
\end{proof}


Now we let $C$ be a constant depending only on the stated data, but which may change from line to line. We can finish the proof of the following $C^2$ estimate

\hypertarget{T:4.2}{\begin{fthm}}
Suppose $X$ is a $C$-subsolution to equation (\ref{eq:4.19}) and $u \colon M   \rightarrow \mathbb{R}$ is a smooth function solving equation (\ref{eq:4.19}). Then there exists a constant $C$ independent of $t$ such that 
\begin{align*}
|\partial \bar{\partial} u | \leq C \bigl ( 1 + \sup_M \bigl|\nabla u\bigr|^2  \bigr), 
\end{align*}where $C = C(\hat{\theta}, c_0, c_1, c_2 \osc_M u, M, X, \omega)$, $h = \sii$ is a constant, and $\nabla$ is the Levi-Civita connection with respect to $\omega$.\bigskip
\end{fthm}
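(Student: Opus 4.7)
The plan is to follow the three-dimensional proof of Theorem~\hyperlink{T:4.1}{4.1} verbatim in structure, substituting the four-dimensional analogues of each ingredient. I would introduce the same test function $U = -Au + G(\Lambda)$ with $G(\Lambda) = \log(1+\lambda_1)$, perform the perturbation from the bullet point preceding equation~(\ref{eq:4.20}) to get a smooth locally defined $\tilde{U}$ attaining its maximum at $p \in M$ with distinct eigenvalues $\tilde{\lambda}_1 > \tilde{\lambda}_2 > \tilde{\lambda}_3 > \tilde{\lambda}_4$. Applying $\mathcal{L}_t$ to $G(\tilde{\Lambda})$ and using Lemma~\hyperlink{L:2.1}{2.1}, Lemma~\hyperlink{L:4.10}{4.10}, and Lemma~\hyperlink{L:4.11}{4.11}, I would obtain the same structural inequality as~(\ref{eq:4.13}):
\begin{align*}
\mathcal{L}_t\bigl(G(\tilde{\Lambda})\bigr) \geq \sum_i f_i \frac{\bigl|(X_u)_{1\bar{1},i}\bigr|^2}{(1+\tilde{\lambda}_1)^2} + C\sum_i f_i - \sum_i f_i \frac{(X_u)_{1\bar{1},i\bar{i}}}{1+\tilde{\lambda}_1},
\end{align*}
modulo bounded curvature contributions from derivatives of $\omega$.

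Next I would use Lemma~\hyperlink{L:4.14}{4.14} with $k=1$ together with Lemma~\hyperlink{L:4.13}{4.13} (convexity of the solution set $\{f_t = h\}$): since $h_1 = 0$, the vector $\bigl((X_u)_{i\bar{i},1}\bigr)_{i}$ is tangent to the level set, so $\sum_{i,j} f_{ij}(X_u)_{i\bar{i},\bar{1}}(X_u)_{j\bar{j},1} \geq 0$, and commuting derivatives (using that $(X_u)_{i\bar{i},1\bar{1}} - (X_u)_{1\bar{1},i\bar{i}}$ is a bounded curvature term) gives the analogue of~(\ref{eq:4.15}):
\begin{align*}
-\sum_i f_i (X_u)_{1\bar{1},i\bar{i}} \geq C \sum_i f_i (1+\lambda_i) + \sum_{j \neq 1} \Bigl(\frac{h}{\lambda_1 \lambda_j} - \frac{c_2(t)}{\lambda_1\lambda_2\lambda_3\lambda_4}\Bigr) \bigl|(X_u)_{j\bar{1},1}\bigr|^2.
\end{align*}
Combining with Lemma~\hyperlink{L:4.15}{4.15} (which yields $\sum_i f_i u_{i\bar{i}} \geq -\kappa \sum_i f_i$) to absorb the $A \sum_i f_i u_{i\bar{i}}$ contribution into a good term $A\kappa \sum_i f_i$, and with Lemma~\hyperlink{L:4.16}{4.16} (which gives $-f_2-f_3-f_4 \geq \tilde{\mathcal{F}}_0 > 0$), for $A$ sufficiently large I obtain the 4D counterpart of~(\ref{eq:4.16}):
\begin{align*}
0 \geq \mathcal{L}_t(U) \geq \frac{A \kappa \tilde{\mathcal{F}}_0}{10} + f_1 \frac{\bigl|(X_u)_{1\bar{1},1}\bigr|^2}{(1+\tilde{\lambda}_1)^2} + \sum_{j \neq 1} \Bigl[ f_j \frac{\bigl|(X_u)_{1\bar{1},j}\bigr|^2}{(1+\tilde{\lambda}_1)^2} + \frac{h/(\lambda_1\lambda_j) - c_2/\sigma_4}{1+\tilde{\lambda}_1}\bigl|(X_u)_{j\bar{1},1}\bigr|^2 \Bigr].
\end{align*}

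The main obstacle is showing the off-diagonal commutator sum is bounded below by a constant $-C$, analogously to inequality~(\ref{eq:4.17}). The trick is to write $(X_u)_{j\bar{1},1} = (X_u)_{1\bar{1},j} + S_j$ with $S_j = X_{j\bar{1},1} - X_{1\bar{1},j}$ uniformly bounded, complete the square, and check that the resulting coefficient of $|(X_u)_{1\bar{1},j}|^2$ simplifies to a non-negative quantity. In the 3D case this reduced to $(f_j \lambda_j + h)/(\lambda_j \sigma_3)$, which telescoped to $1/(\sigma_3^2 \lambda_j)$. Here the parallel algebra yields a coefficient proportional to $(h/(\lambda_1\lambda_j)) + f_j \lambda_j/(1+\tilde\lambda_1) - c_2/\sigma_4$, and one must verify its sign using the identity $-2 f_j/\lambda_j = f_{jj}$ from~(\ref{eq:4.25}) together with the positivity $h \lambda_k \lambda_l - c_2 \geq 0$ coming from the $\Upsilon$-cone structure of Lemma~\hyperlink{L:3.5}{3.5}; the residual $-S_j$ contribution is uniformly bounded.

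Finally, the first-order vanishing relation~(\ref{eq:4.21}) substitutes $(X_u)_{1\bar{1},1} = A(1+\tilde{\lambda}_1) u_1$, so that $f_1 |(X_u)_{1\bar{1},1}|^2/(1+\tilde{\lambda}_1)^2$ becomes $A^2 |u_1|^2 \cdot (-f_1)$, which in turn is controlled by $A^2 h \sup_M |\nabla u|^2/\lambda_1$ using Lemma~\hyperlink{L:4.12}{4.12} and the equation $f_t = h$. Comparing with the large positive term $A \kappa \tilde{\mathcal{F}}_0/10$ gives $\lambda_1 \leq C(A) \sup_M |\nabla u|^2$ at $p$, and unwinding $\tilde{U}$ back to $U$ concludes the proof.
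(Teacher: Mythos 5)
Your proposal follows the paper's proof of Theorem~\hyperlink{T:4.2}{4.2} step for step: same test function and perturbation, same use of Lemma~\hyperlink{L:4.13}{4.13} for convexity, Lemmas~\hyperlink{L:4.15}{4.15} and~\hyperlink{L:4.16}{4.16} to absorb the $Au$ contribution, completion of the square with $S_j$ to show the commutator sum is bounded below, and finally the first-order relation~(\ref{eq:4.21}) to close. The only small imprecision is in the off-diagonal coefficient: the paper reduces it to $c_2\sigma_1(\lambda_{;1,j})-2c_1\ct$ over $\sigma_4(\lambda)$, whose nonnegativity follows from AM--GM on $\lambda_k\lambda_l > c_2/h$ and the $\Upsilon$-cone constraint $c_{2}^{3/2} \geq -c_1\cos(\hat\theta)$, so~(\ref{eq:4.25}) is not actually needed here; the rest of your reasoning is correct.
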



\begin{proof}

First, by applying the operator \hyperlink{dim 4 operator}{$\mathcal{L}_t$} to $G(\tilde{\Lambda})$, at the maximum point, we obtain
\begin{align*}
\label{eq:4.45}
\mathcal{L}_t  \bigl (  G( \tilde{\Lambda} )   \bigr )   \tag{4.45}
&= - \sum_{i, j, k}f_k   g_{ij}  \frac{\partial \tilde{\Lambda}_i^i}{\partial z_k} \frac{\partial \tilde{\Lambda}_j^j}{\partial \bar{z}_k} - \sum_k f_k   \sum_{i \neq j} \frac{g_i - g_j}{ \tilde{\lambda}_i - \tilde{\lambda}_j}   \frac{\partial \tilde{\Lambda}_j^i}{\partial z_k} \frac{\partial \tilde{\Lambda}_i^j}{\partial \bar{z}_k} - \sum_{i,k} f_k   g_i   \frac{\partial^2 \tilde{\Lambda}_i^i}{\partial z_k \partial \bar{z}_k}    \\
&=  \sum_k f_k \frac{1}{(1 + \tilde{\lambda}_1)^2}   \bigl|    (  X_u     )_{1 \bar{1},k }    \bigr |^2 + \sum_k f_k  \frac{ {\lambda}_1}{1 + \tilde{\lambda}_1} \omega_{1\bar{1},k \bar{k}}  -  \sum_k f_k \frac{1}{1+ \tilde{\lambda}_1}   ( X_u   )_{1 \bar{1},k \bar{k}} \\
&\kern2em  - \sum_{k}f_k \sum_{j \neq 1}  \frac{1}{(1 + \tilde{\lambda}_1)( \tilde{\lambda}_1 - \tilde{\lambda}_j)}   \Bigl(    \bigl |    (  X_u    )_{j \bar{1},k}     \bigr |^2   +      \bigl |    (  X_u    )_{1 \bar{j},k}    \bigr  |^2      \Bigr ) \\
&\geq   \sum_i f_i \frac{1}{(1 + \tilde{\lambda}_1)^2}  \bigl |    (  X_u     )_{1 \bar{1}, i}    \bigr |^2 +  C \sum_i f_i  - \sum_i f_i \frac{1}{1+ \tilde{\lambda}_1}   (  X_u   )_{1 \bar{1}, i \bar{i}}.
\end{align*}

Here we change the index from $k$ to $i$ for convenience. Then by equation (\ref{eq:4.35}), we have
\begin{align*}
\label{eq:4.46}
0 = h_{k \bar{k}} &= \sum_{i, j}  f_{ij}   (  X_u     )_{i \bar{i}, \bar{k}}    (  X_u     )_{j \bar{j}, {k}} + \sum_{i \neq j}  \frac{f_i - f_j}{\lambda_i - \lambda_j}    | (X_u)_{j \bar{i},k}    |^2     +  \sum_i \Bigl( f_i  ( X_u   )_{i \bar{i},k \bar{k}} -f_i \lambda_i \omega_{i \bar{i},k \bar{k}} \Bigr) \tag{4.46} \\
&\geq \sum_{i \neq j} \Bigl (  \frac{h}{\lambda_i \lambda_j} - \frac{ c_2(t)   }{\lambda_1 \lambda_2 \lambda_3 \lambda_4  } \Bigr)   | (X_u)_{j \bar{i},k}    |^2     +  \sum_i \Bigl( f_i  ( X_u   )_{i \bar{i},k \bar{k}} -f_i \lambda_i \omega_{i \bar{i},k \bar{k}} \Bigr) \\
&\geq \sum_{i \neq j} \Bigl (  \frac{h}{\lambda_i \lambda_j} - \frac{ c_2(t)   }{\lambda_1 \lambda_2 \lambda_3 \lambda_4  } \Bigr)   | (X_u)_{j \bar{i},k}    |^2     +  \sum_i  f_i  ( X_u   )_{i \bar{i},k \bar{k}} + C \sum_i f_i \lambda_i,
\end{align*}where the inequality on the second line is due to Lemma~\hyperlink{L:4.13}{4.13}. Since the solution set $\{f_t = h\}$ is convex and $0 = h_k = \sum_i f_i (X_u)_{i \bar{i}, k}$ implies $(X_u)_{i \bar{i}, k}$ is a tangent vector, $\sum_{i, j} f_{ij}   (  X_u     )_{i \bar{i}, \bar{k}}    (  X_u     )_{j \bar{j}, {k}}  \geq 0$. Hence by setting $k = 1$, inequality (\ref{eq:4.46}) gives
\begin{align*}
\label{eq:4.47}
-\sum_i f_i   (  X_u     )_{1 \bar{1},i \bar{i}} &= - \sum_i f_i (X_u  )_{i  \bar{i},1 \bar{1}}  + \sum_i f_i \bigl (    (X_u  )_{i  \bar{i},1 \bar{1}} -   ( X_u   )_{1 \bar{1}, i \bar{i}}  \bigr ) \tag{4.47} \\
&\geq   C \sum_i f_i (1 + \lambda_i)   +   \sum_{j \neq 1}  \Bigl (  \frac{h}{\lambda_1 \lambda_j} - \frac{ c_2(t)   }{\lambda_1 \lambda_2 \lambda_3 \lambda_4  } \Bigr)    | (X_u)_{j \bar{1},1}    |^2.
\end{align*}

Combining inequalities (\ref{eq:4.45}) and (\ref{eq:4.47}), at the maximum point $p \in M$, if $\lambda_1$ is sufficiently large, then we have
\begin{align*}
\label{eq:4.48}
0 &\geq \mathcal{L}_t   ( U   ) \geq   A   \sum_i f_i u_{i \bar{i}} + \sum_i f_i \frac{\bigl |    (  X_u     )_{1 \bar{1},i}    \bigr |^2}{(1 + \tilde{\lambda}_1)^2}  +  C \sum_i f_i  - \sum_i f_i  \frac{(  X_u   )_{1 \bar{1}, i\bar{i}}}{1+ \tilde{\lambda}_1}    \tag{4.48} \\
&\geq    \sum_i f_i (Au_{i \bar{i}} + C ) + \sum_i  f_i \frac{  \bigl |    (  X_u     )_{1 \bar{1},i}    \bigr |^2}{(1 + \tilde{\lambda}_1)^2}        +   \sum_{j \neq 1}  \frac{ | (X_u)_{j \bar{1},1}    |^2 }{1+ \tilde{\lambda}_1   }    \Bigl (  \frac{h}{\lambda_1 \lambda_j} - \frac{ c_2(t)   }{\lambda_1 \lambda_2 \lambda_3 \lambda_4  } \Bigr)      \\
&\geq \bigl( C-A \kappa \bigr)  \sum_i f_i    + \sum_i f_i \frac{ \bigl |    (  X_u     )_{1 \bar{1},i}    \bigr |^2}{(1 + \tilde{\lambda}_1)^2}          +  \sum_{j \neq 1}  \frac{ | (X_u)_{j \bar{1},1}    |^2 }{1+ \tilde{\lambda}_1   }    \Bigl (  \frac{h}{\lambda_1 \lambda_j} - \frac{ c_2(t)   }{\lambda_1 \lambda_2 \lambda_3 \lambda_4  } \Bigr)      \\
&\geq \frac{A \kappa   \tilde{\mathcal{F}}_0 }{2}     +  f_1 \frac{\bigl |    (  X_u     )_{1 \bar{1},1}    \bigr |^2}{(1 + \tilde{\lambda}_1)^2}  +  \sum_{j \neq 1} f_j \frac{  \bigl |    (  X_u     )_{1 \bar{1},j}    \bigr |^2}{(1 + \tilde{\lambda}_1)^2}          +  \frac{ | (X_u)_{j \bar{1},1}    |^2 }{1+ \tilde{\lambda}_1   }    \Bigl (  \frac{h}{\lambda_1 \lambda_j} - \frac{ c_2(t)   }{\lambda_1 \lambda_2 \lambda_3 \lambda_4  } \Bigr). 
\end{align*}Here, provided that $A$ is sufficiently large such that $A \kappa - C > A \kappa/2$ and $\lambda_1$ is also sufficiently large. Here the inequality on the third line is by Lemma~\hyperlink{L:4.15}{4.15} and the last inequality is by Lemma~\hyperlink{L:4.16}{4.16}. \bigskip

We can also simplify the last two terms in inequality (\ref{eq:4.48}) 
\begin{align*}
\label{eq:4.49}
\kern1em
&\kern-1em  \sum_{j \neq 1} f_j \frac{  \bigl |    (  X_u     )_{1 \bar{1},j}    \bigr |^2}{(1 + \tilde{\lambda}_1)^2}          +  \frac{ | (X_u)_{j \bar{1},1}    |^2 }{1+ \tilde{\lambda}_1   }    \Bigl (  \frac{h}{\lambda_1 \lambda_j} - \frac{ c_2(t)   }{\lambda_1 \lambda_2 \lambda_3 \lambda_4  } \Bigr) \tag{4.49}   \\ 
&=  \sum_{j \neq 1} f_j \frac{ \bigl |    (  X_u     )_{1 \bar{1},j}    \bigr |^2}{(1 + \tilde{\lambda}_1)^2}   +    \frac{  | (X_u)_{1 \bar{1},j} + S_j   |^2 }{(1+ \tilde{\lambda}_1)   }      \Bigl (  \frac{h}{\lambda_1 \lambda_j} - \frac{ c_2(t)   }{\lambda_1 \lambda_2 \lambda_3 \lambda_4  } \Bigr)  \\
&\geq   \sum_{j \neq 1}  f_j \frac{ \bigl |    (  X_u     )_{1 \bar{1},j}    \bigr |^2}{(1 + \tilde{\lambda}_1)^2}  \\ 
&\kern2em +      \frac{ 1 }{(1+ \tilde{\lambda}_1)    }   \Bigl (  \frac{h}{\lambda_1 \lambda_j} - \frac{ c_2(t)   }{\lambda_1 \lambda_2 \lambda_3 \lambda_4  } \Bigr) \Bigl(  \frac{\lambda_1}{1+ \tilde{\lambda}_1} | (X_u)_{1 \bar{1},j} |^2 - \frac{\lambda_1 - B_{11}}{1+ B_{11}} | S_j   |^2 \Bigr) \\
&\geq    \sum_{j \neq 1}  \frac{ | (X_u)_{1 \bar{1},j} |^2 }{(1+ \tilde{\lambda}_1)^2    \lambda_j} \Bigl(   f_j \lambda_j +  h - \frac{c_2(t) \lambda_j}{\lambda_2 \lambda_3 \lambda_4}    \Bigr)         -  \sum_{j \neq 1}  \frac{ h | S_j   |^2 }{(1+ \tilde{\lambda}_1)    \lambda_j}         \\
&=  \sum_{j \neq 1}  \frac{\bigl |    (  X_u     )_{1 \bar{1},j}    \bigr |^2}{(1 + \tilde{\lambda}_1)^2} \frac{c_2 \sigma_1(\lambda_{;1,j}) -2c_1 \ct}{ \lambda_1 \lambda_2 \lambda_3 \lambda_4}   -   \sum_{j \neq 1}  \frac{ h | S_j   |^2 }{(1+ \tilde{\lambda}_1)    \lambda_j}         
\geq    -   \sum_{j \neq 1}  \frac{  h  | S_j   |^2 }{(1+ \tilde{\lambda}_1)     \lambda_j}         \geq -C,
\end{align*}where we denote $S_j \coloneqq  (X_u)_{j \bar{1}, 1}- (X_u)_{1 \bar{1}, j} = X_{j \bar{1}, 1}- X_{1 \bar{1}, j}$. Thus, by inequalities (\ref{eq:4.21}), (\ref{eq:4.48}), and (\ref{eq:4.49}), at the point $p$ we obtain

\begin{align*}
0 &\geq \mathcal{L}_t    ( U     ) \geq \frac{A \kappa \tilde{\mathcal{F}}_0  }{2}     +  f_1 \frac{\bigl |    (  X_u     )_{1 \bar{1},1}    \bigr |^2}{(1 + \tilde{\lambda}_1)^2}  -C \\
&\geq  \frac{A \kappa  \tilde{\mathcal{F}}_0  }{4} - A^2 |u_1|^2 \frac{c_2 \sigma_2(\lambda_{;1}) - 2c_1 \ct \sigma_1(\lambda_{;1}) + c_0 \cc   }{ \lambda_1^2 \lambda_2 \lambda_3 \lambda_4 },
\end{align*}provided that $A$ is sufficiently large. This implies,

\begin{align*}
\frac{A \kappa  \tilde{\mathcal{F}}_0  }{4}  &\leq  A^2 |u_1|^2  \frac{c_2 \sigma_2(\lambda_{;1}) - 2c_1 \ct \sigma_1(\lambda_{;1}) + c_0 \cc   }{ \lambda_1^2 \lambda_2 \lambda_3 \lambda_4 } \\
&\leq A^2 h |u_1|^2 \frac{c_2 \sigma_2(\lambda_{;1}) - 2c_1 \ct \sigma_1(\lambda_{;1}) + c_0 \cc }{ \lambda_1 \bigl( c_2 \sigma_2(\lambda) - 2c_1 \ct  \sigma_1(\lambda)+ c_0 \cc  \bigr) } \\
&\leq A^2 h |u_1|^2 \frac{1}{ \lambda_1} \leq A^2 h \cdot    \sup_M |\nabla u|^2  \cdot \frac{1}{ \lambda_1} .
\end{align*}Hence at the maximum point $p$ of $\tilde{U}$, we have
\begin{align*}
\lambda_1 \leq \frac{4 A h  }{\kappa \tilde{\mathcal{F}}_0  }   \sup_M |\nabla u|^2.
\end{align*}By plugging back to the original test function $U = -Au +G(\Lambda)$, we will obtain the $C^2$ estimate independent of $t$.
\end{proof}

\subsubsection{The $C^1$ Estimates}
\label{sec:4.2.2}
Here, same as Section~\ref{sec:4.1.2}, we use a \hyperlink{P:4.1}{blow-up argument} proved by Collins--Jacob--Yau \cite{collins20151} to obtain the $C^1$ estimate. Since everything follows verbatim, so we do not state it here.

\subsubsection{Higher Order Estimates}
Here, the proofs are similar to the proofs in Section~\ref{sec:4.1.3}, so we just state the results here without writing down the proofs. The equation is elliptic and the solution set is convex when $h \in \bigl (  0, c_2^3(t) \taa/ c_1^2(t)     \bigr )$ and 
\begin{align*}
\label{eq:4.50}
c_0(t) \cc h  + 24 c_2^2(t)  \cos^2(\theta_{h, c_1(t), c_2(t)})\cos(2\theta_{h, c_1(t), c_2(t)}) > 0. \tag{4.50}
\end{align*}
Here $\theta_{h, c_1(t), c_2(t)} \coloneqq \arccos \bigl(   {- c_1(t) \ct h^{1/2} }\big/{c_2^{3/2}(t) }  \bigr)\big/3  - {2\pi}/{3}$ and we specify the branch so that $ \arccos  ( \bullet ) \in  ( \pi, 3\pi/2  ]$. As long as $h$ satisfies these inequalities for all $t \in [0, 1]$, we can exploit the convexity of the solution sets to obtain $C^{2, \alpha}$ estimates by a blow-up argument. \bigskip




By shrinking the coordinate charts if necessary, we may assume that the manifold $M$ can be covered by finitely many coordinate charts $\bar{U}_a \subset V_a$ such that $X_u = \sqrt{-1} \partial \overline{\partial} u_a$ on $V_a$ for a smooth function $u_{\alpha}$ satisfying $\| u_a \|_{C^2(\bar{U}_a)}   \leq K$, where we use the standard Euclidean metric on $\mathbb{C}^4$ and $K$ is a uniform constant independent of $a$ and $t \in [0, 1]$. For convenience, we focus on a fixed coordinate chart $V_{a}$ and drop the subscript $a$, then the function $u$ on $V$ satisfies 
\begin{align*}
\label{eq:4.51}
 {F}_t(x, \partial \bar{\partial} u  ) = F_t \bigl (\Lambda(x) \bigr)  = h, \text{ for } x \in V, \tag{4.51}
\end{align*}where $\Lambda^j_i(x) = \omega^{j \bar{k}}(x) u_{i \bar{k}}(x)$ with eigenvalues $\lambda \bigl ( \Lambda^j_i (x) \bigr) \in \Upsilon^4_{3; h, 0, -c_2(t), 2c_1(t) \ct } \cap \Upsilon^4_{2; h, 0, -c_2(t) } \cap \Upsilon^4_{1; 1, 0}$ and $h = \sii$ is a constant satisfying the above inequalities for all $t \in [0, 1]$ by our \hyperlink{dim 4 cons}{4-dimensional four constraints}. Moreover, fix $\tilde{x} \in U$, we define the following operator which does not depend on $x \in V$, 
\begin{align*}
\tilde{F}_{t, \tilde{x}}( \partial \bar{\partial} u ) \coloneqq  {F}_t(    \omega^{j \bar{k}}(\tilde{x})  u_{i \bar{k}}   ).
\end{align*}

\bigskip


First, we prove a Hölder estimate for the second derivatives, we have the following. 

\hypertarget{L:4.17}{\begin{flemma}}
Fix $t \in [0, 1]$, let $U \subset \mathbb{C}^4$ be a connected open set, and fix $\tilde{x} \in U$. Suppose $u \colon U \subset  \mathbb{C}^4   \rightarrow \mathbb{R}$ is a $C^3$ function such that $\| \partial \bar{\partial}u \|_{L^\infty( U )}   < \infty$ and $\lambda \bigl (\omega^{j \bar{k}}(\tilde{x}) u_{i \bar{k}}(\tilde{x})  \bigr )   \in \Upsilon^4_{3; h, 0, -c_2(t), 2c_1(t) \ct } \cap \Upsilon^4_{2; h, 0, -c_2(t) } \cap \Upsilon^4_{1; 1, 0}$. If for all $x \in U$,
\begin{align*}
\tilde{F}_{t, \tilde{x}}   ( \partial \bar{\partial} u  )(x) = h,
\end{align*}then there exists a constant $\alpha \in (0, 1)$ such that for any $R > 0$ with $\overline{B_{2R}} \subset U$, the function $u$ satisfies 
\begin{align*}
\| \partial \bar{\partial} u \|_{C^\alpha(B_{R})} \leq C    \cdot R^{-\alpha}.
\end{align*}Here $C = C\bigl( \hat{\theta}, c_0, c_1, c_2, h, \| \partial \bar{\partial}u \|_{L^\infty( U )}  \bigr)$, $\lambda \bigl (\omega^{j \bar{k}}(\tilde{x}) u_{i \bar{k}}(\tilde{x})  \bigr )$ are the eigenvalues of $\omega^{j \bar{k}}(\tilde{x}) u_{i \bar{k}}(\tilde{x})$, and $h \in \bigl (  0, c_2^3(t) \taa/ c_1^2(t)     \bigr )$ is a constant satisfying inequality (\ref{eq:4.50}).
\end{flemma}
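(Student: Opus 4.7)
The plan is to adapt the argument of Lemma~\hyperlink{L:4.8}{4.8} to complex dimension four, replacing the two-cone ellipticity/convexity input by the three-cone version from Lemma~\hyperlink{L:4.12}{4.12} and Lemma~\hyperlink{L:4.13}{4.13}, and then to invoke verbatim the complex Evans--Krylov machinery from Siu \cite{siu2012lectures}. All work occurs for the frozen-coefficient operator $\tilde{F}_{t,\tilde{x}}$, so the argument is genuinely local and the global Kähler geometry does not intervene.

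The first step is to upgrade the hypothesis $\lambda\bigl(\omega^{j\bar{k}}(\tilde{x})u_{i\bar{k}}(\tilde{x})\bigr)\in \Upsilon^4_{3;h,0,-c_2,2c_1\ct}\cap \Upsilon^4_{2;h,0,-c_2}\cap \Upsilon^4_{1;1,0}$ at the single point $\tilde{x}$ to the same membership for every $x\in U$. Since $U$ is connected and $\tilde{F}_{t,\tilde{x}}(\partial\bar{\partial}u)(x)=h$ throughout $U$, if the eigenvalues escaped this component, the intermediate value theorem would force the eigenvalues, at some point, onto the boundary of the $\Upsilon$-cone while still satisfying $f_t=h$. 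By the Positivstellensatz \hyperlink{T:2.1}{Theorem~2.1(b)}, this cone is an entire connected component of $\Upsilon^4_{3;1,0,-c_2/h,2c_1\ct/h}$, and the boundary conditions ($\sigma_1=0$, or $\sigma_2=c_2/h$, or $\sigma_3=c_2/h\cdot\sigma_1-2c_1\ct/h$) are incompatible with $f_t=h$ and $h>0$ exactly as in the dimension three argument. Consequently $\tilde{F}_{t,\tilde{x}}$ is uniformly elliptic on $U$ by Lemma~\hyperlink{L:4.12}{4.12}, with ellipticity constants controlled by $\hat{\theta}, c_0, c_1, c_2, h$ and $\|\partial\bar{\partial}u\|_{L^\infty(U)}$.

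The second step differentiates the equation in an arbitrary unit direction $\gamma\in\mathbb{C}^4$. From $\tilde{F}_{t,\tilde{x}}(\partial\bar{\partial}u)=h$ one gets
\begin{align*}
\sum_{i,j}\tilde{F}_{t,\tilde{x}}^{i\bar{j}}(\partial\bar{\partial}u)\,u_{i\bar{j}\gamma\bar{\gamma}} = -\sum_{i,j,k,l}\tilde{F}_{t,\tilde{x}}^{i\bar{j},k\bar{l}}(\partial\bar{\partial}u)\,u_{i\bar{j}\gamma}u_{k\bar{l}\bar{\gamma}} \leq 0,
\end{align*}
where the inequality uses the convexity of the solution set $\{f_t=h\}$ established in Lemma~\hyperlink{L:4.13}{4.13}. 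Writing $w=u_{\gamma\bar{\gamma}}$, we obtain that $-\tilde{F}_{t,\tilde{x}}^{i\bar{j}}(\partial\bar{\partial}u)\,\partial_i\bar{\partial}_j w \geq 0$, so $w$ is a supersolution of a uniformly elliptic linear operator with bounded measurable coefficients.

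The third and main step applies the Krylov--Safonov weak Harnack inequality to $M_{2R}-w$ on concentric balls $B_R\subset B_{2R}\subset U$ and combines it with the supporting hyperplane property
\begin{align*}
-\tilde{F}_{t,\tilde{x}}^{i\bar{j}}(\partial\bar{\partial}u(y))\bigl(u_{i\bar{j}}(y)-u_{i\bar{j}}(x)\bigr)\leq 0,\qquad x,y\in U,
\end{align*}
which again follows from convexity of $\{f_t=h\}$ applied to the graph of $\tilde{F}_{t,\tilde{x}}$. From here the rest of the argument is the complex Evans--Krylov scheme in Siu \cite{siu2012lectures}: iterating the oscillation decay on dyadic balls gives $\|\partial\bar{\partial}u\|_{C^\alpha(B_R)}\leq C\cdot R^{-\alpha}$ for some $\alpha\in(0,1)$ with the asserted dependence of $C$. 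The genuinely hard part has already been packaged into Lemma~\hyperlink{L:4.12}{4.12} (ellipticity via the \hyperlink{T:2.1}{Positivstellensatz(e)}) and Lemma~\hyperlink{L:4.13}{4.13} (convexity via the delicate discriminant computation of $r_1,r_2$); once these are in hand, the remainder of the $C^{2,\alpha}$ estimate is formally identical to the three dimensional case and no new analytic input is required.
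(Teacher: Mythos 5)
Your proposal is correct and takes essentially the same route as the paper: the paper itself explicitly defers the proof of this lemma to the argument of Lemma~\hyperlink{L:4.8}{4.8}, and you have transplanted it to complex dimension four in exactly the intended way, replacing the two-cone ellipticity and convexity inputs (Lemmas~\hyperlink{L:4.3}{4.3}, \hyperlink{L:4.4}{4.4}) with the three-cone versions (Lemmas~\hyperlink{L:4.12}{4.12}, \hyperlink{L:4.13}{4.13}) and using the Positivstellensatz connected-component property to keep the eigenvalues in the cone throughout $U$.
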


Then, with the above Lemma~\hyperlink{L:4.17}{4.17}, we have a Louiville-type result.

\hypertarget{P:4.3}{\begin{fprop}}
Fix $t \in [0, 1]$ and $\tilde{x} \in \mathbb{C}^4$. Suppose $u \colon \mathbb{C}^4   \rightarrow \mathbb{R}$ is a $C^3$ function such that $\| \partial \bar{\partial}u \|_{L^\infty( \mathbb{C}^4 )} < \infty$ and $\lambda \bigl (\omega^{j \bar{k}}(\tilde{x}) u_{i \bar{k}}(\tilde{x})  \bigr )   \in    \Upsilon^4_{3; h, 0, -c_2(t), 2c_1(t) \ct } \cap \Upsilon^4_{2; h, 0, -c_2(t) } \cap \Upsilon^4_{1; 1, 0}$. If for all $x \in \mathbb{C}^4$,
\begin{align*}
\tilde{F}_{t, \tilde{x}}   ( \partial \bar{\partial} u  )(x) = h,
\end{align*}then $u$ is a quadratic polynomial. Here $\lambda \bigl (\omega^{j \bar{k}}(\tilde{x}) u_{i \bar{k}}(\tilde{x})  \bigr )$ are the eigenvalues of $\omega^{j \bar{k}}(\tilde{x}) u_{i \bar{k}}(\tilde{x})$ and $h \in \bigl (  0, c_2^3(t) \taa/ c_1^2(t)     \bigr )$ is a constant satisfying inequality (\ref{eq:4.50}).
\end{fprop}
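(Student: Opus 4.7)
The plan is to mirror exactly the proof of Proposition~4.2 in the three dimensional setting, which itself is a one-line consequence of the Hölder estimate in Lemma~4.8. In the four dimensional case the analogous ingredient is already stated as Lemma~\hyperlink{L:4.17}{4.17}, so the entire argument reduces to a scaling limit.

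First I would fix an arbitrary base point (by translation invariance, it is harmless to take the origin) and apply Lemma~\hyperlink{L:4.17}{4.17} with $U = B_{2R}(0) \subset \mathbb{C}^4$ for every $R > 0$. The hypothesis $\| \partial \bar{\partial} u \|_{L^{\infty}(\mathbb{C}^4)} < \infty$ guarantees that the constant produced by Lemma~\hyperlink{L:4.17}{4.17} depends only on $\hat{\theta}$, $c_0(t)$, $c_1(t)$, $c_2(t)$, $h$, and the global $L^\infty$ bound for $\partial \bar{\partial}u$, and in particular is independent of $R$. The eigenvalue constraint $\lambda(\omega^{j \bar{k}}(\tilde{x}) u_{i \bar{k}}(\tilde{x})) \in \Upsilon^4_{3; h, 0, -c_2(t), 2c_1(t)\ct} \cap \Upsilon^4_{2; h, 0, -c_2(t)} \cap \Upsilon^4_{1; 1, 0}$ is exactly the cone hypothesis required for the uniform ellipticity and convexity that feed into Lemma~\hyperlink{L:4.17}{4.17} via Lemma~\hyperlink{L:4.13}{4.13} and the Positivstellensatz.

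Lemma~\hyperlink{L:4.17}{4.17} then yields
\[
\| \partial \bar{\partial} u \|_{C^\alpha(B_R(0))} \leq C \cdot R^{-\alpha}
\]
for every $R > 0$, with $C$ independent of $R$. Letting $R \rightarrow \infty$ forces the $C^\alpha$-seminorm of $\partial \bar{\partial} u$ on every bounded set to vanish, so $\partial \bar{\partial} u$ is a constant Hermitian matrix on all of $\mathbb{C}^4$. Writing $u(z) = \sum_{i, j} c_{i \bar{j}} z_i \bar{z}_j + 2 \operatorname{Re} h(z)$ for a holomorphic function $h$, this is precisely the statement that $u$ is a quadratic polynomial modulo pluriharmonic terms; in particular $\partial \partial \bar{\partial} u \equiv 0$, which is the only consequence needed in the blow-up contradiction of Lemma~\hyperlink{L:4.9}{4.9} (adapted to $n=4$).

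The only step that requires any care is verifying that the constant in Lemma~\hyperlink{L:4.17}{4.17} is genuinely $R$-independent, and this is precisely where Remark~\hyperlink{R:4.1}{4.1} is relevant: the dependence of $C$ on $\| \partial \bar{\partial} u \|_{L^\infty}$ is the only potential obstruction, and it is neutralized by the global bound built into the hypothesis. No additional obstacle arises from going from dimension three to four, since all the convexity input already lives in the $n=4$ version of the Positivstellensatz (Theorem~\hyperlink{T:2.1}{2.1}(e)) and Lemma~\hyperlink{L:4.13}{4.13}, both of which are established in the preceding subsections.
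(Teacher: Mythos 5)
Your argument is correct and is essentially the paper's own proof: the paper proves the $n=3$ analogue (Proposition~4.2) by the one-line deduction "apply Lemma~4.8 and let $R\to\infty$," and it explicitly states that the $n=4$ proofs, including this one, follow the same pattern with Lemma~4.17 playing the role of Lemma~4.8. Your observation that the conclusion should really be read as $\partial\partial\bar\partial u\equiv 0$ (i.e.\ $u$ is quadratic modulo pluriharmonic terms), which is all that the blow-up contradiction in the $n=4$ version of Lemma~4.9 actually uses, is a correct and worthwhile clarification, but it does not change the route.
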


\hypertarget{L:4.18}{\begin{flemma}}
For $r > 0$, suppose $u \colon B_{2r} \subset \mathbb{C}^4   \rightarrow \mathbb{R}$ is a smooth function satisfying 
\begin{align*}
{F}_t (x,  \partial \bar{\partial} u ) = h,
\end{align*}where $h \in \bigl[  \sii - \epsilon,  \sii + \epsilon \bigr]$ is a constant and $\epsilon > 0$ small. Then, for every $\alpha \in (0, 1)$, we have the estimate
\begin{align*}
\|\partial \bar{\partial} u\|_{C^{\alpha} (B_{r/2})} \leq C( \alpha, \hat{\theta}, c_0, c_1, c_2, \epsilon, \|\partial \bar{\partial} u\|_{L^{\infty} (B_{2r})}).
\end{align*}
\end{flemma}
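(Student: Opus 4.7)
The argument will parallel the proof of Lemma~\hyperlink{L:4.9}{4.9} in the three-dimensional case verbatim, now replacing the Liouville-type input by the four-dimensional version, Proposition~\hyperlink{P:4.3}{4.3}. The plan is to proceed by a blow-up/contradiction argument on the quantity
\begin{align*}
N_u \coloneqq \sup_{x \in B_r} d_x \, |\partial \partial \bar{\partial} u(x)|, \qquad d_x \coloneqq \dist(x, \partial B_r),
\end{align*}
assuming $N_u$ is attained at some $x_0 \in B_r$ and performing the standard rescaling $\tilde{u}(z) \coloneqq (N_u^2/d_{x_0}^2) \, u(x_0 + d_{x_0} z/N_u) - A - A_i z_i$ with $A, A_i$ chosen so that $\tilde{u}(0) = 0 = \partial \tilde{u}(0)$. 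The rescaling gives $\|\partial \partial \bar{\partial} \tilde{u}\|_{L^\infty(B_{N_u}(0))} = 1 = |\partial \partial \bar{\partial} \tilde{u}(0)|$, the bound $\|\partial \bar{\partial} \tilde{u}\|_{L^\infty(B_{N_u}(0))} \leq \|\partial \bar{\partial} u\|_{L^\infty(B_{2r})}$, and $\tilde{u}$ solves the equation with the same $h$ at the rescaled base point $x_0 + d_{x_0} z/N_u$.

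Next I will verify uniform ellipticity and use Schauder theory to obtain $C^{3,\alpha}$ control of $\tilde{u}$ on $B_{N_u/2}(0)$. Since $\|\partial \bar{\partial} u\|_{L^\infty}$ is bounded, the eigenvalues $\lambda$ of $\omega^{-1} X_u$ are bounded from above; the lower bound follows from the equation $F_t = h > 0$ together with $h \in [\sii - \epsilon, \sii + \epsilon]$, which for $\epsilon$ small keeps $h$ in a compact subinterval of $\bigl(0, c_2^3(t) \taa / c_1^2(t)\bigr)$ satisfying the positivity inequality~(\ref{eq:4.50}) by the \hyperlink{dim 4 cons}{$\Upsilon$-cone constraints}. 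Therefore the fully nonlinear operator $F_t(x, \cdot)$ is uniformly elliptic on the range of interest, and the Schauder theory for such operators gives a uniform $C^{3,\alpha}$ bound on $\tilde{u}$ in compact subsets of $B_{N_u/2}(0)$.

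Suppose the conclusion of the lemma fails: there is a sequence $u_n \colon B_{2r} \to \mathbb{R}$ solving $F_t(x, \partial \bar{\partial} u_n) = h_n$ with $\|\partial \bar{\partial} u_n\|_{L^\infty(B_{2r})} \leq K$ and $h_n \in [\sii - \epsilon, \sii + \epsilon]$, but $N_{u_n} \geq n$. After rescaling as above and choosing $x_n \to x_\infty \in \overline{B_r}$, $h_n \to h_\infty \in [\sii - \epsilon, \sii + \epsilon]$, we obtain functions $\tilde{u}_n$ on $B_{N_{u_n}}(0)$ with uniform $C^{3,\alpha}$ control. A diagonal/Arzelà--Ascoli argument extracts a subsequential limit $\tilde{u}_\infty \colon \mathbb{C}^4 \to \mathbb{R}$ of class $C^{3,\alpha'}_{\mathrm{loc}}$ with $|\partial \partial \bar{\partial} \tilde{u}_\infty(0)| = 1$, solving $\tilde{F}_{t, x_\infty}(\partial \bar{\partial} \tilde{u}_\infty)(x) = h_\infty$. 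Since $\epsilon$ is small, $h_\infty$ still lies in the open region where Proposition~\hyperlink{P:4.3}{4.3} applies, so $\tilde{u}_\infty$ must be a quadratic polynomial, forcing $|\partial \partial \bar{\partial} \tilde{u}_\infty(0)| = 0$ and producing the contradiction.

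The main potential obstacle is ensuring that all structural properties invoked along the limiting procedure---uniform ellipticity, convexity of the solution set (Lemma~\hyperlink{L:4.13}{4.13}), positivity of the linearization (Lemma~\hyperlink{L:4.12}{4.12}), and eligibility of $h_\infty$ for the Liouville result---are preserved after taking the limit. This amounts to checking that the condition \[c_0(t)\cc h + 24 c_2^2(t) \cos^2(\theta_{h, c_1(t), c_2(t)}) \cos(2\theta_{h, c_1(t), c_2(t)}) > 0\] together with $c_2^3(t) \taa/c_1^2(t) > h > 0$ is preserved in a small neighborhood of $\sii$, which is immediate by continuity once $\epsilon$ is chosen small enough depending only on $\hat{\theta}, c_0, c_1, c_2$. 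With these verifications the blow-up argument closes exactly as in dimension three, and the remainder of the proof (reducing the $C^{2,\alpha}$ estimate on $B_{r/2}$ to the boundedness of $N_u$ on $B_r$) is purely formal.
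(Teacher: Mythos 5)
Your proposal is correct and matches exactly what the paper intends: the paper explicitly states that the proof of Lemma~4.18 is omitted because it ``is similar to the proofs in Section~4.1.3'' (i.e.\ to Lemma~4.9), and your argument transcribes that blow-up/Liouville scheme to dimension four using Proposition~4.3 in place of Proposition~4.2, with the correct observation that the narrower range $h \in [\sii - \epsilon, \sii + \epsilon]$ stays, for $\epsilon$ small, inside the open region where ellipticity (Lemma~4.12), convexity of the level set (Lemma~4.13), and inequality~(4.50) all hold by continuity.
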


By arguing locally,  with Lemma~\hyperlink{L:4.18}{4.18} we have the following.

\hypertarget{C:4.2}{\begin{fcor}}
Suppose $X$ is a $C$-subsolution to equation (\ref{eq:4.19}) and $u \colon M  \rightarrow \mathbb{R}$ is a solution to equation (\ref{eq:4.19}), that is,
\begin{align*}
F_t \bigl(  \omega^{-1} X_u    \bigr) = h,
\end{align*}where $X_u \coloneqq X + \sqrt{-1} \partial \bar{\partial} u$ and $h  = \sii$ is a constant. Then for every $\alpha \in (0, 1)$, we have
\begin{align*}
\| \partial \bar{\partial} u \|_{C^\alpha(M)} \leq C(M, X, \omega, \alpha, \hat{\theta}, c_0, c_1, c_2, \| \partial \bar{\partial} u\|_{L^{\infty}(M)}).
\end{align*}

\end{fcor}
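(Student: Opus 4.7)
The plan is to deduce this global Hölder estimate from the local estimate in Lemma~\hyperlink{L:4.18}{4.18} by a standard covering and patching argument, exactly parallel to how Corollary~\hyperlink{C:4.1}{4.1} is obtained from Lemma~\hyperlink{L:4.9}{4.9} in the three-dimensional case.

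First I would cover the compact manifold $M$ by finitely many coordinate balls $\{B_{2r_a}(p_a)\}_{a=1}^N$, chosen small enough that on each $B_{2r_a}(p_a)$ the closed form $X$ admits a smooth local potential $\phi_a$ with $X=\sqrt{-1}\,\partial\bar{\partial}\phi_a$. Setting $u_a\coloneqq \phi_a + u$ we get $X_u = \sqrt{-1}\,\partial\bar{\partial}u_a$ on $B_{2r_a}(p_a)$, so the global equation (\ref{eq:4.19}) assumes the local fully nonlinear form $F_t(x,\partial\bar{\partial}u_a)=h$ on $B_{2r_a}(p_a)$, with $h=\sii$ a constant. By the $\Upsilon$-cone constraints in the \hyperlink{dim 4 cons}{4-dimensional four constraints}, this constant $h$ satisfies $c_2^3(t)\taa/c_1^2(t)>h>0$ together with inequality (\ref{eq:4.50}), so the structural hypotheses needed to invoke the local Hölder estimate are in place.

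Next, from Theorem~\hyperlink{T:4.2}{4.2} combined with the blow-up $C^1$ estimate of Proposition~\hyperlink{P:4.1}{4.1} (applied exactly as in Section~\ref{sec:4.2.2}), one gets $\|\partial\bar{\partial}u\|_{L^\infty(M)}<\infty$, and hence $\|\partial\bar{\partial}u_a\|_{L^\infty(B_{2r_a}(p_a))}$ is controlled for every $a$. In particular the constant $h=\sii$ lies trivially in an interval $[\sii-\epsilon,\sii+\epsilon]$ as required. Applying Lemma~\hyperlink{L:4.18}{4.18} on each ball yields
\begin{align*}
\|\partial\bar{\partial}u_a\|_{C^\alpha(B_{r_a/2}(p_a))}\leq C\bigl(\alpha,\hat{\theta},c_0,c_1,c_2,\|\partial\bar{\partial}u\|_{L^\infty(M)}\bigr)
\end{align*}
for every $\alpha\in(0,1)$ and every $a$.

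Finally, since $\partial\bar{\partial}u = \partial\bar{\partial}u_a - \partial\bar{\partial}\phi_a$ on each coordinate chart and each $\phi_a$ is a fixed smooth function, subtracting the background contributions costs only a constant depending on $M$, $X$, and $\omega$. Taking the maximum over the finite covering then gives the desired global bound
\begin{align*}
\|\partial\bar{\partial}u\|_{C^\alpha(M)}\leq C\bigl(M,X,\omega,\alpha,\hat{\theta},c_0,c_1,c_2,\|\partial\bar{\partial}u\|_{L^\infty(M)}\bigr).
\end{align*}
I do not anticipate a substantive obstacle at this step; the real work has already been carried out in Lemmas~\hyperlink{L:4.17}{4.17} and~\hyperlink{L:4.18}{4.18}, whose proofs rely on the convexity of the level set $\{f_t=h\}$ (Lemma~\hyperlink{L:4.13}{4.13}) via a blow-up Liouville argument à la Siu and a Krylov--Safanov-type Schauder iteration. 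The only mild point to verify is that the constants produced by Lemma~\hyperlink{L:4.18}{4.18} can be chosen uniform in the finite covering, which follows because the coordinate charts and their trivializations are fixed once and for all.
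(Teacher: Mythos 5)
Your proposal is correct and is essentially the paper's own (unwritten) argument: cover $M$ by finitely many charts in which $X_u$ is $\sqrt{-1}\,\partial\bar{\partial}$ of a local potential $u_a$, apply Lemma~\hyperlink{L:4.18}{4.18} on each chart, and patch. One small inefficiency: since the constant in the corollary is already allowed to depend on $\|\partial\bar{\partial}u\|_{L^\infty(M)}$, you do not need to re-derive that bound via Theorem~\hyperlink{T:4.2}{4.2} and Proposition~\hyperlink{P:4.1}{4.1}; it is part of the hypotheses here and is only assembled into the full a priori estimate later.
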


\section{Existence Results}
\label{sec:5}
\subsection{When \texorpdfstring{$n =3$}{}}
\label{sec:5.1}
In this subsection, we always assume that $\hat{\theta} \in \bigl( \pi/2, 5 \pi/6    \bigr)$ and there exists a $C$-subsolution. By changing representative, we say $X$ is this $C$-subsolution. We can find a pair $\bigl(c_1(t), c_0(t)\bigr)$ such that the \hyperlink{dim 3 cons}{3-dimensional four constraints} will all be satisfied. Moreover, we prove that when the complex dimension equals three, if there exists a $C$-subsolution, then the dHYM equation (\ref{eq:3.1}) is solvable. We consider the following continuity path,
\begin{align*}
\label{eq:5.1}
\css X^3 - 3 c_1(t)   \omega^2 \wedge X -2 c_0(t) \ta   \omega^3 = 0, \tag{5.1}
\end{align*}where $t \in [0, 1]$ and $c_0(t)$ and $c_1(t)$ are smooth functions in $t$ which satisfy all the following \hypertarget{dim 3 cons 5.1}{3-dimensional version of the four constraints}:
\begin{enumerate}[leftmargin=4.5cm]
	\setlength\itemsep{+0.4em}
\item[Topological constraint:] $\cos^2(\hat{\theta}) \Omega_0 - 3c_1(t)    \Omega_2 - 2 c_0(t) \ta   \Omega_3  =0$.
\item[Boundary constraints:] $c_1(1) = c_0(1) = 1;\quad c_1(0)>0; \quad c_0(0) = 0$.
\item[Positivstellensatz constraint:] $c_1(t)^{ {3}/{2}} > c_0(t) \sin(\hat{\theta})$.
\item[$\Upsilon$-cone constraint:] $c_1'(t) > 0$.
\end{enumerate}\smallskip

Here we denote $\Omega_i \coloneqq \int_M \omega^i \wedge X^{3-i}$. \bigskip

\hypertarget{L:5.1}{\begin{flemma}} If $ ( \Omega_2, \Omega_3   ) \in \Omega^{3, \hat{\theta}}$, then the following pair will satisfy all the \hyperlink{dim 3 cons 5.1}{3-dimensional four constraints}:
\begin{align*}
c_1(t) \coloneqq \frac{ \css \Omega_0 - 2t \ta   \Omega_3}{3   \Omega_2}; \quad c_0(t) \coloneqq t.
\end{align*}Here $\Omega_i = \int_M \omega^i \wedge X^{3-i}$ and 
\begin{align*}
\Omega^{3, \hat{\theta}} \coloneqq \Bigl \{     \Omega_3 < \inf_{t \in [0, 1) } - \frac{3(1- t^{2/3} \sin^{2/3}(\hat{\theta}) )\ct}{2(1-t)} \Omega_2  \Bigr \}. 
\end{align*}
\end{flemma}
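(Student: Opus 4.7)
The plan is to verify the four constraints one at a time, noting that the definition of the set $\Omega^{3,\hat{\theta}}$ has been engineered precisely so that the Positivstellensatz constraint reduces to an explicit inequality on $\Omega_3$.

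\textbf{Topological and boundary constraints.} First I would substitute the explicit formula for $c_1(t)$ and $c_0(t)=t$ into the topological identity
\[\cos^2(\hat{\theta})\Omega_0-3c_1(t)\Omega_2-2c_0(t)\tan(\hat{\theta})\Omega_3=0;\]
the two terms involving $\Omega_0$ cancel and the two terms involving $t\tan(\hat{\theta})\Omega_3$ cancel, so this is automatic. For the boundary constraints, evaluating at $t=1$ reproduces the topological identity of the original dHYM equation (\ref{eq:3.2}), which gives $c_1(1)=1$; evaluating at $t=0$ gives $c_1(0)=\cos^2(\hat{\theta})\Omega_0/(3\Omega_2)$, which is strictly positive since $X$ is a $C$-subsolution and hence (by Lemma~\hyperlink{L:3.1}{3.1}) has pointwise positive eigenvalues, so $\Omega_0=\int_M X^3>0$ and $\Omega_2=\int_M \omega^2\wedge X>0$.

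\textbf{$\Upsilon$-cone constraint.} This amounts to checking $c_1'(t)>0$. Differentiating gives $c_1'(t)=-2\tan(\hat{\theta})\Omega_3/(3\Omega_2)$. Since $\hat{\theta}\in(\pi/2,5\pi/6)$ forces $\tan(\hat{\theta})<0$, and $\Omega_3=\int_M \omega^3>0$ because $\omega$ is Kähler, the quantity $c_1'(t)$ is strictly positive.

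\textbf{Positivstellensatz constraint.} This is the main computation, and the point where the definition of $\Omega^{3,\hat{\theta}}$ is tailored to the argument. We need $c_1(t)^{3/2}>c_0(t)\sin(\hat{\theta})=t\sin(\hat{\theta})$ for all $t\in[0,1]$. At $t=1$, both boundary values are $1$ and $\sin(\hat{\theta})<1$ for $\hat{\theta}\in(\pi/2,5\pi/6)$, so the inequality is strict; for $t\in[0,1)$ the inequality is equivalent to $c_1(t)>t^{2/3}\sin^{2/3}(\hat{\theta})$. Substituting the formula for $c_1(t)$ and using the topological identity $\cos^2(\hat{\theta})\Omega_0=3\Omega_2+2\tan(\hat{\theta})\Omega_3$ to eliminate $\Omega_0$, this rearranges to
\[\frac{2(1-t)\tan(\hat{\theta})\Omega_3}{3\Omega_2}>-\bigl(1-t^{2/3}\sin^{2/3}(\hat{\theta})\bigr).\]
Because $\tan(\hat{\theta})<0$ and $\Omega_2>0$, dividing by the negative coefficient reverses the inequality and yields exactly
\[\Omega_3<-\frac{3\bigl(1-t^{2/3}\sin^{2/3}(\hat{\theta})\bigr)\cot(\hat{\theta})}{2(1-t)}\Omega_2,\]
which is the defining inequality of $\Omega^{3,\hat{\theta}}$ taken uniformly over $t\in[0,1)$. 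The potentially worrisome step is that the infimum over $t\in[0,1)$ could be $0$ (or negative) and so be unattainable, but since $-\cot(\hat{\theta})>0$ and the function $(1-t^{2/3}\sin^{2/3}(\hat{\theta}))/(1-t)$ is continuous on $[0,1)$ with a positive limit as $t\to 1^-$ (by L'Hôpital, the limit equals $\frac{2}{3}\sin^{2/3}(\hat{\theta})$), the infimum is a strictly positive constant, so the constraint carves out a nonempty half-space. Thus $(\Omega_2,\Omega_3)\in\Omega^{3,\hat{\theta}}$ is exactly what is needed, and the proof is complete.
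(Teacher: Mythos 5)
Your proof is correct and follows essentially the same route as the paper: plug in, observe the topological and boundary constraints, compute $c_1'(t)$ for the $\Upsilon$-cone constraint, and rewrite $c_1(t) = 1 + \tfrac{2}{3}(1-t)\tan(\hat\theta)\,\Omega_3/\Omega_2$ via the $t=1$ topological identity so that the Positivstellensatz constraint $c_1(t) > t^{2/3}\sin^{2/3}(\hat\theta)$ reduces to exactly the defining inequality of $\Omega^{3,\hat\theta}$. You supply a couple of justifications the paper leaves implicit (that $\Omega_0,\Omega_2,\Omega_3 > 0$, which follow from $X>0$ pointwise by Lemma~\hyperlink{L:3.1}{3.1} and from $\omega$ being Kähler), and that is a small improvement.

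One factual slip, however, in your closing remark about nonemptiness: you claim that $\lim_{t\to 1^-} (1-t^{2/3}\sin^{2/3}(\hat\theta))/(1-t) = \tfrac{2}{3}\sin^{2/3}(\hat\theta)$ by L'H\^opital. L'H\^opital does not apply here because the numerator does not tend to $0$: for $\hat\theta\in(\pi/2,5\pi/6)$ one has $\sin(\hat\theta)<1$, so $1-\sin^{2/3}(\hat\theta)>0$ and the ratio actually tends to $+\infty$. You have conflated this with the function $(1-t^{2/3})/(1-t)$, which does tend to $\tfrac{2}{3}$ (and which is the one used in Corollary~\hyperlink{C:5.1}{5.1}). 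The conclusion you want — that the infimum over $t\in[0,1)$ is a strictly positive constant — still holds, since the function equals $1$ at $t=0$, is continuous and positive on $[0,1)$, and diverges to $+\infty$ as $t\to 1^-$. This remark is a sanity check rather than part of the conditional the lemma asserts, so the error does not affect the validity of the proof, but the stated limit should be corrected.
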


\begin{proof}

%
%
%
%

First, the topological constraint is automatically satisfied. Then, we can check that they satisfy the boundary constraints
\begin{align*}
1 =  c_1(1) = c_0(1); \quad c_1(0) = 0.
\end{align*}Third, for the $\Upsilon$-cone constraint, we have
\begin{align*}
c_1'(t) = \frac{-2 \ta   \Omega_3}{3   \Omega_2} > 0 
\end{align*}So $c_0(t)$ and $c_1(t)$ are both increasing with $1 \geq c_1(t) \geq \css \Omega_0/ ( 3   \Omega_2) > 0$. \bigskip


Last, for the Positivstellensatz constraint, when $t =0$ or $t =1$, the Positivstellensatz constraint holds. We rewrite $c_1(t)$ as
\begin{align*}
c_1(t) &= \frac{\css \Omega_0 - 2t \ta   \Omega_3}{3   \Omega_2} = 1+ \frac{2}{3}(1-t) \ta   \frac{\Omega_3}{\Omega_2}.
\end{align*}

For $t \in (0, 1)$, if $ ( \Omega_2, \Omega_3   ) \in \Omega^{3, \hat{\theta}}$, then
\begin{align*}
c_1(t) &= 1+ \frac{2}{3}(1-t) \ta   \frac{\Omega_3}{\Omega_2} > 1- \frac{2}{3}(1-t) \ta \frac{3(1- t^{2/3} \sin^{2/3}(\hat{\theta}) )\ct}{2(1-t)} \\
&=   t^{2/3} \sin^{2/3}(\hat{\theta}).
\end{align*}Thus, $c_1(t)^{ {3}/{2}} > c_0(t) \sin(\hat{\theta})$, which finishes the proof.
\end{proof}

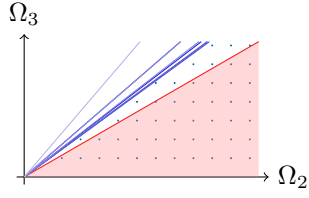
\begin{figure}
\centering
\begin{tikzpicture}[scale=0.5]
	\def\theta{2*pi/3}
	\xdef\acc{0}
  	\draw[->] (-0.2,0) -- (6.5,0) node[right] {$\Omega_2$};
  	\draw[->] (0,-0.2) -- (0,3.8) node[above] {$\Omega_3$};
  	\draw[color=red,domain=0:3.6]    plot ({-tan(\theta r)*\x},\x)             ;
	\foreach \t in {0, ..., 7}
	{
		\pgfmathsetmacro\r{\t/10 }
		\pgfmathsetmacro\g{\t/10 }	
		\pgfmathsetmacro\b{0.8+\t/64}
  		\draw[color={rgb, 1:red, \r; green, \g; blue, \b},domain=0:3.6,samples=100]    plot ({-2*tan(\theta r)*(1 - \t/8)*\x/(  3*(1 - (sin(\theta r)*\t/8)^(2/3))    ) },\x)             ;
	}
	\foreach \x in {0.5, 1, 1.5, 2, 2.5, 3, 3.5, 4, 4.5, 5, 5.5, 6 }
	{
			\foreach \y in {0.5, 1, 1.5, 2, 2.5, 3, 3.5}
			{
				\xdef\acc{0}
				\foreach \t in {0, ..., 7}
				{
					\xdef\C{0}
					\pgfmathsetmacro\C{  -3*\x*(1 - (sin(\theta r)*\t/8)^(2/3))/(2*tan(\theta r)*(1 - \t/8))  - \y }
					\ifdim \C pt > 0 pt 
						\pgfmathsetmacro\acc{\acc+1} 
						\xdef\acc{\acc}
					\fi
					\ifdim \acc pt = 8 pt 
						\filldraw[color=UCIB] (\x, \y) circle (0.3pt);
					\fi
				}
			}
	}
	
\begin{scope}
    \fill[color=red!50,opacity=0.3,thick,domain=0:3.6,samples=101]
    plot ({-tan(\theta r)*\x},\x) -- ({-tan(\theta r)*3.6}, 0)  ;
  \end{scope}
\end{tikzpicture}
\caption{$\Omega^{3, \hat{\theta}}$ and $C$-subsolution constraints}
\label{fig:5.1}
\end{figure}


Figure~\ref{fig:5.1} shows that $\Omega^{3, \hat{\theta}}$, which is the blue dotted region, contains the $C$-subsolution constraints, which is the pink shaded region. So as long as a $C$-subsolution exists, the numerical values $(\Omega_2, \Omega_3)$ will be in the pink shaded region, which will also be in $\Omega^{3, \hat{\theta}}$, thus the complex three dimensional dHYM equation is solvable. The following Corollary proves this observation.

\hypertarget{C:5.1}{\begin{fcor}}
When $\hat{\theta} \in  (   {\pi}/{2},   {5\pi}/{6}   )$. If there exists a $C$-subsolution to equation (\ref{eq:3.1}), then the three dimensional dHYM equation (\ref{eq:3.1}) is solvable.
\end{fcor}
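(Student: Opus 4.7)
The plan is to invoke the continuity method along the path (5.1) with the explicit pair $\bigl(c_1(t), c_0(t)\bigr)$ supplied by Lemma~5.1, and then combine the a priori estimates from Section~4.1 with the solvability at $t=0$ due to Fang--Lai--Ma \cite{fang2011class}. The entire argument hinges on being able to \emph{apply} Lemma~5.1, which requires checking that the numerical data $(\Omega_2, \Omega_3)$ associated to the $C$-subsolution $X$ lies in $\Omega^{3,\hat{\theta}}$.

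First I would verify the numerical inclusion. By Lemma~3.1 (applied with $t=1$, so $c_1=c_0=1$), the $C$-subsolution condition on $X$ means that at every point $p\in M$ the eigenvalues $\mu = \mu(p)$ of $\omega^{-1}X$ lie in $\Upsilon^3_{2;1,0,-\sec^2(\hat{\theta})}\cap \Upsilon^3_{1;1,0}$. Multiplying by $\cos^2(\hat{\theta})$ and translating back into differential forms, this is exactly $\cos^2(\hat{\theta})\, X^2 - \omega^2 > 0$ and $X>0$ pointwise. Integrating these positivity statements against the appropriate powers of $\omega$ (and using $\int_M \omega\wedge X^2 \geq \cos^{-2}(\hat{\theta})\int_M\omega^3 \cdot (\text{something})$ type inequalities obtained from the pointwise Maclaurin/AM--GM bounds) produces the pair of linear inequalities that cut out the pink region in Figure~5.1. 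A direct computation then shows this pink region is contained in $\Omega^{3,\hat{\theta}}$; in effect, one needs to verify that the pointwise bound $-\cot(\hat{\theta})\mu_i \omega^3 \leq \frac{3(1-t^{2/3}\sin^{2/3}(\hat{\theta}))}{2(1-t)}\,\omega^2\wedge X$ holds for each $t\in[0,1)$ by an elementary one-variable minimization, and then to sum and integrate.

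Once $(\Omega_2, \Omega_3)\in\Omega^{3,\hat{\theta}}$ is established, Lemma~5.1 produces the explicit pair $c_1(t) = \tfrac{\cos^2(\hat{\theta})\Omega_0 - 2t\tan(\hat{\theta})\Omega_3}{3\Omega_2}$, $c_0(t)=t$ satisfying all four constraints of Section~3.1. I would then execute the continuity method on
\begin{align*}
T \coloneqq \bigl\{ t\in[0,1] \, :\, \text{equation (5.1) at parameter } t \text{ admits a smooth solution } u_t \bigr\}.
\end{align*}
Non-emptiness: at $t=0$ we have $c_0(0)=0$, so (5.1) reduces to $\cos^2(\hat{\theta}) X^3 = 3c_1(0)\,\omega^2\wedge X$, which is a generalized inverse $\sigma_k$/complex Hessian equation with positive coefficient; since $X$ remains a $C$-subsolution for it (Lemma~3.2 together with the $\Upsilon$-cone inclusion of Lemma~3.3), Fang--Lai--Ma \cite{fang2011class} (or equivalently Collins--Székelyhidi \cite{collins2017convergence}) provides a solution. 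Openness is a standard implicit function theorem argument on $C^{2,\alpha}$, using that the linearization $\mathcal{L}_t$ defined in Section~4.1.1 is an elliptic operator whose zeroth-order term has the correct sign. Closedness is exactly the content of the a priori estimates: Lemma~3.3 shows the $C$-subsolution $X$ remains a $C$-subsolution for every $t\in[0,1]$; Theorem~4.1 gives $|\partial\bar\partial u_t| \leq C(1+\sup|\nabla u_t|^2)$; Proposition~4.1 (via the blow-up argument of Collins--Jacob--Yau) upgrades this to $|\nabla u_t|\leq C$; Corollary~4.1 yields a uniform $C^{2,\alpha}$ bound; and Schauder bootstrapping gives $C^{k,\alpha}$ bounds for all $k$. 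Arzelà--Ascoli then extracts a limit solution at any boundary parameter.

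The main obstacle is the first paragraph above: showing that the pointwise $C$-subsolution inequalities, once integrated, land inside $\Omega^{3,\hat{\theta}}$. This is a purely real-algebraic computation but it is delicate because $\Omega^{3,\hat{\theta}}$ is defined by an \emph{infimum} over $t\in[0,1)$ and the integrated $C$-subsolution inequalities must dominate this infimum uniformly. The strategy there is to view the defining inequality of $\Omega^{3,\hat{\theta}}$ as a linear pencil in $t$ and observe that its pointwise counterpart $\cos^2(\hat{\theta})\mu_2\mu_3 > 1$ (i.e.\ $C$-subsolution at $t=1$) already implies the pointwise analogue at every $t\in[0,1)$ by the monotonicity supplied by the $\Upsilon$-cone constraint $c_1'(t)>0$; integration then commutes with this monotone family and yields the desired inclusion.
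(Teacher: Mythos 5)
Your overall architecture is sound: invoke Lemma~5.1 to build the path, then run the continuity method using solvability at $t=0$ (Fang--Lai--Ma), openness by the implicit function theorem, the preservation of the $C$-subsolution (Lemmas~3.2 and 3.3), and closedness via Theorem~4.1, Proposition~4.1, and Corollary~4.1. That matches what the paper's Corollary~5.1 assembles. However, the step you yourself flag as ``the main obstacle''---proving $(\Omega_2,\Omega_3)\in\Omega^{3,\hat\theta}$---is where your proposal breaks down.

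There are three concrete problems. First, the pointwise inequality you propose, namely $-\cot(\hat\theta)\,\mu_i\,\omega^3 \leq \frac{3(1-t^{2/3}\sin^{2/3}\hat\theta)}{2(1-t)}\,\omega^2\wedge X$, has a dangling index and, once summed over $i$, collapses to a statement about the constants alone (since $\omega^2\wedge X/\omega^3 = \sigma_1(\mu)/3$): it reads $-\cot(\hat\theta)\leq \frac{3(\cdots)}{2(1-t)}$ with the eigenvalues cancelling, so it cannot produce a constraint relating $\Omega_3$ to $\Omega_2$. Second, the monotonicity appeal at the end conflates two unrelated things: the $\Upsilon$-cone constraint $c_1'(t)>0$ ensures the $C$-subsolution cone shrinks along the path so that $X$ stays a $C$-subsolution for every $t$ (needed for the \emph{estimates}), whereas the Positivstellensatz constraint $c_1(t)^{3/2}>c_0(t)\sin\hat\theta$ on the pair $(c_1(t),c_0(t))$ is a separate numerical condition on the \emph{coefficients}, and it is the one that forces $(\Omega_2,\Omega_3)\in\Omega^{3,\hat\theta}$; these do not imply each other. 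Third, and most importantly, you never invoke the \emph{topological constraint} $\cos^2(\hat\theta)\Omega_0 - 3\Omega_2 - 2\tan(\hat\theta)\Omega_3 = 0$, which is the crucial link. The paper's argument is: (a) the $C$-subsolution condition gives $\Omega_0 > \sec^2(\hat\theta)\,\Omega_2$ (from $X^2>\sec^2(\hat\theta)\,\omega^2$ wedged with $X>0$ and integrated); (b) substituting into the topological constraint yields $-\tan(\hat\theta)\,\Omega_3/\Omega_2 = 3/2 - \Omega_0/(2\sec^2(\hat\theta)\,\Omega_2) < 1$; and (c) an elementary calculus estimate (using that $(1-t^{2/3})/(1-t)$ is decreasing and L'H\^opital at $t=1$) shows $\frac{3(1-t^{2/3}\sin^{2/3}\hat\theta)}{2(1-t)} > 1$ for all $t\in(0,1)$, so the infimum defining $\Omega^{3,\hat\theta}$ is at least $1$ and the inclusion follows. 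You gesture at (c) but skip (a) and (b). (An alternative correct route---integrate the pointwise AM--GM bound $\sigma_1(\mu)>3|\sec\hat\theta|$ to get $\Omega_3 < |\cos\hat\theta|\,\Omega_2 < -\cot(\hat\theta)\,\Omega_2$---also works and avoids the topological constraint, but that is not what you wrote.)
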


\begin{proof}

If a $C$-subsolution exists, by Lemma~\hyperlink{L:3.1}{3.1}, at every point, the eigenvalues of this $C$-subsolution will be in $\Upsilon^3_{2; 1, 0, - \s }  \cap \Upsilon^3_{1; 1, 0}$. This implies that
\begin{align*}
X^2 > \s \omega^2   \Longrightarrow X^3 > \s \omega^2 \wedge X   \Longrightarrow \Omega_0 > \s \Omega_2.
\end{align*}

%

By rewriting the topological constraint, we get
\begin{align*}
\label{eq:5.2}
-\ta \frac{\Omega_3}{\Omega_2} =   \frac{3}{2} -  \frac{  \Omega_0}{2 \s   \Omega_2} < 1. \tag{5.2}
\end{align*}

On the other hand, consider the following quantity 
\begin{align*}
\label{eq:5.3}
\frac{3(1- t^{2/3} \sin^{2/3}(\hat{\theta}) ) }{2(1-t)}. \tag{5.3}
\end{align*} For $t \in (0, 1)$, quantity (\ref{eq:5.3}) has a lower bound 
\begin{align*}
\label{eq:5.4}
\frac{3(1- t^{2/3} \sin^{2/3}(\hat{\theta}) ) }{2(1-t)} > \frac{3(1- t^{2/3}   ) }{2(1-t)} \geq 1.  \tag{5.4}
\end{align*}The last inequality is due to the fact that the function $(1- t^{2/3})/(1-t)$ is decreasing when $t \in (0, 1)$ and by L'Hôpital's rule. Combining inequalities (\ref{eq:5.2}) and (\ref{eq:5.4}), we see that if there exists a $C$-subsolution, then we always have $(\Omega_2, \Omega_3) \in \Omega^{3, \hat{\theta}}$.
\end{proof}

\subsection{When \texorpdfstring{$n=4$}{}}
\label{sec:5.2}
In this subsection, we always assume that $\hat{\theta} \in \bigl( \pi, 5 \pi/4    \bigr)$ and there exists a $C$-subsolution. By changing representative, we say $X$ is this $C$-subsolution. We can find a triple $\bigl(c_2(t), c_1(t), c_0(t)\bigr)$ such that the \hyperlink{dim 4 cons}{4-dimensional four constraints} will all be satisfied. Moreover, we prove that when the complex dimension equals four, if there exists a $C$-subsolution, then the dHYM equation (\ref{eq:3.7}) is solvable. We consider the following continuity path,
\begin{align*}
\label{eq:5.5}
\sii X^4 - 6 c_2(t)   \omega^2 \wedge X^2 + 8 c_1(t) \ct   \omega^3 \wedge X - c_0(t) \cc   \omega^4 = 0. \tag{5.5}
\end{align*}
where $t \in [0, 1]$ and this triple $\bigl(c_2(t), c_1(t), c_0(t)\bigr)$ are smooth functions in $t$ satisfying the following \hypertarget{dim 4 cons 5.2}{4-dimensional four constraints}:
\begin{enumerate}[leftmargin=4.5cm]
	\setlength\itemsep{+0.4em}
\item[Topological constraint:] $\sii \Omega_0 - 6c_2(t) \Omega_2 + 8c_1(t) \ct \Omega_3 - c_0(t) \cc   \Omega_4 =0$.
\item[Boundary constraints:] $c_2(1) = c_1(1) = c_0(1) = 1;\quad c_2(0)>0; \quad c_1(0) = 0$.
\item[Positivstellensatz constraint:] $\cc c_0(t) >  -24   {c_2^2(t) \cs \cos^2(\theta_{c_1, c_2})\cos(2\theta_{c_1, c_2})} $.
\item[$\Upsilon$-cone constraints:] $\frac{d}{dt} \bigl( c_2^{3/2}(t) \bigr) \geq -\cos(\hat{\theta}) c_1'(t); \quad c_2'(t) > 0; \quad c_1'(t) > 0$.
\end{enumerate}Here we denote $\Omega_i \coloneqq \int_M \omega^i \wedge X^{4-i}$ and we define $\theta_{c_1, c_2} =  \arccos \bigl(   { c_1(t) \coo}/{c_2^{3/2}(t)}  \bigr)\big/ 3 -  {2\pi}/{3}$, where we specify the branch so that $\arccos \bigl(   {c_1(t) \cos(\hat{\theta})}/{c_2^{3/2}(t)}  \bigr) \in   \bigl( \pi, 3\pi/2 \bigr]$.\bigskip

\hypertarget{T:5.1}{\begin{fthm}} If $ ( \Omega_2, \Omega_3, \Omega_4   ) \in \Omega^{4,   \hat{\theta}}_{\ell}$, then the following triple will satisfy all the \hyperlink{dim 4 cons 5.2}{4-dimensional four constraints}:
\begin{align*}
c_{2, \ell} (t) \coloneqq \bigl[ 1 + (1-t) \ell   \cos (\hat{\theta} )   \bigr ]^{2/3}; \  c_1(t) \coloneqq t; \  c_{0, \ell}(t) \coloneqq \frac{ \sii  \Omega_0 -  6c_{2, \ell}(t)      \Omega_2 + 8c_1(t) \ct     \Omega_3}{\cc    \Omega_4}.
\end{align*}Here $\ell \in \bigl [ 1, -\sco \bigr)$ is a constant, $\Omega_i \coloneqq \int_M \omega^i \wedge X^{4-i}$, and 
\begin{align*}
\Omega^{4, \hat{\theta}}_{\ell} \coloneqq \Bigl \{     \Omega_3 <   \inf_{t \in [0, 1) } \frac{3(1- c_{2, \ell} (t))\ta}{4(1-t)} \Omega_2 + \frac{  \tilde{c}_{0, \ell} (t) \ta}{8(1-t)} \Omega_4  \Bigr \}, 
\end{align*}where we define $\tilde{c}_{0, \ell} (t) \coloneqq 3 \cs -4 +  24   {c_{2, \ell}^2 \cs \cos^2(\theta_{c_1, c_{2, \ell}})\cos(2\theta_{c_1, c_{2, \ell}})}$. Moreover, $\tilde{c}_{0, \ell}(t)$ is a positive function. 
\end{fthm}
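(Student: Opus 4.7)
The plan is to verify the four constraints together with the positivity of $\tilde{c}_{0,\ell}$ by direct substitution and algebraic manipulation. The topological constraint is built into the definition of $c_{0,\ell}(t)$ and therefore holds identically in $t$. For the boundary constraints, $c_{2,\ell}(1)=1$, $c_1(1)=1$, and $c_1(0)=0$ are immediate; $c_{0,\ell}(1)=1$ reduces to the cohomological identity $\sii\Omega_0 - 6\Omega_2 + 8\ct\Omega_3 - \cc\Omega_4 = 0$ that defines the topological phase $\hat{\theta}$; and $c_{2,\ell}(0) = (1+\ell\cos\hat{\theta})^{2/3}$ is positive because the hypothesis $\ell\in[1,-\sco)$ forces $1+\ell\cos\hat{\theta}>0$. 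For the $\Upsilon$-cone constraints, $c_1'(t)=1$; since $c_{2,\ell}^{3/2}(t) = 1+(1-t)\ell\cos\hat{\theta}$ is affine in $t$ with slope $-\ell\cos\hat{\theta}>0$, $c_{2,\ell}$ is strictly increasing, whence $c_2'(t)>0$; and $\frac{d}{dt}(c_{2,\ell}^{3/2}) + \cos\hat{\theta}\,c_1'(t) = (1-\ell)\cos\hat{\theta} \geq 0$ because $\ell\geq 1$ and $\cos\hat{\theta}<0$.

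The Positivstellensatz constraint will be shown to be tautologically equivalent to the membership $(\Omega_2,\Omega_3,\Omega_4)\in\Omega^{4,\hat{\theta}}_\ell$. Substituting the topological identity at $t=1$ to eliminate $\sii\Omega_0$ from the formula for $c_{0,\ell}(t)$, one obtains
\begin{align*}
\cc\,c_{0,\ell}(t)\,\Omega_4 = 6\bigl(1-c_{2,\ell}(t)\bigr)\Omega_2 - 8(1-t)\ct\,\Omega_3 + \cc\,\Omega_4.
\end{align*}
Adding $24\,c_{2,\ell}^2(t)\cs\cos^2(\theta_{c_1,c_{2,\ell}})\cos(2\theta_{c_1,c_{2,\ell}})\,\Omega_4$ to both sides and invoking the definition of $\tilde{c}_{0,\ell}(t)$, the constraint becomes, for $t\in[0,1)$,
\begin{align*}
8(1-t)\ct\,\Omega_3 < 6\bigl(1-c_{2,\ell}(t)\bigr)\Omega_2 + \tilde{c}_{0,\ell}(t)\,\Omega_4;
\end{align*}
dividing by $8(1-t)\ct>0$ and using $\ta = 1/\ct$, this is precisely the inequality defining $\Omega^{4,\hat{\theta}}_\ell$ after taking the infimum over $t$. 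At $t=1$, the constraint collapses to $0<\tilde{c}_{0,\ell}(1)\Omega_4$, which follows from the positivity of $\tilde{c}_{0,\ell}$ addressed below.

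The positivity of $\tilde{c}_{0,\ell}(t)$ is the main substantive point. Combining $\cos(2\theta) = 2\cos^2\theta - 1$ with the square completion $48\cos^4\theta - 24\cos^2\theta = 3(4\cos^2\theta-1)^2 - 3$, one rewrites
\begin{align*}
\tilde{c}_{0,\ell}(t) = 3\cs\bigl(1 - c_{2,\ell}^2(t)\bigr) - 4 + 3\,c_{2,\ell}^2(t)\,\cs\,\bigl(4\cos^2\theta - 1\bigr)^2.
\end{align*}
The approach is to check the two endpoints directly and interpolate. At $t=0$ one has $\theta=-\pi/6$, so $4\cos^2\theta-1 = 2$ and the expression simplifies to $3\cs(1+3c_{2,\ell}^2(0)) - 4 > 3\cs - 4 > 2$, using $\cs>2$ on $(\pi,5\pi/4)$. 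At $t=1$ one has $c_{2,\ell}(1)=1$ and the branch-defining identity $\cos(3\theta_{c_1,c_{2,\ell}}) = c_1\cos\hat{\theta}/c_{2,\ell}^{3/2}$ combined with the triple-angle formula $\cos\hat{\theta} = 4\cos^3\alpha - 3\cos\alpha$ for $\alpha = \hat{\theta}/3-2\pi/3$ reduces the claim to the one-variable trigonometric inequality $3\cs(4\cos^2\alpha-1)^2 > 4$ on $\hat{\theta}\in(\pi,5\pi/4)$; this is immediate away from $\hat{\theta}=\pi$ and requires a second-order Taylor expansion near $\hat{\theta}\to\pi^+$ to confirm. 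For intermediate $t$, the plan is to reparametrize via $s = c_1(t)\cos\hat{\theta}/c_{2,\ell}^{3/2}(t)$, which sweeps monotonically through $(\cos\hat{\theta},0]$, and reduce positivity to a single-variable inequality in $s$ treatable by calculus. The anticipated obstacle lies in the region where $4\cos^2\theta-1$ is small (i.e., $\theta$ near $-\pi/3$, corresponding to $\hat{\theta}$ near $\pi$), where the positive square term must delicately overcome the $-4$; the identity $(4\cos^2\alpha-1)^2 - 4\sii/3 = (2/9)\epsilon^2 + O(\epsilon^3)$ at $\hat{\theta}=\pi+\epsilon$ confirms positivity in this limiting regime.
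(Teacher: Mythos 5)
Your verification of the topological, boundary, and $\Upsilon$-cone constraints is correct and essentially identical to the paper's. Your treatment of the Positivstellensatz constraint is also sound: you invert the paper's logic (showing the constraint is \emph{equivalent} to $8(1-t)\ct\,\Omega_3 < 6(1-c_{2,\ell}(t))\Omega_2 + \tilde{c}_{0,\ell}(t)\Omega_4$ and hence to membership in $\Omega^{4,\hat{\theta}}_\ell$ for $t<1$), which is a perfectly good way to organize the same algebra.

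The genuine gap is the positivity of $\tilde{c}_{0,\ell}$ on the interior $(0,1)$, which you yourself flag as ``the main substantive point.'' Your endpoint checks are correct --- the rewriting $\tilde{c}_{0,\ell} = 3\cs(1-c_{2,\ell}^2) - 4 + 3c_{2,\ell}^2\cs(4\cos^2\theta-1)^2$ is valid, and at $t=0$ ($\theta=-\pi/6$) and $t=1$ ($c_{2,\ell}=1$) it gives the right simplifications --- but for intermediate $t$ you only offer a \emph{plan} (``reparametrize via $s$, reduce to a one-variable inequality, treat by calculus'') without executing it. As stated this is not a proof. Moreover your Taylor claim near $\hat\theta=\pi$ is off: with $\hat\theta = \pi+\epsilon$ and $\alpha = \hat\theta/3 - 2\pi/3 = -\pi/3 + \epsilon/3$, one computes $4\cos^2\alpha - 1 = \tfrac{2\sqrt3}{3}\epsilon + \tfrac{2}{9}\epsilon^2 + O(\epsilon^3)$, hence $(4\cos^2\alpha-1)^2 = \tfrac{4}{3}\epsilon^2 + \tfrac{8\sqrt3}{27}\epsilon^3 + O(\epsilon^4)$ while $\tfrac{4}{3}\sin^2\hat\theta = \tfrac{4}{3}\epsilon^2 + O(\epsilon^4)$; the difference is $\tfrac{8\sqrt3}{27}\epsilon^3 + O(\epsilon^4)$, not $\tfrac{2}{9}\epsilon^2$. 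The sign is still favorable, but the degeneracy is cubic, which is exactly why a careless endpoint-plus-interpolation argument needs care. The paper avoids this by a cleaner route: it computes $\tilde{c}_{0,\ell}'(t) = 16\,c_{2,\ell}^{1/2}\cos(\hat\theta)\,\cs\,\cos(\theta_{c_1,c_{2,\ell}})\bigl(1 - \ell\cos(\theta_{c_1,c_{2,\ell}})\bigr)$, observes that $\cos(\theta_{c_1,c_{2,\ell}})$ is strictly decreasing in $t$ so $1-\ell\cos(\theta_{c_1,c_{2,\ell}})$ is strictly increasing and positive at $t=1$ (using $\ell < -\sec\hat\theta \le \sqrt2$ and $\theta_{1,1}\in(-\pi/3,-\pi/4)$), concludes that $\tilde{c}_{0,\ell}'$ changes sign at most once from positive to negative, so the minimum of $\tilde{c}_{0,\ell}$ on $[0,1]$ is attained at an endpoint, and only then reduces to the two endpoint evaluations you already did. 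You should either carry out your reparametrization plan in full or adopt the monotonicity-of-the-derivative argument; as written the interior positivity is unproved.
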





\begin{proof}

%
%
%
%

First, the topological constraint is automatically satisfied. Then, we can check that they satisfy the boundary constraints
\begin{align*}
1 =  c_{2, \ell} (1) = c_1(1) = c_{0, \ell}(1); \quad c_1(0) = 0.
\end{align*}Third, for the $\Upsilon$-cone constraints, we check
\begin{align*}
c_{2, \ell}'(t) = -\frac{2}{3} \ell  \coo  \bigl[ 1 + (1-t) \ell \coo \bigr ]^{-1/3} > 0; \quad c_1'(t) = 1 > 0. 
\end{align*}So $c_1(t)$ and $c_{2, \ell}(t)$ are both increasing with $1 \geq c_{2, \ell}(t) \geq  \bigl( 1 + \ell \cos (\hat{\theta} )   \bigr )^{2/3} > 0$ and $1 \geq c_1(t) \geq 0$. We also check that
\begin{align*}
\frac{d}{dt} c_{2, \ell}^{3/2} (t) = \frac{d}{dt}  \bigl[ 1 + (1-t)  \ell  \coo  \bigr ]  = - \ell \coo \geq -\coo c_1'(t).
\end{align*}



Last, for the Positivstellensatz constraint, we are checking whether the following quantity
\begin{align*}
\label{eq:5.7}
&\kern-1em \sii  \Omega_0 -  6c_{2, \ell}(t)      \Omega_2 + 8c_1(t) \ct     \Omega_3 + 24   {c_{2, \ell}^2(t) \cs \cos^2(\theta_{c_1, c_{2, \ell}})\cos(2\theta_{c_1, c_{2, \ell}})} \Omega_4    \tag{5.7} \\
&=  6 (1 -  c_{2, \ell}(t))      \Omega_2 + 8 (c_1(t)-1) \ct     \Omega_3 + \tilde{c}_{0, \ell}(t)  \Omega_4
\end{align*}is positive. If $t = 1$, then quantity (\ref{eq:5.7}) will be
\begin{align*}
\label{eq:5.8}
&\kern-1em 6 (1 -  c_{2, \ell}(1))      \Omega_2 + 8 (c_1(1)-1) \ct     \Omega_3 + \tilde{c}_{0, \ell}(1)  \Omega_4  \tag{5.8} \\ 
&=  \cs \bigl( 3   -4 \sii +  24   {    \cos^2(\theta_{1, 1})\cos(2\theta_{1, 1})}  \bigr) \Omega_4,
\end{align*}where $\theta_{1, 1} =   \hat{\theta} / 3 -  {2\pi}/{3} \in (- \pi/3, -\pi/4)$. By the observation that $\hat{\theta} = 3 \theta_{1, 1} + 2\pi$, we have
\begin{align*}
3   -4 \sii+  24   {    \cos^2(\theta_{1, 1})\cos(2\theta_{1, 1})} &= 3  -4 \sin^2(3 \theta_{1, 1}) + 24   \cos^2(\theta_{1, 1})\cos(2\theta_{1, 1}).
\end{align*}The above quantity will be positive if $\theta_{1, 1} \in (- \pi/3, -\pi/4)$ by some standard calculus techniques. In fact, it will be strictly increasing in this range and hence have an infimum if one set $\theta_{1,1} = -\pi/3$. Thus quantity (\ref{eq:5.8}) will be positive. \bigskip

Now, for $t \in [0, 1)$ and $ ( \Omega_2, \Omega_3, \Omega_4   ) \in \Omega^{4, \hat{\theta}}_{\ell}$, we have
\begin{align*}
8(1-c_1(t)) \ct \Omega_3 &= 8(1-t) \ct \Omega_3 <    \inf_{t \in [0, 1) }  {6(1- c_{2, \ell}(t))\ta}  \Omega_2 +  { \tilde{c}_{0, \ell}(t) \ta}  \Omega_4 \\
&\leq {6(1- c_{2, \ell}(t))\ta}  \Omega_2 +  { \tilde{c}_{0, \ell}(t) \ta}  \Omega_4,
\end{align*}which implies quantity (\ref{eq:5.7}) will be positive. Thus we prove the Positivstellensatz constraint holds.\bigskip

In addition, to prove $\tilde{c}_{0, \ell}(t)$ is a positive function on $[0, 1]$, first, we calculate the first derivative of $\theta_{c_1, c_{2, \ell}}$ with respect to $t$.
\begin{align*}
\label{eq:5.9}
\frac{d}{dt} \theta_{c_1, c_{2, \ell}} &=   \frac{1}{3} \coo \bigl(1 + \ell \coo \bigr) \bigl(   1+ (1-t) \ell \coo    \bigr)^{-2}  \bigl(   1 -   {c_1^2 \css}/{c_{2, \ell}^3}       \bigr)^{-1/2} < 0. \tag{5.9}
\end{align*}Note that due to our choice of the branch of $\arccos$, thus the derivative of $\arccos$ will differ by a sign compared to the usual one, the derivative of $\arccos$ will be positive instead of negative. When $t = 0$, $\theta_{c_1, c_{2, \ell}} ( 0) = - \pi/6$, and when $t = 1$, $\theta_{c_1, c_{2, \ell}} ( 1) = - \theta_{1,1} \in (- \pi/3, -\pi/4)$.

Then, the derivative of $\tilde{c}_{0, \ell}(t)$ with respect to $t$ will be
\begin{align*}
\label{eq:5.10}
\tilde{c}_{0, \ell}'(t)  \tag{5.10}
&= 48 c_{2, \ell} \cs \cos(\theta_{c_1, c_{2, \ell}}) \bigl(   c_{2, \ell}'  \cos(\theta_{c_1, c_{2, \ell}})\cos(2\theta_{c_1, c_{2, \ell}})   -c_{2, \ell} \theta'_{c_1, c_{2, \ell}} \sin( 3\theta_{c_1, c_{2, \ell}})     \bigr) \\
&= 16 c_{2, \ell}^{1/2} \cs \cos(\theta_{c_1, c_{2, \ell}})  \Bigl( \bigl(   c_{2, \ell}^{3/2} \cos(3 \theta_{c_1, c_{2, \ell}})    \bigr)'   + (c_{2, \ell}^{3/2})' \cos(\theta_{c_1, c_{2, \ell}})    \Bigr) \\
&= 16 c_{2, \ell}^{1/2} \cos(\hat{\theta}) \cs \cos(\theta_{c_1, c_{2, \ell}}) \bigl  (  1 - \ell \cos(\theta_{c_1, c_{2, \ell}})   \bigr  ). 
\end{align*} 

By inequality (\ref{eq:5.9}), the value of $\cos(\theta_{c_1, c_{2, \ell}})$ will be strictly decreasing. By this fact and $\ell < -\sco \leq \sqrt{2}$, the last term in equation (\ref{eq:5.10}), $1 - \ell \cos(\theta_{c_1, c_{2, \ell}})$, will be positive when $t$ approaches $1$. Hence $\tilde{c}'_{0, \ell}(t)$ will be negative when $t$ approaches $1$. This implies that $\tilde{c}_{0, \ell}(t)$ will attain its minimum at one of its boundary points.
\begin{align*}
\tilde{c}_{0, \ell}(t) &\geq \min \{ \tilde{c}_{0, \ell}(0), \tilde{c}_{0, \ell}(1) \} \\
&= 3 \cs -4 + \cs \min \bigl \{          9 c_{2, \ell}^2(0),    24    {    \cos^2(\theta_{1, 1})\cos(2\theta_{1, 1})}    \bigr    \} > 0.
\end{align*}

Here, we use the fact that quantity (\ref{eq:5.8}) will be positive, which implies $\tilde{c}_0(t)$ is a positive function. This finishes the proof.
\end{proof}

%


\hypertarget{T:5.2}{\begin{fthm}}
There exists an $\ell_N$ sufficiently close to $-\sco$, such that if there exists a $C$-subsolution $X$ to equation (\ref{eq:3.7}), then
\begin{align*}
(\Omega_2, \Omega_3, \Omega_4 ) \in \Omega^{4, \hat{\theta}}_{{\ell}_N}
\end{align*}
and the four dimensional dHYM equation (\ref{eq:3.7}) is solvable. Here $\Omega_i \coloneqq \int_M \omega^i \wedge X^{4-i}$.  
\end{fthm}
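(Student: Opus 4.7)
The plan is to combine Theorem~\hyperlink{T:5.1}{5.1}, the a priori estimates of Theorem~\hyperlink{T:1.2}{1.2} (Corollary~\hyperlink{C:4.2}{4.2}), and the solvability of the generalized inverse $\sigma_k$ equation of Collins--Székelyhidi~\cite{collins2017convergence} via a continuity argument along the path (\ref{eq:3.9}), for a carefully chosen $\ell_N$ sufficiently close to $-\sec(\hat{\theta})$. First I would extract numerical information from the existence of a $C$-subsolution: by Lemma~\hyperlink{L:3.4}{3.4}, the eigenvalues $\mu$ of $\omega^{-1}X$ at each $p \in M$ lie in the intersection of the three $\Upsilon$-cones at $t=1$; wedging these pointwise inequalities with powers of $\omega$ and integrating via the identity $\int_M \sigma_k(\mu)\,\omega^4 = \binom{4}{k}\Omega_{4-k}$, I obtain $\Omega_0,\Omega_3 > 0$, the integrated $\Upsilon_2$-inequality $\Omega_2 > \csc^2(\hat{\theta})\,\Omega_4$, and, after multiplying the $\Upsilon_1$-inequality by the omitted eigenvalue and summing over the index, the integrated $\Upsilon_1$-inequality $\sin^2(\hat{\theta})\,\Omega_0 + 2\cot(\hat{\theta})\,\Omega_3 > 3\Omega_2$.

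The central step is to rewrite the membership $(\Omega_2,\Omega_3,\Omega_4) \in \Omega^{4,\hat{\theta}}_{\ell}$ in a checkable form. Using the topological constraint at generic $t$ and at $t=1$ to eliminate $c_{0,\ell}(t)$, a direct manipulation shows the membership is equivalent to
\begin{align*}
P(t,\ell) \coloneqq c_{0,\ell}(t)\bigl(3\csc^2(\hat{\theta})-4\bigr) + 24\, c_{2,\ell}^2(t)\csc^2(\hat{\theta})\cos^2(\theta_{c_1,c_{2,\ell}})\cos(2\theta_{c_1,c_{2,\ell}}) > 0
\end{align*}
for every $t \in [0,1)$, i.e., exactly the Positivstellensatz constraint of Theorem~\hyperlink{T:5.1}{5.1} applied to the prescribed triple. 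As $\ell \to -\sec(\hat{\theta})$ one has $c_{2,\ell}(t) \to t^{2/3}$ uniformly and $\theta_{c_1,c_{2,\ell}} \to \theta_{1,1} \coloneqq \hat{\theta}/3 - 2\pi/3$, and after substituting $s = t^{1/3}$ the limit $P_{*}(s)\,\Omega_4$ becomes
\begin{align*}
\sin^2(\hat{\theta})\,\Omega_0 - 6s^2\,\Omega_2 + 8s^3\cot(\hat{\theta})\,\Omega_3 + 24\, s^4\csc^2(\hat{\theta})\cos^2(\theta_{1,1})\cos(2\theta_{1,1})\,\Omega_4,
\end{align*}
which I will show is strictly positive on $[0,1]$.

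The endpoints give $P_{*}(0) = \sin^2(\hat{\theta})\Omega_0/\Omega_4 > 0$ and $P_{*}(1) = \tilde{c}_{0,\ell}(1) > 0$ by the topological constraint and the positivity established in the proof of Theorem~\hyperlink{T:5.1}{5.1}. For $s \in (0,1)$, I would use the topological constraint again to rewrite $P_{*}(s)\,\Omega_4 = 6(1-s^2)\Omega_2 - 8(1-s^3)\cot(\hat{\theta})\Omega_3 + H(s)\,\Omega_4$, where $H(s) = \bigl(3\csc^2(\hat{\theta})-4\bigr) + 24 s^4\csc^2(\hat{\theta})\cos^2(\theta_{1,1})\cos(2\theta_{1,1})$; since $\cos(2\theta_{1,1}) < 0$ the function $H$ is monotonically decreasing, hence $H(s) \geq H(1) = \tilde{c}_{0,\ell}(1) > 0$. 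Combining this uniform positivity of $H$ with the two integrated $C$-subsolution inequalities above, together with the factorization $1 - 3s^2 + 2s^3 = (1-s)^2(2s+1) \geq 0$ used to dispose of the $\cot(\hat{\theta})\Omega_3$ cross-term, should yield a strict uniform lower bound for $P_{*}$ on $[0,1]$. Uniform continuity in $\ell$ then produces an $\ell_N$ close to $-\sec(\hat{\theta})$ for which $P(\,\cdot\,,\ell_N) > 0$ on $[0,1]$, so $(\Omega_2,\Omega_3,\Omega_4) \in \Omega^{4,\hat{\theta}}_{\ell_N}$, and Theorem~\hyperlink{T:5.1}{5.1} furnishes a smooth triple $(c_{2,\ell_N},c_1,c_{0,\ell_N})$ satisfying all four \hyperlink{dim 4 cons 5.2}{4-dimensional constraints}.

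Shrinking $\ell_N$ further so that $c_{2,\ell_N}(0) = [1+\ell_N\cos(\hat{\theta})]^{2/3} < \sin^2(\hat{\theta})\Omega_0/(6\Omega_2)$ ensures $c_{0,\ell_N}(0) \geq 0$, so equation (\ref{eq:1.10}) at $t=0$ is a generalized inverse $\sigma_k$ equation with non-negative coefficients and admits a smooth solution by Collins--Székelyhidi~\cite{collins2017convergence}. The continuity method along (\ref{eq:3.9}) now closes: openness follows from the ellipticity in Lemma~\hyperlink{L:4.12}{4.12}, and closedness is supplied by the uniform a priori estimates of Corollary~\hyperlink{C:4.2}{4.2}, which apply precisely because the four constraints hold along the whole path. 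The main technical obstacle is the uniform positivity of $P_{*}$ on $(0,1)$: because $\cos(2\theta_{1,1}) < 0$ and the sign of $1-s^3$ does not help control the $\cot(\hat{\theta})\Omega_3$ term, one cannot bound $P_{*}$ term-by-term, and must instead locate the critical point via $P_{*}'(s)\,\Omega_4 = s\bigl(-12\Omega_2 + 24s\cot(\hat{\theta})\Omega_3 + 96 s^2\csc^2(\hat{\theta})\cos^2(\theta_{1,1})\cos(2\theta_{1,1})\Omega_4\bigr)$ and estimate $P_{*}$ there by leveraging the sharp threshold embodied in the Positivstellensatz Theorem~\hyperlink{T:2.1}{2.1(e)}; this is where the real algebraic geometry of Section~\ref{sec:2.2} is genuinely used.
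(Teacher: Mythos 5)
Your roadmap matches the paper's: extract integrated $C$-subsolution inequalities from Lemma~\hyperlink{L:3.4}{3.4}, rewrite membership in $\Omega^{4,\hat{\theta}}_{\ell}$ as the Positivstellensatz-type positivity $P(t,\ell)>0$, pass to the limit $\ell\to-\sec(\hat{\theta})$ (substituting $s=t^{1/3}$ is fine), and close with Theorem~\hyperlink{T:5.1}{5.1}, Corollary~\hyperlink{C:4.2}{4.2}, and Collins--Székelyhidi at $t=0$. But the argument has two genuine gaps. First, you never actually prove the positivity of $P_*$ on $(0,1)$; you call it ``the main technical obstacle'' and propose locating the critical point of $P_*(s)$ via $P_*'(s)=0$ and estimating $P_*$ there via Theorem~\hyperlink{T:2.1}{2.1(e)}. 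That route is not workable: the critical-point equation involves the unknown ratios $\Omega_2:\Omega_3:\Omega_4$, and Theorem 2.1(e) does not give you control of the critical value. The paper does not locate critical points of $P_*$ at all --- it eliminates $\Omega_3$ using the integrated $\Upsilon_1$-inequality (equivalently $8\cot(\hat{\theta})\Omega_3<4\Omega_2+\tfrac{4}{3}(3\csc^2(\hat{\theta})-4)\Omega_4$) and then $\Omega_2>\csc^2(\hat{\theta})\Omega_4$, reducing everything to the purely scalar inequality $2+4t-6t^{2/3}+\tfrac{(4t-1)}{3}(3-4\sin^2(3\theta_{1,1}))+24t^{4/3}\cos^2(\theta_{1,1})\cos(2\theta_{1,1})\geq 0$, which is proved by first differentiating in $\theta_{1,1}$ (the expression is monotone, so the worst case is $\theta_{1,1}=-\pi/3$) and then observing that the resulting $1+8t-6t^{2/3}-3t^{4/3}$ is decreasing in $t$ by AM--GM, hence $\geq$ its value $0$ at $t=1$. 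The factorization $2-6s^2+4s^3=2(1-s)^2(2s+1)\geq 0$ you mention is exactly the step that licenses replacing $\Omega_2$ by $\csc^2(\hat{\theta})\Omega_4$, so you have the right ingredient, but you do not assemble the final calculus inequality.

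Second, the ``uniform continuity in $\ell$'' you invoke to pass from $P_*>0$ to $P(\cdot,\ell_N)>0$ fails near $t=0$. As $\ell\to-\sec(\hat{\theta})$, one has $c_{2,\ell}(0)=[1+\ell\cos(\hat{\theta})]^{2/3}\to 0$, hence $c_{2,\ell}'(0)\to\infty$, and moreover $\theta_{c_1,c_{2,\ell}}(0)\equiv -\pi/6$ for every $\ell<-\sec(\hat{\theta})$ while the pointwise limit $\lim_{t\to 0^+}\theta_{1,1}=\hat{\theta}/3-2\pi/3\in(-\pi/3,-\pi/4)$; so the convergence of $P(\cdot,\ell)$ to $P_*$ is not uniform on $[0,1]$ and Dini/Arzelà--Ascoli cannot be applied on the whole interval. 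The paper handles this by splitting $[0,1]$ at $1/100$: on $[0,1/100]$ the coefficient of $\Omega_2$ in quantity~(\ref{eq:5.12}) is bounded below by $1.5$ and the coefficient of $\Omega_4$ is shown directly to be increasing, giving positivity without any limit argument; only on $[1/100,1]$ does the paper establish uniform $C^1$ bounds and pass to the limit. Your write-up needs both of these missing pieces before it constitutes a proof.
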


\begin{proof}

If a $C$-subsolution exists, by Lemma~\hyperlink{L:3.4}{3.4}, we have
\begin{align*}
\sii X^3 - 3 \omega^2 \wedge X + 2 \ct \omega^3 > 0 \Longrightarrow \sii \Omega_0 - 3 \Omega_2 + 2 \ct \Omega_3 > 0.
\end{align*}
By the topological constraint that $\sii \Omega_0 = 6 \Omega_2 - 8 \ct \Omega_3 + \cc \Omega_4$, we obtain the following inequality 
\begin{align*}
\label{eq:5.11}
3 \Omega_2  -6 \ct \Omega_3 + \cc \Omega_4 > 0. \tag{5.11}
\end{align*}So if a $C$-subsolution exists, then we automatically have the following inequality
\begin{align*}
4 \Omega_2 +\frac{4}{3} \cc \Omega_4 > 8 \ct \Omega_3.
\end{align*}

By comparing the above inequality with $\Omega^{4, \hat{\theta}}_{\ell}$, if for any $t \in [0, 1)$, the following quantity 
\begin{align*}
\label{eq:5.12}
&\kern-1em \Bigl[   6(1-c_{2, \ell}(t)) \Omega_2 + \tilde{c}_{0, \ell}(t) \Omega_4    \Bigr] -  \Bigl[   4 (1-t) \Omega_2 +   \frac{4}{3}(1-t) \cc  \Omega_4    \Bigr] \tag{5.12} \\
&= \bigl(2 +4t - 6c_{2, \ell}(t) \bigr)  \Omega_2 + \bigl(  \tilde{c}_{0, \ell}(t) -4(1-t) \cc/3 \bigr) \Omega_4
\end{align*}is positive for some $\ell$, then $( \Omega_2, \Omega_3, \Omega_4   ) \in \Omega^{4, \hat{\theta}}_{\ell}$. This will finish the proof if $\ell$ is sufficiently close to $-\sco$. \bigskip

By taking the derivative of the coefficient of $\Omega_2$ of quantity (\ref{eq:5.12}) with respect to $t$, we get
\begin{align*}
\label{eq:5.13}
\frac{d}{dt} \bigl(2 +4t - 6c_{2, \ell}(t) \bigr) &=  4 - 6 c'_{2, \ell}(t) = 4 + 4 \ell \coo   \bigl[ 1 + (1-t) \ell \coo \bigr ]^{-1/3}. \tag{5.13}
\end{align*}Above quantity (\ref{eq:5.13}) will be negative when $t < 1 + \ell^2 \css +  \ell^{-1} \sec(\hat{\theta})$. If $\ell$ is sufficiently close to $-\sco$, then we have 
$1/100 < 1 + \ell^2 \css +  \ell^{-1} \sec(\hat{\theta})$. Thus the coefficient of $\Omega_2$ will be decreasing on $[0, 1/100]$. We get
\begin{align*}
\label{eq:5.14}
2 +4t - 6c_{2, \ell}(t) \geq 2.04 - 6c_{2, \ell}( {1}/{100}) = 2.04 - 6  \bigl[ 1 +  {99} \ell \coo/{100} \bigr ]^{2/3} \geq 1.5, \tag{5.14}
\end{align*}for $t \in [0, 1/100]$, provided that $\ell$ is sufficiently close to $-\sco$.\bigskip

On the other hand, by taking the derivative of the coefficient of $\Omega_4$ of quantity (\ref{eq:5.12}) with respect to $t$, when $t \in [0, 1/100]$, we obtain
\begin{align*}
\label{eq:5.15}
&\kern-1em \frac{d}{dt} \bigl(  \tilde{c}_{0, \ell}(t) -4(1-t) \cc/3 \bigr) \tag{5.15} \\
&= 4 \cc/3 + 16 c_{2, \ell}^{1/2} \cos(\hat{\theta}) \cs \cos(\theta_{c_1, c_{2, \ell}}) \bigl  (  1 - \ell \cos(\theta_{c_1, c_{2, \ell}})   \bigr  ) \\
&= 4 \cs -  {16}/{3} +  16 c_{2, \ell}^{1/2} \cs \cos(\theta_{c_1, c_{2, \ell}}) \bigl(  \cos(\hat{\theta})   -  \ell \cos(\hat{\theta})   \cos(\theta_{c_1, c_{2, \ell}}) \bigr) \\
&\geq 4 \cs -  {16}/{3}  \\
&\kern2em+  16 c_{2, \ell}^{1/2}( {1}/{100}) \cs \cos(\theta_{c_1, c_{2, \ell}})  \min \bigl \{ 0,   \cos(\hat{\theta})   -  \ell \cos(\hat{\theta})   \cos(\theta_{c_1, c_{2, \ell}}) \bigr \} \\
&\geq 4 \cs -  {16}/{3} +  16 \cdot 0.25 \cdot  \cs \cos(\theta_{c_1, c_{2, \ell}})  \min \bigl \{ 0,   \cos(\hat{\theta})   +0.9   \cos(\theta_{c_1, c_{2, \ell}}) \bigr \} \\
&\geq 4 \cs -  {16}/{3} +  4  \cs \cos(\theta_{c_1, c_{2, \ell}})  \min \bigl \{ 0,   \cos(\hat{\theta})   +0.9   \cos(\theta_{c_1, c_{2, \ell}}) \bigr \}.
\end{align*}For the inequality on the second to last line, for $\ell$ sufficiently close to $-\sco$, we have the bound that $c_{2, \ell}^{1/2}( {1}/{100}) = \bigl[ 1 + 99 \ell   \cos (\hat{\theta} ) /100  \bigr ]^{1/3} \leq 0.25$. Then, we try to estimate the following quantity
\begin{align*}
\label{eq:5.16}
&\kern-1em \cos(\theta_{c_1, c_{2, \ell}})  \bigl(   \cos(\hat{\theta})   +0.9   \cos(\theta_{c_1, c_{2, \ell}}) \bigr ) \tag{5.16} \\
&= 0.9 \bigl(  \cos(\theta_{c_1, c_{2, \ell}})  +  {\coo}/{1.8}    \bigr)^2 - \css/3.6 \geq - \css/3.6 \geq -5/18.
\end{align*}
By combining inequalities (\ref{eq:5.15}) and (\ref{eq:5.16}), we obtain 
\begin{align*}
\frac{d}{dt} \bigl(  \tilde{c}_{0, \ell}(t) -4(1-t) \cc/3 \bigr) &\geq 26 \cs /9 -  {16}/{3}  \geq 4/9 > 0.
\end{align*}Here $\hat{\theta} \in (\pi, 5\pi/4)$, thus we have the bound $\cs \geq 2$. In conclusion, the coefficient of $\Omega_4$ of quantity (\ref{eq:5.12}) will be increasing on $[0, 1/100]$. By combining (\ref{eq:5.12}), (\ref{eq:5.14}), and the inequality $\Omega_2 > \cs \Omega_4$, for $t \in [0, 1/100]$, we have
\begin{align*}
\label{eq:5.17}
&\kern-1em \bigl(2 +4t - 6c_{2, \ell}(t) \bigr)  \Omega_2 + \bigl(  \tilde{c}_{0, \ell}(t) -4(1-t) \cc/3 \bigr) \Omega_4 \tag{5.17} \\
&> 1.5 \cs \Omega_4 + \bigl(  \tilde{c}_{0, \ell}(0) -4(1-0) \cc/3 \bigr) \Omega_4  \\
&=   \bigl( 0.5 \cs + 9 c_{2, \ell}^2(0) \cs  +4/3     \bigr) \Omega_4 > 0.
\end{align*}

Last, we just need to check that whether the quantity (\ref{eq:5.12}) is positive on $[1/100, 1]$. By letting $\ell_n = -\sco - 1/n$ for $n$ large, we define the following sequence of functions on $[1/100, 1]$: 
\begin{align*}
d_n (t) \coloneqq 6(1-c_{2, \ell_n}(t)) \Omega_2 + \tilde{c}_{0, \ell_n}(t) \Omega_4
\end{align*}and the following function on $[1/100, 1]$: 
\begin{align*}
d_{\infty} (t) \coloneqq 6(1- t^{2/3}) \Omega_2 +  \bigl (  3 \cs -4 +  24   {t^{4/3} \cs \cos^2(\theta_{1, 1})\cos(2\theta_{1, 1})}  \bigr) \Omega_4. 
\end{align*}

One can check that the sequence $\{d_n\}$ converges pointwise to $d_{\infty}$. Moreover, by checking the $C^1$ norm is uniformly bounded on $[1/100, 1]$, the sequence is an equicontinuous sequence that converges uniformly to $d_{\infty}$ on $[1/100, 1]$. Thus instead of quantity (\ref{eq:5.12}), we consider the following quantity 
\begin{align*}
&\kern-1em d_{\infty} (t) -  \Bigl[   4 (1-t) \Omega_2 +    \frac{4}{3}(1-t) \cc  \Omega_4    \Bigr]   \\
&= (2 + 4t - 6 t^{2/3}) \Omega_2 + \Bigl[       {(4t-1)} \cc/{3} + 24 {t^{4/3} \cs \cos^2(\theta_{1, 1})\cos(2\theta_{1, 1})}     \Bigr] \Omega_4.
\end{align*}

By the $\Upsilon$-cone condition, we have $\sii X^2 > \omega^2$, which implies $\Omega_2 > \cs \Omega_4$. If we prove the following quantity 
\begin{align*}
\label{eq:5.18} 
\kern0.55em
&\kern-1em \Bigl[    (2 + 4t - 6 t^{2/3}) \cs +    {(4t-1)} \cc/{3} + 24 {t^{4/3} \cs \cos^2(\theta_{1, 1})\cos(2\theta_{1, 1})}     \Bigr] \Omega_4 \tag{5.18} \\
&= \cs  \Bigl[    2 + 4t - 6 t^{2/3} +    {(4t-1)} (3- 4 \sii)/{3} + 24 {t^{4/3}   \cos^2(\theta_{1, 1})\cos(2\theta_{1, 1})}     \Bigr] \Omega_4
\end{align*}is positive on $[1/100, 1]$, then $d_{\infty} (t) -   [   4 (1-t) \Omega_2 +    \frac{4}{3}(1-t) \cc  \Omega_4     ]$ will always be positive on $[1/100, 1]$.
Since the sequence $\{d_n\}$ converges uniformly to $d_{\infty}$, there exists $\ell_N$ sufficiently close to $-\sco$ such that 
\begin{align*}
d_{N} (t) -  \bigl[   4 (1-t) \Omega_2 +    \frac{4}{3}(1-t) \cc  \Omega_4    \bigr] > 0
\end{align*}for $t \in [1/100, 1]$. In addition, by inequality (\ref{eq:5.17}), the above quantity will also be positive on $[0, 1/100]$, provided that $N$ is sufficiently large. By choosing $N$ sufficiently large if necessary, then $d_{\ell_N} (t) -   [   4 (1-t) \Omega_2 +    \frac{4}{3}(1-t) \cc  \Omega_4     ] $ will be positive on $[0 ,1]$, which will finish the proof.\bigskip

Now, we consider the coefficient of quantity (\ref{eq:5.18}), we have
\begin{align*}
\label{eq:5.19}
&\kern-1em  2 + 4t - 6 t^{2/3} +    {(4t-1)} (3- 4 \sii) /{3}+ 24 {t^{4/3}   \cos^2(\theta_{1, 1})\cos(2\theta_{1, 1})}   \tag{5.19} \\
&= 2 + 4t - 6 t^{2/3} +    {(4t-1)} (3- 4 \sin^2(3 \theta_{1, 1})) /{3}+ 24 {t^{4/3}   \cos^2(\theta_{1, 1})\cos(2\theta_{1, 1})},
\end{align*}where $\theta_{1, 1} \in (-\pi/3, \pi/4)$. By taking the partial derivative with respect to $\theta_{1, 1}$, the quantity (\ref{eq:5.19}) becomes
\begin{align*}
&\kern-1em -8(4t-1)\sin(3 \theta_{1, 1}) \cos(3 \theta_{1, 1}) -48 t^{4/3} \cos( \theta_{1, 1}) \sin(3 \theta_{1, 1}) \\
&= 8 \sin(3 \theta_{1, 1}) \bigl( \cos(3 \theta_{1, 1})  -4t \cos(3 \theta_{1, 1}) - 6t^{4/3} \cos(\theta_{1, 1})        \bigr) \\
&\geq  8 \sin(3 \theta_{1, 1}) \bigl( \cos(3 \theta_{1, 1})  -4 \cos(3 \theta_{1, 1}) - 6  \cos(\theta_{1, 1})        \bigr)  \\
&= -24 \cos(\theta_{1, 1}) \sin(3 \theta_{1, 1}) (4 \cos^2(\theta_{1, 1}) - 1) > 0.
\end{align*}Note that the inequality on the second to last line is due to standard calculus methods. The function $\cos(3 \theta_{1, 1})  -4t \cos(3 \theta_{1, 1}) - 6t^{4/3} \cos(\theta_{1, 1})   $ will be increasing on $[0, 1]$. The last inequality is by the range of $\theta_{1, 1}$: since $\theta_{1, 1} \in (-\pi/3, \pi/4)$, we have $\cos(\theta_{1, 1}) > 0$, $\sin(3 \theta_{1, 1}) < 0$, and $4 \cos^2(\theta_{1, 1}) > 1$. With this observation, quantity (\ref{eq:5.19}) has a pointwise lower bound when setting $\theta_{1, 1} = -\pi/3$. That is, for $t \in [0, 1]$, we have
\begin{align*}
&\kern-1em 2 + 4t - 6 t^{2/3} +    {(4t-1)} (3- 4 \sin^2(3 \theta_{1, 1})) /{3}+ 24 {t^{4/3}   \cos^2(\theta_{1, 1})\cos(2\theta_{1, 1})} \\
&> 2 + 4t - 6 t^{2/3} +    {(4t-1)} (3- 4 \sin^2(-\pi ) )/{3}+ 24 {t^{4/3}   \cos^2(-\pi/3)\cos(-2\pi/3)} \\
&= 1 + 8t - 6 t^{2/3}  -3 t^{4/3}   \geq 1 + 8\cdot 1 - 6 \cdot1^{2/3}  - 3 \cdot 1^{4/3} = 0,
\end{align*}where the last inequality is by the fact that $1 + 8t - 6 t^{2/3}  -3 t^{4/3}$ is a decreasing function on $[0, 1]$. In conclusion, we showed that $d_{\infty} (t) -  \bigl[   4 (1-t) \Omega_2 +    \frac{4}{3}(1-t) \cc  \Omega_4    \bigr]$ will be positive on $[1/100, 1]$. So for $N$ sufficiently large, we have that $d_{N} (t) -  \bigl[   4 (1-t) \Omega_2 +    \frac{4}{3}(1-t) \cc  \Omega_4    \bigr]$ is positive on $[1/100, 1]$ as well. As we discussed, for $N$ sufficiently large, $d_{N} (t) -  \bigl[   4 (1-t) \Omega_2 +    \frac{4}{3}(1-t) \cc  \Omega_4    \bigr]$ is positive on $[0, 1]$. Hence, $( \Omega_2, \Omega_3, \Omega_4   ) \in \Omega^{4, \hat{\theta}}_{\ell_N}$. \bigskip

In particular, by Theorem~\hyperlink{T:5.1}{5.1}, the following triple will satisfy all the 
\hyperlink{dim 4 cons 5.2}{4-dimensional four constraints}:
\begin{align*}
c_{2, \ell_N} (t) = \bigl[ 1 + (1-t) \ell_N   \cos (\hat{\theta} )   \bigr ]^{2/3};   c_1(t) = t;   c_{0, \ell_N}(t) = \frac{ \sii  \Omega_0 -  6c_{2, \ell_N}(t)      \Omega_2 + 8c_1(t) \ct     \Omega_3}{\cc    \Omega_4}.
\end{align*}When $t = 0$, we have
\begin{align*}
c_{2, \ell_N} (0) = \bigl[ 1 +   \ell_N   \cos (\hat{\theta} )   \bigr ]^{2/3}; \quad  c_1(0) = 0; \quad   c_{0, \ell_N}(0) = \frac{ \sii  \Omega_0 -  6c_{2, \ell_N}(0)      \Omega_2  }{\cc    \Omega_4}.
\end{align*}If $N$ is sufficiently large, then $c_{2, \ell_N} (0) < 1/6$. By the $\Upsilon$-cone condition, we have $\sii \Omega_0 > \Omega_2$. This implies that $\sii \Omega_0 - 6c_{2, \ell_N}(0) \Omega_2$ is positive. Finally, we have $ c_{0, \ell_N}(0) > 0$, so when $t = 0$, equation (\ref{eq:3.9}) becomes a general inverse $\sigma_k$ equation, which is solvable due to the result of Collins--Székelyhidi \cite{collins2017convergence}. This finishes the proof.
\end{proof}

\vspace{3cm}
\Address


\begin{thebibliography}{99} 
\bibitem{blocki2005uniform}
Z.~Błocki, {\em On uniform estimate in Calabi--Yau theorem}, Science in China Series A: Mathematics,
  vol.~48, no.~1, pp.~244--247, 2005.

\bibitem{blocki2011uniform}
Z.~Błocki, {\em On the uniform estimate in the Calabi--Yau theorem, II}, Science China Mathematics,
  vol.~54, no.~7, pp.~1375--1377, 2011.

\bibitem{caffarelli1985dirichlet}
L.~Caffarelli, L.~Nirenberg, and J.~Spruck, {\em The Dirichlet problem for nonlinear second order elliptic equations, III: Functions of the eigenvalues of the Hessian}, Acta Mathematica, vol.~155, pp.~261--301, 1985.

\bibitem{calabi1954kahler}
E.~Calabi, {\em The space of Kähler metrics}, Proceedings of the International Congress of Mathematicians Amsterdam, pp.~206--207, 1954

\bibitem{calabi1957kahler}
E.~Calabi, {\em On Kähler manifolds with vanishing canonical class, Algebraic geometry and topology. A symposium in honor of S.~Lefschetz}, Princeton Mathematical Series, vol.~12, pp.~78--89, 1957.

\bibitem{chen2021j}
G.~Chen, {\em The $J$-equation and the supercritical deformed Hermitian--Yang--Mills equation}, Inventiones Mathematicae, pp.~1--74, 2021.

\bibitem{chu2021nakai}
J.~Chu, M.-C.~Lee, and R.~Takahashi, {\em A Nakai--Moishezon type criterion for supercritical deformed Hermitian--Yang--Mills equation}, arXiv:2105.10725, 2021.

\bibitem{collins20151}
T.~C.~Collins, A.~Jacob, and S.-T.~Yau, {\em \texorpdfstring{$(1, 1)$}{(1,1)} forms with specified
  Lagrangian phase: a priori estimates and algebraic obstructions}, arXiv:1508.01934, 2015.

\bibitem{collins2020stability}
T.~C.~Collins and Y.~Shi, {\em Stability and the deformed Hermitian--Yang--Mills
  equation}, arXiv:2004.04831, 2020.

\bibitem{collins2017convergence}
T.~C.~Collins and G.~Székelyhidi, {\em Convergence of the $J$-flow on toric manifolds}, Journal of Differential Geometry,
  vol.~107, no.~1, pp.~47--81, 2017.

\bibitem{collins2018moment}
T.~C.~Collins and S.-T.~Yau, {\em Moment maps, nonlinear PDE, and stability in mirror symmetry}, arXiv:1811.04\\824, 2018.

\bibitem{collins2018deformed}
T.~C.~Collins, D.~Xie, and S.-T.~Yau, {\em The deformed Hermitian--Yang--Mills equation 
in geometry and Pphysics}, Geometry and Physics: Volume 1: A Festschrift in 
Honour of Nigel Hitchin, vol.~1, pp.~69, 2018.

\bibitem{datar2021numerical}
V.~Datar and V.~P.~Pingali, {\em A numerical criterion for generalised Monge--Ampère equations on projective manifolds}, Geometric and Functional Analysis,
  vol.~31, no.~4, pp.~767--814, 2021.

\bibitem{dinew2017liouville}
S.~Dinew and S.~Kołodziej, {\em Liouville and Calabi--Yau type theorems for
  complex Hessian equations}, American Journal of Mathematics, vol.~139,
  no.~2, pp.~403--415, 2017.

\bibitem{fang2013convergence}
H.~Fang and M.~Lai, {\em Convergence of general inverse $\sigma_k$-flow on Kähler manifolds}, Transactions of the American Mathematical Society, vol.~365,
  no.~12, pp.~6543--6567, 2013.

\bibitem{fang2011class}
H.~Fang, M.~Lai, and X.~Ma, {\em On a class of fully nonlinear flows in Kähler geometry}, J.~Reine Angew.~Math.~653, 2011.

\bibitem{gilbarg2015elliptic}
D.~Gilbarg and N.~S.~Trudinger, {\em Elliptic partial differential equations of second order}, vol.~224, 2015.

\bibitem{guan2014second}
B.~Guan, {\em Second order estimates and regularity for fully
  nonlinear elliptic equations on Riemannian manifolds}, Duke
  Mathematical Journal, vol.~163, no.~8, pp.~1491--1524, 2014.

\bibitem{hou2010second}
Z.~Hou, X.~Ma, and D.~Wu, {\em A second order estimate for complex Hessian equations on a compact Kähler manifold}, Mathematical Research Letters, vol.~17, no.~3, pp.~547--561, 2010.

\bibitem{jacob2019weak}
A.~Jacob, {\em Weak geodesics for the deformed Hermitian--Yang--Mills equation}, 
arXiv:1906.07128, 2019.

\bibitem{jacob2020deformed}
A.~Jacob and N.~Sheu, {\em The deformed Hermitian--Yang--Mills equation on the
  blowup of \texorpdfstring{$\mathbb{P}^n$}{Pn}}, arXiv:2009.00651, 2020.

\bibitem{jacob2017special}
A.~Jacob and S.-T.~Yau, {\em A special Lagrangian type equation for holomorphic
  line bundles}, Mathematische Annalen, vol.~369, no.~1-2, pp.~869--898,
  2017.

\bibitem{leung2000special}
N.~C.~Leung, S.-T.~Yau, and E.~Zaslow, {\em From special Lagrangian to
  Hermitian--Yang--Mills via Fourier--Mukai transform}, arXiv:math/0005118, 2000.

\bibitem{lejmi2015j}
M.~Lejmi and G.~Székelyhidi, {\em The $J$-flow and stability}, Advances in mathematics, vol.~274, pp.~404--431, 2015.

\bibitem{lin2020}
C.-M.~Lin, {\em Deformed Hermitian--Yang--Mills Equation on compact Hermitian manifolds}, arXiv:math
/2012.00\\487, 2000.

\bibitem{marino2000nonlinear}
M.~Marino, R.~Minasian, G.~Moore, and A.~Strominger, {\em Nonlinear instantons
  from supersymmetric $p$-branes}, Journal of High Energy Physics,
  vol.~2000, no.~01, pp.~005, 2000.

\bibitem{pingali2019deformed}
V.~P.~Pingali, {\em The deformed Hermitian--Yang--Mills equation on three-folds}, 
arXiv:1910.01870, 2019.

\bibitem{siu2012lectures}
Y.-T.~Siu, {\em Lectures on Hermitian--Einstein metrics for stable bundles and Kähler--Einstein metrics: delivered at the German Mathematical Society Seminar in Düsseldorf in June, 1986}, vol.~8, 2012.

\bibitem{schlitzer2021deformed}
E.~Schlitzer and J.~Stoppa, {\em Deformed Hermitian--Yang--Mills connections,
  extended gauge group and scalar curvature}, Journal of the London Mathematical Society, 2021.

\bibitem{song2020nakai}
J.~Song, {\em Nakai--Moishezon criterions for complex Hessian equations}, arXiv:2012.07956, 2020.

\bibitem{szekelyhidi2018fully}
G.~Székelyhidi, {\em Fully non-linear elliptic equations on
  compact Hermitian manifolds}, Journal of Differential Geometry,
  vol.~109, no.~2, pp.~337--378, 2018.

\bibitem{szekelyhidi2017gauduchon}
G.~Székelyhidi, V.~Tosatti, and B.~Weinkove, {\em Gauduchon metrics
  with prescribed volume form}, Acta Mathematica, vol.~219, no.~1,
  pp.~181--211, 2017.

\bibitem{trudinger1995dirichlet}
N.~Trudinger, {\em On the Dirichlet problem for Hessian equations}, Acta Mathematica,
  vol.~175, no.~2, pp.~151--164, 1995.

\bibitem{yau1978ricci}
S.-T.~Yau, {\em On the Ricci curvature of a compact Kähler manifold and the
  complex Monge--Ampère equation, I}, Communications on Pure and
  Applied Mathematics, vol.~31, no.~3, pp.~339--411, 1978.

\end{thebibliography}
\end{document}